\setlist[enumerate,1]{label={(\roman*)}}
\DeclareFontFamily{U}{mathc}{}
\DeclareFontShape{U}{mathc}{m}{it}%
{<->s*[1] mathc10}{}
\DeclareMathAlphabet{\mathscr}{U}{mathc}{m}{it}
\newtheorem{thm}{Theorem}[subsection]
\newtheorem{prp}[thm]{Proposition}
\newtheorem{lem}[thm]{Lemma}
\newtheorem{cor}[thm]{Corollary}
\newtheorem{con}[thm]{Conjecture}
\newtheorem{thm-intro}{Theorem}
\newtheorem{cor-intro}[thm-intro]{Corollary}
\theoremstyle{definition}
\newtheorem{dfn}[thm]{Definition}
\newtheorem{exm}[thm]{Example}
\newtheorem{rmk}[thm]{Remark}
\theoremstyle{remark}
\newtheorem{prb}{Problem}
\newcommand{\mx}[1]{\begin{pmatrix}#1\end{pmatrix}}
\newcommand{\vb}{\,|\,}
\newcommand{\Z}{\mathbb{Z}}
\newcommand{\R}{\mathbb{R}}
\newcommand{\C}{\mathbb{C}}
\renewcommand{\P}{\mathbb{P}}
\newcommand{\RP}{\mathbb{RP}}
\newcommand{\cA}{\mathcal{A}}
\newcommand{\cB}{\mathcal{B}}
\newcommand{\cC}{\mathcal{C}}
\newcommand{\cD}{\mathcal{D}}
\newcommand{\cF}{\mathcal{F}}
\newcommand{\cL}{\mathcal{L}}
\newcommand{\cS}{\mathcal{S}}
\newcommand{\cW}{\mathcal{W}}
\newcommand{\cP}{\mathcal{P}}
\newcommand{\cT}{\mathcal{T}}
\newcommand{\cM}{\mathcal{M}}
\newcommand{\fX}{\mathfrak{X}}
\newcommand{\fc}{\mathfrak{c}}
\newcommand{\fr}{\mathfrak{r}}
\newcommand{\fb}{\mathfrak{b}}
\newcommand{\fm}{\mathfrak{m}}
\newcommand{\fd}{\mathfrak{d}}
\newcommand{\fs}{\mathfrak{s}}
\newcommand{\inter}{\textup{int}}
\newcommand{\id}{\textup{id}}
\newcommand{\supp}{\textup{supp}}
\newcommand{\sing}{\textup{ss}}
\newcommand{\colim}{\textup{colim}}
\newcommand{\holim}{\textup{holim}}
\newcommand{\HW}{\textup{HW}}
\newcommand{\CF}{\textup{CF}}
\newcommand{\CW}{\textup{CW}}
\newcommand{\starr}{\textup{star}}
\newcommand{\tb}{\textup{tb}}
\newcommand{\dd}{^{\diamond}}
\renewcommand{\th}{^{\text{th}}}
\newcommand{\st}{^{\text{st}}}
\newcommand{\nd}{^{\text{nd}}}
\newcommand{\Zero}{\textup{Zero}}
\newcommand{\Crit}{\textup{Crit}}
\newcommand{\dgCat}{\textup{dgCat}}
\newcommand{\op}{\textup{op}}
\newcommand{\dg}{\textup{dg}}
\newcommand{\cox}{\boldsymbol{c}}
\newcommand{\mon}{\boldsymbol{m}}
\renewcommand{\k}{\boldsymbol{k}}
\newcommand{\sgn}{\textup{sgn}}
\newcommand{\modk}{\textup{Mod}_{\k}}
\newcommand{\loc}{\textup{Loc}}
\newcommand{\sh}{\textup{Sh}}
\newcommand{\Sh}{\mathscr{Sh}}
\newcommand{\Modk}{\mathscr{Mod}_{\k}}
\newcommand{\Loc}{\mathscr{Loc}}
\newcommand{\mSh}{\mathscr{\mu Sh}}
\newcommand{\Hom}{\mathscr{Hom}}
\newcommand{\Perf}{\mathscr{Perf}}
\newcommand{\Perfk}{\mathscr{Perf}_{\k}}
\newcommand{\Fun}{\mathscr{Fun}}
\newcommand{\KS}{\mathscr{KS}}
\newcommand{\CE}{\mathscr{CE}}
\title{Microlocal Sheaves on Pinwheels}
\author{Do\u{g}ancan Karaba\c{s}}
\date{\today}
\begin{document}

\maketitle

\begin{abstract}
	In this thesis, we study the wrapped Fukaya category of the rational homology ball $B_{p,q}$ and the traditional/wrapped microlocal sheaves on its skeleton $L_{p,q}$, called pinwheel. We explicitly calculate both for $q=1$, and show they match in wrapped case.
\end{abstract}

\tableofcontents

\section{Introduction}

\subsection{Summary}

Given coprime integers $p$ and $q$ such that $p\geq 2$ and $0<q<p$, the \textit{rational homology ball $B_{p,q}$} is defined as a certain quotient of $A_{p-1}$-Milnor fibre. Explicitly, if we present $A_{p-1}$-Milnor fibre as
\[A_{p-1}:=\{(x,y,z)\vb z^p+2xy=1\}\subset\C^3\]
then $B_{p,q}:=A_{p-1}/\Gamma_{p,q}$ where
\begin{align*}
	\Gamma_{p,q}\colon\Z/p\times A_{p-1} &\rightarrow A_{p-1}\\
	(\xi,(x,y,z))&\mapsto(\xi x,\xi^{-1}y,\xi^q z)
\end{align*}
where $\Z/p$ is presented as $\{e^{2\pi i k/p}\vb k\in\Z\}$. There is a Lefschetz fibration of $A_{p-1}$ given by
\begin{align*}
	\pi\colon A_{p-1} &\rightarrow \C\\
	(x,y,z)&\mapsto z
\end{align*}
which has $p$ critical points. The action of $\Gamma_{p,q}$ can be visualised using the Lefschetz fibration: It rotates the base $\C_z$ around the origin by the angle $2\pi q/p$, and the fibres by $2\pi/p$. The Lefschetz fibration of $A_{p-1}$ is shown in Figure \ref{fig:intro-lef-fib}, where the crosses are the critical values of $\pi$. See Section \ref{sec:rational-homology-ball} for more detail.

\begin{figure}[h]
	\centering
	
	\begin{tikzpicture}
	\draw (-3,-1) -- (2,-1) node[right]{$\C_z$};;
	\draw (-2,1) -- (-1.6,1);
	\draw (-1.4,1) -- (-0.1,1);
	\draw (0.1,1) -- (3,1);
	\draw (-3,-1) -- (-2,1);
	\draw (2,-1) -- (3,1);
	
	\newcommand{\cross}[3][thick]{
		\draw[#1] (#2-0.1,#3-0.1) -- (#2+0.1,#3+0.1);
		\draw[#1] (#2-0.1,#3+0.1) -- (#2+0.1,#3-0.1)
	}
	
	\draw[fill] (0,0) circle (2pt) node[right]{$0$};
	\cross{1.5}{0.5};
	\cross{1.5}{-0.5};
	\cross{0}{-0.7};
	\cross{-1.5}{-0.5};
	\cross{-1.5}{0.5};
	
	\draw[domain=-1.3:-0.2,variable=\x] plot ({\x},{(0.5/-1.5)*\x});
	\draw (0,0.2) -- (0,1.3);
	\draw (-1.5,0.7) -- (-1.5,1.3);
	
	\draw (0.4,1.5) arc (0:-180:0.4 and 0.1);
	\draw[dashed] (0.4,1.5) arc (0:180:0.4 and 0.1);
	\draw (0.4,1.9) arc (0:-180:0.4 and 0.1);
	\draw[dashed] (0.4,1.9) arc (0:180:0.4 and 0.1);
	\draw (0,2.3) ellipse (0.4 and 0.1);
	\draw (-0.4,1.5) -- (-0.4,2.3);
	\draw (0.4,1.5) -- (0.4,2.3);
	
	\draw (-1.1,1.5) arc (0:-180:0.4 and 0.1);
	\draw[dashed] (-1.1,1.5) arc (0:180:0.4 and 0.1);
	\draw (-1.9,1.5) -- (-1.1,2.3);
	\draw (-1.1,1.5) -- (-1.9,2.3);
	\draw (-1.5,2.3) ellipse (0.4 and 0.1);
	
	\draw[blue] (-1.5,1.9) .. controls (-0.7,1.8) .. (0,1.8);
	
	\begin{scope}
	\clip (-0.4,1.5) rectangle (-1.4,2.5);
	\draw[blue] (-1.5,1.9) .. controls (-0.7,2) .. (0,2);
	\end{scope}
	
	\begin{scope}
	\clip (-0.4,1.5) rectangle (0.4,2.5);
	\draw[blue,dashed] (-1.5,1.9) .. controls (-0.7,2) .. (0,2);
	\end{scope}
	
	\end{tikzpicture}
	
	\caption{Lefschetz fibration $\pi\colon A_{p-1}\to\C_z$}
	\label{fig:intro-lef-fib}
\end{figure}

$B_{p,q}$ is a $4$-dimensional Weinstein manifold, moreover in \cite{lekili}, it is shown that there is no closed exact embedded Lagrangian in $B_{p,q}$. This makes the Fukaya categories of $B_{p,q}$ using the definition in \cite{seidel} uninteresting if we only allow closed exact embedded Lagrangians inside. Instead, we must consider immersed Lagrangians also. Note that $2c_1(B_{p,q})\neq 0\in H^2(B_{p,q};\Z)\simeq\Z/p$ if $p\geq 3$. Hence we consider Fukaya categories as $\Z/2$-graded, and get the following theorem:

\begin{thm-intro}[Theorem \ref{thm:wrapped-rational}]\label{thm:intro-wrapped}
	Let $\k$ be a field of characteristic zero. For $p\geq 3$, the $\k$-linear wrapped Fukaya category $\cW(B_{p,1})$ is generated by a Lagrangian cocore whose endomorphism algebra is given by the semifree differential graded algebra (dga) $\cA_{p,1}$ generated by the degree $1$ elements $x_i$ for $p\geq i\geq 1$, and $y_{ij}$ for $p\geq i,j\geq 1$, such that
	\begin{align*}
		dx_i&=-\delta_{i,p}+\sum_{j=1}^{i-1} x_{i-j}\circ x_j\\
		dy_{ij} &= \delta_{ij} + \sum_{k=1}^{i-1} x_{i-k}\circ y_{kj}+\sum_{k=j+1}^{p}y_{ik}\circ  x_{k-j}
	\end{align*}
	where $\delta_{ij}$ is Kronecker delta. Hence, there is an $A_{\infty}$-quasi-equivalence of pretriangulated $A_{\infty}$-categories over $\k$
	\[\cW(B_{p,1})\simeq\Perf(\cA_{p,1})\]
	where $\Perf(\cA_{p,1})$ is the triangulated envelope of $\cA_{p,1}$.
\end{thm-intro}

We prove this as follows: In \cite{CDRGG} it is shown that Lagrangian cocores, cocores of critical (i.e. $n$-dimensional) Weinstein handles of a $2n$-dimensional Weinstein manifold, generate the wrapped Fukaya category. Moreover, for any field $\k$ of characteristic zero, \cite{bee} and \cite[Theorem 2]{ekholm-lekili} showed that the endomorphism algebra of Lagrangian cocores is given by the Chekanov-Eliashberg dga $\CE^*(\Lambda)$ of the Legendrian attaching sphere $\Lambda$ of the critical handles. For $4$-dimensional Weinstein manifolds, $\CE^*(\Lambda)$ can be explained combinatorially using the Legendrian surgery diagram by \cite{subcritical}.

By inspecting the Lefschetz fibration of $A_{p-1}$, we see that it consists of one $0$-handle, one $1$-handle, and $p$ $2$-handles whose cores are Lefschetz thimbles, and attached as shown in Figure \ref{fig:intro-leg-milnor}.

\begin{figure}[h]
	\centering
	
	\begin{tikzpicture}
	\draw (-5,0) circle (1);
	\draw (-4,0) arc (0:-180:1 and 0.15);
	\draw[dashed] (-4,0) arc (0:180:1 and 0.15);
	\draw (5,0) circle (1);
	\draw (6,0) arc (0:-180:1 and 0.15);
	\draw[dashed] (6,0) arc (0:180:1 and 0.15);
	
	\node[left] at ({-5+cos(asin(0.9))},0.9){\tiny 1};
	\node[left] at ({-5+cos(asin(0.7))},0.7){\tiny 2};
	\node[left] at ({-5+cos(asin(0.9))},-0.9){\tiny p};
	
	\node[right] at ({5-cos(asin(0.9))},0.9){\tiny 1};
	\node[right] at ({5-cos(asin(0.7))},0.7){\tiny 2};
	\node[right] at ({5-cos(asin(0.9))},-0.9){\tiny p};
	
	\draw ({-5+cos(asin(0.9))},0.9) -- ({5-cos(asin(0.9))},0.9);
	\draw ({-5+cos(asin(0.7))},0.7) -- ({5-cos(asin(0.7))},0.7);
	\draw ({-5+cos(asin(0.9))},-0.9) -- ({5-cos(asin(0.9))},-0.9);
	
	\draw[dashed] (0,0.6) -- (0,-0.8) node[left,midway]{\small $p$};
	\end{tikzpicture}
	
	\caption{Legendrian surgery diagram of $A_{p-1}$}
	\label{fig:intro-leg-milnor}
\end{figure}

After taking quotient by $\Gamma_{p,1}$, by \cite{lekili} we see that $B_{p,1}$ has one $0$-handle, one $1$-handle, and one $2$-handle which is attached as in Figure \ref{fig:intro-leg-bp1}.

\begin{figure}[h]
	\centering
	
	\begin{tikzpicture}    
	\draw (-5,0) circle (1);
	\draw (-4,0) arc (0:-180:1 and 0.15);
	\draw[dashed] (-4,0) arc (0:180:1 and 0.15);
	\draw (5,0) circle (1);
	\draw (6,0) arc (0:-180:1 and 0.15);
	\draw[dashed] (6,0) arc (0:180:1 and 0.15);
	
	\node[left] at ({-5+cos(asin(0.9))},0.9){\tiny 1};
	\node[left] at ({-5+cos(asin(0.7))},-0.7){\tiny p-1};
	\node[left] at ({-5+cos(asin(0.9))},-0.9){\tiny p};
	
	\node[right] at ({5-cos(asin(0.9))},0.9){\tiny 1};
	\node[right] at ({5-cos(asin(0.7))},0.7){\tiny 2};
	\node[right] at ({5-cos(asin(0.9))},-0.9){\tiny p};
	
	\draw ({-5+cos(asin(0.9))},0.9) -- ({5-cos(asin(0.7))},0.7);
	\draw ({-5+cos(asin(0.7))},-0.7) -- ({5-cos(asin(0.9))},-0.9);
	\draw ({-5+cos(asin(0.9))},-0.9) -- (-2,-1) -- (-3,1.1) -- ({5-cos(asin(0.9))},0.9);
	
	\draw[dashed] (-3.5,0.75) -- (-3.5,-0.65) node[right,midway]{\small $p$};
	\draw[dashed] (3,0.6) -- (3,-0.8);  
	\end{tikzpicture}
	
	\caption{Legendrian surgery diagram of $B_{p,1}$}
	\label{fig:intro-leg-bp1}
\end{figure}

If we denote the attaching circle in Figure \ref{fig:intro-leg-bp1} by $\Lambda_{p,1}$, we get
\[\cW(B_{p,1})\simeq\Perf(\CE^*(\Lambda_{p,1}))\ .\]
$\CE^*(\Lambda_{p,1})$ is a semifree dga which can be calculated by first resolving the Legendrian surgery diagram to get the Lagrangian projection diagram, then its generators are coming from the crossings and the 1-handle as shown in Figure \ref{fig:intro-gen-bp1}, and their differential is determined by counting appropriate disks bounded by the generators in the diagram, see Proposition \ref{prp:ce-bp1} and \cite{subcritical}. Finally, we simplify the dga algebraically to get Theorem \ref{thm:intro-wrapped}, see Section \ref{sec:wrapped-rational} for these calculations.

\begin{figure}[h]
	\centering
	
	\begin{tikzpicture}[scale=1.27]
	\draw (-5,0) circle (1) node[blue]{\small $c_{ij},c'_{ij}$};
	\draw (-4,0) arc (0:-180:1 and 0.15);
	\draw[dashed] (-4,0) arc (0:180:1 and 0.15);
	\draw (5,0) circle (1) node[blue]{\small $c_{ij},c'_{ij}$};
	\draw (6,0) arc (0:-180:1 and 0.15);
	\draw[dashed] (6,0) arc (0:180:1 and 0.15);
	
	\node[left] at ({-5+cos(asin(0.9))},0.9){\tiny $1$};
	\node[left] at ({-5+cos(asin(0.7))},0.7){\tiny $2$};
	\node[left] at ({-5+cos(asin(0.5))},-0.5){\tiny $p-2$};
	\node[left] at ({-5+cos(asin(0.7))},-0.7){\tiny $p-1$};
	\node[left] at ({-5+cos(asin(0.9))},-0.9){\tiny $p$};
	
	\node[right] at ({5-cos(asin(0.9))},0.9){\tiny $p$};
	\node[right] at ({5-cos(asin(0.7))},0.7){\tiny $p-1$};
	\node[right] at ({5-cos(asin(0.5))},-0.5){\tiny $3$};
	\node[right] at ({5-cos(asin(0.7))},-0.7){\tiny $2$};
	\node[right] at ({5-cos(asin(0.9))},-0.9){\tiny $1$};
	
	\draw ({-5+cos(asin(0.9))},-0.9) -- (-3.1,-0.9);
	\draw plot[smooth] coordinates{(-2.9,-0.9) (-2.5,-1) (-2.5,-1.4) (-2.8,-1.4) (-3,-0.9) (-3,1.1) (1.5,1.1) (2.5,-1) (3,-0.9) ({5-cos(asin(0.9))},-0.9)};
	
	\begin{scope}
	\clip (-6,-1.3) rectangle (-3.25,1.3) (-3.1,-1.3) rectangle (2.35,1.3) (2.45,-1.3) rectangle (6,1.3);
	\draw plot[smooth] coordinates{({-5+cos(asin(0.9))},0.9) (1,0.9) (2,-1) (2.5,-0.7) ({5-cos(asin(0.7))},-0.7)};
	\end{scope}
	
	\begin{scope}
	\clip (-6,-1.3) rectangle (-3.35,1.3) (-3.15,-1.3) rectangle (1.8,1.3) 	(1.95,-1.3) rectangle (2.25,1.3) (2.4,-1.3) rectangle (6,1.3);
	\draw plot[smooth] coordinates{({-5+cos(asin(0.7))},0.7) (0.5,0.7) (1.5,-1) (2,-0.5) ({5-cos(asin(0.5))},-0.5)};
	\end{scope}
	
	\begin{scope}
	\clip (-6,-1.3) rectangle (-3.2,1.3) (-3,-1.3) rectangle (-0.2,1.3) (0.4,-1.3) rectangle (0.7,1.3) (0.82,-1.3) rectangle (1.2,1.3) (1.35,-1.3) rectangle (1.8,1.3) (1.95,-1.3) rectangle (6,1.3);
	\draw plot[smooth] coordinates{({-5+cos(asin(0.5))},-0.5) (-1,-0.5) (-0.5,-1) (1,0.7) ({5-cos(asin(0.7))},0.7)};
	\end{scope}
	
	\begin{scope}
	\clip (-6,-1.3) rectangle (-3.15,1.3) (-2.95,-1.3) rectangle (-0.8,1.3) (-0.67,-1.3) rectangle (-0.5,1.3) (0.1,-1.3) rectangle (0.3,1.3) (0.45,-1.3) rectangle (0.7,1.3) (0.95,-1.3) rectangle (1.55,1.3) (1.75,-1.3) rectangle (6,1.3);
	\draw plot[smooth] coordinates{({-5+cos(asin(0.7))},-0.7) (-1.5,-0.7) (-1,-1) (0.7,0.9) ({5-cos(asin(0.9))},0.9)};
	\end{scope}
	
	\draw[dashed] (-3.8,0.6) -- (-3.8,-0.4) node[right,midway]{\small $p$};
	\draw[dashed] (-1.7,0.6) -- (-1.7,-0.4);
	\draw[dashed] (-0.2,-0.3) -- (0.1,0.2);
	\draw[dashed] (-0.2,-0.9) -- (1.2,-0.9);
	\draw[dashed] (1.2,0.5) -- (1.6,-0.4);
	\draw[dashed] (1.7,0.6) -- (2.1,-0.3);
	\draw[dashed] (3.2,0.6) -- (3.2,-0.4);
	
	\fill[blue] (-3.18,0.95) circle (1pt) node[above left]{\tiny $a_1$};
	\fill[blue] (-3.25,0.76) circle (1pt) node[above left,yshift=-0.1cm]{\tiny $a_2$};
	\fill[blue] (-3.1,-0.48) circle (1pt) node[above left,yshift=-0.1cm]{\tiny $a_{p-2}$};
	\fill[blue] (-3.05,-0.68) circle (1pt) node[above left,yshift=-0.1cm]{\tiny $a_{p-1}$};
	\fill[blue] (-3,-0.9) circle (1pt) node[above left,yshift=-0.1cm]{\tiny $a_p$};
	
	\fill[blue] (1.67,1) circle (1pt) node[above]{\tiny $b_{p1}$};
	\fill[blue] (0.82,0.94) circle (1pt) node[above]{\tiny $b_{p2}$};
	\fill[blue] (0.38,0.72) circle (1pt) node[above]{\tiny $b_{p3}$};
	\fill[blue] (-0.73,-0.67) circle (1pt) node[above]{\tiny $b_{p(p-1)}$};
	\fill[blue] (1.88,0.8) circle (1pt) node[above right,yshift=-0.14cm]{\tiny $b_{(p-1)1}$};
	\fill[blue] (1.28,0.77) circle (1pt) node[above,yshift=-0.05cm]{\tiny $b_{(p-1)2}$};
	\fill[blue] (0.76,0.57) circle (1pt) node[above,yshift=-0.05cm]{\tiny $b_{(p-1)3}$};
	\fill[blue] (2.34,-0.48) circle (1pt) node[above right,yshift=-0.08cm]{\tiny $b_{31}$};
	\fill[blue] (1.86,-0.55) circle (1pt) node[above right]{\tiny $b_{32}$};
	\fill[blue] (2.4,-0.74) circle (1pt) node[above right,yshift=-0.06cm]{\tiny $b_{21}$};
	
	\end{tikzpicture}
	
	\caption{Generators of $\CE^*(\Lambda_{p,1})$ on the Lagrangian projection of $\Lambda_{p,1}$}
	\label{fig:intro-gen-bp1}
\end{figure}
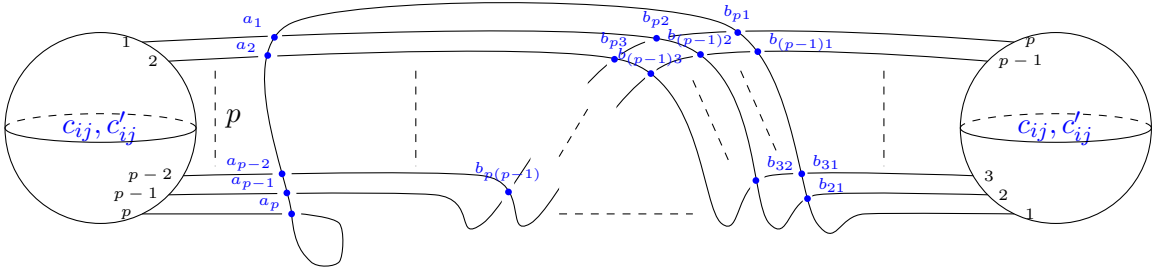

Next, we study the microlocal sheaves on the skeleton of $B_{p,1}$. The skeleton of $A_{p-1}$ is obtained by the union of the cores of its Weinstein handles which is given by $V_p\times S^1$, where $V_p$ is $p$-valent vertex (equally spaced $p$ spikes attached to a point), and then attaching disk to each of the $p$ connected component of its boundary. To get the skeleton of $B_{p,1}$, we divide it by the action of $\Gamma_{p,1}$. Then its skeleton is described as follows: First we take $V_p\times[0,1]$, then identify $V_p\times\{0\}$ with $V_p\times\{1\}$ after rotating the former by the angle $2\pi/p$ clockwise. We denote the resulting space by $S_{p,1}$. Its boundary is a circle, and we attach a disk to it to get the skeleton of $B_{p,1}$, which we denote by $L_{p,1}$, and call it a \textit{pinwheel} (a terminology introduced in \cite{evans}). A neighbourhood of its core circle is shown in Figure \ref{fig:intro-core-circle}.

\begin{figure}[h]
	\centering
	
	\begin{tikzpicture}
	\draw (0,0) -- (0,1);
	\draw (0,0) -- ({-sqrt(3)/2},-0.5);
	\draw (0,0) -- ({sqrt(3)/2},-0.5);
	\draw (0,0) -- (5,1);
	\draw (0,1) -- (5,2);
	\draw ({sqrt(3)/2},-0.5) -- ({5+sqrt(3)/2},0.5);
	\draw ({-sqrt(3)/2},-0.5) -- ({5*0.24-sqrt(3)/2},-0.5+0.24);
	\draw[dashed] ({5*0.27-sqrt(3)/2},-0.5+0.27) -- ({5-sqrt(3)/2},0.5);
	\draw (5,1) -- (5,2);
	\draw (5,1) -- ({5+sqrt(3)/2},0.5);
	\draw[dashed] (5,1) -- ({5-sqrt(3)/2},0.5);
	
	\end{tikzpicture}
	
	\caption{A neighbourhood of an arc in the core circle of $L_{3,1}$}
	\label{fig:intro-core-circle}
\end{figure}
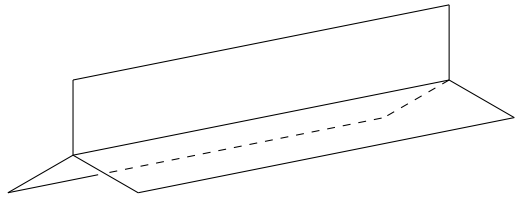

For a commutative ring $\k$, traditional (resp. wrapped) microlocal sheaves $\mSh(L)$ (resp. $\mSh^w(L)$) on a skeleton $L$, studied in \cite{combinatorics} and \cite{wrapped} (see Section \ref{sec:microlocal} for a review), form a $\k$-linear dg category, and it can be calculated by first finding microlocal sheaves on the small pieces of the skeleton and then gluing them together by taking homotopy limit (resp. homotopy colimit) in the category of dg categories $\dgCat$, which is studied in Section \ref{sec:dgcat}. It is conjectured (Conjecture \ref{con:microlocal}, \ref{con:microlocal-wrapped}) that the compact (resp. wrapped) Fukaya category of a Weinstein manifold and the traditional (resp. wrapped) microlocal sheaves on its skeleton are quasi-equivalent. First conjecture is proved for cotangent bundles by Nadler and Zaslow in \cite{nadler-zaslow,microbranes}, where traditional microlocal sheaves are defined for a (singular) conic Lagrangian $L$ inside a cotangent bundle $T^*M$ as the dg category $\Sh_L(M)$ of the complexes of sheaves of $\k$-modules on the base $M$ whose singular support lies inside $L$ and the cohomology of its stalks is of finite rank. Second conjecture is proved for cotangent bundles by Ganatra, Pardon, and Shende in \cite{gps3}, where wrapped microlocal sheaves are defined as the full dg subcategory $\Sh^w_L(M)$ of compact objects in $\Sh\dd_L(M)$, which is the dg category defined similar to $\Sh_L(M)$ where we drop the ``finite-rank stalk'' condition. See Chapter \ref{chp:sheaves} for the precise definitions, and Section \ref{sec:fukaya-weinstein} for the review of these results.

Locally, a neighbourhood of the skeleton of a Weinstein manifold can be seen as a cotangent bundle by Darboux theorem, so for a general Weinstein manifold we can define microlocal sheaves locally by carrying its skeleton to a cotangent bundle and doing the calculations there. We prove the following theorem for $L_{p,1}$:

\begin{thm-intro}[Theorem \ref{thm:msh-pinwheel}]
	Let $\k$ be a commutative ring. For $p\geq 3$, microlocal sheaves on the pinwheel $L_{p,1}$ are given by
	\begin{align*}
		\mSh(L_{p,1})&\simeq\Perfk(\cA_{p,1})\\
		\mSh^w(L_{p,1})&\simeq\Perf(\cA_{p,1})
	\end{align*}
	where $\cA_{p,1}$ is the semifree dga in Theorem \ref{thm:intro-wrapped}.
\end{thm-intro}

$\Perfk(\cA_{p,1})$ stands for the dg category of the $A_{\infty}$-functors from $\cA_{p,1}^\op$ to the (co)chain complexes of $\k$-modules with cohomology of finite rank. We get the immediate corollary which confirms Conjecture \ref{con:microlocal-wrapped} for the Weinstein manifolds $B_{p,1}$:

\begin{cor-intro}[Corollary \ref{cor:bp1-lp1}]
	Let $\k$ be a field of characteristic zero. For $p\geq 3$, we have the $A_{\infty}$-quasi-equivalence of pretriangulated $A_{\infty}$-categories over $\k$
	\[\mSh^w(L_{p,1})\simeq\cW(B_{p,1})\ .\]
\end{cor-intro}

To prove the theorem, our strategy is to calculate large microlocal sheaves $\mSh\dd(L_{p,1})$ instead, which is defined similar to $\mSh(L_{p,1})$ except that $\mSh(L_{p,1})$ necessarily contains the complexes of sheaves of $\k$-modules such that the cohomology of its stalks is of finite rank, whereas $\mSh\dd(L_{p,1})$ does not have a boundedness condition on the cohomology, so it is a much larger category. Once we know $\mSh\dd(L_{p,1})$, we can also find $\mSh^w(L_{p,1})$. By definition in \cite{wrapped}, it is the full dg subcategory of compact objects inside $\mSh\dd(L_{p,1})$.

First, we calculate the (large) microlocal sheaves on its local piece $V_p\times (0,1)$. By the stabilisation property (Proposition \ref{prp:stabilisation}), we have $\mSh\dd(V_p\times (0,1))\simeq \mSh\dd(V_p)$. Microlocal sheaves on the $p$-valent vertex are obtained as follows: We symplectically embed $V_p$ with its neighbourhood to $T^*\R$. However, for $p>4$ it is impossible to make $V_p$ a conic Lagrangian in $T^*\R$. Therefore, we lift $V_p$ to a Legendrian $V_p$ in $T^{\infty}\R^2$ by a process explained in Section \ref{sec:microlocal}, which preserves the exact symplectic geometry of $V_p$. Then we take the cone $\R_{>0}V_p$ of $V_p\subset T^{\infty}\R^2$ and add the zero section $0_{\R^2}$, we call $V_p'\subset T^*\R^2$ the resulting singular Lagrangian. Then we have
\[\mSh\dd(V_p)\simeq\mSh\dd(V_p')/\Loc\dd(\R^2)\]
where we divide by local systems $\Loc\dd(\R^2)$ on $\R^2$ because we added the zero section $\R^2$. We can calculate $\mSh\dd(V_p')$ since $V_p'$ is a conic Lagrangian in the cotangent bundle $T^*\R^2$, it is the complexes of sheaves of $\k$-modules on $\R^2$ whose singular support lies inside $V_p'$. See Chapter \ref{chp:sheaves} for the combinatorial calculation of this category, and see the proof of Proposition \ref{prp:msh-vertex}. $\mSh\dd(V'_p)$ is the $A_{\infty}$-modules over the $p$-cyclic category, i.e. the dg category with $p$ objects and a closed degree zero morphism between each pair of objects in cyclic manner. For $p=4$, an object of $\mSh\dd(V_p')$, which is a complex of sheaves of $\k$-modules on $\R^2$, is represented in Figure \ref{fig:intro-vertex-front} shown in red, where $A_i\in\Modk$ for all $i$, and $\Modk$ is the dg category of (co)chain complexes of $\k$-modules. Also, the curve in Figure \ref{fig:intro-vertex-front} is the front projection of $V'_p$ in $\R^2$ and blue directions are the fibre directions of $V'_p$. Higher morphisms are hidden in the figure.

\begin{figure}[h]
	\centering
	
	\begin{tikzpicture}
		\newcommand{\pta}[1]{(2-(#1+10)/90)}
		\newcommand{\ptb}[1]{(4-(#1+5)/30)}
		
		\newcommand{\edge}[2]{
			\draw[domain=0:{4-((#1)/30)},variable=\x] plot ({\x},{(tan(#1)/2)*(\x^2)}) node[right] {edge #2};
			\draw[->,blue] ({\pta{#1}},{(tan(#1)/2)*(\pta{#1}^2)}) -- ({\pta{#1}+(0.2*(1/sqrt(1+(tan(#1)*\pta{#1})^2)))*tan(#1)*\pta{#1}},{(tan(#1)/2)*(\pta{#1}^2)-(0.2*(1/sqrt(1+(tan(#1)*\pta{#1})^2)))});
			\draw[->,blue] ({\ptb{#1}},{(tan(#1)/2)*(\ptb{#1}^2)}) -- ({\ptb{#1}+(0.2*(1/sqrt(1+(tan(#1)*\ptb{#1})^2)))*tan(#1)*\ptb{#1}},{(tan(#1)/2)*(\ptb{#1}^2)-(0.2*(1/sqrt(1+(tan(#1)*\ptb{#1})^2)))})
		}
		
		\node[left] at (0,0) {$0$};
		\draw[->,blue] (0,0) -- (0,-0.2);
		\edge{0}{1};
		\edge{10}{2};
		\edge{30}{3};
		\edge{60}{4};
		
		\node[red](A1) at (3.2,0.4) {$A_1$};
		\node[red](A2) at (2.7,1.3) {$A_2$};
		\node[red](A3) at (2.1,2.2) {$A_3$};
		\node[red](A4) at (-2,0.2) {$A_4$};
		
		\draw[red,->] (A1) to["$a_1$" {xshift=0.2cm, yshift=0.1cm}] (A2);
		\draw[red,->] (A2) to["$a_2$" {xshift=0.2cm, yshift=0.1cm}] (A3);
		\draw[red,->] (A3) to [bend right,"$a_3$" {xshift=0.2cm, yshift=0.1cm}] (A4);
		\draw[red,->] (A4) to [bend right,"$a_0$" {xshift=0.2cm, yshift=0cm}] (A1);
	\end{tikzpicture}
	
	\caption{An object of $\mSh\dd(V_4')$}
	\label{fig:intro-vertex-front}
\end{figure}

Dividing $\mSh\dd(V_p')$ by $\Loc(\R^2)$ can be explained by setting the stalk of the sheaves to zero at some fixed point. In Figure \ref{fig:intro-vertex-front}, we can select a point which makes $A_4\simeq 0$, which gives us the object
\[A:=A_1\xrightarrow{a_1} A_2\xrightarrow{a_2} A_3\ .\]
In general, we get
\[\mSh\dd(V_p)\simeq\mSh\dd(V_p')/\Loc\dd(\R^2)\simeq\Modk(A_{p-1})\]
where $A_{p-1}$ stands for the $A_{p-1}$-quiver (where the morphisms are closed degree zero), and $\Modk(A_{p-1})$ stands for the $A_{\infty}$-modules over $A_{p-1}$-quiver, which is studied extensively in Section \ref{sec:quiver}.

From now on, we fix $p=4$, since the calculations have the similar ingredients when $p$ is higher. We have the restriction maps $j_i\colon\mSh\dd(V_4)\to\Modk$ to the $i\th$ edge of $V_4$ given by the cones of the arrows crossing the corresponding edge in Figure \ref{fig:intro-vertex-front}. Note that we have $A_4\simeq 0$ there. This gives
\[j_1(A)=A_1, j_2(A)=C(a_1), j_3(A)=C(a_2), j_4(A)=A_3[1]\]
where $C(a_i)$ is the cone of the morphism $a_i$.

Next, we glue the pieces using the tools developed in Section \ref{sec:gluing}. To get $S_{4,1}$, we glue the ends of $V_4\times [0,1]$ by identify $V_4\times\{0\}$ with $V_4\times\{1\}$ after rotating the former by the angle $2\pi/p$ clockwise. By the \hyperlink{circle}{Circle Lemma}, this gives a dg category whose objects are the homotopy equivalences in $\mSh\dd(V_4)$ sending an object $A$ to its rotation $r(A)$ clockwise. This rotation can be easily seen if we work with $\mSh\dd(V_4')$ as shown in Figure \ref{fig:intro-vertex-rotated}. However, when we restrict to $\mSh\dd(V_4)$, it is not immediately clear what this rotation functor should be. To understand it, we focus on the edges: The rotation functor $r$ on $\mSh\dd(V_4)$ shifts the restriction to edges clockwise, i.e.
\[j_1\circ r(A)=C(a_1), j_2\circ r(A)=C(a_2), j_3\circ r(A)=A_3[1], j_4\circ r(A)=A_1\ .\]
One can show that there is no functor satisfying this if the categories are $\Z$-graded. Hence, we start to work with $\Z/2$-graded categories on this stage onwards. Then there exist a unique (up to natural equivalence) functor satisfying the above relations, which is the Coxeter functor $\cox_{4,1}$, given by
\[\cox_{4,1}(A):=C(a_1)\xrightarrow{\mx{\id & 0\\0 & a_2}}C(a_2\circ a_1)\xrightarrow{\mx{\id & 0}} A_1[1]\]
shown by Proposition \ref{prp:rotation-coxeter}.

\begin{figure}[h]
	\centering
	
	\begin{tikzpicture}
		\newcommand{\pta}[1]{(2-(#1+10)/90)}
		\newcommand{\ptb}[1]{(4-(#1+5)/30)}
		
		\newcommand{\edge}[2]{
			\draw[domain=0:{4-((#1)/30)},variable=\x] plot ({\x},{(tan(#1)/2)*(\x^2)}) node[right] {edge #2};
			\draw[->,blue] ({\pta{#1}},{(tan(#1)/2)*(\pta{#1}^2)}) -- ({\pta{#1}+(0.2*(1/sqrt(1+(tan(#1)*\pta{#1})^2)))*tan(#1)*\pta{#1}},{(tan(#1)/2)*(\pta{#1}^2)-(0.2*(1/sqrt(1+(tan(#1)*\pta{#1})^2)))});
			\draw[->,blue] ({\ptb{#1}},{(tan(#1)/2)*(\ptb{#1}^2)}) -- ({\ptb{#1}+(0.2*(1/sqrt(1+(tan(#1)*\ptb{#1})^2)))*tan(#1)*\ptb{#1}},{(tan(#1)/2)*(\ptb{#1}^2)-(0.2*(1/sqrt(1+(tan(#1)*\ptb{#1})^2)))})
		}
		
		\node[left] at (0,0) {$0$};
		\draw[->,blue] (0,0) -- (0,-0.2);
		\edge{0}{1};
		\edge{10}{2};
		\edge{30}{3};
		\edge{60}{4};
		
		\node[red](A1) at (3.2,0.4) {$A_2$};
		\node[red](A2) at (2.7,1.3) {$A_3$};
		\node[red](A3) at (2.1,2.2) {$A_4$};
		\node[red](A4) at (-2,0.2) {$A_1$};
		
		\draw[red,->] (A1) to["$a_2$" {xshift=0.2cm, yshift=0.1cm}] (A2);
		\draw[red,->] (A2) to["$a_3$" {xshift=0.2cm, yshift=0.1cm}] (A3);
		\draw[red,->] (A3) to [bend right,"$a_0$" {xshift=0.2cm, yshift=0.1cm}] (A4);
		\draw[red,->] (A4) to [bend right,"$a_1$" {xshift=0.2cm, yshift=0cm}] (A1);
	\end{tikzpicture}
	
	\caption{The rotation of the object in Figure \ref{fig:intro-vertex-front} in $\mSh\dd(V_4')$}
	\label{fig:intro-vertex-rotated}
\end{figure}

Then the objects of $\mSh\dd(S_{4,1})$ are of the form
\[\begin{tikzcd}
	A\dar["f"]\\ \cox_{4,1}(A)
\end{tikzcd}
:=
\begin{tikzcd}[ampersand replacement=\&]
	A_1\dar["f_1"]\rar["a_1"]\drar["h_1"] \& A_2\dar["f_2"]\rar["a_2"]\drar["h_2"] \& A_3\dar["f_3"]\\
	C(a_1)\rar["\mx{\id & 0\\0 & a_2}"'] \& C(a_2\circ a_1)\rar["\mx{\id & 0}"'] \& A_1[1]
\end{tikzcd}\]
where $f$ is a homotopy equivalence, i.e. $f_1,f_2,f_3$ are homotopy equivalences, and the squares commute up to homotopy inside, see Proposition \ref{prp:quiver-algebra}. We will refer this object simply as $f$.

Note that we have the restriction functor $\mon\colon\mSh\dd(S_{4,1})\to\Loc\dd(S^1)$ induced by the inclusion $\partial S_{4,1}\simeq S^1\hookrightarrow S_{4,1}$. The objects of $\Loc\dd(S^1)$ are of the form $(X,m)$ where $X\in\Modk$ and $m\colon X\to X$ is a homotopy equivalence, called monodromy. If we write $\mon(f)=(\mon(f)_1,\mon(f)_0)$, the monodromy $\mon(f)_0\colon \mon(f)_1\to \mon(f)_1$ can be obtained as follows: We fix a $4$-valent vertex $V_4$ in $S_{4,1}$. We start from the first edge of $V_4$, and trace the boundary of $S_{4,1}$ until we come back to the first edge of $V_4$, which will complete the circle. Schematically, we can express this as
\[\text{edge 1}\to\text{edge 2}\to\text{edge 3}\to\text{edge 4}\to\text{edge 1}\]
where $f$ associates the object $j_i(A)$ to $i\th$ edge, and morphisms are coming from $f$. Explicitly, $\mon(f)_0$ is given by
\begin{align*}
	A_1\xrightarrow{f_1} C(a_1)\xrightarrow{\mx{f_1 & 0\\h_1 & f_2}}C(C(a_1)&\xrightarrow{\mx{\id & 0\\0 & a_2}} C(a_2\circ a_1))\simeq C(a_2)\xrightarrow{\mx{f_2 & 0 \\h_2 & f_3}}\\
	&\xrightarrow{\mx{f_2 & 0 \\h_2 & f_3}}C(C(a_2\circ a_1)\xrightarrow{\mx{\id & 0}}A_1[1])\simeq A_3[1]\xrightarrow{f_3}A_1
\end{align*}
where the equivalences are natural equivalences explained in Section \ref{sec:quiver}, and given in Proposition \ref{prp:monodromy} explicitly.

Lastly, we attach a disk to the boundary of $S_{4,1}$ to get $L_{4,1}$. By \hyperlink{disk}{Disk Lemma}, the effect of the attaching a disk is to kill the monodromy, i.e. adding a degree $1$ element $\gamma$ such that $d\gamma=\mon(f)_0-\id$. Before proceeding, we observe that there is a way to simplify the objects of $\mSh\dd(S_{4,1})$ by getting rid of the vertical and diagonal maps, starting from the right (by applying Lemma \ref{lem:simplified-pinwheel-1} and \ref{lem:simplified-pinwheel-2} successively), and get the objects of the form
\[\begin{tikzcd}[ampersand replacement=\&]
A\dar["f", ":=" xshift=1cm] \& [30pt] A_1\dar["f_1=\mx{f_1^1\\f_1^2\\f_1^3}"']\rar["\mx{f_1^1\\f_1^2}"]\drar["0"] \& [20pt]C(f_1^1)\dar["\id"]\rar["\mx{\id & 0}"]\drar["0"] \& A_1[1]\dar["\id"]\\ [20pt]
\cox_{4,1}(A) \& C\mx{f_1^1\\f_1^2}\rar["\mx{\id & 0 & 0\\0 & \id & 0}"'] \& C(f_1^1)\rar["\mx{\id & 0}"'] \& A_1[1]
\end{tikzcd}\]
where $f_1$ is a homotopy equivalence. We can equivalently characterise this condition by asking $df_1=0$ which gives
\[df_1^i=\sum_{j=1}^{i-1}f_1^{i-j}\circ f_1^j\]
where $f_1^i\colon A_1\to A_1$ is a degree $1$ morphism for $i=1,2,3$, and requiring $C(f_1)\simeq 0$. The monodromy $\mon(f)$ is given by
\[\mon(f)_0=\id\circ\mx{0 & \id & f_1^1}\circ\id\circ\mx{0 & \id & 0 & f_1^1 & 0\\ 0 & 0 & \id & f_1^2 & 0\\0 & 0 & 0 & 0 & \id}\circ\mx{f_1^1 & 0 & 0\\f_1^2 & 0 & 0\\f_1^3 & 0 & 0\\ 0 & \id & 0 \\ 0& 0 &\id}\circ\mx{f_1^1\\f_1^2\\f_1^3}=\sum_{j=1}^{4-1} f_1^{i-j}\circ f_1^j\]
and if we set $f_1^4:=\gamma$ we get
\[df_1^i=-\delta_{i,4}+\sum_{j=1}^{i-1} f_1^{i-j}\circ f_1^j\]
for $i=1,\dots,4$.

Finally, we can describe the condition $C(f_1)\simeq 0$ by introducing a degree $1$ morphism $\varepsilon\colon C(f_1)\to C(f_1)$ satisfying $d\varepsilon=\id$ following \cite{drinfeld}, which in turn introduces degree $1$ morphisms $g^{ij}\colon A_1\to A_1$ for $1\leq i,j\leq 4$ where
\[dg^{ij}=\sum_{k=1}^{i-1} f_1^{i-k}\circ g^{kj}+\sum_{k=j+1}^4 g^{ik}\circ f_1^{k-j}\]
see the proof of Theorem \ref{thm:msh-pinwheel}. This shows that the objects of $\mSh\dd(L_{4,1})$ are of the form $(A_1,(f_1^i)_{i=1}^4,(g^{ij})_{1\leq i,j\leq 4})$ where $A_1\in\Modk$ and $f_1^i,g^{ij}$ are degree $1$ morphisms $A_1\to A_1$ satisfying above differential relations. This shows that
\[\mSh\dd(L_{4,1})\simeq \Modk(\cA_{4,1})\]
and if we restrict to the objects with cohomology of finite rank, we get
\[\mSh(L_{4,1})\simeq \Perfk(\cA_{4,1})\]
and if we take the compact objects, we get
\[\mSh^w(L_{4,1})\simeq \Perf(\cA_{4,1}) \ .\]
Note that here we focused on objects, but their morphisms match also: We can glue the morphisms, and simplify them to show this similarly.

\subsection{Background}

Let $\k$ be a commutative ring, and fix $R=\Z$ (or $\Z/N$ when we specify so). We mostly work with \textit{pretriangulated (small, $\k$-linear) differential graded (dg) categories}, whose definition and properties can be found in \cite{dgcat} and \cite{toen}. In particular, for any two objects $A,B\in\cC$ of a dg category $\cC$, their morphism space $\Hom_{\cC}(A,B)$ is an $R$-graded $\k$-module, and equipped with a degree $1$ morphism $d$ satisying $d\circ d=0$, called \textit{differential}. We write $\Hom_{\cC}^n(A,B)$ for the $\k$-module of degree $n$ morphisms from $A$ to $B$, and we drop the subscript ``$\cC$'' if it is clear from the context. The degree of a morphism $f$ is also denoted by $|f|$.

The identity $\id\in\Hom_{\cC}(A,A)$ is degree zero and $d(\id)=0$. Also, the graded Leibniz rule holds for the composition: If $f\in\Hom^n_{\cC}(A,B)$ and $g\in\Hom^m_{\cC}(B,C)$, then
\[d(g\circ f)=dg\circ f + (-1)^m g\circ df\ .\]
Note that we read the compositions from right to left.

We write $H^n\cC$ for the $\k$-linear category with the same objects as $\cC$ and whose morphism space is $H^n\Hom_{\cC}(A,B)$. We define $H^*\cC$ similarly by replacing $n$ with $*$. Note its morphism space is graded.

A \textit{dg functor} $F$ between the dg categories $\cC,\cD$ is a $\k$-linear functor preserving grading and satisfying $dF(f)=F(df)$ for any morphism $f$. We write $\Fun_{\dg}(\cC,\cD)$ for the set of dg functors between $\cC$ and $\cD$, and moreover it is a dg category whose morphisms are natural transformations between dg functors differential is obtained by differentiating each component of a natural transformation.

A dg functor is a (dg) \textit{quasi-equivalence} (resp. \textit{quasi-isomorphism}) if $H^0F\colon H^0\cC\to H^0\cD$ is an equivalence (resp. isomorphism), and $H^*F\colon H^*\cC\to H^*\cD$ is full and faithful. A chain map between chain complexes inducing an isomorphism between their homology is also called \textit{quasi-isomorphism}, hence the last condition can be restated as: $F$ induces a quasi-isomorphism between the morphism spaces.

We also work with \textit{pretriangulated $A_{\infty}$-categories}, which appear naturally in the context of symplectic geometry, specifically in Floer thoery. See \cite{seidel} for the definitions and properties regarding them. In particular, for any two objects $A,B\in\cC$ of an $A_{\infty}$-category $\cC$, their morphism space $\Hom_{\cC}(A,B)$ is an $R$-graded $\k$-module equipped with \textit{$A_{\infty}$-composition maps} $\mu^n$ for $n\geq 1$ satisfying $A_{\infty}$-relations in \cite{seidel}. Note that a dg category can be trivially seen as an $A_{\infty}$-category by setting $\mu^1$ equal to the differential, $\mu^2$ to the composition, and $\mu^n=0$ for $n\geq 3$ after appropriately arranging the signs.

If $\cC$ and $\cD$ are $A_{\infty}$-categories, $\Fun(\cC,\cD)$ denotes the set of \textit{$A_{\infty}$-functors} between $\cC$ and $\cD$, moreover it is an $A_{\infty}$-category whose morphisms are \textit{$A_{\infty}$-natural transformations}. If $\cD$ is a dg category, then $\Fun(\cC,\cD)$ is also a dg category.

When we work with dg categories, we localise the dg functors at quasi-isomorphisms. Whereas for $A_{\infty}$-categories, we do not need to localise the morphisms, because quasi-isomorphisms are already invertible up to homotopy. In particular, instead of working with $\Fun_{\dg}(\cC,\cD)$ and localise it, we can work with $\Fun(\cC,\cD)$ directly. In the view of this, we call $\cC$ and $\cD$ \textit{quasi-equivalent}, if there is a roof of (dg) quasi-equivalences between $\cC$ or $\cD$, or equivalently, an \textit{$A_{\infty}$-quasi-equivalence} between $\cC$ and $\cD$.

We define $\Modk$ as the dg category of (co)chain complexes of $\k$-modules, and $\Perfk$ as the dg category of (co)chain complexes of $\k$-modules with cohomology of finite rank. Then the dg category $\Fun(\cC^\op,\Modk)$ is called \textit{$A_{\infty}$-modules over $\cC$}, denoted by $\Modk(\cC)$. Similarly, the dg category $\Fun(\cC^\op,\Perfk)$ is called \textit{perfect $A_{\infty}$-modules over $\cC$}, denoted by $\Perfk(\cC)$.

If a dg category ($A_{\infty}$-category) $\cC$ has only one object $A$, we call it \textit{differential graded algebra (dga)} (\textit{$A_{\infty}$-algebra}). Hence we work with $\Hom_{\cC}(A,A)$ instead of $\cC$ in that case.

We say a pretriangulated $A_{\infty}$-category $\cC$ is \textit{generated} by its full $A_{\infty}$-subcategory $\cD$ if the \textit{triangulated envelope} $\Perf(\cD)$ of $\cD$ is quasi-equivalent to $\cC$, i.e. if enlarging $\cD$ by taking all cones and shifts in $\cD$ in arbitrarily many times results in $\cC$. If $\cD$ is a dg or an $A_{\infty}$-algebra, we also write $\cC\simeq\Perf(\Hom_{\cD}(A,A))$ where $A$ is the unique object of $\cD$.

On the geometry side, we work with manifolds which are smooth ($C^{\infty}$) without boundary unless otherwise specified. Lagrangian and Legendrian submanifolds are assumed to be smooth unless they are told to be singular. We denote the cotangent bundle of the manifold $M$ by $T^*M$, and its zero section by $0_M$, which we simply write as $M$ when there is no possibility of confusion. An element of $T^*M$ is presented as $(x,p)$ where $x\in M$ and $p$ is a cotangent vector at $x$. For a submanifold $L\subset M$, we write $T^*L$ for the cotangent bundle of $L$ inside $T^*M$, and $N^*L$ for the normal bundle of $L$.

We define the \textit{cosphere bundle} $T^{\infty}M$ as the quotient of $T^*M\setminus 0_M$ by the action of $\R_{>0}$. An element of $T^{\infty}M$ is presented as $(x,[p])$ where $x\in M$, $p\neq 0$ is a cotangent vector at $x$, and $[p]$ is the equivalence class of $p$ such that $[p]=[cp]$ if $c>0$. We write $T^{\infty,-}(M)$ for the open subset of $T^{\infty}(M)$ whose elements of the form $(x_1,\ldots,x_n,[p_1,\ldots,p_{n-1},-1])$ if $M$ is $n$-dimensional.

\subsection{Acknowledgements}

I would like to thank my PhD supervisor Yank{\i} Lekili for suggesting me to compute the microlocal sheaves on the pinwheels, and leading me to prove Theorem \ref{thm:intro-wrapped}. His guidance and immense knowledge helped me in writing many parts of this thesis.

This document is the author's PhD thesis written at King's College London. This research was partially funded by a Royal Society grant RG130456.

\section{Constructible Sheaves and Singular Support}\label{chp:sheaves}

In this chapter, we will review some materials from sheaf theory, in particular constructible sheaves, and describe the connection between sheaf theory and symplectic geometry via the singular support of a sheaf. We will also define wrapped constructible sheaves which will be useful when we define wrapped microlocal sheaves in Section \ref{sec:microlocal}. The main reference for this section is \cite{kashiwara-schapira}. We also refer \cite{wrapped} and \cite{shende-treumann-zaslow} frequently for some useful information.

\subsection{Constructible Sheaves}

In this section, we will define constructible sheaves and study their combinatorial nature. Let $M$ be a topological space throughout the section.

\begin{dfn}
    A \textit{presheaf} $\cF$ on a topological space $M$ with values in the category $\cC$ is a functor $\cF\colon\text{Top}(M)^{\op}\rightarrow\cC$ where ``op'' means the opposite category, and $\text{Top}(M)$ is the category with objects as open subsets of $M$ and morphisms as inclusions of the open subsets. If furthermore, for every open $U\subset M$ and its open covering $\{U_i\}$ we have
    \[\cF(U)\simeq\lim\left(\prod_i\cF(U_i)\rightrightarrows\prod_{j,k}\cF(U_j\cap U_k)\right)\]
    then $\cF$ is called \textit{sheaf}. $\cF(U)$ is called \textit{sections on $U$} and it is also denoted by $\Gamma(U;\cF)$ where $\Gamma(U;\boldsymbol{\cdot})$ is the functor from the category of sheaves to the category $\cC$. Given $V\subset U$, we call the associated morphism $\cF(U)\to\cF(V)$ \textit{restriction morphism}.
    
    We define $\sh(M)$ as the abelian category of sheaves on $M$ with values in the abelian category $\modk$ of $\k$-modules, which is a functor category. Our main interest is the derived category $D(M)$ of $\sh(M)$. Note that $\sh(M)$ can be regarded as the full subcategory of $D(M)$ consisting of objects concentrated in degree zero. To deal with $D(M)$, we will work with its dg enhancement $\Sh(M)$, which is defined as the dg derived category of complexes of sheaves on $M$ with values in $\modk$ (or equivalently, sheaves on $M$ with values in $\Modk$), where the morphisms are obtained by localising the usual complexes of maps between complexes at quasi-isomorphisms (or equivalently, by taking dg quotient by the acyclic objects in the sense of \cite{drinfeld}). The complexes are $R$-graded, where $R=\Z$ or $R=\Z/N$ for some $N$. $\Sh(M)$ is a pretriangulated dg category and by taking its (ungraded) cohomology category, we have
    \[H^0(\Sh(M))\simeq D(M)\]
    as triangulated categories. See \cite[Section 2.1-2]{microbranes} for the detailed discussion.
    
    For the operations on sheaves and useful relations, refer to \cite{kashiwara-schapira}, \cite{viterbo}, and \cite[Appendix]{combinatorics}.
\end{dfn}

\begin{rmk}
    We mostly care about derived functors, in particular derived sections. Given the open sets $U,V\subset M$, we can glue the derived sections using
    \[R\Gamma(U\cup V;\cF)\simeq C(R\Gamma(U;\cF)\oplus R\Gamma(V;\cF)\to R\Gamma(U\cap V;\cF))[-1]\]
    where $R\Gamma(U;\boldsymbol{\cdot})$ is the right derived functor for $\Gamma(U;\boldsymbol{\cdot})$, $C(\boldsymbol{\cdot})$ is the cone functor, the isomorphism is quasi-isomorphism. 
\end{rmk}

\begin{dfn}
    For $A\in\modk$, we define the \textit{constant sheaf} $A_M\in\sh(M)$ as
    \[A_M(U)=\{f\colon U\to A\vb f\text{ is locally constant}\}\]
    and restrictions are restrictions of functions.
\end{dfn}
    
\begin{exm}
    The constant sheaf $\k_M$ on $M$ is given by $\Gamma(U;\k_M)=H^0(U;\k)$ with the obvious restriction maps. The derived sections are given by $R\Gamma(U;\k_M)=C^*(U;\k)$, the singular chain complex associated to $U$.
\end{exm}
    
\begin{dfn}\label{dfn:locally-constant}
    The \textit{stalk} of the sheaf $\cF$ at the point $x\in M$ is defined as
    \[\cF_x:=\lim_{\underset{U\ni x}{\longrightarrow}}\cF(U)\]
    where the limit is the direct limit. The \textit{support} of $\cF$ is defined as the closure of the set $\{x\in M\vb \cF_x\not\simeq 0\}$. $\cF$ is called \textit{locally constant} or \textit{local system} if for each point $x$, there is a neighbourhood $U$ of $x$ such that $\cF|_U$ is a constant sheaf, where $\cF|_U$ is defined as $j^*(\cF)$ for the inclusion $j\colon U\hookrightarrow M$.

    We define $\loc(M)$ as the full subcategory of $\sh(M)$ consisting of locally constant sheaves. It is well known that if $M$ is connected, we have
    \[\loc(M)\simeq \modk(\pi_1(M))\]
    where $\modk(\pi_1(M))$ stands for the category of functors from $\pi_1(M)$ (as a category) to $\modk$.
    
    We define $\Loc(M)$ as the full dg subcategory of $\Sh(M)$ consisting of sheaves $\cF\in\Sh(M)$ such that $H^*\cF$ is a locally constant sheaf with perfect stalks (i.e. stalks with cohomology of finite rank). If we drop the perfect stalk condition, we get the dg category $\Loc\dd(M)$. Note that $\loc(M)$ can be regarded as the full subcategory of $H^0(\Loc\dd(M))$ consisting of objects concentrated in degree zero. As shown in \cite[Example 1.1]{wrapped}, if $M$ is connected, we have
    \begin{align*}
        \Loc\dd(M)&\simeq\Modk(C_{-*}(\Omega M))\\
        \Loc(M)&\simeq\Perfk(C_{-*}(\Omega M))
    \end{align*}
    where $\Omega M$ is the based loop space of $M$.
\end{dfn}

\begin{prp}\label{prp:locally-constant}
    If $M$ is locally contractible, the following are equivalent:
    \begin{itemize}
        \item $\cF\in\Loc\dd(M)$,
        
        \item Whenever $U$ and $V$ are contractible, $V\subset U$, the restriction $R\Gamma(U;\cF)\to R\Gamma(V;\cF)$ is a quasi-isomorphism.
    \end{itemize}
\end{prp}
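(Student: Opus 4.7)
The plan is to analyse both implications via the hypercohomology spectral sequence
\[ E_2^{pq} = H^p(V; H^q\cF) \Longrightarrow H^{p+q}(R\Gamma(V; \cF)), \]
combined with the constant-sheaf/derived-sections adjunction, exploiting that local contractibility furnishes a basis of contractible opens at every point.

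For the forward direction, suppose $\cF \in \Loc\dd(M)$ and let $V \subset U$ be contractible. Each cohomology sheaf $H^q\cF$ is locally constant, and local systems on the simply connected space $U$ are trivial, so $H^q\cF|_U$ is the constant sheaf $A^q_U$ where $A^q := (H^q\cF)_x$ for any $x \in U$. Constant sheaves on contractibles have no higher cohomology, so $E_2^{pq} = 0$ for $p > 0$, the spectral sequence collapses, and $H^n(R\Gamma(U; \cF)) \cong A^n$. The same holds over $V$, and naturality of the spectral sequence identifies the induced map with the restriction of sections of constant sheaves between connected contractibles, which is the identity on $A^n$. Hence $R\Gamma(U;\cF) \to R\Gamma(V;\cF)$ is a quasi-isomorphism.

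For the backward direction, fix $x \in M$ and a contractible neighbourhood $W$ of $x$ provided by local contractibility, and set $C^\bullet := R\Gamma(W; \cF)$. I will show that $\cF|_W$ is quasi-isomorphic to the constant complex of sheaves $C^\bullet_W$. The adjunction between the constant-sheaf functor and $R\Gamma(W;-)$ produces a canonical derived morphism $C^\bullet_W \to \cF|_W$, obtained by choosing an injective resolution of $\cF$ and applying the unit at its global sections complex. Taking stalks at any $y \in W$ sends $C^\bullet_W$ to $C^\bullet$ and $\cF|_W$ to $\cF_y \simeq \varinjlim_{V \ni y} R\Gamma(V;\cF)$; restricting this colimit to the cofinal subsystem of contractible $V \subset W$ containing $y$, every transition map is a quasi-isomorphism by hypothesis, so all terms are canonically identified with $C^\bullet$ and the stalk map becomes the identity. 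The comparison is thus a stalkwise quasi-isomorphism, hence a quasi-isomorphism of complexes of sheaves, so $H^n\cF|_W \cong H^n(C^\bullet)_W$ is the constant sheaf with value $A^n := H^n(C^\bullet)$, proving $H^n\cF$ is locally constant on $M$.

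The main obstacle is ensuring the stalkwise argument genuinely promotes a filtered colimit of quasi-isomorphisms to a canonical identification with $C^\bullet$; this relies on the hypothesis being at the level of honest quasi-isomorphisms in $\Modk$, not merely cohomological isomorphisms, so that the filtered colimit of quasi-isomorphisms remains a quasi-isomorphism. Once the comparison map $C^\bullet_W \to \cF|_W$ is correctly set up and shown to be a stalkwise quasi-isomorphism, the remaining steps are standard facts about complexes of sheaves.
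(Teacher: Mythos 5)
The paper states this proposition without giving a proof (it is quoted as a standard characterisation of objects of $\Loc\dd$), so there is nothing to compare your argument against; on its own terms your proof is correct and is essentially the standard one. Both directions are sound: the forward direction via triviality of locally constant sheaves on contractible (hence connected, simply connected, locally path-connected) opens together with the hypercohomology spectral sequence, and the backward direction via the counit $C^\bullet_W\to\cF|_W$ checked on stalks, using that the contractible opens form a cofinal filtered system by local contractibility and that cohomology commutes with filtered colimits. Two small points are worth making explicit. First, objects of $\Loc\dd(M)$ are unbounded complexes, so the convergence of the spectral sequence $E_2^{pq}=H^p(V;H^q\cF)\Rightarrow H^{p+q}(R\Gamma(V;\cF))$ is not automatic; it does hold here because open subsets of a smooth manifold (the setting in force immediately after the proposition) have finite cohomological dimension, but you should say so, or replace the spectral sequence by the same constant-complex comparison you use in the converse direction. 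Second, the vanishing $H^p(U;A_U)=0$ for $p>0$ on a contractible $U$ is not a formal consequence of contractibility for arbitrary topological spaces; it again uses that $U$ is a nice (paracompact, locally contractible) space, which is harmless here but should be flagged since the proposition is phrased for general locally contractible $M$.
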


From now on, assume that $M$ is a smooth manifold.
    
\begin{dfn}[\cite{mather}]
    Let $S$ be a subset of $M$. A \textit{Whitney stratification} of $S$ is $\cS=\{\cS_i\}$ where $\cS_i$ are pairwise disjoint connected smooth submanifolds of $M$ which lie in $S$, called \textit{stratum}, such that $M=\bigsqcup_i\cS_i$ and $\cS$ satisfies the following conditions:
    \begin{enumerate}
        \item Locally finiteness: Every point of $M$ has a neighbourhood which intersects finitely many strata,
            
        \item Condition of the frontier: For each stratum $\cS_i$ of $\cS$ its frontier $(\bar\cS_i - \cS_i)\cap S$ is a union of strata,
            
        \item Whitney condition A: Given any $(\cS_i,\cS_j)$, $y\in\cS_j$, and given any sequence $\{x_n\}$ of points in $\cS_i$ such that $x_n\to y$ and $T_{x_n}\cS_i$ converges to some subspace $V\subset T_y M$, we have $T_y\cS_j \subset V$,
            
        \item Whitney condition B: Given any $(\cS_i,\cS_j)$, $y\in\cS_j$, consider everything in a coordinate chart around $y$ which is identified with $\R^n$. Identify $T_p^*\R^n$ with $\R^n$ for any $p\in\R^n$ in the standard way. Let $\{x_n\}$ be a sequence of points in $\cS_i$, converging to $y$ and $\{y_n\}$ a sequence of points in $\cS_j$, also converging to $y$. Suppose $T_{x_n}\cS_i$ converges to some subspace $V \subset \R^n$ and that $x_n\neq y_n$ for all $n$ and the secants $\widehat{x_n y_n}$ (the line in $\R^n$ which is parallel to the line joining $x_n$ and $y_n$ and passes through the origin) converge (in projective space $\P^{n-1}$) to some line $L \subset \R^n$. Then $L\subset V$.
    \end{enumerate}
	If $\cS$ satisfies only the first two conditions, we call $\cS$ a \textit{stratification}, and $M$ a \textit{stratified space}. If $M$ is a subset of a symplectic manifold and consists of isotropic strata, we call $M$ \textit{isotropic subset}. If moreover, all of its connected components has a Lagrangian stratum, we call $M$ \textit{singular Lagrangian}.
\end{dfn}

\begin{prp}[\cite{mather}]
    Whitney condition B implies Whitney condition A.
\end{prp}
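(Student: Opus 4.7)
My plan is to fix a point $y\in\cS_j$ and a sequence $\{x_n\}\subset\cS_i$ with $x_n\to y$ and $T_{x_n}\cS_i\to V$ as in the hypothesis of Whitney A, pick an arbitrary tangent vector $v\in T_y\cS_j$, and manufacture a sequence $\{y_n\}\subset\cS_j$ so that the hypothesis of Whitney B is satisfied with secants $\widehat{x_n y_n}$ converging to the line $L=\R v$. The desired conclusion $v\in V$ will then come directly from Whitney B, and since $v\in T_y\cS_j$ was arbitrary this yields $T_y\cS_j\subset V$.

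The construction of the $y_n$ is the heart of the argument. I will work in the fixed coordinate chart around $y$ identified with $\R^n$ in which condition B is phrased, assume $v\neq 0$ (the case $v=0$ is trivial since $V$ is a linear subspace), and choose a smooth curve $\gamma\colon[0,\varepsilon)\to\cS_j$ with $\gamma(0)=y$ and $\gamma'(0)=v$. Setting $\varepsilon_n:=|x_n-y|$ I will put $y_n:=\gamma(t_n)$ for a sequence $t_n\to 0$ chosen so that $\varepsilon_n/t_n\to 0$ (for instance $t_n=\sqrt{\varepsilon_n}$ once $\varepsilon_n<1$). Then
\[
\frac{y_n-x_n}{t_n}=\frac{\gamma(t_n)-y}{t_n}-\frac{x_n-y}{t_n}=v+o(1)-\frac{x_n-y}{t_n}\;\longrightarrow\;v,
\]
so the unit vectors $(y_n-x_n)/|y_n-x_n|$ converge to $v/|v|$, which is exactly the statement that the secants $\widehat{x_n y_n}$ converge in $\P^{n-1}$ to $L=\R v$. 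The condition $x_n\neq y_n$ required by Whitney B is automatic because the strata $\cS_i$ and $\cS_j$ are disjoint.

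With $T_{x_n}\cS_i\to V$ already given and the secants now arranged to converge to $\R v$, Whitney B yields $\R v\subset V$, hence $v\in V$. As $v$ was an arbitrary element of $T_y\cS_j$, we conclude $T_y\cS_j\subset V$, which is Whitney A.

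The main (mild) obstacle is purely bookkeeping: one must guarantee that the rescaling $t_n$ can simultaneously be chosen to make $t_n\to 0$ (so that $y_n\to y$ and stays in the image of $\gamma$) and to dominate $\varepsilon_n$ (so that the $x_n$-term in the secant becomes negligible). This is immediate by taking $t_n$ to be any function of $\varepsilon_n$ going to zero more slowly, e.g.\ $t_n=\sqrt{\varepsilon_n}$, so no serious work is required beyond the limit computation above.
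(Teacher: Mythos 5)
The paper does not prove this proposition — it is stated with a citation to Mather and no proof is given — so there is nothing internal to compare against. Your argument is the standard one (essentially Mather's) and it is correct: given $v\in T_y\cS_j$, running along a curve in $\cS_j$ tangent to $v$ at a rate $t_n$ dominating $\varepsilon_n=|x_n-y|$ forces the secants $\widehat{x_ny_n}$ to converge to $\R v$, and condition B then gives $\R v\subset V$. The only point worth flagging is the case $i=j$ (where your disjointness remark does not apply), but there condition A holds trivially by continuity of $x\mapsto T_x\cS_i$ on the smooth submanifold $\cS_i$, so the proof is complete.
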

    
\begin{dfn}
    Let $\cF$ be a sheaf on $S\subset M$. Given a Whitney stratification $\cS$ of $S$, $\cF$ is called \textit{$\cS$-constructible} if $(H^*\cF)|_{\cS_i}$ is locally constant and has perfect stalks for each $i$. $\cF$ is called \textit{constructible} if there exists a Whitney stratification $\cS$ of $S$ such that $\cF$ is $\cS$-constructible. If we drop the perfect stalk condition, then we get \textit{large $\cS$-constructible} and \textit{large constructible} sheaf, respectively.
    
    We define $\Sh_c(M)$, $\Sh_c\dd(M)$, $\Sh_{\cS}(M)$, $\Sh\dd_{\cS}(M)$ as the full dg subcategory of $\Sh(M)$ consisting of constructible, large constructible, $\cS$-constructible, large $\cS$-constructible sheaves, respectively.
\end{dfn}

By definition, constructible sheaves have a combinatorial nature. We will exploit this by realising $\cS$-constructible sheaves by $A_{\infty}$-representations of quivers. For that, we need the following definitions:

\begin{dfn}
    Given the Whitney stratification $\cS=\{\cS_i\}$ of $S\subset M$, the \textit{star} of the stratum $\cS_i$ is defined as
    \[\starr(\cS_i)=\bigsqcup_{\cS_i\subset\bar\cS_j}\cS_j\ .\]
    We also consider $\cS$ as the $A_{\infty}$-category with objects as strata $\cS_i$ and there is a degree 0 morphism $\cS_i\rightarrow\cS_j$ whenever $\cS_i\subset\starr(\cS_j)$ with $\mu^n=0$ for all $n\neq 2$ and $\mu^2$ is the obvious composition (we take the linearisation of the category to make it $\k$-linear). We call $\cS$ \textit{regular cell complex} if all strata and stars are contractible.
\end{dfn}

\begin{lem}
    If $\cS=\{\cS_i\}$ is a Whitney stratification of $S\subset M$, then $\starr(\cS_i)$ is open in $S$ for any $i$.
\end{lem}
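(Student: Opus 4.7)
The plan is to show that the complement $S \setminus \starr(\cS_i)$ is closed in $S$, or equivalently, that every point $x \in \starr(\cS_i)$ has a neighbourhood in $S$ contained in $\starr(\cS_i)$. Fix $x \in \starr(\cS_i)$, so $x \in \cS_j$ for some stratum with $\cS_i \subset \bar{\cS_j}$. The basic idea is to use local finiteness to restrict attention to finitely many strata near $x$, and then use the condition of the frontier to show that those are automatically in $\starr(\cS_i)$, up to shrinking the neighbourhood to discard a few bad ones.

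First, by local finiteness I would choose an open neighbourhood $W$ of $x$ in $M$ which meets only finitely many strata $\cS_{k_1},\ldots,\cS_{k_n}$ (one of which is $\cS_j$). Split this finite collection into two subsets: those $\cS_{k_l}$ with $x \in \bar{\cS_{k_l}}$, and those with $x \notin \bar{\cS_{k_l}}$. For each stratum in the second group, $M \setminus \bar{\cS_{k_l}}$ is an open neighbourhood of $x$, so intersecting $W$ with finitely many such open sets gives an open neighbourhood $W' \ni x$ in $M$ that meets only strata $\cS_{k_l}$ from the first group (plus possibly $\cS_j$ itself).

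The key step is to show that every $\cS_{k_l}$ from the first group lies in $\starr(\cS_i)$. So assume $x \in \bar{\cS_{k_l}}$ and $\cS_{k_l} \neq \cS_j$. Since strata are pairwise disjoint, $x \in \cS_j$ forces $x \in \bar{\cS_{k_l}} \setminus \cS_{k_l}$. By the condition of the frontier, $(\bar{\cS_{k_l}} \setminus \cS_{k_l}) \cap S$ is a union of strata, and because strata are connected the whole stratum $\cS_j$ containing $x$ must lie in this union. Hence $\cS_j \subset \bar{\cS_{k_l}}$, and then
\[\cS_i \subset \bar{\cS_j} \subset \bar{\cS_{k_l}},\]
so $\cS_{k_l} \in \starr(\cS_i)$. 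Of course $\cS_j \in \starr(\cS_i)$ by assumption. Therefore $W' \cap S \subset \starr(\cS_i)$, which exhibits the required neighbourhood of $x$ in $S$.

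The only nontrivial input is the combination of connectedness of strata with the condition of the frontier, which is exactly what lets us upgrade the pointwise statement $x \in \bar{\cS_{k_l}}$ to the stratum-wise statement $\cS_j \subset \bar{\cS_{k_l}}$; this is the step where the stratification hypothesis does real work, and the rest is just bookkeeping via local finiteness. Note that Whitney conditions A and B play no role here — the lemma holds for any stratification satisfying the first two axioms.
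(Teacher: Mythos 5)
Your proof is correct and follows essentially the same strategy as the paper's: produce a neighbourhood of $x$ meeting only strata whose closures contain $x$ (you do this by explicitly excising the closures of the finitely many other strata, the paper by shrinking until the collection of strata met stabilizes), and then apply the condition of the frontier to conclude each such stratum contains $\cS_j$, hence $\cS_i$, in its closure. The only cosmetic remark is that the upgrade from $x\in\bar\cS_{k_l}$ to $\cS_j\subset\bar\cS_{k_l}$ needs only that the frontier is a union of members of the partition $\{\cS_i\}$, not connectedness of the strata.
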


\begin{proof}
    Pick $x\in\starr(\cS_i)$. Then $x\in\cS_j$ for some $\cS_j\subset\starr(\cS_i)$. Take a neighbourhood $U$ of $x$ such that it intersects finitely many strata and any smaller neighbourhood of $x$ inside $U$ still intersects the same strata. Such neighbourhood exists, since otherwise one can choose a smaller neighbourhood inside $U$ which intersects less strata. This process will end since the number of strata intersecting with the neighbourhood is finite.
    
    Pick a stratum $\cS_k$ such that $\cS_k\cap U\neq\emptyset$. Any smaller neighbourhood of $x$ inside $U$ intersects with $\cS_k$, hence we have $x\in\bar\cS_k$. Since $\cS_k$ is a union of strata, we must have $\cS_j\subset\bar\cS_k$. We also have $\cS_i\subset\bar\cS_j$, therefore $\cS_i\subset\bar\cS_k$ and $\cS_k\subset\starr(\cS_i)$. This shows $\starr(\cS_i)$ is open in $S$.
\end{proof}

By seeing $\Sh\dd_{\cS}(M)$ as $A_{\infty}$-category, we have the following proposition:
    
\begin{prp}[\cite{shende-treumann-zaslow}, \cite{gps3}]\label{prp:combinatorics-constructible}
    Let $\cS$ be a Whitney stratification of $M$. We have the $A_{\infty}$- functor $\Gamma_{\cS}\colon\Sh\dd_{\cS}(M)\rightarrow\Modk(\cS)$ given by \[\Gamma_{\cS}(\cF)=[\cS_i\mapsto R\Gamma(\starr(\cS_i);\cF)]\ .\]
    If $\cS$ is a regular cell complex, then $\Gamma_{\cS}$ is a quasi-equivalence.
    
    Same statement holds if we replace $\Sh\dd_{\cS}(M)$ with $\Sh_{\cS}(M)$ and $\Modk(\cS)$ with $\Perfk(\cS)$.
\end{prp}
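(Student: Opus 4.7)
The plan is to build an explicit quasi-inverse $\Phi$ to $\Gamma_\cS$ by left Kan extension from the face poset, and to verify the unit and counit are quasi-isomorphisms using the contractibility hypotheses packaged into the regular cell complex condition.

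First I would check that $\Gamma_\cS$ is a well defined $A_\infty$-functor. The preceding lemma says each $\starr(\cS_i)$ is open, and the relation $\cS_i \subset \starr(\cS_j)$, which is equivalent to $\cS_j \subset \bar\cS_i$, forces $\starr(\cS_i) \subset \starr(\cS_j)$; the associated restriction $R\Gamma(\starr(\cS_j);\cF) \to R\Gamma(\starr(\cS_i);\cF)$ assigns to each morphism $\cS_i \to \cS_j$ of $\cS$ a morphism in $\Modk$, making $\Gamma_\cS(\cF)$ into an object of $\Modk(\cS) = \Fun(\cS^\op,\Modk)$. Working with a single h-injective resolution of $\cF$ makes these restrictions strictly functorial in the open set, and the induced action on morphism complexes extends $\Gamma_\cS$ to a dg, hence $A_\infty$, functor.

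Next I would construct $\Phi\colon \Modk(\cS) \to \Sh\dd_\cS(M)$. Concretely, declare $\Phi(F)(\starr(\cS_i)) := F(\cS_i)$ with restrictions read off from $F$ and sheafify. The regular cell complex hypothesis enters when one observes that $\{\starr(\cS_i)\}$ is a basis closed under intersections (since $\starr(\cS_i)\cap\starr(\cS_j)$ is the union of $\starr(\cS_k)$ over common cofaces), so this data really does determine a sheaf. The result is automatically $\cS$-constructible because its stalk at a point of $\cS_j$ is $F(\cS_j)$, using that $\cS_j$ is contractible. The equivalence $\Gamma_\cS\circ\Phi\simeq\id$ is then verified by computing $R\Gamma(\starr(\cS_i); \Phi(F))$ via the Čech complex of the sub-star cover of $\starr(\cS_i)$, which under the contractibility and cofinality hypotheses collapses to $F(\cS_i)$. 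Conversely, for $\cF \in \Sh\dd_\cS(M)$ the unit $\cF \to \Phi(\Gamma_\cS(\cF))$ is a quasi-isomorphism on each star because both sides have the same derived sections there, and stars form a basis.

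The main obstacle is not the object-level bijection but the coherence of the $A_\infty$-structure on $\Gamma_\cS$ and the check that the unit and counit lift to natural quasi-isomorphisms of dg functors rather than merely object-wise quasi-isomorphisms. This is controlled by working systematically with the Čech nerve of the star cover, which is finite by local finiteness of $\cS$, and by using that the inclusion of the face poset $\cS$ into the exit-path $\infty$-category of $M$ is cofinal — a statement that reduces, under the regular cell complex hypothesis, to the contractibility of every stratum and every star. The perfect-stalk version $\Sh_\cS(M)\simeq\Perfk(\cS)$ then follows without extra work by restricting the equivalence to the subcategory on which stalk cohomology has finite rank, since both $\Gamma_\cS$ and $\Phi$ preserve this condition.
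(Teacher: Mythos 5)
The paper does not actually prove this proposition; its ``proof'' is a citation to \cite[Section 2.3-4]{microbranes}, so there is no in-paper argument to measure yours against. Your sketch follows the standard route of that reference (derived sections over stars, a quasi-inverse built from the face poset, contractibility of cells and stars doing the work), and the variance bookkeeping in your first paragraph is correct: $\cS_i\subset\starr(\cS_j)$ is indeed equivalent to $\cS_j\subset\bar\cS_i$ and does force $\starr(\cS_i)\subset\starr(\cS_j)$, so $\Gamma_{\cS}(\cF)$ lands in $\Fun(\cS^{\op},\Modk)$.

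There are, however, two concrete soft spots. First, $\{\starr(\cS_i)\}$ is \emph{not} a basis for the topology of $M$ --- it is a locally finite collection of opens, and a general open set is not a union of stars --- so ``declare $\Phi(F)$ on stars and sheafify'' does not by itself produce a sheaf whose derived star sections are the prescribed $F(\cS_i)$. The real input at this point is the lemma recorded as a remark immediately after the proposition: for a regular cell complex, the restriction $R\Gamma(\starr(\cS_i);\cF)\to\cF_x$ is a quasi-isomorphism for every $x\in\cS_i$ and every $\cS$-constructible $\cF$. That statement is where contractibility of stars genuinely enters, and it is what justifies building $\Phi(F)$ by gluing standard extensions along the recollement (or, in your language, by Kan extension along the face poset) and then checking the unit and counit. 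Second, your argument nowhere uses that $M$ is a manifold without boundary, yet the paper's own example of the closed disk --- stratified by a boundary point, a boundary arc, and the interior --- is a regular cell complex for which the conclusion fails (a nontrivial rank-one local system on the boundary circle, extended by zero, has acyclic sections over the star of the point but nonzero stalk there). Any complete proof must locate where this hypothesis is consumed; in the star-to-stalk comparison it appears as a noncharacteristic-deformation step, and your \v{C}ech collapse would break precisely there. The coherence issue you flag at the end is real, and cofinality of the face poset in the exit-path $\infty$-category is the clean modern fix, though the cited reference verifies the equivalence by more hands-on means.
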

    
\begin{proof}
    See \cite[Section 2.3-4]{microbranes}.
\end{proof}

\begin{rmk}
    If $\cS$ is a regular cell complex, then the restriction map from $R\Gamma(\starr(\cS_i);\cF)$ to the stalk of $\cF$ at any point of $\cS_i$ is a quasi-isomorphism.
\end{rmk}
    
\begin{exm}
    Consider the stratification $\cS$ of the circle $S^1$ consisting of the circle itself. Then $\Sh\dd_{\cS}(S^1)$ consists of locally constant sheaves on $S^1$, whereas $\Modk(\cS)$ consists of constant sheaves on $S^1$. This means $\Modk(\cS)$ does not record the monodromy. The reason is the stratum ``circle'' is not contractible.
\end{exm}

\begin{exm}
    Consider the stratification $\cS$ of $S^1$ consisting of a point and an arc. Then again locally constant sheaves appear in $\Sh\dd_{\cS}(S^1)$, however not in $\Modk(\cS)$. The reason is the star of a point is circle which is not contractible.
\end{exm}
    
\begin{exm}\label{exm:circle-local-system}
    Consider the stratification $\cS$ of $S^1$ consisting of two points and two arcs. $\cS$ is a regular cell complex, hence by Proposition \ref{prp:combinatorics-constructible} we have
    \[\Sh\dd_{\cS}(S^1)\simeq\Modk(\cS)\]
    where the objects of the $\Modk(\cS)$ are given by the following diagram:
    \[\begin{tikzcd}[column sep=10pt, row sep=10pt]
          & A\ar[ld]\ar[rd] &   \\
        B &   & C \\
          & D\ar[lu]\ar[ru] &
    \end{tikzcd}\]
    where $A,B,C,D\in\Modk$ and the arrows represent morphisms between them. As for locally constant sheaves, $\Loc\dd(S^1)$ is a full dg subcategory of $\Sh\dd_{\cS}(S^1)$, hence of $\Modk(\cS)$ by setting all stalks as quasi-isomorphic, i.e. $A\simeq B\simeq C\simeq D$, and all morphisms as quasi-isomorphisms by Proposition \ref{prp:locally-constant}. One can show that such objects are isomorphic to
    \[\begin{tikzcd}[column sep=10pt, row sep=10pt]
          & A\ar[ld,"\id"',"\sim" {rotate=45, xshift=-0.2cm}]\ar[rd,"m","\sim"' {rotate=-45, xshift=0.2cm}] &   \\
        A &   & A \\
          & A\ar[lu,"\id","\sim"' {rotate=-45, xshift=-0.2cm}]\ar[ru,"\id"',"\sim" {rotate=45, xshift=0.2cm}] &
    \end{tikzcd}\]
    where $m$ is the composition of arrows (after inverting them whenever needed) in the previous diagram, which we call the monodromy. Hence
    \[\Loc\dd(S^1)=\{(A,m)\vb A\in\Modk, m\colon A\to A\text{ is quasi-isomorphism}\}\]
    and also
    \[\Loc(S^1)=\{(A,m)\vb A\in\Perfk, m\colon A\to A\text{ is quasi-isomorphism}\}\ .\]
\end{exm}
    
\begin{exm}
    Consider the stratification $\cS$ of the disk (with boundary) consisting of a point and an arc in the boundary, and the interior of the disk. Then $\cS$ is a regular cell complex, but Proposition \ref{prp:combinatorics-constructible} does not hold since disk is not a manifold (it is a manifold with boundary). Indeed, $\Modk(\cS)$ does not capture all the sheaves which are locally constant when restricted to the boundary and vanish in the interior of the disk.
\end{exm}

\subsection{Singular Support}

For a smooth manifold $M$, we will define singular support for sheaves in $\Sh(M)$ following \cite{kashiwara-schapira}. One can also restrict the attention to $\Sh_c\dd(M)$ and define singular support using stratified Morse theory following \cite{macpherson}, as in \cite{shende-treumann-zaslow}.

For a closed set $Z\subset M$, we define the functor
\[\Gamma_Z\colon\Sh(M)\to\Sh(M)\]
such that for an open set $U$, we have
\[\Gamma_Z(\cF)(U)=\ker(\cF(U)\to\cF(U\setminus Z))\ .\]
Note that we have the exact triangle
\[R\Gamma_Z(\cF)(U)\to R\Gamma(U;\cF)\to R\Gamma(U\setminus Z;\cF)\xrightarrow{[1]}\]
which gives
\[R\Gamma_Z(\cF)(U)\simeq C(R\Gamma(U;\cF)\to R\Gamma(U\setminus Z;\cF))[-1]\ .\]

\begin{dfn}[\cite{kashiwara-schapira}, \cite{viterbo}]
    For $\cF\in\Sh(M)$, the \textit{microstalk} $\cF_{(x,p),f}$ of $\cF$ at $(x,p)\in T^*M$ relative to the smooth function $f\colon M\to\R$ with $f(x)=0$ and $df_x=p$ is defined as
    \[\cF_{(x,p),f}:=(R\Gamma_{\{y\in M \vb f(y)\geq 0\}}(F))_x\ .\] 
    The \textit{singular support (microsupport)} $\sing(\cF)$ of $\cF$ is the subset of $T^*M$ given by the closure of the points $(x,p)\in T^*M$ with $\cF_{(x,p),f}\not\simeq 0$ for some smooth function $f\colon M\to\R$ with $f(x)=0$ and $df_x=p$. We call $(x,p)\in T^*M$ \textit{characteristic} for $\cF$ if $(x,p)\in\sing(\cF)$. Also, we call a subset $L\subset T^*M$ \textit{conic} if $(x,kp)\in L$ whenever $(x,p)\in L$ for any $k\in\R_{>0}$.
\end{dfn}

\begin{rmk}
    We can calculate the microstalk $\cF_{(x,p),f}$ by
    \[\cF_{(x,p),f}=\lim_{\underset{U\ni x}{\longrightarrow}} C(R\Gamma(U;\cF)\to R\Gamma(\{y\in U \vb f(y)< 0\};\cF))[-1]\ .\]
    Basically, $\sing(\cF)$ is the collection of points and directions, along which the sheaf $\cF$ changes.
\end{rmk}

\begin{prp}[\cite{kashiwara-schapira}, \cite{kuwagaki}]\label{prp:test-function}
    Given $(x,p)\in T^*M$ and $f\colon M\to\R$ with $f(x)=0$ and $df_x=p$, let $L\subset T^*M$ be a conic Lagrangian such that $(x,p)$ is a smooth point on $L$ and $\sing(\cF)\subset L$ in a neighbourhood of $(x,p)$. If $L$ and the graph $\Gamma_{df}$ of $df$ intersect transversely at $(x,p)$, then $\cF_{(x,p),f}$ does not depend on $f$ up to shifts. For such $f$, if $\cF_{(x,p),f}\simeq 0$, then $(x,p)\notin\sing(F)$.
\end{prp}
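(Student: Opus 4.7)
The plan is to use a deformation argument to show that the microstalk is locally constant as $f$ varies within the locus of transverse test functions, and then deduce both conclusions from this. The key analytical ingredient is the microlocal non-characteristic deformation lemma from Kashiwara-Schapira.

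First I would parametrize the test functions. Given two transverse test functions $f_0, f_1$ at $(x,p)$ with $f_i(x)=0$ and $(df_i)_x = p$, I would consider the affine interpolation $f_t = (1-t)f_0 + tf_1$; one has $f_t(x)=0$ and $(df_t)_x = p$ for all $t$. Whether $\Gamma_{df_t}$ is transverse to $L$ at $(x,p)$ depends only on the Hessian of $f_t$ at $x$; the set of Hessians giving transversality is an open subset of the space of symmetric bilinear forms on $T_xM$, and its connected components are labelled by a Maslov-type index. By refining the path (possibly breaking it into finitely many segments and inserting shifts at crossings), I reduce to the case in which the transversality holds at every $t\in[0,1]$.

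Next I would apply the non-characteristic deformation lemma to the family of closed sets $Z_t = \{y \in M \vb f_t(y) \geq 0\}$. Since $\sing(\cF)\subset L$ in a neighbourhood of $(x,p)$ and $L$ is transverse to $\Gamma_{df_t}$ at $(x,p)$ for each $t$, there exists a small neighbourhood $U$ of $x$ such that the boundary hypersurface $\partial Z_t\cap U$ is non-characteristic for $\cF$ away from $x$, uniformly in $t$. The lemma then identifies the natural comparison maps $R\Gamma_{Z_{t_0}}(\cF)_x \to R\Gamma_{Z_{t_1}}(\cF)_x$ as quasi-isomorphisms for every $t_0,t_1\in[0,1]$. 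This proves $\cF_{(x,p),f_0}\simeq\cF_{(x,p),f_1}$, up to the Maslov-index shift incurred when the original path crosses non-transverse Hessians.

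For the second statement, suppose $\cF_{(x,p),f}\simeq 0$ for some transverse $f$. By the independence result above, the microstalk vanishes for every transverse test function at $(x,p)$. Applying the same deformation argument to a continuously varying family of base points $(x',p')\in L$ near $(x,p)$, equipped with transverse test functions depending smoothly on the base point, shows that the microstalk vanishes along all of $L$ in a neighbourhood of $(x,p)$. For $(x'',p'')$ in the same neighbourhood but not on $L$, the assumption $\sing(\cF)\subset L$ gives vanishing automatically. Thus the microstalk vanishes on an open neighbourhood of $(x,p)$ in $T^*M$, which is precisely the statement $(x,p)\notin\sing(\cF)$.

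The principal obstacle is the non-characteristic deformation lemma together with the uniform control of the non-characteristic condition for $\partial Z_t$ near $x$ across the whole interval; this requires the openness of the transverse locus and the local containment $\sing(\cF)\subset L$ to cooperate. Because the proposition only asks for invariance up to shift, a precise accounting of the Maslov correction is unnecessary, which simplifies the bookkeeping considerably.
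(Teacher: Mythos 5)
The paper offers no proof of this proposition; it is quoted from Kashiwara--Schapira and Kuwagaki, so there is no in-paper argument to compare yours against and I assess your sketch on its own terms. Your overall strategy --- deform the test function through the transverse locus using the non-characteristic deformation / microlocal Morse lemma, and control what happens at the walls --- is indeed the shape of the standard proof. But the two places where you wave your hands are exactly where the content of the proposition lives.

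First, the deformation lemma only gives you constancy of $\cF_{(x,p),f_t}$ while $\Gamma_{df_t}$ stays transverse to $L$, i.e.\ within a single connected component of the set of admissible Hessians. Two transverse test functions can perfectly well lie in different components: already for $L=0_M$ and $\cF=\k_M$, the components are labelled by the Morse index $k$ of the Hessian and the microstalk is $\k[-k]$. So wall-crossings cannot be avoided by ``refining the path,'' and the assertion that crossing the Maslov wall changes the microstalk \emph{by a shift} (rather than by something uncontrolled) is precisely the purity statement of Kashiwara--Schapira (Prop.\ 7.5.3 together with the inertia-index formula); the deformation lemma is silent at the wall, where the intersection $\Gamma_{df_t}\cap L$ is non-transverse. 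Your remark that ``a precise accounting of the Maslov correction is unnecessary'' conflates not needing the \emph{value} of the shift with not needing to prove that the change \emph{is} a shift. Second, for the last claim you must exclude that some \emph{non-transverse} test function at a nearby point has nonvanishing microstalk, since $\sing(\cF)$ is defined using arbitrary test functions. The paper's own example $Z=\{y\leq x^3\}$ shows transverse and non-transverse test functions can genuinely disagree (there the non-transverse one vanishes while the transverse one does not), so one cannot perturb a non-transverse $g$ to a transverse one and compare microstalks. The standard way to close both gaps is to reduce to the conormal model $L=N^*S$ by a quantized contact transformation (or the refined microlocal cut-off lemma), where all microstalks are computed explicitly by stratified Morse theory; some such input is needed and is absent from your sketch.
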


\begin{dfn}
    We call the function $f$ in Proposition \ref{prp:test-function} \textit{proper test function} for $\cF$ at $(x,p)$. Using this, we define the \textit{microstalk} of $\cF$ at $(x,p)$ (up to shifts)
    \[\cF_{(x,p)}:=\cF_{(x,p),f}\]
    where $f$ is a proper test function for $\cF$ at $(x,p)$.
\end{dfn}

\begin{rmk}
    With this new definition, we can simply say that the singular support $\sing(\cF)$ of $\cF$ is the closure of the points $(x,p)\in T^*M$ where the microstalk of $\cF$ does not vanish.
\end{rmk}
    
\begin{prp}[\cite{viterbo}]\label{prp:sing-properties}
    $\sing(\cF)$ has the following properties:
        
    \begin{enumerate}
        \item $\sing(\cF)$ is closed and conic,
            
        \item $\sing(\cF)$ is determined locally, i.e. $\sing(\cF)\cap T^*U=\sing(\cF|_U)$ for open $U\subset M$,
            
        \item $\sing(\cF)\cap 0_M=\supp(\cF)$,
        
        \item $\sing(\cF)\subset\bigcup_i\sing(H^i\cF)$,
            
        \item For an exact triangle $\cF_1\rightarrow\cF_2\rightarrow\cF_3\xrightarrow{[1]}$ we have
        \[\sing(\cF_j)\triangle\sing(\cF_k)\subset\sing(\cF_i)\subset\sing(\cF_j)\cup\sing(\cF_k)\]
        for $\{i,j,k\}=\{1,2,3\}$.
            
    \end{enumerate}
\end{prp}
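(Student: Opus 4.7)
My plan is to verify the five properties by unpacking the definition of $\sing(\cF)$ directly, exploiting the fact that the microstalk $\cF_{(x,p),f}=(R\Gamma_{\{f\geq 0\}}\cF)_x$ is built from two triangulated functors, $R\Gamma_Z$ and the stalk functor $(-)_x$.

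For (i), closedness is built into the definition as a closure. For conicity, note that if $f(x)=0$ and $df_x=p$, then for any $c>0$ the function $cf$ satisfies $(cf)(x)=0$, $d(cf)_x=cp$, and $\{cf\geq 0\}=\{f\geq 0\}$; hence $\cF_{(x,cp),cf}=\cF_{(x,p),f}$. So the defining pre-closure set of $\sing(\cF)$ is $\R_{>0}$-invariant, as is its closure. For (ii), the microstalk is computed as a direct limit over neighbourhoods of $x$ and therefore depends only on $\cF$ in a neighbourhood of $x$; the defining sets of $\sing(\cF)\cap T^*U$ and $\sing(\cF|_U)$ coincide pointwise, and so do their closures in $T^*U$. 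For (iii), take the constant function $f\equiv 0$: then $Z=M$, $R\Gamma_M\cF\simeq\cF$, and the microstalk at $(x,0)$ with respect to $f=0$ is the ordinary stalk $\cF_x$. Hence $\{(x,0)\vb \cF_x\not\simeq 0\}=\supp(\cF)$ sits inside the defining set of $\sing(\cF)$, and since $\supp(\cF)$ is already closed, this gives the claimed equality on the zero section.

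For (v), both $R\Gamma_Z$ (a right derived functor) and $(-)_x$ are exact functors of triangulated categories, so applying them to the exact triangle $\cF_1\to\cF_2\to\cF_3\xrightarrow{[1]}$ yields, for each $(x,p)$ and each admissible $f$, an exact triangle of microstalks. The two-out-of-three principle for exact triangles gives the inclusion $\sing(\cF_i)\subset\sing(\cF_j)\cup\sing(\cF_k)$ at the level of the pre-closure sets, and this inclusion is preserved by closure since the closure of a finite union is the union of closures. The symmetric-difference inclusion $\sing(\cF_j)\triangle\sing(\cF_k)\subset\sing(\cF_i)$ is a formal consequence of the three union inclusions obtained by permuting $\{i,j,k\}$. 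For (iv), apply (v) iteratively to the truncation triangles $\tau^{<n}\cF\to\tau^{\leq n}\cF\to H^n\cF[-n]\xrightarrow{[1]}$ to deduce $\sing(\tau^{\leq n}\cF)\subset\sing(\tau^{<n}\cF)\cup\sing(H^n\cF)$, and induct on $n$.

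The main obstacle will be property (iv) when $\cF$ has cohomology in infinitely many degrees, since the finite induction above does not immediately pass to a limit. I would handle this by working locally on $M$ and on $T^*M$: near any fixed $(x,p)$ one can restrict to a small neighbourhood where only finitely many $H^i\cF$ contribute to the microstalk (using an argument of cohomological amplitude, or, in the constructible setting, the fact that constructible sheaves have locally bounded cohomology), reducing to the bounded case already handled. A hypercohomology spectral sequence from $(H^*\cF)_{(x,p),f}$ to $\cF_{(x,p),f}$ provides the cleanest alternative, provided one checks convergence; this convergence check is the one place where the closure operation and the dependence on the test function $f$ interact nontrivially, and is the only step I would treat with real care.
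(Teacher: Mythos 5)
The paper offers no proof of this proposition --- it is quoted from Viterbo's notes (ultimately Kashiwara--Schapira) --- so your argument is being measured against the standard one in the literature, which it essentially reproduces. Parts (i), (ii) and (v) are sound: the scaling observation $\cF_{(x,cp),cf}=\cF_{(x,p),f}$, the locality of the derived stalk, and the two-out-of-three argument applied to the exact triangle of microstalks (both $R\Gamma_{\{f\geq 0\}}$ and the derived stalk are triangulated functors, and shifts do not change singular support, so all rotations of the triangle are available) are exactly the right moves; the symmetric-difference inclusion is indeed formal from the three permuted union inclusions.

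There is one genuine gap, in (iii). Taking $f\equiv 0$ shows that the microstalk at $(x,0)$ is the ordinary stalk $\cF_x$, which gives $\supp(\cF)\subset\sing(\cF)\cap 0_M$; but your sentence ``since $\supp(\cF)$ is already closed, this gives the claimed equality'' is a non sequitur --- closedness of the support contributes nothing to the reverse inclusion $\sing(\cF)\cap 0_M\subset\supp(\cF)$. That inclusion needs its own argument: if $x\notin\supp(\cF)$, then all stalks of $\cF$ vanish on the open complement of the support, so $\cF$ restricts to zero on a neighbourhood $U$ of $x$, and your own part (ii) gives $\sing(\cF)\cap T^*U=\sing(0)=\emptyset$, hence $(x,0)\notin\sing(\cF)$. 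With that sentence added, (iii) is complete. On (iv), you correctly identify the only delicate point: the truncation-triangle induction settles the bounded case, but for genuinely unbounded complexes the statement as written is not automatic (the references restrict to (locally) bounded complexes, and in general the right-hand side must be replaced by the closure of the union), so reducing to local cohomological boundedness is the honest resolution rather than a formality.
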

    
\begin{exm}
    For the constant sheaf $\k_M$, we have $R\Gamma(U;\k_M)=C^*(U;\k)$ and $\sing(\k_M)=0_M$. Since singular support is locally determined, if $\cF$ is a locally constant sheaf on $M$ which is not the zero sheaf, then $\sing(\cF)=0_M$ also.
\end{exm}
    
\begin{exm}
    Let $Z\subset M$ be the closure of an open set in $M$. If $i\colon Z\hookrightarrow M$ is the inclusion, then we can define the sheaf $\k_Z$ on $M$ as $i_*(\k_Z)$ where the second $\k_Z$ is the constant sheaf on $Z$. We have $\Gamma(U;\k_Z)=H^0(U\cap Z;\k)$ and $R\Gamma(U;\k_Z)=C^*(U\cap Z;\k)$. If $Z$ has smooth boundary $\partial Z$, we get
    \[\sing(\k_Z)=0_Z\cup\{(x,-\lambda\nu_Z(x))\in T^*M\vb x\in\partial Z,\lambda>0\}\]
    where $\nu_Z(x)$ is the exterior normal covector of $Z$ at $x$.
\end{exm}
    
\begin{exm}
    If $j\colon Y\hookrightarrow M$ is the inclusion of the open set $Y\subset M$, then we can define the sheaf $\k_Y$ on $M$ as $j_!(\k_Y)$ where the second $\k_Y$ is the constant sheaf on $Y$. We have $\Gamma(U;\k_Y)=\k^{c(U\cap\bar Y,\partial Y)}$ where $c(U\cap\bar Y,\partial Y)$ is the number of the connected components of $U\cap\bar Y$ which do not intersect with $\partial Y$. We have the exact sequence
    \[0\rightarrow \k_Y\rightarrow \k_M\rightarrow \k_Z\rightarrow 0\]
    where $Z=M\setminus Y$ and using Proposition \ref{prp:sing-properties}(v), if $Y$ has smooth boundary, we get
    \[\sing(\k_Y)=0_{\bar Y}\cup\{(x,\lambda\nu_Y(x))\in T^*M\vb x\in\partial Y,\lambda>0\}\ .\]
\end{exm}

We have the following proposition which makes Proposition \ref{prp:test-function} useful for constructible sheaves:

\begin{prp}[\cite{shende-treumann-zaslow}]\label{prp:sing-conormal}
    Let $\cS$ be a Whitney stratification of $M$ and $\cF\in\Sh(M)$. Then $\cF\in\Sh\dd_{\cS}(M)$ if and only if $\sing(\cF)\subset N^*\cS$,
    where $N^*\cS$ is defined as $\bigsqcup_{\cS_i\in\cS}N^*\cS_i$.
\end{prp}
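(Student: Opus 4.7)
The plan is to prove both implications via the non-characteristic deformation lemma: if the graph of $df$ avoids $\sing(\cF)$ over a relatively compact region, then the restriction maps between sections of $\cF$ on level sets of $f$ are quasi-isomorphisms. Whitney's condition A is the crucial input that, given a local noncharacteristic condition at a single stratum point, promotes it to a noncharacteristic condition on a whole neighborhood.

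For the backward direction, suppose $\sing(\cF)\subset N^*\cS$ and fix a stratum $\cS_i$, a point $x\in\cS_i$, and a smooth curve $\gamma\subset\cS_i$ through $x=\gamma(0)$ with nonzero velocity $v\in T_x\cS_i$. Choose a smooth $f\colon M\to\R$ with $df_x(v)\neq 0$, so in particular $df_x\notin N^*_x\cS_i$. I claim $df_y\notin N^*_y\cS_j$ for every stratum $\cS_j$ (only those with $\cS_i\subset\bar\cS_j$ matter) and every $y\in\cS_j$ close enough to $x$: by Whitney's condition A, the tangent spaces $T_{y_n}\cS_j$ along any sequence $y_n\to x$ in $\cS_j$ limit to a subspace containing $T_x\cS_i$, so by continuity $df_y$ restricted to $T_y\cS_j$ remains nonzero. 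Hence the graph of $df$ avoids $\sing(\cF)$ on a neighborhood of $\gamma([0,t])$ for small $t>0$, and the non-characteristic deformation lemma forces $\cF_{\gamma(0)}\simeq\cF_{\gamma(t)}$. Applying this to every smooth curve in $\cS_i$, $H^*\cF$ is locally constant on $\cS_i$, and Proposition~\ref{prp:locally-constant} upgrades this to $\cF|_{\cS_i}\in\Loc\dd(\cS_i)$, giving $\cF\in\Sh\dd_\cS(M)$.

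For the forward direction, assume $\cF\in\Sh\dd_\cS(M)$ and let $(x_0,p_0)\in T^*M$ satisfy $x_0\in\cS_i$ and $p_0\notin N^*_{x_0}\cS_i$, so that $p_0|_{T_{x_0}\cS_i}\neq 0$. Pick any smooth $f$ with $f(x_0)=0$ and $df_{x_0}=p_0$, and run the same Whitney-condition-A argument to conclude that $df$ avoids $N^*\cS$ on a punctured neighborhood of $x_0$. Using the local product structure of a Whitney stratification (Thom-Mather), identify a neighborhood of $x_0$ in $M$ with $(\cS_i\cap U)\times C$ for a normal slice $C$ carrying the induced stratification, under which $\cF$ is quasi-isomorphic to the pullback of an $\cS$-constructible sheaf on $C$ along the slice projection (since $\cF$ is locally constant along the $\cS_i$-factor by hypothesis). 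Because $f$ has nonvanishing $\cS_i$-directional derivative at $x_0$, a small slide along $\cS_i$ moves $\{f\geq 0\}$ off $x_0$ without changing the sections of $\cF$, giving $\cF_{(x_0,p_0),f}\simeq 0$, and Proposition~\ref{prp:test-function} then yields $(x_0,p_0)\notin\sing(\cF)$.

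The hard part is the forward direction: it really rests on the local product structure coming from Thom-Mather theory, which is a nontrivial black-box consequence of Whitney's conditions A and B. A more hands-on alternative is to induct on $\dim\cS_j-\dim\cS_i$ using the open-closed exact triangles associated to each stratum and Proposition~\ref{prp:sing-properties}(v), peeling off one stratum at a time; but in either approach the noncharacteristic control afforded by Whitney's condition A is the essential geometric input.
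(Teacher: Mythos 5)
First, a point of comparison: the paper does not prove this proposition at all --- it is quoted from \cite{shende-treumann-zaslow} (the original statement is Kashiwara--Schapira's Proposition 8.4.1) --- so your argument has to be measured against the standard proof rather than against anything in the text.

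Your backward direction is essentially that standard propagation argument and is sound in outline: Whitney condition A is exactly what makes $N^*\cS=\bigsqcup_i N^*\cS_i$ closed and guarantees that a covector nonvanishing on $T_x\cS_i$ remains non-conormal to every incident stratum at nearby points, so the graph of $df$ is noncharacteristic on a neighbourhood (you also need $df\neq 0$ there, to avoid the zero-section part $\sing(\cF)\cap 0_M=\supp(\cF)$, which your choice of $f$ ensures), and the noncharacteristic deformation lemma then shows that the cospecialization maps along $\cS_i$ are quasi-isomorphisms, which is what Proposition \ref{prp:locally-constant} requires.

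The genuine gap is in the forward direction. The assertion that, in a Thom--Mather chart $(\cS_i\cap U)\times C$, the sheaf $\cF$ is quasi-isomorphic to a pullback from the normal slice $C$ ``since $\cF$ is locally constant along the $\cS_i$-factor by hypothesis'' is precisely the hard content of the proposition and does not follow from what you have. Thom--Mather theory gives a stratified homeomorphism of the underlying \emph{space}; it says nothing about $\cF$. The hypothesis $\cF\in\Sh\dd_{\cS}(M)$ only says that each $H^j\cF$ restricts to a locally constant sheaf on each stratum; upgrading this stratumwise statement to the derived product decomposition of $\cF$ transverse to the slice is equivalent to the exit-path description (Proposition \ref{prp:combinatorics-constructible} plus a stabilisation statement for product stratifications), or else is usually deduced \emph{from} the singular support bound you are trying to prove via the noncharacteristic deformation lemma --- so as written the step is either unjustified or circular. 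The standard repair is the route you relegate to a closing remark: induct on strata using the triangles $j_!j^*\cF\to\cF\to i_*i^*\cF$ together with the functorial estimates for $\sing$ under $j_!$ and $i_*$ of local systems on locally closed submanifolds and Proposition \ref{prp:sing-properties}(v); there the closedness of $N^*\cS$ and, more substantially, Whitney's condition B (via the normal-cone/$\mu$-condition controlling the limiting covectors produced by these non-proper pushforwards) are the real geometric inputs, not condition A alone. That induction needs to be carried out, not just mentioned, for the proof to be complete.
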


\begin{rmk}
    If $L=N^*S$ for some submanifold $S$ of $M$ and $f\colon M\to\R$ is smooth, then $\Gamma_{df}$ and $L$ are transverse at $(x,p)\in T^*M$ if and only if $x$ is nondegenerate critical point of $f|_S$. Hence, assuming $(x,p)$ is a smooth point of $N^*\cS$ and $(x,p)\in N^*\cS_i$, $f$ is a proper test function for $\cF\in\Sh\dd_{\cS}(M)$ at $(x,p)$ if and only if $x$ is nondegenerate critical point of $f|_{\cS_i}$
\end{rmk}

\begin{exm}
    Let us consider an example in more detail: Let $Z=\{y\leq x^3\}\subset\R^2$, $i\colon Z\hookrightarrow\R^2$ be the inclusion and $\cF=i_*(\k_Z)$. It can be seen that $\cF$ is $\cS$-constructible where strata in $\cS$ are $\partial Z=\{y=x^3\}$ and the two disjoint complements of the boundary. By Proposition \ref{prp:sing-conormal} $\sing(F)\subset N^*\cS$. We want to know if $(0,-dy)\in N^*\partial Z$ is in $\sing(\cF)$ or not.

    If we try the test function $f(x,y)=-y$, we get $\cF_{(0,-dy),f}\simeq 0$. However, the problem is $\Gamma_{df}$ and $N^*\partial Z$ are not transverse, since $0$ is a degenerate critical point of $f|_{\partial Z}(x)=x^3$. This means $f$ is not a proper test function.

    If we try the test function $f(x,y)=-x^2-y$, then $\Gamma_{df}$ and $N^*\partial Z$ are transverse, since $0$ is a nondegenerate critical point of $f|_{\partial Z}(x)=-x^2-x^3$. Hence $f$ is a proper test function. We have
    \[F_{(0,-dy)}=F_{(0,-dy),f}\simeq C(\k\xrightarrow{\mx{\id\\ \id}}\k^2)\not\simeq 0\]
    hence $(0,-dy)\in\sing(\cF)$.
\end{exm}

\begin{exm}
    If $S$ is a closed submanifold of $M$, we have the stratification $\cS$ of $M$ consisting of $S$ and the connected components of the complement which are open submanifolds. Consider the sheaf $\k_S$ on $M$ defined as $i_*(\k_S)$ where the second $\k_S$ is the constant sheaf on $S$ and $i\colon S\hookrightarrow M$ is the inclusion. It is $\cS$-constructible, hence by Proposition \ref{prp:sing-conormal} we have $\sing(\k_S)\subset N^*\cS=N^*S\cup 0_M$. In fact, it is easy to see that we have $\sing(\k_S)=N^*S$.
\end{exm}

The singular support has a deep geometrical meaning, given by the following theorem by Kashiwara and Schapira:
    
\begin{thm}[\cite{kashiwara-schapira}, \cite{kashiwara1983}]\label{thm:kashiwara-schapira}
    If $\cF\in\Sh(M)$, then $\sing(\cF)$ is a coisotropic subset of $T^*M$. Moreover, $\cF\in\Sh\dd_c(M)$ if and only if $\sing(\cF)$ is a (singular) Lagrangian.
\end{thm}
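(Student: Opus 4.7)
The proof separates into two parts: coisotropy of $\sing(\cF)$ for an arbitrary sheaf, and the characterisation of constructibility as being a singular Lagrangian. The first is the truly deep ingredient and is where I would defer to \cite{kashiwara-schapira} for the technical heart; the second follows rather cleanly from Proposition \ref{prp:sing-conormal} together with Whitney--Mather stratification theory.

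For coisotropy, my plan is to fix a smooth point $(x_0,p_0)\in\sing(\cF)$ and a smooth function $f$ defined near $(x_0,p_0)$ on $T^*M$ with $f(x_0,p_0)=0$, $df\neq 0$, and $\sing(\cF)\subset\{f\leq 0\}$ locally. The goal is to show the Hamilton vector field $H_f$ is tangent to $\sing(\cF)$ at $(x_0,p_0)$; this gives $(T_{(x_0,p_0)}\sing(\cF))^{\omega}\subset T_{(x_0,p_0)}\sing(\cF)$, which is coisotropy. The central tool is the microsupport propagation lemma: if an open $U\subset T^*M$ has smooth non-characteristic boundary for $\cF$, then $\sing(\cF)$ is invariant under the associated Hamiltonian flow. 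Applying this to the sublevel sets of $f$ forces the desired tangency. The main obstacle is the propagation lemma itself, which I would quote from \cite{kashiwara-schapira}; its original proof uses Gabber's positive-characteristic trick, or alternatively FBI transforms in the real-analytic setting.

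For the forward direction of the equivalence, if $\cF\in\Sh\dd_c(M)$ then there is a Whitney stratification $\cS$ of $\supp(\cF)$ with $\cF\in\Sh\dd_{\cS}(M)$, so by Proposition \ref{prp:sing-conormal} we have $\sing(\cF)\subset N^*\cS=\bigsqcup_i N^*\cS_i$. Each $N^*\cS_i$ is a smooth conic Lagrangian of dimension $\dim M$, hence $\sing(\cF)$ is a closed conic isotropic subset of $T^*M$, and combining with coisotropy from the first part, every top-dimensional smooth stratum must actually be Lagrangian. Checking that every connected component of $\sing(\cF)$ contains such a Lagrangian stratum then yields the singular Lagrangian structure.

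For the reverse direction, assume $\sing(\cF)=\Lambda$ is a singular Lagrangian. My plan is to construct a Whitney stratification $\cS$ of $M$ with $\Lambda\subset N^*\cS$, so that Proposition \ref{prp:sing-conormal} places $\cF$ in $\Sh\dd_{\cS}(M)\subset\Sh\dd_c(M)$. Each top-dimensional smooth stratum of $\Lambda$, being a conic Lagrangian in $T^*M$, is locally of the form $N^*S$ for a smooth submanifold $S\subset M$; I would stratify $M$ by the collection of these submanifolds $S$ together with the complement, then refine by the Whitney--Mather stratification theorem to enforce the Whitney conditions. The subtlety here is arranging compatibility between the stratifications coming from different smooth pieces of $\Lambda$; this is handled by standard Whitney--Mather theory applied to $\pi(\Lambda)\subset M$. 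Modulo the deep coisotropy input, the argument is then straightforward.
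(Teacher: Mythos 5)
The paper offers no proof of this theorem: it is quoted from \cite{kashiwara-schapira} as a black box, so there is no in-text argument to compare yours against. Your sketch follows the standard route (involutivity via the microlocal propagation lemma, then the conormal characterisation of constructibility in both directions), and the part you defer to the reference --- the propagation lemma underlying coisotropy --- is exactly the part one should defer. One small imprecision in that first part: involutivity of a singular closed conic set is a condition at \emph{every} point, formulated via tangent and normal cones, not only at smooth points; the smooth-point version you state is weaker, though it is all that the ``moreover'' clause actually uses.

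The genuine gap is in the reverse direction. Your key claim --- that each top-dimensional smooth stratum of $\Lambda$, being a smooth conic Lagrangian, is locally of the form $N^*S$ --- fails without a constant-rank hypothesis on $\pi|_{\Lambda}$: a smooth conic Lagrangian is an open subset of a conormal bundle only near points where $d\pi|_{\Lambda}$ has locally constant rank, which holds on a dense open subset but can degenerate on a sizeable closed locus, and it is precisely that degeneration locus which forces further stratification. Moreover, your appeal to ``Whitney--Mather stratification theory applied to $\pi(\Lambda)$'' presupposes that $\pi(\Lambda)$ admits a Whitney stratification at all; this is guaranteed for subanalytic (or otherwise tame) sets, not for an arbitrary closed conic Lagrangian. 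This is why the precise statement in \cite{kashiwara-schapira} requires $\sing(\cF)$ to be contained in a closed conic \emph{subanalytic} Lagrangian and concludes (weak) $\R$-constructibility with respect to subanalytic stratifications; the theorem as reproduced in the paper silently carries the same tameness hypothesis, and your argument needs it too --- both here and in the forward direction, where producing a Whitney stratification of $\sing(\cF)$ itself (as the paper's definition of singular Lagrangian demands) again uses that the set is tame.
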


Now, we are ready to define our main object of interest:

\begin{dfn}\label{dfn:sheaf-category-sing}
    For a conic subset $L\subset T^*M$, we define $\Sh_L(M)$ as the full dg subcategory of $\Sh(M)$ consisting of sheaves with perfect stalks whose singular support lies in $L$. If we drop the perfect stalk condition, we get the dg category $\Sh\dd_{L}(M)$.
    
    We also define $\Sh_L(M)_0$ as the dg quotient $\Sh_L(M)/\Loc(M)$, and $\Sh_L(M)\dd_0$, as the dg quotient $\Sh\dd_L(M)/\Loc\dd(M)$. Equivalently, as in \cite{STWZ}, we could define them as the full dg subcategory of $\Sh_L(M)$, respectively $\Sh_L\dd(M)$, consisting of sheaves with zero stalk at any specified point. Note that the equivalence depends on the point.
\end{dfn}

\begin{rmk}
    With this new definition, Proposition \ref{prp:sing-conormal} can be rephrased as
    \begin{align*}
        \Sh\dd_{\cS}(M)&=\Sh\dd_{N^*\cS}(M)\\
        \Sh_{\cS}(M)&=\Sh_{N^*\cS}(M)\ .
    \end{align*}
\end{rmk}
    
\begin{cor}\label{cor:sing-constructible}
    If $L$ is conic (singular) Lagrangian in $T^*M$, $\Sh\dd_{L}(M)$ is a full dg subcategory of $\Sh\dd_c(M)$. If moreover $L\subset N^*\cS$ for a Whitney stratification $\cS$ of $M$, then $\Sh\dd_L(M)$ is a full dg subcategory of $\Sh\dd_{\cS}(M)$. Similar statements hold for $\Sh_L(M)$.
\end{cor}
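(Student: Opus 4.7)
The two inclusions in the corollary are immediate consequences of the two preceding results, Theorem \ref{thm:kashiwara-schapira} and Proposition \ref{prp:sing-conormal}; the plan is simply to chase definitions. All the categories involved are defined as full dg subcategories of $\Sh\dd(M)$ cut out by conditions on the singular support (or on stalks), so fullness of the claimed inclusions is automatic and only the inclusions of the underlying object classes need to be verified.

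For the first statement, I would take $\cF\in\Sh\dd_L(M)$, so that $\sing(\cF)\subset L$ by definition. Since $L$ is a conic (singular) Lagrangian, every stratum of $L$ is isotropic, so the closed conic set $\sing(\cF)$ is contained in an isotropic subset. Combined with the first half of Theorem \ref{thm:kashiwara-schapira}, which asserts that $\sing(\cF)$ is always coisotropic, this forces $\sing(\cF)$ to be a (singular) Lagrangian, because a coisotropic subset of a symplectic manifold contained in an isotropic subset must itself be Lagrangian, applied smooth stratum by smooth stratum. The second half of the same theorem then yields $\cF\in\Sh\dd_c(M)$, which is exactly what is needed. For the second statement, the hypothesis $L\subset N^*\cS$ together with $\sing(\cF)\subset L$ gives $\sing(\cF)\subset N^*\cS$, and Proposition \ref{prp:sing-conormal} produces $\cF\in\Sh\dd_\cS(M)$ directly.

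The perfect-stalk variants for $\Sh_L(M)$, $\Sh_c(M)$, and $\Sh_\cS(M)$ are obtained from their large counterparts by intersecting with the sheaves whose stalks have cohomology of finite rank, so the two inclusions restrict without further work. I do not anticipate any serious obstacle; the only non-formal ingredient is the stratified version of the linear-algebra fact ``isotropic plus coisotropic implies Lagrangian'' invoked in the first step, which is standard once one uses Theorem \ref{thm:kashiwara-schapira} to know that $\sing(\cF)$ already comes equipped with a nice enough structure.
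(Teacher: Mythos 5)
Your proposal is correct and follows essentially the same route as the paper's own proof: both deduce from Theorem \ref{thm:kashiwara-schapira} that the coisotropic set $\sing(\cF)\subset L$ must be a (singular) Lagrangian, hence $\cF\in\Sh\dd_c(M)$, and both obtain the second inclusion directly from Proposition \ref{prp:sing-conormal}. The paper's argument is just a terser version of yours, leaving the ``coisotropic inside isotropic implies Lagrangian'' step and the perfect-stalk variant implicit.
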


\begin{proof}
    Let $\cF\in\Sh_L(M)$. Then we have $\sing(\cF)\subset L$, and $\sing(\cF)$ is coisotropic, which implies $\sing(\cF)$ is a Lagrangian. Hence by Theorem \ref{thm:kashiwara-schapira} $\cF\in\Sh\dd_c(M)$. If we assume $L\subset N^*\cS$, then we have
    \[\Sh\dd_L(M)\subset\Sh\dd_{N^*\cS}(M)=\Sh\dd_{\cS}(M)\]
    by Proposition \ref{prp:sing-conormal}.
\end{proof}

This corollary allows us to describe $\Sh\dd_L(M)$ combinatorially: We first stratify $M$ with the Whitney stratification $\cS$ such that $\cS$ is regular cell complex and $L\subset N^*\cS$. By Corollory \ref{cor:sing-constructible}, we have $\Sh\dd_L(M)\subset\Sh\dd_{\cS}(M)$ as the full dg subcategory. Also, for $\Sh\dd_{\cS}(M)$ we have the combinatorial description $\Sh\dd_{\cS}(M)\simeq\Modk(\cS)$ by Proposition \ref{prp:combinatorics-constructible}. Hence we have
\[\Sh\dd_L(M)\subset\Modk(\cS)\]
as the full dg subcategory, and $\Sh\dd_L(M)$ can be described by imposing the microsupport condition $\sing(\cF)\subset L$ on the objects $\cF\in\Modk(\cS)$. Similarly, we have
\[\Sh_L(M)\subset\Perfk(\cS)\]
as the full dg subcategory. Moreover, $\Sh\dd_L(M)_0$ and $\Sh_L(M)_0$ can be obtained by setting the stalk at any specified point as $0$.

\begin{rmk}
    All previous examples regarding the calculation of singular support can be approached by this combinatorial way.
\end{rmk}

\begin{exm}
    Let $M=\R$ and $L=0_\R\cup\{(0,\lambda dx)\vb \lambda>0\}\subset T^*\R$ which is a conic Lagrangian, shown in Figure \ref{fig:conic-lag-in-TR}:
    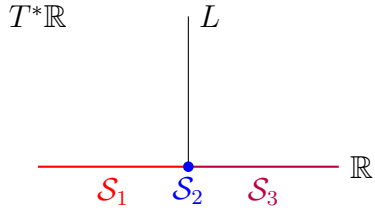
\begin{figure}[h]
        \centering
        
        \begin{tikzpicture}
            \draw (0,0) -- (0,2) node[right]{$L$};
            \draw[thick,red] (-2,0) -- (0,0) node[midway,below]{$\cS_1$};
            \draw[thick,purple] (0,0) -- (2,0) node[midway,below]{$\cS_3$} node[right,black]{$\R$};
            \fill[blue] (0,0) circle (2pt) node[below]{$\cS_2$};
            \node at (-2,2) {$T^*\R$};
        \end{tikzpicture}
        
        \caption{The conic Lagrangian $L$ in $T^*\R$}
        \label{fig:conic-lag-in-TR}
    \end{figure}
    
    To calculate $\Sh_L(M)$, we choose the Whitney stratification $\cS=\{\cS_1,\cS_2,\cS_3\}$ of $\R$ where
    \[
        \cS_1=\{0\},
        \cS_2=\{x\in\R\vb x<0\},
        \cS_3=\{x\in\R\vb x>0\}
    \]
    shown by colours in Figure \ref{fig:conic-lag-in-TR}. Note that $\cS$ is regular cell complex and $L\subset N^*\cS$. Then we have
    \[\Sh\dd_L(M)\subset\Modk(\cS)\]
    as the full dg subcategory. $\Modk(\cS)$ has the objects
    \[\begin{tikzcd}[column sep=10pt, row sep=10pt]
          & A\ar[ld,"f"']\ar[rd,"g"] &   \\
        B &   & C
    \end{tikzcd}\]
    where $A,B,C\in\Modk$ and the microsupport condition forces $g$ to be quasi-isomorphism. So, one can present the objects of $\Sh\dd_L(M)$ as
    \[\begin{tikzcd}
        A\ar[r,"f"] & B
    \end{tikzcd}\]
    which are representations of $A_2$-quiver
    \[\bullet\longrightarrow\bullet\]
    hence we get
    \[\Sh\dd_L(M)=\Modk(A_2)\]
    and also
    \[\Sh_L(M)=\Perfk(A_2)\ .\]
    By setting the stalk at $0$ $A\simeq 0$, we also have
    \begin{align*}
        \Sh\dd_L(M)_0&=\Modk\\
        \Sh_L(M)_0&=\Perfk\ .
    \end{align*}
\end{exm}

$\Sh\dd_{L}(M)$ has stabilisation property (see \cite{kashiwara-schapira}, and \cite{h-principle}), which is useful in calculations by reducing the dimension:

\begin{prp}\label{prp:stabilisation}
	Let $L\subset T^*M$ be a conic set, then $L\times \R\subset T^*M\times T^*\R\simeq T^*(M\times\R)$ is also conic and we have
	\[\Sh\dd_{L\times\R}(M\times\R)\simeq\Sh\dd_L(M)\ .\]
	Same equivalence holds if we remove the diamonds.
\end{prp}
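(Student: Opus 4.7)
The plan is to realise the equivalence via the projection $\pi\colon M\times\R\to M$ and the zero-section inclusion $i\colon M\hookrightarrow M\times\R$, $x\mapsto(x,0)$, which satisfy $\pi\circ i=\id_M$. First note that $L\times\R$ is conic in $T^*(M\times\R)$: the $\R_{>0}$-action scales cotangent fibres only, sending $((x,p),(t,0))$ to $((x,\lambda p),(t,0))$, and this remains in $L\times\R$ because $L$ itself is conic and the $T^*\R$ coordinate is already zero. I then take as candidate functors the pullback $\pi^*\colon\Sh\dd_L(M)\to\Sh\dd_{L\times\R}(M\times\R)$ and the restriction $i^*\colon\Sh\dd_{L\times\R}(M\times\R)\to\Sh\dd_L(M)$.

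The first step is to verify these functors land in the indicated subcategories. For $\cF\in\Sh\dd_L(M)$, the Kashiwara--Schapira estimate for pullbacks along the submersion $\pi$ yields $\sing(\pi^*\cF)=\sing(\cF)\times 0_\R\subset L\times\R$. For $\cG\in\Sh\dd_{L\times\R}(M\times\R)$, the inclusion $i$ is non-characteristic with respect to $L\times\R$: the conormal to $M\times\{0\}$ in $M\times\R$ is $0_M\times T^*\R$, and its intersection with $L\times\R$ collapses to the zero section (because $L\times\R$ already has zero $T^*\R$-component). The non-characteristic pullback estimate therefore gives $\sing(i^*\cG)\subset L$.

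To prove $\pi^*$ and $i^*$ are quasi-inverse, $\pi\circ i=\id_M$ immediately yields $i^*\pi^*\cF\simeq\cF$. For the other direction I consider the natural map $\pi^*i^*\cG\to\cG$ and argue it is a quasi-isomorphism stalkwise. The key input is the propagation theorem from \cite{kashiwara-schapira}: since $\sing(\cG)$ contains no nonzero covector in the $T^*\R$-direction, $\cG$ is cohomologically locally constant along every fibre $\{x\}\times\R$. Hence for each $(x,t)$ the stalk map $\cG_{(x,0)}\to\cG_{(x,t)}$ is a quasi-isomorphism, so both sides of $\pi^*i^*\cG\to\cG$ have stalk $\cG_{(x,0)}$ at $(x,t)$, and the map is a quasi-isomorphism.

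The main obstacle is this propagation statement; once it is invoked, the rest of the argument is formal. For the equivalence without diamonds, I observe that stalks of $\cF$ and $\pi^*\cF$ (respectively of $i^*\cG$ and $\cG$) agree up to quasi-isomorphism, so both $\pi^*$ and $i^*$ preserve the perfect-stalk condition. Restricting the same pair of functors yields $\Sh_{L\times\R}(M\times\R)\simeq\Sh_L(M)$, completing the proposition.
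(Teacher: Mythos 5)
The paper does not actually prove this proposition---it is stated with a citation to \cite{kashiwara-schapira} and \cite{h-principle}---so your proposal is supplying an argument the paper leaves to the literature. Your route is the standard one and its skeleton is correct: $L\times\R$ means $L\times 0_\R$, which is conic because the $\R_{>0}$-action fixes the zero covector in the $T^*\R$-factor; $\pi^*$ lands in $\Sh\dd_{L\times 0_\R}(M\times\R)$ by the submersion estimate $\sing(\pi^*\cF)=\sing(\cF)\times 0_\R$; $i^*$ lands in $\Sh\dd_L(M)$ because $i$ is non-characteristic (the conormal $0_M\times T^*_0\R$ meets $L\times 0_\R$ only in the zero section); $i^*\pi^*\simeq\id$ is immediate; and the essential content is the propagation lemma, which you correctly isolate. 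The perfect-stalk variant also goes through as you say, since both functors preserve stalks up to quasi-isomorphism.

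The one step that needs repair is the comparison $\pi^*i^*\cG\to\cG$: no such natural transformation exists. By adjunction it would require a natural map $i^*\cG\to R\pi_*\cG$, i.e.\ an ``extension'' of germs along $U\times\{0\}$ to sections over $U\times\R$, which general sheaves do not admit. The correct comparison is the zigzag $\cG\leftarrow\pi^*R\pi_*\cG\to\pi^*i^*\cG$, where the left arrow is the counit and the right arrow is $\pi^*$ applied to the genuine restriction map $R\pi_*\cG\to i^*\cG$; the hypothesis $\sing(\cG)\subset T^*M\times 0_\R$ and the non-characteristic deformation/propagation lemma of \cite{kashiwara-schapira} show both legs are quasi-isomorphisms. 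Relatedly, your closing inference ``both sides have stalk $\cG_{(x,0)}$, hence the map is a quasi-isomorphism'' is not valid as stated: abstract agreement of stalks does not make a particular map a quasi-isomorphism; one must check that the specific (zigzag of) maps induce the identification, which is exactly what the propagation lemma provides. With these adjustments the argument is complete.
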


Next, we will present a very important theorem which clarifies the dependence of $\Sh\dd_L(M)$ on the Lagrangian $L$:
    
\begin{dfn}\label{dfn:associated-conic}
   	Given a subset $\Lambda\subset T^{\infty}M$, we define the cone of $\Lambda$ in $T^*M$ as
    \[\R_{>0}\Lambda:=\{(x,p)\in T^*M\vb (x,[p])\in\Lambda\}\ .\]
    The \textit{associated conic subset} for $\Lambda$ is defined as
    \[L_{\Lambda}=0_M\cup\R_{>0}\Lambda\subset T^*M\]
    and we set
    \begin{align*}
        \Sh\dd_{\Lambda}(M)&:=\Sh\dd_{L_{\Lambda}}(M)\\
        \Sh_{\Lambda}(M)&:=\Sh_{L_{\Lambda}}(M)\ .
    \end{align*}
\end{dfn}
    
\begin{thm}[\cite{guillermou-kashiwara-schapira}, \cite{shende-treumann-zaslow}]
	Let $\Lambda\subset T^{\infty}M$ be a closed (singular) Legendrian and $h_t$ be a Hamiltonian isotopy of $T^{\infty}M$ (with $h_0=\id$). If $h_t$ has compact horizontal support (i.e. there exists an open $U\subset M$ with compact closure such that $h_t(x,[p])=(x,[p])$ for $x\notin U$), then $\Sh\dd_{\Lambda}(M)$ and $\Sh\dd_{h_t(\Lambda)}(M)$ are quasi-equivalent for any $t$.
\end{thm}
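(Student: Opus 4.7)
The plan is to prove the invariance via the sheaf-theoretic quantization of Hamiltonian isotopies, following the strategy of Guillermou--Kashiwara--Schapira. First, lift the contact isotopy $h_t$ on $T^{\infty}M$ to a conic (degree $1$ homogeneous) Hamiltonian isotopy $\tilde{h}_t$ on $T^*M\setminus 0_M$; the hypothesis of compact horizontal support for $h_t$ ensures that $\tilde{h}_t$ is the identity outside a neighbourhood of a compact region of the base. Consider the associated conic Lagrangian in $T^*(M\times M\times\R)$
\[\Lambda_{\tilde{h}} := \{(x,-p,\tilde{h}_t(x,p),t,-\tau_t(x,p))\vb (x,p)\in T^*M\setminus 0_M,\ t\in\R\}\]
where $\tau_t$ is the conjugate momentum coming from the time-dependent Hamiltonian generating $\tilde{h}_t$. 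The aim is then to produce a sheaf $K\in\Sh\dd(M\times M\times\R)$ whose singular support is contained in $L_{\Lambda_{\tilde{h}}}$ and which satisfies $K|_{t=0}\simeq \k_{\Delta_M}$, the constant sheaf on the diagonal.

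Granted such a kernel $K$, set $K_t:=K|_{M\times M\times\{t\}}$ and define the convolution functor
\[\Phi_t:=\pi_{13,!}(\pi_{12}^{-1}(\boldsymbol{\cdot})\otimes K_t)\colon\Sh\dd(M)\to\Sh\dd(M)\]
with projections onto the appropriate factors. By the microsupport estimates for proper direct image and tensor product (the composition formula for sheaf kernels in \cite{kashiwara-schapira}), the singular support of $\Phi_t(\cF)$ is controlled by the composition of $\sing(\cF)$ with the Lagrangian correspondence cut out by $K_t$, which is precisely the graph of $\tilde{h}_t$ on $T^*M\setminus 0_M$. Thus $\sing(\Phi_t(\cF))\subset \tilde{h}_t(\sing(\cF))\cup 0_M$, so applied to $\cF$ with singular support in $L_{\Lambda}=0_M\cup\R_{>0}\Lambda$, this gives $\sing(\Phi_t(\cF))\subset L_{h_t(\Lambda)}$. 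Hence $\Phi_t$ restricts to a functor $\Sh\dd_{\Lambda}(M)\to\Sh\dd_{h_t(\Lambda)}(M)$.

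To upgrade $\Phi_t$ to a quasi-equivalence, apply the same construction to the inverse isotopy $h_t^{-1}$, obtaining a kernel $K'$ and a functor $\Phi'_t$ in the opposite direction. The compositions $\Phi'_t\circ\Phi_t$ and $\Phi_t\circ\Phi'_t$ correspond to convolution with kernels associated to the concatenated isotopies $h_t^{-1}\circ h_t$ and $h_t\circ h_t^{-1}$, both of which are constant in $t$ and equal to the identity. By the uniqueness part of the sheaf quantization (a sheaf whose singular support lies in the conormal of the diagonal and which restricts to $\k_{\Delta_M}$ at $t=0$ is quasi-isomorphic to $\k_{\Delta_M\times\R}$), these composite kernels are quasi-isomorphic to $\k_{\Delta_M}$, whose convolution is the identity functor.

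The main obstacle is the construction of the kernel $K$ itself, which is the content of the Guillermou--Kashiwara--Schapira sheaf quantization theorem. The proof proceeds by a delicate local-to-global argument: one first constructs $K$ for small time $t$ as a non-characteristic deformation of $\k_{\Delta_M}$ along $\Lambda_{\tilde{h}}$, using the involutivity theorem to guarantee that the deformed sheaf has its singular support in the prescribed Lagrangian; then one patches these local constructions, appealing to the uniqueness of sheaves with prescribed microsupport and initial condition on each overlap. The compact horizontal support hypothesis is essential in this step: it ensures that $\Lambda_{\tilde{h}}$ coincides with the conormal to the diagonal outside a compact region of the base, allowing the patching to extend to all of $M\times M\times\R$ and making the projections in the convolution formula proper on the support of $K_t$.
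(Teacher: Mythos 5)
The paper offers no proof of this statement; it is quoted verbatim from the cited sources, so there is nothing internal to compare against. Your outline is a faithful reconstruction of the Guillermou--Kashiwara--Schapira argument: homogeneous lift of the contact isotopy, a quantization kernel $K$ on $M\times M\times\R$ with $\sing(K)$ in the prescribed conic Lagrangian and $K|_{t=0}\simeq\k_{\Delta_M}$, convolution, the microsupport composition estimate to see that $\Phi_t$ lands in $\Sh\dd_{h_t(\Lambda)}(M)$, and uniqueness of sheaves with prescribed microsupport and initial condition to invert. Two remarks. First, the existence of $K$ --- which you correctly identify as the main obstacle --- \emph{is} the theorem; your sketch of the construction (small-time non-characteristic deformation, then patching via uniqueness) has the right shape, but as written the proposal is an accurate account of the cited proof rather than an independent one, so it should be read as such. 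Second, GKS actually establish invertibility of the kernel directly (producing an explicit inverse kernel for convolution) rather than quantizing $h_t^{-1}$ separately; your route via the inverse isotopy together with the uniqueness statement is equally valid, provided one checks that the composed kernel has microsupport in the conormal of the diagonal times the zero section over $\R$, which again uses the composition estimate and the properness supplied by compact horizontal support.
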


\begin{rmk}
    This shows that $\Sh\dd_{\Lambda}(M)_0$ and $\Sh\dd_{h_t(\Lambda)}(M)_0$ are also quasi-equivalent since they are obtained by taking dg quotient by $\Loc\dd(M)$. Also, same equivalences hold if we remove the diamonds.
\end{rmk}

\subsection{Wrapped Constructible Sheaves}

The material in this section is introduced in \cite{wrapped}. The term ``wrapped'' is justified by the result in \cite{gps3} (Theorem \ref{thm:gps}) relating wrapped constructible sheaves and (partially) wrapped Fukaya categories.

Let $M$ be a smooth manifold, $\cS$ be a Whitney stratification on $M$, $L$ be a conic subset of $T^*M$, and $\Lambda$ be a subset in $T^{\infty}M$.

\begin{dfn}
	We define the dg categories $\Sh_c^w(M)$, $\Sh_{\cS}^w(M)$, $\Sh_{L}^w(M)$ as the full dg subcategories of compact objects in the dg categories $\Sh_c\dd(M)$, $\Sh_{\cS}\dd(M)$, $\Sh_{L}\dd$(M), respectively. We refer their objects as \textit{wrapped constructible sheaves}. We also write $\Sh_{\Lambda}^w(M)$ for $\Sh_{L_{\Lambda}}^w(M)$.
\end{dfn}

One has the following useful proposition regarding compact objects:

\begin{prp}[{\cite[Proposition 6.3]{rickard-morita}, \cite[Proposition 6.4]{bokstedt-homotopy}}]\label{prp:mod-perf}
	Given an $A_{\infty}$-category $\cC$, the full dg subcategory of compact objects in $\Modk(\cC)$ is generated by $\cC$.
\end{prp}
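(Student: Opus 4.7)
The plan is to prove the two containments separately: (a) every object of $\Perf(\cC)$, the triangulated envelope of $\cC$ under the Yoneda embedding, is compact in $\Modk(\cC)$; and (b) conversely, every compact $A_{\infty}$-module is a retract of an object of $\Perf(\cC)$.

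For (a), I would start from the Yoneda embedding $y\colon\cC\to\Modk(\cC)$, $A\mapsto y_A:=\Hom_{\cC}(-,A)$. The $A_{\infty}$-Yoneda lemma gives, for every $F\in\Modk(\cC)$, a natural quasi-isomorphism $\Hom_{\Modk(\cC)}(y_A,F)\simeq F(A)$. Since coproducts (and more generally filtered colimits) in $\Modk(\cC)$ are formed pointwise in $\Modk$, and the evaluation functor $F\mapsto F(A)$ manifestly commutes with them, each representable $y_A$ is compact. The full dg subcategory of compact objects is closed under shifts, cones, and retracts, so the entire triangulated envelope $\Perf(\cC)$ lies inside the compacts.

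For (b), the standard trick is to take any $A_{\infty}$-module $M$ and resolve it by its bar resolution $B(M)\xrightarrow{\sim}M$, a semifree $A_{\infty}$-module whose underlying graded object is a (possibly infinite) direct sum of shifts of representables, equipped with an exhausting length filtration $B_0(M)\subset B_1(M)\subset\cdots$ such that each successive quotient is a sum of shifted representables and $M\simeq\hocolim_n B_n(M)$. Each $B_n(M)$ then lies in $\Perf(\cC)$ by construction. If $M$ is compact, then $\Hom(M,-)$ commutes with this filtered colimit, so the identity $\id_M\in\Hom(M,M)\simeq\colim_n\Hom(M,B_n(M))$ factors through some $B_n(M)$. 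This exhibits $M$ as a homotopy retract of an object of $\Perf(\cC)$; since $\Perf(\cC)$ is closed under retracts, we conclude $M\in\Perf(\cC)$.

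The main obstacle is assembling the bar resolution and its filtration cleanly in the $A_{\infty}$ setting, and verifying the required homotopy colimit identification $M\simeq\hocolim_n B_n(M)$. This is essentially the Keller/Rickard argument adapted to $A_{\infty}$-modules and is the substance of the cited references; once it is in place, the rest is formal manipulation of compactness together with the Yoneda lemma.
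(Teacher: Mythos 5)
The paper gives no proof of this proposition---it is quoted from the cited references---so the only question is whether your outline is sound. Part (a) is fine: Yoneda plus the pointwise computation of coproducts shows each representable is compact, and the compacts form a thick subcategory, so $\Perf(\cC)$ sits inside them.

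Part (b) has a genuine gap at the sentence ``Each $B_n(M)$ then lies in $\Perf(\cC)$ by construction.'' As you yourself set it up, the successive quotients $B_n(M)/B_{n-1}(M)$ are \emph{possibly infinite} direct sums of shifted representables, so $B_n(M)$ is a finite iterated extension of infinite coproducts and is in general not compact, let alone in $\Perf(\cC)$. Consequently, factoring $\id_M$ through some $B_n(M)$ does not yet exhibit $M$ as a retract of a perfect object. The missing ingredient---which is the actual substance of the cited results, not the assembly of the bar resolution---is the Thomason--Neeman finiteness argument: having factored $M\to B_n(M)$, one writes $B_n(M)$ as the filtered (homotopy) colimit of its sub-cell-complexes built from finitely many summands at each of the $n$ attachment stages, and uses compactness of $M$ a second time, inductively in $n$, to push the factorization into such a finite subcomplex $Y$; only then is $M$ a retract of $Y\in\Perf(\cC)$. (A smaller point: with the paper's definition of $\Perf(\cC)$ as the closure under cones and shifts only, closure under retracts is not automatic; the clean statement is that the compacts are the \emph{thick} closure of the representables, i.e.\ one must also allow direct summands, and this is genuinely needed already for $\cC$ a ring with non-free projectives.)
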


By Proposition \ref{prp:combinatorics-constructible}, this implies that
\[\Sh_{\cS}^w(M)\simeq\Perf(\cS)\]
if $\cS$ is a regular cell complex, as remarked in \cite{gps3}.

\begin{exm}\label{exm:wrapped-local-loop}
	We can define $\Loc^w(M)$ as  the full dg subcategory of compact objects in $\Loc\dd(M)$. Then if $M$ is connected, we have
	\[\Loc^w(M)\simeq\Perf(C_{-*}(\Omega M))\]
	since $\Loc\dd(M)\simeq\Modk(C_{-*}(\Omega M))$
\end{exm}

\section{Microlocal Sheaves and Fukaya Category}

For a given Weinstein pair $(W,\Lambda)$, we will define the infinitesimally wrapped and (partially) wrapped Fukaya categories. Then, we will try to give a definition for traditional and wrapped microlocal sheaves associated to $(W,\Lambda)$, which are conjecturally expected to correspond to the respective Fukaya categories. This definition works for the application in this thesis, namely the calculation of microlocal sheaves on pinwheels in Section \ref{sec:msh-pinwheel}.

\subsection{Weinstein Manifolds}\label{sec:weinstein}

In this section, we will review some material regarding to exact symplectic, Liouville, and Weinstein manifolds. Definitions and results in this chapter are mostly taken from \cite{weinstein} and \cite{revisited}.

\begin{dfn}
    An \textit{exact symplectic manifold} $(W,\theta)$ is a symplectic manifold $(W,\omega)$ with a 1-form $\theta$ on $W$, called \textit{Liouville form}, such that $d\theta=\omega$. An \textit{exact symplectomorphism} $F\colon (W,\theta)\to(W',\theta')$ between exact symplectic manifolds is a diffeomorphism such that $F^*\theta'-\theta$ is exact.
    
    A Lagrangian $L$ in $(W,\theta)$ is called an \textit{exact} if there exist a smooth function $f\colon L\to\R$, called \textit{potential}, such that $\theta|_L=df$. Such $L$ is called \textit{compactly exact} if furthermore $f$ is compactly supported. A singular Lagrangian is \textit{(compactly) exact} if there exist a (compactly supported) continuous function $f\colon L\to\R$ such that $f|_{L_i}$ is smooth and $\theta|_{L_i}=df|_{L_i}$ for any of its stratum $L_i$. 
\end{dfn}

\begin{prp}\label{prp:exact-to-exact}
	An exact symplectomorphism maps a (singular) exact Lagrangian to a (singular) exact Lagrangian.
\end{prp}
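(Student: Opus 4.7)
The plan is to use the defining identity $F^*\theta' - \theta = dg$ for some smooth $g \colon W \to \R$ together with the potential $f \colon L \to \R$ satisfying $\theta|_L = df$ to produce an explicit potential on $F(L)$. The whole argument is a direct chase through the definitions via the chain rule.

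First, I would restrict everything to $L$ and pull back along the diffeomorphism $F|_L \colon L \to F(L)$:
\[(F|_L)^*(\theta'|_{F(L)}) = (F^*\theta')|_L = \theta|_L + dg|_L = df + d(g|_L) = d(f + g|_L),\]
so $f + g|_L$ is a potential on $L$ for the pulled-back form. Next, I would transport this potential across the diffeomorphism by setting $f' := (f + g|_L) \circ (F|_L)^{-1}$ on $F(L)$. Then $(F|_L)^* df' = d(f + g|_L)$, and since $F|_L$ is a diffeomorphism this equality lifts to $\theta'|_{F(L)} = df'$, exhibiting $F(L)$ as an exact Lagrangian with potential $f'$.

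For the singular case, the same formula works stratum-by-stratum: $F$ sends the stratification $\{L_i\}$ of $L$ to the stratification $\{F(L_i)\}$ of $F(L)$, each $F|_{L_i}$ is a smooth diffeomorphism onto its image, and the smooth argument above gives $\theta'|_{F(L_i)} = df'|_{F(L_i)}$ on every stratum. The global function $f'$ remains continuous on $F(L)$ because $f$ is continuous on $L$, $g$ is smooth on $W$, and $(F|_L)^{-1}$ is a homeomorphism of the underlying topological spaces. There is no genuine obstacle here; the only bookkeeping is to confirm that the stratum-wise smooth potentials assemble into a globally continuous function in the singular case, and this is immediate from the single closed-form expression for $f'$.
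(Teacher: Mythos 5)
Your proof is correct and takes essentially the same approach as the paper's: both produce the new potential by adding the primitive $g$ of $F^*\theta'-\theta$ to the old potential and transporting it along the diffeomorphism, verifying $\theta'|_{F(L_i)}=df'|_{F(L_i)}$ stratum by stratum. The only (immaterial) difference is that the paper runs the identical computation for the preimage $F^{-1}(L')$ of an exact Lagrangian rather than for the image $F(L)$.
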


\begin{proof}
	Let $F\colon (W,\theta)\to(W',\theta')$ be an exact symplectomorphism. Then there exists a smooth $f\colon W\to\R$ such that $df=F^*\theta'-\theta$. Let $L'$ be an exact singular Lagrangian with the strata $L'_i$. Then $L:=F^{-1}(L')$ is a singular Lagrangian with the strata $L_i:=F^{-1}(L'_i)$. Exactness of $L'$ implies that there exists a continuous function $g\colon L'\to\R$ such that $\theta'|_{L'_i}=dg|_{L'_i}$. Define the continuous function $h\colon L\to\R$ by $(g\circ F-f)|_L$. We have
	\[\theta|_{L_i}=(F^*\theta')|_{L_i}-df|_{L_i}=d(g\circ F)|_{L_i}-df|_{L_i}=dh|_{L_i}\]
	hence $L$ is exact.
\end{proof}

\begin{dfn}
    The \textit{Liouville vector field} $X$ for the Weinstein manifold $(W,\theta)$ is defined by the equality $\iota_X\omega=\theta$, where $\iota_X\omega$ is the interior product defined as $\iota_X\omega(Y):=\omega(X,Y)$. We denote its flow by $X^t$ and call it \textit{Liouville flow}. Note that it is  symplectically expanding, i.e. $(X^t)^*\omega = e^t\omega$ (and moreover, $(X^t)^*\theta = e^t\theta$), hence $-X$ is contracting. A subset $S\subset W$ is called \textit{conic} if it is invariant under Liouville flow.
\end{dfn}

\begin{lem}\label{lem:liouville-form-field}
    Let $L$ be a Lagrangian in an exact symplectic manifold and $x$ be a point in $L$. The Liouville form $\theta$ vanishes on $T_xL$ if and only if the Liouville vector field $X$ at $x$ lies inside $T_xL$.
\end{lem}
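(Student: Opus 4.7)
The plan is to unpack the defining relation $\iota_X\omega=\theta$ at the point $x$ and combine it with the defining property of a Lagrangian subspace, namely that its symplectic orthogonal complement equals itself. Concretely, the relation $\iota_X\omega=\theta$ means that for every tangent vector $Y\in T_xW$ we have $\theta_x(Y)=\omega_x(X_x,Y)$; this is the only algebraic input, and everything else is pointwise linear symplectic geometry at $x$.

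For the forward implication, I would assume $\theta|_{T_xL}=0$, which by the identity above says $\omega_x(X_x,Y)=0$ for all $Y\in T_xL$. By definition this places $X_x$ in the symplectic orthogonal $(T_xL)^{\omega}$. Since $L$ is Lagrangian, $(T_xL)^{\omega}=T_xL$, so $X_x\in T_xL$. For the converse, assume $X_x\in T_xL$; then for any $Y\in T_xL$ isotropy of $T_xL$ gives $\omega_x(X_x,Y)=0$, and hence $\theta_x(Y)=0$.

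There is essentially no obstacle: the whole lemma is a pointwise restatement of the Lagrangian condition $(T_xL)^{\omega}=T_xL$ combined with the definition of the Liouville vector field. The only thing to be mildly careful about is to phrase the argument purely pointwise at $x$, since we are not assuming $\theta$ vanishes on all of $L$ nor that $X$ is everywhere tangent to $L$; only the condition at the single point $x$ is at stake, and the equivalence is genuinely local.
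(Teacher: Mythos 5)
Your proof is correct and essentially the same as the paper's: both directions reduce to $\theta_x(Y)=\omega_x(X_x,Y)$ together with the Lagrangian condition. The only cosmetic difference is that for the forward implication the paper argues via the dimension count (the span of $T_xL$ and $X_x$ would be an isotropic subspace of dimension $\dim L+1$), whereas you invoke the equivalent characterisation $(T_xL)^{\omega}=T_xL$ directly.
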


\begin{proof}
    If $\theta$ vanishes on $T_xL$, we get $\omega(X,Y)=0$ for all $Y\in T_xL$. Then $\omega$ vanish on
    \[\{Y+kX\vb Y\in T_xL,k\in\R\}\ .\]
    If we assume $X\notin T_xL$, this subspace cannot be a isotropic since it has the dimension $\dim L+1$ and $L$ is Lagrangian. 
    
    Conversely, let $i\colon L\hookrightarrow W$ be the inclusion and assume $X\in T_xL$. For any $Y\in T_xL$ we have
    \[\theta|_L(Y)=(i^*\theta)(Y)=\theta(di(Y))=\omega(X,di(Y))=0\]
    since $di(Y)\in T_xL$ and $\omega|_L=0$.
\end{proof}

\begin{prp}\label{prp:conic-exact}
    If $L$ is conic Lagrangian in an exact symplectic manifold $(W,\theta)$, then $\theta|_L=0$. In particular, conic Lagrangians are compactly exact.
\end{prp}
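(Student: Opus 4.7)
The plan is to deduce the vanishing of $\theta|_L$ directly from Lemma \ref{lem:liouville-form-field}, using that conicity forces the Liouville vector field $X$ to be tangent to $L$. More precisely, first I would unpack the definition of ``conic'': since $L$ is invariant under the Liouville flow $X^t$, for every $x\in L$ the curve $t\mapsto X^t(x)$ lies in $L$, and its velocity at $t=0$ is $X_x$. Hence $X_x\in T_xL$ at every point of $L$ (this is trivially fine at zeros of $X$, where $0\in T_xL$). Then Lemma \ref{lem:liouville-form-field} applied pointwise gives $\theta|_{T_xL}=0$ for all $x\in L$, i.e.\ $\theta|_L=0$ identically.

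For the ``compactly exact'' claim, the vanishing $\theta|_L=0$ means one can take the potential $f\equiv 0$, which is smooth and compactly supported (with empty support), so $L$ is compactly exact by definition. No further work is needed here.

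There is essentially no obstacle to overcome; the only point worth a sentence of care is the step ``conic $\Rightarrow$ $X$ tangent to $L$,'' which is immediate from $X^t(L)\subset L$ by differentiating at $t=0$. Everything else is a one-line invocation of the preceding lemma.
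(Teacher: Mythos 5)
Your proof is correct and follows the same route as the paper: conicity gives tangency of the Liouville vector field to $L$, and Lemma \ref{lem:liouville-form-field} then yields $\theta|_L=0$. You in fact spell out two points the paper leaves implicit (differentiating the flow at $t=0$, and taking $f\equiv 0$ for compact exactness), which is fine.
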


\begin{proof}
    If $L$ is conic, then it is invariant under Liouville flow. So at each point $x$ of $L$, the Liouville vector field lies inside $T_xL$. Then the result follows from Lemma \ref{lem:liouville-form-field}.
\end{proof}

We have also the partial converse of this proposition:

\begin{prp}
    If $L$ is closed Lagrangian in an exact symplectic manifold $(W,\theta)$ with $\theta|_L=0$, then $L$ is conic.
\end{prp}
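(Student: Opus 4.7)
The plan is to apply Lemma \ref{lem:liouville-form-field} pointwise to deduce that the Liouville vector field $X$ lies in $T_xL$ at every point $x\in L$, and then promote this infinitesimal tangency to global invariance under the Liouville flow $X^t$, which is the definition of being conic.

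\textbf{Step 1 (pointwise tangency).} Since $\theta|_L=0$, the Liouville form vanishes on $T_xL$ for every $x\in L$. By Lemma \ref{lem:liouville-form-field}, this is equivalent to $X|_x\in T_xL$ at every $x\in L$. Hence $X|_L$ defines a smooth vector field on the submanifold $L$.

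\textbf{Step 2 (global invariance).} Fix $x\in L$ and let $\gamma\colon I\to W$ be the maximal integral curve of $X$ with $\gamma(0)=x$. Consider
\[
J:=\{t\in I\mid \gamma(t)\in L\}\subset I.
\]
Then $J$ is closed in $I$ because $L$ is closed in $W$ and $\gamma$ is continuous. It is also open: at any $t_0\in J$, the tangency $X|_{\gamma(t_0)}\in T_{\gamma(t_0)}L$ together with the fact that $L$ is a smooth submanifold allows us to solve the defining ODE $\dot\alpha=X\circ\alpha$ inside $L$ on a neighbourhood of $t_0$, and uniqueness of integral curves in $W$ forces $\gamma$ to agree with this interior solution there. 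Since $0\in J$ and $I$ is connected, $J=I$, so $\gamma$ stays in $L$ throughout its maximal interval of definition. Running the same argument with $-X$ and using $L$ closed shows $X^t(L)\subset L$ for every $t$ where the flow is defined, so $L$ is invariant under the Liouville flow, i.e.\ conic.

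The argument is essentially a standard ODE invariance statement, and the only point that really uses anything non-trivial is Step 1, where Lemma \ref{lem:liouville-form-field} converts the hypothesis $\theta|_L=0$ into the tangency of $X$; the hypothesis that $L$ is closed is used exactly to ensure that $J$ is closed (and thus equals $I$), so that the integral curves cannot leak out of $L$ before the flow runs out. No genuine obstacle is expected beyond checking that $L$ being smooth and closed is enough to run the openness half of the connectedness argument, which is immediate.
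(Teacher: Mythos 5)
Your proposal is correct and follows essentially the same route as the paper: apply Lemma \ref{lem:liouville-form-field} to get $X|_x\in T_xL$ at every point, then use closedness of $L$ to conclude that integral curves of $X$ cannot escape $L$. The paper states this second step in one sentence, whereas you spell out the standard open/closed connectedness argument for the invariance of a closed submanifold under the flow of a tangent vector field; the content is the same.
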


\begin{proof}
    By Lemma \ref{lem:liouville-form-field}, at each point $x$ of $L$ the Liouville vector field lies inside $T_xL$. Closedness guarantees that $L$ does not escape under its flow.
\end{proof}

\begin{exm}\label{exm:cotangent-standard}
    The cotangent bundle $(T^*M,\omega=\sum_idx_i\wedge dp_i)$ of an $n$-dimensional smooth manifold $M$ is an exact symplectic manifold with the standard Liouville form $\theta=-\sum_i p_i dx_i$. Its Liouville vector field is $X=\sum_i p_i \partial_{p_i}$, where $\partial_{p_i}$ is the partial derivative $\partial/\partial p_i$, see Figure \ref{fig:standard-Liouville} for a particular example. Its flow is given by $X^t=(x_i,p_i e^t)$ which is complete. The skeleton is given by $\fX_{T^*M}=0_M$. Note that it is exact Lagrangian, moreover it is conic.
\end{exm}

\begin{figure}[h]
    \centering
    
    \begin{tikzpicture}
        \newcommand{\arrow}[3][blue]{\draw[#1] (#2,#3*0.5) -- (#2-0.1,#3*0.4);\draw[#1] (#2,#3*0.5) -- (#2+0.1,#3*0.4);\draw[#1] (#2,#3*0.2) -- (#2,#3*0.5)}
    
        \draw (-2,0) -- (2,0) node[right]{$\R$};
        \node at (-2,1){$T^*\R$};
        
        \arrow{0}{1};
        \arrow{0.7}{1};
        \arrow{1.4}{1};
        \arrow{-0.7}{1};
        \arrow{-1.4}{1};
        \arrow{0}{-1};
        \arrow{0.7}{-1};
        \arrow{1.4}{-1};
        \arrow{-0.7}{-1};
        \arrow{-1.4}{-1};
    \end{tikzpicture}
    
    \caption{The Liouville vector field associated to $\theta=-pdx$ in $T^*\R$}
    \label{fig:standard-Liouville}
\end{figure}

\begin{dfn}
    A \textit{Liouville manifold} $(W,\theta)$ is an exact symplectic manifold whose Liouville vector field $X$ is complete and there is an exhaustion of $W$ by compact domains $\{W_k\}_{k=1}^{\infty}$ with smooth boundaries such that $X$ is outwardly transverse to $\partial W_k$ for each $k$.
    
    The \textit{skeleton} of the Liouville manifold $(W,\theta)$ is defined by
    \[\fX_W=\bigcup_{k=1}^{\infty}\bigcap_{t>0}X^{-t}(W_k)\]
    where $X^{-t}$ is the negative Liouville flow of the Liouville vector field $X$. Note that this does not depend on the exhaustion $\{W_k\}$. It is easy to see that $\fX_W$ is homotopy equivalent to $W$. 
    
    $(W,\theta)$ is called \textit{finite-type} if $\fX_W$ is compact. In that case, we can fix a single compact domain $W^c$ containing $\fX_W$ such that the Liouville vector field is outward pointing on $\partial W^c$. Then we can regard $W$ as
    \[W^c\cup_{\partial W^c}(\partial W^c\times[0,\infty))\]
    where we call $W^c$ \textit{Liouville domain}, $\partial W^c$ \textit{ideal contact boundary} of $W$, $\partial W^c\times[0,\infty)$ \textit{cylindrical end}, . The Liouville form becomes $e^r\alpha$ on the cylindrical end, where $r$ is the radial coordinate and $\alpha=\theta|_{\partial W^c}$, and Liouville vector field becomes $\partial_r$. $\partial W^c$ is naturally a contact manifold with the contact form $\alpha$. Note that in the case of finite-type Liouville manifolds, the skeleton is just
    \[\fX_W=\bigcap_{t>0}X^{-t}(W^c)\]
    which is just tracing back the Liouville flow.
\end{dfn}

\begin{rmk}\label{rmk:conic-set}
    If $(W,\theta)$ is a finite-type Liouville manifold with the Liouville domain $W^c$,  then we have the exact symplectomorphism
    \[W\setminus\fX_W\overset{\sim}{\longrightarrow}\partial W^c\times\R\]
    where $\partial W^c\times\R$ has the Liouville form $e^r\alpha$, $r$ is the radial coordinate, and $\alpha$ is the contact form of $\partial W^c$. With this formulation, a conic set $L$ can be written as
    \[L=L'\sqcup(\Lambda\times\R)\]
    where $L'=L\cap\fX_W$ and $\Lambda=L\cap\partial W^c$.
\end{rmk}
    
\begin{dfn}
	For a subset $\Lambda\subset\partial W^c$, we define the cone of $\Lambda$ in $W$ as
	\[\R_{>0}\Lambda:=\bigcup_{t\in\R}X^t(\Lambda)\]
	for the Liouville flow $X^t$. The \textit{associated conic subset} for $\Lambda$ is defined as
	\[L_{\Lambda}:=\fX_W\sqcup\R_{>0}\Lambda\subset W\]
	as in Definition \ref{dfn:associated-conic} in cotangent bundle case. Note that in Remark \ref{rmk:conic-set}, $\Lambda\times\R$ is the cone of $\Lambda$, hence
	\[L=L'\sqcup\R_{>0}\Lambda\subset L_{\Lambda}\ .\]
\end{dfn}

Compactly exact Lagrangians also have a similar presentation:

\begin{prp}\label{prp:cylindrical-end}
    Let $L$ be an exact closed Lagrangian in a finite-type Liouville manifold $(W,\theta)$. Then $L$ is compactly exact if and only if $L$ is a Lagrangian with cylindrical end, i.e.
    \[L=L^c\cup_{\partial L^c}(\partial L^c\times[0,\infty))\]
    where $L^c\subset W^c$ for a Liouville domain $W^c$ and $\partial L^c\subset\partial W^c$ is the Legendrian boundary of $L^c$ in $W^c$.
\end{prp}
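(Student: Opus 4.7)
The plan is to exploit Lemma \ref{lem:liouville-form-field}, which identifies the locus in $L$ on which $\theta$ vanishes with the locus on which the Liouville vector field $X$ is tangent to $L$. Compact exactness is the statement that $\theta|_L=0$ outside a compact subset of $L$, while the cylindrical end structure is the statement that $L$ is invariant under the positive Liouville flow outside a compact subset, and the lemma translates directly between these two conditions.

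For the forward implication, let $f\colon L\to\R$ be a compactly supported primitive of $\theta|_L$ and set $K:=\supp f$. Outside $K$ we have $\theta|_L=df=0$, hence by Lemma \ref{lem:liouville-form-field} the field $X$ is tangent to $L$ at every point of $L\setminus K$. I would then select a Liouville domain $W^c$ with $K\subset\inter(W^c)$ such that $L$ meets $\partial W^c$ transversally; transversality can be arranged by flowing a chosen Liouville domain outward by a generic time. Set $L^c:=L\cap W^c$ and $\partial L^c:=L\cap\partial W^c$. Tangency of $X$ to $L$ on $L\setminus K$ implies that the Liouville trajectory through any point of $L\setminus\inter(L^c)$ stays in $L$, so $L\setminus\inter(L^c)$ is foliated by Liouville trajectories emanating from $\partial L^c$. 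Under the standard identification $W\setminus\inter(W^c)\simeq\partial W^c\times[0,\infty)$ induced by the flow, this reads $L\setminus\inter(L^c)=\partial L^c\times[0,\infty)$. Finally, $\partial L^c$ is Legendrian in $(\partial W^c,\alpha)$ because $\alpha(v)=\theta(v)=0$ for every $v\in T(\partial L^c)\subset TL\cap T(\partial W^c)$, and the isotropic dimension $n-1$ is correct by transversality.

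For the backward implication, assume $L=L^c\cup_{\partial L^c}(\partial L^c\times[0,\infty))$ with $\partial L^c$ Legendrian in $\partial W^c$. On the cylindrical end $\partial L^c\times[0,\infty)\subset\partial W^c\times[0,\infty)$ the Liouville form is $e^r\alpha$, and a tangent vector to $L$ there decomposes as $v+a\partial_r$ with $v\in T(\partial L^c)$ and $a\in\R$. Since $\partial L^c$ is Legendrian we have $\alpha(v)=0$, and since $\alpha$ is pulled back from $\partial W^c$ we have $\alpha(\partial_r)=0$, so $\theta|_L$ vanishes identically on the cylindrical end. Taking any primitive $g$ of $\theta|_L$ (which exists by exactness of $L$), the condition $dg=\theta|_L=0$ on the cylindrical end makes $g$ locally constant there, and after subtracting appropriate additive constants on each connected component of $L$, the resulting primitive is compactly supported.

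I expect the most delicate point to be the adjustment step in the backward direction when a connected component of $L$ meets $\partial W^c$ in several Legendrian pieces, since $g$ may take different constant values on different cylindrical end components while only one additive constant per component of $L$ is available for adjustment; this should be reconciled using exactness of $L$ together with the cylindrical presentation to show these end-constants coincide. The transversality selection of $W^c$ in the forward direction is also technical but standard, handled by a generic perturbation of the domain along the Liouville flow.
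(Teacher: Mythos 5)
Your forward direction is the paper's argument: a compactly supported primitive forces $\theta|_L=0$ outside a compact set, Lemma \ref{lem:liouville-form-field} converts this into tangency of the Liouville vector field $X$ to $L$ there, and closedness of $L$ (which you use implicitly when asserting the trajectories ``stay in $L$''; it is worth saying, since tangency on an open set alone does not prevent a trajectory from running off the end of $L$) gives invariance under the flow and hence the cylindrical model. The transversality perturbation of $W^c$ you propose is not needed: once $X$ is tangent to $L$ and outwardly transverse to $\partial W^c$, the intersection $L\cap\partial W^c$ is automatically transverse. In the backward direction you compute $\theta|_L=0$ on the end directly from $e^r\alpha$ and the Legendrian condition, where the paper instead applies Lemma \ref{lem:liouville-form-field} to $\partial_r$; these are equivalent.

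The ``delicate point'' you flag at the end is, however, a genuine gap, and it cannot be reconciled in the way you hope: the end-constants of the primitive need not coincide. Take $W=T^*S^1$ with $\theta=-p\,dx$ and let $L$ be an embedded arc that is the vertical ray $\{x=0,\ p\geq 2\}$ on one end, the vertical ray $\{x=1/2,\ p\geq 2\}$ on the other, and joins $(0,2)$ to $(1/2,2)$ through the region $p\approx 2$. This $L$ is closed, exact (it is contractible), and has cylindrical ends over two Legendrian points of $\partial W^c=\{|p|=4\}$, but any primitive $g$ of $\theta|_L$ satisfies $g(\text{end}_2)-g(\text{end}_1)=\int_L-p\,dx\approx -1\neq 0$, so no compactly supported primitive exists. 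Thus the implication ``cylindrical end $\Rightarrow$ compactly exact'' fails with the paper's Definition of compactly exact (compactly supported potential); what the argument actually proves is that $\theta|_L$ vanishes outside a compact set, i.e.\ the potential is locally constant near infinity. The paper's own proof silently elides the same step (``Hence $L$ is compactly exact''), so you have located a real defect in the statement rather than merely failing to finish a routine verification; the fix is to weaken ``compactly exact'' to ``$\theta|_L=0$ outside a compact set,'' which is what the forward direction uses anyway.
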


\begin{proof}
    Since $L$ is compactly exact and $W$ is finite-type, there is a Liouville domain $W^c$ such that $\fX_W\subset W^c$ and $\theta|_L=0$ outside $W^c$. By Lemma \ref{lem:liouville-form-field}, the Liouville vector field is tangent to $L$ outside $W^c$ and since $L$ is closed, $L$ is invariant under Liouville flow there. Hence outside $W^c$, $L$ can be modelled as
    \[\partial L^c\times [0,\infty)\subset\partial W^c\times [0,\infty)\]
    where $L^c=L\cap W^c$ and $\partial L^c\subset\partial W^c$.
    
    Conversely, if $L$ is a Lagrangian with cylindrical end, then $\theta$ vanish on $\partial L^c\times (0,\infty)$ by Lemma \ref{lem:liouville-form-field} since the Liouville vector field lies in its tangent bundle by definition. Hence $L$ is compactly exact.
\end{proof}

\begin{exm}
    $T^*M$ as in Example \ref{exm:cotangent-standard} is not a Liouville manifold if $M$ is noncompact, since here is no compact exhaustion $\{W_k\}$ such that the Liouville vector field is outwardly transverse to $\partial W_k$. Nevertheless, we can still see it as a (noncompact) domain $W^c$ with cylindrical end attached, where
    \[W^c=\{(x_i,p_i)\in T^*M\vb |(p_1,\ldots,p_n)|\leq 1\}\]
    and $\partial W^c$ can be canonically identified with $T^{\infty}M$.
\end{exm}

\begin{exm}
    If $M$ is compact, $T^*M$ as in Example \ref{exm:cotangent-standard} is a Liouville manifold.
\end{exm}

\begin{exm}\label{exm:cotangent-nonstandard}
    Consider $(T^*\R^n,\omega=\sum_idx_i\wedge dp_i)$ with a different (nonstandard) Liouville form $\theta=\frac{1}{2}\sum_i(x_idp_i-p_idx_i)$. Its Liouville vector field is $X=\frac{1}{2}\sum_i(x_i\partial_{x_i}+p_i\partial_{p_i})$, see Figure \ref{fig:nonstandard-Liouville} for a particular example. Its flow is $X^t=(x_ie^{\frac{1}{2}t},p_ie^{\frac{1}{2}t})$ which is complete. Its skeleton is given by $\fX_{T^*\R^n}=\{0\}$, a single point. With this Liouville form, $T^*\R^n$ is Liouville since there is a compact exhaustion by closed balls around $0$. It can be seen as a compact domain $W^c$ with cylindrical end attached, where
    \[W^c=\{(x_i,p_i)\in T^*\R^n\vb |(x_1,\ldots,x_n,p_1,\ldots,p_n)|\leq 1\}\ .\]
    Note that $\partial W^c$ is not contactomorphic to $T^{\infty}\R^n$ with this Liouville form.
\end{exm}

\begin{figure}[h]
    \centering
    
    \begin{tikzpicture}
        \newcommand{\arrow}[3][blue]{
        \draw[#1] ({(#2+0.3)*cos(#3)+0.1*cos(#3+135)},{(#2+0.3)*sin(#3)+0.1*sin(#3+135)}) -- ({(#2+0.3)*cos(#3)},{(#2+0.3)*sin(#3)});
        \draw[#1] ({(#2+0.3)*cos(#3)+0.1*cos(#3+225)},{(#2+0.3)*sin(#3)+0.1*sin(#3+225)}) -- ({(#2+0.3)*cos(#3)},{(#2+0.3)*sin(#3)});
        \draw[#1] ({#2*cos(#3)},{#2*sin(#3)}) -- ({(#2+0.3)*cos(#3)},{(#2+0.3)*sin(#3)})
        }
    
        \fill (0,0) circle (2pt) node[below]{$0$};
        \node at (-2,1){$T^*\R$};
        \node at (2,1){};
        
        \arrow{0.7}{0};
        \arrow{0.7}{45};
        \arrow{0.7}{90};
        \arrow{0.7}{135};
        \arrow{0.7}{180};
        \arrow{0.7}{225};
        \arrow{0.7}{270};
        \arrow{0.7}{315};
    \end{tikzpicture}
    
    \caption{The Liouville vector field associated to $\theta=\frac{1}{2}(xdp-pdx)$}
    \label{fig:nonstandard-Liouville}
\end{figure}

In general, the skeleton of a Liouville manifold may not be isotropic, see \cite{McDuff1991}, but we have a better behaved class of exact symplectic manifolds, which will be our central object to study:

\begin{dfn}
    A \textit{Weinstein manifold} $(W,\theta,\phi)$ is an exact symplectic manifold $(W,\theta)$ together with an exhausting (i.e. proper and bounded below) Morse function $\phi\colon W\to\R$ such that the Liouville vector field $X$ is complete and gradient-like for $\phi$, i.e.
    \[d\phi(X)\geq\delta(|X|^2+|d\phi|^2)\]
    for some $\delta>0$, where $|X|$ is the norm with respect to some Riemannian metric on $W$ and $|d\phi|$ is the dual norm. Here, $\phi$ is called \textit{Lyapunov function} for $X$. We call the pair $(\theta,\phi)$ a \textit{Weinstein structure} on $W$.
    
    One can relax the conditions on the Lyapunov function, e.g. instead of Morse functions, we could work with Morse-Bott ones. We will work with Lyapunov functions which are Morse.
\end{dfn}

\begin{rmk}
    In \cite[Section 9.3-4]{weinstein}, it is shown that the above inequality implies the following:
    \begin{enumerate}
    	\item The zero locus of $X$, denoted by $\Zero(X)$, and the critical locus of $\phi$, denoted by $\Crit(\phi)$, coincide. Moreover, a zero of $X$ is nondegenerate if and only if it is a nondegenerate critical point of $\phi$,
    	
    	\item A nondegenerate zero of $X$ is hyperbolic, i.e. all eigenvalues of the linearisation of $X$ at the zeroes have nonzero real part,
    	
    	\item All the limit points of the Liouville flow are in $\Zero(X)$.
    \end{enumerate}
    
  	The first two points imply that our Lyapunov function has only hyperbolic zeroes. Then we can define the \textit{stable manifold} at $p\in \Zero(X)$ as
    \[W_p^-:=\{x\in W\vb \lim_{t\to\infty} X^t(x)=p\}\]
    and the \textit{unstable manifold} at $p$ as
    \[W_p^+:=\{x\in W\vb \lim_{t\to -\infty} X^t(x)=p\}\]
    where $\dim W_p^-$ is equal to the index of $\phi$ at $p$, and $\dim W_p^+=\dim W-\dim W_p^-$. By the third point, we can express the skeleton of $W$ as the union of the stable manifolds, i.e.
    \[\fX_W=\{x\in W\vb \lim_{t\to\infty}X^t(x)\in\text{Zero}(X)\}\]
    or equivalently,
    \[\fX_W=\{x\in W\vb \lim_{t\to\infty}X^t(x)\in\text{Crit}(\phi)\}\ .\]
\end{rmk}

\begin{lem}[\cite{weinstein}]\label{lem:stable}
	For any $p\in\Zero(X)$, the stable manifold $W_p^-$ is isotropic (i.e. $\omega$-isotropic), moreover it is $\theta$-isotropic. The unstable manifold $W_p^+$ is coisotropic.
\end{lem}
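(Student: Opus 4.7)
The plan is to exploit the scaling behavior of $\theta$ (and hence $\omega$) under the Liouville flow, combined with the hyperbolic dynamics at the zero $p$.

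\textbf{Step 1 ($W_p^-$ is $\theta$-isotropic, hence $\omega$-isotropic).} Pick $x\in W_p^-$ and $v\in T_x W_p^-$. Since $X^t(x)\to p$ as $t\to\infty$ and $p$ is a hyperbolic zero, the tangent direction $v$ is a sum of generalised eigenvectors of $DX(p)$ with eigenvalues of negative real part, so $(X^t)_*v$ decays exponentially to $0$. The relation $(X^t)^*\theta=e^t\theta$ gives
\[
e^t\,\theta_x(v)=\theta_{X^t(x)}\bigl((X^t)_*v\bigr).
\]
The right-hand side stays bounded (in fact tends to $0$) because $\theta$ is smooth near $p$ and $(X^t)_*v\to 0$, while the left-hand side blows up unless $\theta_x(v)=0$. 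Hence $\theta|_{W_p^-}=0$, and differentiating gives $\omega|_{W_p^-}=d\theta|_{W_p^-}=0$, so $W_p^-$ is isotropic.

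\textbf{Step 2 ($W_p^+$ is coisotropic).} I would first observe that the distribution $D:=(TW_p^+)^\perp$ along $W_p^+$ is invariant under the Liouville flow: since $X$ is tangent to $W_p^+$ (because $W_p^+$ is flow-invariant) and $(X^t)^*\omega=e^t\omega$, the flow preserves symplectic orthogonality to $TW_p^+$. Second, I would analyse the problem at the fixed point $p$ via the linearisation $A:=DX(p)$. From $\mathcal{L}_X\omega=d\iota_X\omega=d\theta=\omega$ one gets the algebraic identity
\[
\omega(Av,w)+\omega(v,Aw)=\omega(v,w),
\]
so $B:=A-\tfrac12\,\id$ lies in the symplectic Lie algebra; consequently the (generalised) eigenvalues of $A$ are symmetric about $\tfrac12$, pairing as $\tfrac12+\mu$ with $\tfrac12-\mu$. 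Using this pairing together with hyperbolicity (no eigenvalue has real part $0$), a linear-algebra check shows that the unstable subspace $T_pW_p^+$ contains its own symplectic orthogonal, i.e.\ $D_p\subset T_pW_p^+$.

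\textbf{Step 3 (propagation).} Every point $x\in W_p^+$ is backward-asymptotic to $p$, and both the distribution $D$ and the tangent distribution $TW_p^+$ are preserved by the Liouville flow. Therefore the pointwise inclusion $D_p\subset T_pW_p^+$ propagates to $D_x\subset T_xW_p^+$ at every $x\in W_p^+$, which is the coisotropic condition. A dimension check $\dim W_p^+ + \dim W_p^- = \dim W$ together with Step 1 confirms $\dim W_p^+\geq n$, consistent with coisotropy.

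The main obstacle I anticipate is Step 2, and in particular the linear-algebraic statement at $p$: extracting $D_p\subset T_pW_p^+$ from the eigenvalue pairing requires careful bookkeeping of generalised eigenspaces when $A$ has complex eigenvalues (which do occur for Liouville vector fields, since a complex pair $\tfrac12\pm i\nu$ still satisfies the pairing relation and gives two unstable directions rather than a stable/unstable pair). Once the linear statement at $p$ is settled, Steps 1 and 3 are essentially formal consequences of the scaling laws $(X^t)^*\theta=e^t\theta$ and $(X^t)^*\omega=e^t\omega$ combined with hyperbolic asymptotics.
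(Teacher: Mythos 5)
The paper itself gives no proof of this lemma --- it is imported from \cite{weinstein} --- so I am judging your argument on its own merits. Your Step 1 is the standard argument and is correct: $(X^t)^*\theta=e^t\theta$ together with boundedness of $\theta_{X^t(x)}\bigl((X^t)_*v\bigr)$ forces $\theta|_{W_p^-}=0$, hence $\omega|_{W_p^-}=d\theta|_{W_p^-}=0$. Step 2 is the right idea, and the linear-algebra check you defer does go through: from $\omega(Av,w)+\omega(v,Aw)=\omega(v,w)$ one gets $\omega(V_\lambda,V_\mu)=0$ for generalised eigenspaces with $\lambda+\mu\neq 1$, so nondegeneracy of $\omega$ pairs $V_\lambda$ nondegenerately with $V_{1-\lambda}$; since $\mathrm{Re}\,\mu<0$ implies $\mathrm{Re}(1-\mu)>1$, every $V_\mu$ with $\mathrm{Re}\,\mu<0$ pairs into $E^+\otimes\C:=T_pW_p^+\otimes\C$, and $A$-invariance of $(E^+)^{\omega}$ then forces $(E^+)^{\omega}\subseteq E^+$. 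You should write this out, but it is not where the problem lies.

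The genuine gap is Step 3. Flow-invariance of $D=(TW_p^+)^{\omega}$ and of $TW_p^+$ only says that the truth value of ``$D_y\subseteq T_yW_p^+$'' is constant along each flow line; the flow line through a point $x\neq p$ of $W_p^+$ never passes through $p$, it only converges to $p$ as $t\to-\infty$. Containment of one subspace in another is a \emph{closed} condition in the Grassmannian, so knowing $D_p\subseteq T_pW_p^+$ at the limit point gives nothing at the approximating points $X^{-t}(x)$. Your closing dimension count is also far from sufficient: a complement of an isotropic subspace need not be coisotropic --- in $(\R^4,\omega_{\mathrm{std}})$ with $\omega(e_i,f_j)=\delta_{ij}$, the Lagrangian $E^-=\mathrm{span}(e_1,e_2)$ has the complement $E^+=\mathrm{span}(f_1,\,f_2+e_1)$, which satisfies $\omega(f_1,f_2+e_1)=-1\neq0$ and is therefore not coisotropic. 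To close the gap you need an actual limiting argument: if $v\in(T_xW_p^+)^{\omega}\setminus T_xW_p^+$, the backward flow near the hyperbolic zero expands the component of $(X^{-t})_*v$ transverse to $W_p^+$ and contracts the tangential one, so the normalised vectors $(X^{-t})_*v/\|(X^{-t})_*v\|$ accumulate on some nonzero $v_\infty\in E^-$; by Step 2 one has $E^-\cap(E^+)^{\omega}=0$, so there is $w\in E^+$ with $\omega(v_\infty,w)\neq0$, and extending $w$ to a continuous tangent field of $W_p^+$ near $p$ contradicts $\omega\bigl((X^{-t})_*v,\,T_{X^{-t}(x)}W_p^+\bigr)=0$ for large $t$. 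Alternatively, first put $X$ in its standard (linear) form near $p$, so that coisotropy holds at actual points of $W_p^+$ in a neighbourhood of $p$; only then does flow-invariance finish the job.
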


\begin{prp}\label{prp:isotropic-skeleton}
    Given a Weinstein manifold $W$, there is a stratification of its skeleton $\fX_W$ by isotropic submanifolds of $W$. Moreover, $\fX_W$ is Whitney stratifiable if the Liouville flow is Morse-Smale (i.e. stable and unstable manifolds intersect transversely at every critical point) and near critical points the Liouville vector field is gradient with respect to an Euclidean metric.
\end{prp}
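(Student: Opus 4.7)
The plan is to use the decomposition of the skeleton into stable manifolds of critical points of $\phi$. Using the characterisation
\[\fX_W = \bigsqcup_{p\in\Zero(X)} W_p^-\]
from the remark preceding Lemma \ref{lem:stable}, each $W_p^-$ is a smooth submanifold of dimension equal to the Morse index of $\phi$ at $p$ (by the stable manifold theorem applied to the hyperbolic zero $p$), and distinct $W_p^-$ are disjoint because the forward Liouville flow has a unique limit point in $\Zero(X)$. By Lemma \ref{lem:stable}, each $W_p^-$ is $\theta$-isotropic, giving the isotropic strata. Locally finiteness of the stratification follows from $\phi$ being exhausting and Morse, so every compact subset of $W$ contains only finitely many zeros of $X$. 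The frontier condition reduces to the following standard Morse-theoretic statement: if $x_n\in W_p^-$ converges to a point $x\in W_q^-$ with $q\neq p$, then every point of $W_q^-$ is a limit of a sequence in $W_p^-$; one proves this by flowing $x_n$ and $x$ forward toward $q$ and using a transverse slice to $W_q^-$ at $q$ to spread the approximating sequence along the stratum.

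For the second assertion, I would establish Whitney regularity first locally near each critical point and then globally via the flow. The Euclidean-gradient hypothesis puts the Liouville vector field into a linear normal form $X=\tfrac{1}{2}\sum_i \varepsilon_i x_i\partial_{x_i}$ with $\varepsilon_i\in\{\pm 1\}$ in suitable coordinates near $p$; in these coordinates $W_p^-$ is a linear subspace, and any other stable manifold $W_q^-$ intersecting a neighbourhood of $p$ is cut out as the intersection of the local picture with a smooth submanifold of complementary dimension determined by the Morse--Smale transversality with $W_p^+$. Whitney conditions A and B are immediate for pairs involving a linear stratum together with a smooth stratum abutting it transversely, so both conditions hold in a neighbourhood of every critical point.

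To globalise, I would use that each $X^t$ is a diffeomorphism of $W$ preserving the stratification (stable manifolds are flow-invariant), so Whitney regularity, being diffeomorphism-invariant, propagates along flow lines. Given any pair of strata $W_q^- \subset\overline{W_p^-}$ and any point $y\in W_q^-$, for $t$ large the point $X^t(y)$ lies in a neighbourhood of $q$ where Whitney conditions hold; pulling back by the diffeomorphism $X^t$ gives the conditions at $y$. Morse--Smale transversality is used to ensure that the pair $(W_p^-, W_q^-)$ is locally modelled near $q$ in the way described above, rather than in some degenerate tangential configuration.

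The main obstacle will be the global propagation step: strictly speaking, one has to verify that the test sequences in Whitney's conditions (points $x_n$, secants $\widehat{x_n y_n}$, and tangent planes $T_{x_n}W_p^-$) behave well under the expanding/contracting dynamics of $X^t$ as $y_n$ approaches the lower-dimensional stratum. This is essentially the content of the classical Laudenbach-type results on CW-structures from Morse--Smale gradient flows (and their refinements by Nicolaescu and others), which directly produce a Whitney stratification by unstable manifolds under precisely these hypotheses; the corresponding statement for stable manifolds follows by reversing the flow, which exchanges stable and unstable manifolds and preserves the Morse--Smale and Euclidean-gradient assumptions.
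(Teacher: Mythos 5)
Your proposal follows essentially the same route as the paper: the first claim is the decomposition $\fX_W=\bigsqcup_p W_p^-$ together with Lemma \ref{lem:stable} for isotropy of the stable manifolds, and the Whitney stratifiability under the Morse--Smale and Euclidean-gradient hypotheses is exactly the content of the cited result of Laudenbach, to which the paper (and, ultimately, you) defer. The extra detail you supply (local finiteness via monotonicity of $\phi$ along the flow, the local linear model near critical points, propagation by the flow) is a reasonable sketch of what that reference actually proves, so there is nothing to correct.
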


\begin{proof}
	First statement follows from Lemma \ref{lem:stable}, last one is proven in \cite{laudenbach}.
\end{proof}

\begin{rmk}
	Any Weinstein manifold is Liouville, by defining the compact exhaustion $\{W_k\}$ by $W_k=\{\phi\leq c_k\}$ where $\{c_k\}$ is a sequence of regular values of $\phi$ such that $\lim_{k\to\infty}c_k=\infty$. However, not every Liouville manifold is diffeomorphic to a Weinstein manifold, see \cite{McDuff1991}.
	
	It is easy to see that a Weinstein manifold is finite-type if and only if $\phi$ has finitely many critical points. If the Weinstein manifold is finite, it has the Liouville domain $W_k=\{\phi\leq c_k\}$ for some $k$, which is called \textit{Weinstein domain}.
\end{rmk}

\begin{exm}\label{exm:cotangent-weinstein}
    If $M$ is compact, after perturbing the Liouville vector field (see \cite{weinstein}), $T^*M$ in Example \ref{exm:cotangent-standard} is Weinstein with the Lyapunov function $\phi\colon T^*M\to\R$ defined as
    \[\phi(x_i,p_i)=f(x_1,\ldots,x_n)+\frac{1}{2}\sum_{i=1}^n p_i^2\]
    where $f\colon M\to\R$ is a sufficiently small Morse function. If we had allowed Morse-Bott functions, we could have chosen
     \[\phi(x_i,p_i)=\frac{1}{2}\sum_{i=1}^n p_i^2\]
    and no perturbation would be needed. Observe that the skeleton $\fX_{T^*M}=0_M$ is isotropic, moreover it is Lagrangian.
\end{exm}

\begin{exm}\label{exm:cotangent-weinstein-nonstandard}
    $T^*\R^n$ in Example \ref{exm:cotangent-nonstandard} is Weinstein with the Lyapunov function $\phi\colon T^*\R^n\to\R$ defined as
    \[\phi(x_i,p_i)=\frac{1}{4}\sum_{i=1}^n x_i^2+p_i^2\ .\]
    Observe that the skeleton $\fX_{T^*\R^n}=\{0\}$ is isotropic, but it is not Lagrangian.
\end{exm}

There is a nice way to deal with Weinstein manifolds:

\begin{dfn}\label{dfn:weinstein-handle}
	The Lyapunov function $\phi$, being exhausting Morse, gives a handle decomposition of the Weinstein domain $W^c$ with finitely many handles, called \textit{Weinstein handle decomposition}. The regular sublevel set $W^a:=\{\phi\leq a\}$ is a Weinstein domain for every regular value $a$ of $\phi$, with the contact boundary $\partial W^a=\{\phi=a\}$. The core of a handle attached to $W^a$, given by the stable manifold $W_p^-\cap\{\phi\geq a\}$ at a critical point $p$ of $\phi$, is isotropic, and its attaching sphere $W_p^-\cap\{\phi=a\}$ is isotropic in $\partial W^a$. The cocore of a handle attached to $W^a$, given by the unstable manifold $W_p^+\cap\{\phi\leq b\}$ for some small enough $b> \phi(p)$, is coisotropic. Conversely, using Weinstein handles we can get a Weinstein domain by attaching them along isotropic attaching spheres, so that the Weinstein structure extends to the resulting Weinstein domain. Note that the skeleton can be thought as the union of the cores of the handles, although we should take the whole stable manifold instead of just the core.
	
	Let $W$ be $2n$-dimensional. Since stable manifolds are isotropic, $\phi$ has critical points with index at most $n$, hence we have only $k$-handles for $k\leq n$. A $k$-handle with $k<n$ is called \textit{subcritical handle}, and an $n$-handle is called \textit{critical handle}. If a Weinstein domain has only subcritical handles, then it is called \textit{subcritical}. The symplectic topology of subcritical Weinstein domains is trivial by \cite{weinstein}, therefore only the critical handles contribute to the symplectic structure. The normal bundle of the attaching sphere of a critical handle automatically trivialises since the core of the critical handle is Lagrangian and by Weinstein neighbourhood theorem its neighbourhood gives the handle. Hence we do not need to specify normal framing for the attaching spheres of critical handles. This implies that a Weinstein domain can be described by a Legendrian link in the boundary of a subcritical Weinstein domain, i.e. by \textit{Legendrian surgery diagram}. We refer to the cocore of a critical handle as \textit{Lagrangian cocore}.
\end{dfn}

\begin{exm}\label{exm:cotangent-cocore}
	If $M$ is compact, the Weinstein manifold $T^*M$ in Example \ref{exm:cotangent-weinstein} has the Weinstein handle decomposition by the thickening of the handles of $M$ coming from the Morse function $f$. The Lagrangian cocore is the the cotangent fibre $T^*_pM$, where $p\in M$ is the critical point of $f$ with the index $n$.
\end{exm}

The class of Weinstein manifolds is not large enough for our purposes. For example, when $M$ is not compact the cotangent bundle $T^*M$ is not Weinstein. This leads us the following series of definitions, following \cite{revisited}:

\begin{dfn}\label{dfn:weinstein-pair}
	Let $N$ be a contact manifold with the contact form $\alpha$. A codimension $1$ submanifold $\Sigma$ of $N$ with boundary is called \textit{Weinstein hypersurface}, if $\Sigma$ is equipped with a Weinstein structure $(\alpha|_{\Sigma},\phi)$ such that its Liouville vector field is outwardly transverse to $\partial\Sigma$. Note that the skeleton $\fX_{\Sigma}$ of $\Sigma$ is stratified by isotropic submanifolds of $N$.
	
	A \textit{contact surrounding} $U(\Sigma)\subset N$ of a Weinstein hypersurface $\Sigma$ is obtained as follows: Take a bigger Weinstein hypersurface $\tilde\Sigma\supset\Sigma$ equipped with a Lyapunov function $\tilde\phi\leq\varepsilon^2$ such that $\tilde\phi$ has no critical point in $\tilde\Sigma\setminus\Sigma$, $\tilde\phi|_{\Sigma}=\phi$, and $\tilde\phi|_{\partial\tilde\Sigma}=\varepsilon^2$ for some $\varepsilon>0$. Consider a neighbourhood $\tilde U$ of $\tilde\Sigma$ diffeomorphic to $\tilde\Sigma\times(-\varepsilon,\varepsilon)$ such that $\alpha|_{\tilde U}=\pi^*(\alpha|_{\tilde\Sigma})+du$ where $u$ is the coordinate for $(-\varepsilon,\varepsilon)$ and $\pi\colon \tilde U\to\tilde\Sigma$ is the projection. Then we set
	\[U(\Sigma):=\{(t,u)\in \tilde U\simeq\tilde \Sigma\times(-\varepsilon,\varepsilon)\vb \tilde\phi(t)+u^2\leq\varepsilon^2\}\ .\]
	
	For a finite-type Weinstein manifold $W$, and a Weinstein hypersurface $\Sigma\subset\partial W^c$, the pair $(W,\Sigma)$ (or sometimes $(W,\fX_{\Sigma})$) is called a \textit{Weinstein pair}. Its skeleton is defined as
	\[\fX_{W,\Sigma}:=L_{\fX_{\Sigma}}\ .\]
	Note that not every closed singular $\Lambda\subset W^c$ is a skeleton of some $\Sigma$, hence $(W,\Lambda)$ is not a Weinstein pair for such $\Lambda$.
\end{dfn}

\begin{prp}[\cite{revisited}]\label{prp:weinstein-adjusted}
	Let $(W^c,\theta,\phi)$ be a finite-type Weinstein domain, and $\Sigma$ be a Weinstein hypersurface in $\partial W^c$. By seeing $W^c$ as a manifold with boundary, there is a Weinstein structure $(\theta',\phi')$ on $W^c$ with the Liouville vector field $X'$ satisfying
	\begin{enumerate}
		\item $d\theta'=d\theta$, and $\theta'=\theta$ outside a neighbourhood of $\Sigma$,
		
		\item $X'$ is tangent to $\partial W^c$ on $U(\Sigma)$ and transverse to $\partial W^c$ elsewhere,
		
		\item $X'|_{U(\Sigma)}=X_{\tilde\Sigma}+u\partial_u$,
		
		\item $\phi'|_{U(\Sigma)}=\phi_{\tilde\Sigma}+u^2$, and all critical values of $\phi'$ are smaller than $\phi'|_{\partial U(\Sigma)}=\varepsilon^2$,
		
		\item $\fX_{(W^c,\theta',\phi')}=\fX_{(W^c,\theta,\phi),\Sigma}$.
	\end{enumerate}
	where $U(\Sigma)$ and the related notations are as defined in Definition \ref{dfn:weinstein-pair}, $X_{\tilde\Sigma}$ and $\phi_{\tilde\Sigma}$ are the Liouville vector field and Lyapunov function on $\tilde\Sigma$, respectively.
\end{prp}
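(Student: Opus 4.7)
The strategy is to modify $(\theta,\phi)$ compactly on a neighbourhood of $U(\Sigma)\subset\partial W^c$ inside $W^c$, leaving everything unchanged away from $\Sigma$. First I would use the contact surrounding construction to set up coordinates $(t,u,r)$ on a neighbourhood $\mathcal{N}$ of $\tilde\Sigma$ in $W^c$, where $t$ ranges over $\tilde\Sigma$, $u\in(-\varepsilon,\varepsilon)$ is the normal direction to $\Sigma$ within $\partial W^c$ as in Definition \ref{dfn:weinstein-pair}, and $r\in(-\delta,0]$ is the inward collar coordinate for $\partial W^c$. In these coordinates $\theta=e^r(\pi^*\theta_{\tilde\Sigma}+du)$, $\omega=d\theta$, and the original Liouville vector field is $X=\partial_r$.

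Since any other primitive of $\omega$ differs from $\theta$ by an exact $1$-form, I would set $\theta'=\theta+dH$ with $H=-\chi\cdot e^r u$, where $\chi\colon W^c\to[0,1]$ is a cutoff supported inside $\mathcal{N}$ and equal to $1$ on a neighbourhood of $U(\Sigma)$ in $\partial W^c$ (satisfying $\partial_r\chi|_{r=0}=0$ on that neighbourhood so that (iii) holds exactly). A direct computation of $\iota_{X'}\omega=\theta'$ on the core region $\{\chi=1\}$ then yields $X'=X_{\tilde\Sigma}+u\partial_u$, verifying (ii) and (iii) along $U(\Sigma)$; outside $\supp(\chi)$ we have $\theta'=\theta$ so $X'=\partial_r$ is outward transverse to $\partial W^c$, giving the remaining half of (ii); the transition region is handled by choosing $\supp(d\chi)$ inside a thin shell near $\partial U(\Sigma)$. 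Property (i) is immediate.

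For $\phi'$ I would set $\phi'(t,u,r)=\phi_{\tilde\Sigma}(t)+u^2+q(r)$ on a neighbourhood of $U(\Sigma)$ with $q(0)=0$ and $q$ quadratic in $r$, chosen so that $\phi'$ is Morse at the new boundary critical points $\{(p,0,0):p\in\Crit(\phi_{\tilde\Sigma})\}$ when $W^c$ is viewed as a manifold with boundary, and keep $\phi'=\phi$ away from $\Sigma$ with a second cutoff matching the one for $\theta'$. The Lyapunov inequality $d\phi'(X')\geq\delta(|X'|^2+|d\phi'|^2)$ reduces on the core region to the sum $d\phi_{\tilde\Sigma}(X_{\tilde\Sigma})+2u^2+q'(r)(1-\psi(r))$ of nonnegative terms (for a suitable decreasing profile $\chi=\psi(r)$), on the untouched region to the original inequality, and in the interpolation band it follows by taking the cutoff scale small. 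Rescaling $\tilde\phi$ pushes all critical values below $\phi'|_{\partial U(\Sigma)}=\varepsilon^2$, completing (iv). For (v), the old critical points of $\phi$ away from $\mathcal{N}$ keep their original stable manifolds, contributing $\fX_W$; for each new critical point $p\in\Crit(\phi_{\tilde\Sigma})$ the tangential stable manifold is $W_p^-(X_{\tilde\Sigma})$, whose union over $p$ is $\fX_\Sigma\subset\partial W^c$; and points in the interior of $W^c$ whose forward $X'$-flow asymptotically reaches $\fX_\Sigma$ form precisely the backward-Liouville orbits of $\fX_\Sigma$ under the original $X$ (since $X'=X$ outside $\mathcal{N}$), i.e.\ $\R_{>0}\fX_\Sigma\cap W^c$. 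Together these yield $\fX_W\sqcup(\R_{>0}\fX_\Sigma\cap W^c)=\fX_{(W^c,\theta,\phi),\Sigma}$.

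The main obstacle is the joint calibration of the cutoffs defining $\theta'$ and $\phi'$: the Liouville identity $\mathcal{L}_{X'}\omega=\omega$ is automatic from $\theta'=\theta+dH$, but ensuring simultaneously the transversality in (ii), the nondegeneracy of the new boundary critical points (which forces a specific second-order form of $\phi'$ in the $r$-direction to compensate for the degeneracy of the linearisation of $X'$ at those points), and the Lyapunov inequality across the transition region requires fine control of $\chi$ and $q$. Most of the technical work lies in choosing these profiles so that the correction $q'(r)(1-\psi(r))$ dominates any sign issues coming from the normal direction near the new critical points, while leaving the explicit local form $X_{\tilde\Sigma}+u\partial_u$ intact on $U(\Sigma)$.
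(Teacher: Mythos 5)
The paper offers no proof of this proposition --- it is imported from \cite{revisited} with only the citation --- so your proposal has to stand on its own. Its overall strategy is the right one and is essentially the construction of \cite{revisited}: replace $\theta$ by $\theta+dH$ with $H$ a cut-off multiple of $e^{r}u$ supported in a collar neighbourhood of $\tilde\Sigma$, and read off $X'$ from $\iota_{X'}\omega=\theta'$ in the model coordinates. Your computation of $X'$ where the cutoff is locally constant is correct, and (i), the outward transversality away from $\supp(\chi)$, and the tangency along $U(\Sigma)$ come out as you say.

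The genuine gap is in your treatment of the new zeroes of $X'$ on $U(\Sigma)$. You correctly observe that the linearisation of $X'$ at a point $p\in\Crit(\phi_{\tilde\Sigma})\cap\{u=0\}$ is degenerate in the inward normal direction, but you propose to ``compensate'' for this by a second-order choice of the $r$-profile $q$ of $\phi'$. This cannot work, and the degeneracy is not an artefact of your cutoff: since $\mathcal{L}_{X'}\omega=\omega$, the trace of the linearisation of $X'$ at any zero equals $n=\tfrac12\dim W^c$; because $X'$ is tangent to $\partial W^c$ near $p$, the subspace $T_p\partial W^c$ is invariant and the restricted trace is that of $X_{\tilde\Sigma}+u\partial_u$, namely $(n-1)+1=n$, so the eigenvalue in the normal direction is forced to be $0$ whenever (iii) holds exactly. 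By the paper's own remark that nondegenerate zeroes of the Liouville field correspond to nondegenerate critical points of the Lyapunov function, no Morse $\phi'$ can satisfy the Lyapunov inequality at these points in the closed-manifold sense: the normal dynamics is parabolic (interior points on the rays converge to $p$ only polynomially), and the Morse/Lyapunov conditions at these zeroes must be imposed in the boundary-adapted sense of \cite{revisited} rather than recovered by tuning $q$. Relatedly, your verification of (v) identifies the interior stable set of these zeroes with $\R_{>0}\fX_\Sigma$ ``since $X'=X$ outside $\mathcal{N}$'', but the relevant dynamics takes place inside $\mathcal{N}$, where $X'\neq X$; one must argue there --- using that the tangential component of $X'$ is a positive multiple of $X_{\tilde\Sigma}$ and that the $u$-component is expanding --- that precisely the points over $\fX_\Sigma$ with $u=0$ are attracted to the boundary, and separately that the modification creates no new flow lines into the original critical points.
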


\begin{dfn}\label{dfn:weinstein-sector}
	Given a Weinstein pair $(W,\Sigma)$, the new Weinstein structure $(\theta',\phi')$ on $W^c$ obtained from Proposition \ref{prp:weinstein-adjusted} is said to be \textit{adjusted} to the Weinstein pair $(W,\Sigma)$. We know $\phi'$ has no critical values greater than or equal to $\varepsilon^2$, so it makes sense to define
	\[W'^c:=\{x\in W^c\vb \phi'(x)\leq\varepsilon^2\}\]
	which is a manifold with corner $\partial U(\Sigma)$. We define the \textit{Weinstein sector} $(W',\theta',\phi')$ corresponding to the Weinstein pair $(W,\Sigma)$ as the resulting space after attaching a cylindrical end to $W'^c$ along $\partial W'^c=\{\phi'=\varepsilon^2\}$. Note that $W'$ is a manifold with boundary if $\Sigma$ is nonempty, hence not a Weinstein manifold.
	
	Similarly, if we start with a Weinstein sector $W'$, we can get a Weinstein pair $(W,\Sigma)$ corresponding to $W'$. So there is a correspondence between Weinstein pairs and Weinstein sectors, and we can use them interchangeably.
\end{dfn}

\begin{rmk}
	A Weinstein sector is a special case of a Liouville sector, which is introduced in \cite{gps1}. Weinstein sectors are also defined in \cite{CDRGG}. An important fact to highlight is that there is no closed Reeb orbit on $U(\Sigma)$, hence one can show that the moduli space of holomorphic curves in a Weinstein sector is compact.
\end{rmk}

\begin{exm}
	The cotangent bundle of a manifold with boundary is a Weinstein sector. In particular, $T^*\R^n$ with the standard Weinstein structure from Example \ref{exm:cotangent-weinstein} can be thought as a Weinstein sector as follows: Equip the closed $2n$-disk
	\[D^{2n}=\left\{(x_i,p_i)_{i=1}^n\in\R^{2n}\,\left|\, \sum_{i=1}^n x_i^2+p_i^2\leq 1\right.\right\}\]
	with a Weinstein structure coming from Example \ref{exm:cotangent-weinstein-nonstandard}. Then $D^{2n}$ is a Weinstein domain. Define the Legendrian
	\[\Lambda:=\left\{(x_i,p_i)_{i=1}^n\in\R^{2n}\,\left|\, \sum_{i=1}^n x_i^2= 1,p_i=0\right.\right\}\subset\partial D^{2n}\]
	and let $\Sigma$ be a Weinstein hypersurface in $\partial D^{2n}$ such that $\fX_{\Sigma}=\Lambda$. Then by Proposition \ref{prp:weinstein-adjusted}, there is an adjusted Weinstein structure on $D^{2n}$ such that we can define
	\[W'^c:=\left\{(x_i,p_i)_{i=1}^n\in D^{2n}\,\left|\, \sum_{i=1}^n p_i^2\leq\varepsilon^2\right.\right\}\]
	for some $\varepsilon<1$ with $\fX_{W'^c}=\{(x_i,p_i)_{i=1}^n\in D^{2n}\vb p_i=0\}$. After attaching a cylindrical end to $W'^c$ along $\partial W'^c$ and possibly perturbing the Weinstein structure, we get $T^*D^n$ with the standard Weinstein structure, which makes it a Weinstein sector corresponding to the Weinstein pair $(D^{2n},\Lambda)$. $T^*\R^n$ can be considered as $T^*D^n$ after compactifying $\R^n$ with its boundary at infinity, namely $(n-1)$-sphere $S^{n-1}$.
\end{exm}

\subsection{Fukaya Category of Weinstein Manifolds}\label{sec:fukaya-weinstein}

We will introduce the compact, infinitesimally wrapped, wrapped, and partially wrapped Fukaya categories of immersed Lagrangians of Weinstein manifolds as defined in \cite{akaho-joyce}, \cite{nadler-zaslow}, \cite{abouzaid-wrapped}, and \cite{sylvan}, respectively, omitting many details which can be found in \cite{seidel}, \cite{fooo1}, and \cite{fooo2}. One can find the sketch of some arguments in \cite{HMS} and \cite{auroux}. If we have a Weinstein sector instead of a Weinstein manifold, then we work with the Weinstein pair $(W,\Lambda)$ it is adjusted to.

From this section onwards, any Weinstein manifold $(W,\theta,\phi)$ is assumed to be finite-type, hence it can equivalently be characterised by a Weinstein domain with cylindrical end. An immersed Lagrangian $L$ in $W$ is given by the smooth immersion $i\colon L\to W$ with $i^*d\theta=0$, which is assumed to be proper, and an embedding outside finitely many points. All the self-intersections of $L$ are transverse double self-intersections, or more generally, clean intersections, see \cite{alston-bao}. Lagrangians in a Fukaya category are equipped with a brane structure, i.e. a local system, a grading, and a relative pin structure.
	
\begin{dfn}\label{dfn:compact-Fukaya}
	The \textit{compact Fukaya category} $\cF(W)$ of the Weinstein manifold $W$ is the triangulated envelope of the $A_{\infty}$-category of compact exact immersed Lagrangians $L$ in $W$. The morphisms between embedded Lagrangians $L_0$ and $L_1$ are given by the $R$-graded $\k$-module
	\[\CF^*(L_0,L_1):=\k\langle\phi^1_H(L_0)\cap L_1\rangle\]
	for $R=\Z/N$ or $\Z$, with the $A_{\infty}$-operations counting pseudoholomorphic polygons with Lagrangian boundary conditions, where $\phi^1_H$ is the time-1 map of a suitable Hamiltonian $H$ on $W$ and $\phi^1_H(L_0)$ intersects with $L_1$ transversally. We denote its cohomology by $HF^*(L_0,L_1)$. See \cite{seidel} for details.
	
	The problem occurs when Lagrangians are immersed, because the differential does not necessarily square to zero, which is caused by holomorphic disks (``teardrops'') that immersed Lagrangians may bound. One way to resolve this issue is to consider only the Lagrangians which do not bound any holomorphic disks. Such Lagrangians are called \textit{unobstructed}. But this is a very restricted class. Instead, we consider Lagrangians with bounding cochains, and proceed as in \cite{akaho-joyce} to define the Floer cohomology.
\end{dfn}

\begin{rmk}\label{rmk:grading-structure}
	Given a Weinstein manifold $W$, $\cF(W)$ can be made $R$-graded if and only if $2c_1(W)=0\in H^2(W;R)$ for $R=\Z/N$ or $\Z$. In particular, $\cF(W)$ can always be made $\Z/2$-graded. Grading structure on Lagrangians depends on an initial choice on $W$, called a \textit{grading structure} on $W$, which is a trivialization $\eta_W^2$ of the bicanonical bundle $\kappa_W^2$, where $\kappa_W=(\bigwedge^{\dim W} T^{\text{hol}}W)^{-1}$ where $T^{\text{hol}}W$ is holomorphic tangent bundle of $W$. $\cF(W)$ depends only on the homotopy class of $\eta_W^2$, which gives $H^1(W;R)$ many choices if $\cF(W)$ is $R$-graded. In the case of $\Z/2$-graded $\cF(W)$, there is a canonical choice for $\eta_W^2$ such that the grading of an intersection point is even if the Lagrangians intersect positively, and odd otherwise. We will use this canonical choice when $\cF(W)$ is $\Z/2$-graded. See \cite{seidel} and \cite{graded} for more details.
\end{rmk}

If we allow noncompact Lagrangians, we need to deal with their intersection at infinity. We consider $W$ as a Weinstein domain $W^c$ with cylindrical end. We have two different ways to follow:

In the first approach, introduced by Nadler and Zaslow in \cite{nadler-zaslow}, we fix a singular closed Legendrian $\Lambda$ in $\partial W^c$. Then we consider exact immersed Lagrangians $L$ with cylindrical end in $W$ satisfying $L\cap\partial W^c\subset\Lambda$. Note that instead of asking for cylindrical ends, we could equivalently require our Lagrangians to be compactly exact by Proposition \ref{prp:cylindrical-end}.

As for morphisms, we choose Hamiltonians whose time-1 map separates the Lagrangians in the cylindrical end of $W$. For that, we make use of the following lemma which is a consequence of Curve Selection Lemma (see \cite[Lemma 3.1]{milnor-singular}):

\begin{lem}[{\cite{HMS}, \cite[Lemma 5.2.5]{nadler-zaslow}}]\label{lem:infinitesimally-wrapped}
	Let $\phi^t_R$ be the Reeb flow on a contact manifold $N$. For any two Legendrians $\Lambda_0$ and $\Lambda_1$ in $N$, there exists $\varepsilon>0$ such that $\phi^t_R(\Lambda_0)\cap \Lambda_1 =\emptyset$ for all $t\in(0,\varepsilon)$.
\end{lem}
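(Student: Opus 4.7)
The plan is to argue by contradiction using the Curve Selection Lemma. Suppose the conclusion fails, so there is a sequence $t_n \to 0^+$ and points $p_n \in \phi^{t_n}_R(\Lambda_0)\cap \Lambda_1$. Writing $p_n = \phi^{t_n}_R(x_n)$ with $x_n \in \Lambda_0$, and localising near an accumulation point (using the fact that properly embedded Legendrians meet a compact neighbourhood in a compact set), pass to a subsequence with $p_n \to p_0 \in \Lambda_1$; continuity of the Reeb flow forces $x_n \to p_0$ as well, so $p_0 \in \Lambda_0 \cap \Lambda_1$.

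Working in a neighbourhood of $p_0$ and, if necessary, approximating the contact form and the two Legendrians by real-analytic data, consider the semi-analytic set
\[S = \{(x,t)\in \Lambda_0\times(0,\infty)\vb \phi^t_R(x)\in \Lambda_1\}.\]
By the contradiction hypothesis, $(p_0,0)\in \overline{S}\setminus S$. The Curve Selection Lemma then produces a real-analytic arc $\gamma\colon [0,\delta)\to \overline{S}$ with $\gamma(0)=(p_0,0)$ and $\gamma((0,\delta))\subset S$. Write $\gamma(s)=(x(s),t(s))$, with $t(0)=0$ and $t(s)>0$ for $s>0$, and set $y(s) := \phi^{t(s)}_R(x(s))\in \Lambda_1$.

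The key computation is to differentiate. From
\[\dot y(s) \;=\; \dot t(s)\, R(y(s)) + (D\phi^{t(s)}_R)(\dot x(s))\]
and the facts $\dot x(s)\in T_{x(s)}\Lambda_0$, $\dot y(s)\in T_{y(s)}\Lambda_1$, applying the contact form $\alpha$ (with $\alpha(R)=1$ and $\alpha|_{\Lambda_0}=\alpha|_{\Lambda_1}=0$) together with the invariance $(\phi^t_R)^*\alpha=\alpha$ yields
\[0 \;=\; \dot t(s)\,\alpha(R(y(s))) + \alpha(\dot x(s)) \;=\; \dot t(s).\]
Hence $\dot t\equiv 0$ on $[0,\delta)$, and real-analyticity forces $t\equiv t(0)=0$, contradicting $t(s)>0$ for $s>0$.

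The main technical obstacle is justifying the reduction to real-analytic data so that the Curve Selection Lemma applies. Legendrians are a priori only smooth, so one either appeals to a definable version of Curve Selection in a suitable o-minimal structure (standard in the sheaf-theoretic literature following \cite{kashiwara-schapira}), or notes that the conclusion is stable under small $C^1$-perturbations and approximates the contact form and Legendrians by real-analytic ones. A secondary mild point is the localisation step for non-compact $\Lambda_0,\Lambda_1$, which is handled by the properness hypothesis (the Legendrian ends of Lagrangians with cylindrical end in the applications are closed submanifolds).
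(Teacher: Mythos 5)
Your argument is correct and is exactly the route the paper points to: the paper gives no proof of its own, only the remark that the lemma ``is a consequence of Curve Selection Lemma'' together with the citations to \cite{HMS} and \cite[Lemma 5.2.5]{nadler-zaslow}, and your contradiction--curve-selection--$\alpha$-computation (pairing the velocity of the selected arc with the contact form and using $(\phi^t_R)^*\alpha=\alpha$ to force $\dot t\equiv 0$) is precisely that standard proof. The one caveat is that your fallback of approximating by real-analytic data does not obviously work (an $\varepsilon$ for the approximants need not bound one for the original Legendrians), so the compactness and subanalyticity hypotheses implicit in the cited sources are genuinely needed --- but that is a defect of the lemma as stated in the paper rather than of your argument, which correctly isolates it as the technical point.
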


We choose a Hamiltonian $H$ which at infinity is of the form $H = h(e^r)$, where $r$ is the radial coordinate in the cylindrical end, and $h$ is a function with $h'\in(0,\varepsilon)$. Then the Hamiltonian vector field $X_H$ at infinity is $h'(e^r)$ times the Reeb vector field $R$ on $\partial W^c$, so its time-1 map perturbs $L_0$ infinitesimally at infinity. Hence by Lemma \ref{lem:infinitesimally-wrapped} it separates $L_0$ and $L_1$ in the cylindrical end, so that they only intersect in the compact domain $W^c$. The rest is as in Definition \ref{dfn:compact-Fukaya}.

\begin{dfn}
	We call the pretriangulated $A_{\infty}$-category with Lagrangians and morphisms as above \textit{infinitesimally wrapped Fukaya category} $\cF(W,\Lambda)$. See \cite{nadler-zaslow}, \cite{microbranes}, and \cite{HMS} for more details. Note that if $\Lambda=\emptyset$, then $\cF(W,\Lambda)=\cF(W)$.
\end{dfn}

\begin{rmk}
	In \cite{nadler-zaslow}, Nadler and Zaslow defined the $A_{\infty}$-category $\cF(T^*M,\Lambda)$ for a smooth manifold $M$ (not necessarily compact) for embedded Lagrangians. When $M$ is compact, $T^*M$ is a Weinstein manifold and this matches with our definition, because the triangulated envelope gives the same $A_{\infty}$-category. They proved the following important theorem relating constructible sheaves to the Fukaya category, which is be one of our main guides:
\end{rmk}

\begin{thm}[\cite{nadler-zaslow}, \cite{microbranes}]\label{thm:nadler-zaslow}
	There is an $A_{\infty}$-quasi-equivalence (microlocalisation)
	\[\mu_M\colon\Sh_{\Lambda}(M)\xrightarrow{\sim}\cF(T^*M,\Lambda)\]
	where $\mu_M$ sends the standard sheaf $i_*\cL_S$ associated to a local system $\cL_S$ on a submanifold $i\colon S\hookrightarrow M$ to the perturbation of its singular support $\sing(i_*\cL_S)\subset L_{\Lambda}$, which is the standard Lagrangian
	\[L_{S,m,\cL_S} := N^*S + \Gamma_{-d \log (m|_S)}\]
	for some $m\colon S\to\R_{\geq 0}$ whose zero set is precisely $\partial S$ and it is equipped with the canonical brane structure where the local system is coming from $\cL_S$. The standard sheaves, whose singular support inside $L_{\Lambda}$, generate $\Sh_{\Lambda}(M)$, hence $\mu_M$ is automatically defined for any arbitrarily $\cF\in\Sh_{\Lambda}(M)$.
\end{thm}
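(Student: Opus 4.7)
The plan is to construct the microlocalisation functor $\mu_M$ on a generating subcategory of standard sheaves and then extend it by passing to the triangulated envelope; the hard work is matching morphism spaces and higher $A_{\infty}$-products.

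First, I would reduce to a combinatorial model. Choose a Whitney stratification $\cS$ of $M$ which is a regular cell complex and refines the front projection of $\Lambda$, so that $L_{\Lambda}\subset N^*\cS$. By Corollary \ref{cor:sing-constructible} and Proposition \ref{prp:combinatorics-constructible}, the category $\Sh_{\Lambda}(M)$ sits as a full dg subcategory of $\Perfk(\cS)$, and the microsupport condition $\sing(\cF)\subset L_{\Lambda}$ becomes an explicit condition on the restriction/corestriction arrows between costalks. Moreover, the standard sheaves $i_*\cL_S$, for $i\colon S\hookrightarrow M$ the inclusion of a closed stratum (or stratum closure) and $\cL_S\in\Loc(S)$, generate this subcategory: every cell-gluing in $\cS$ can be written as an iterated cone of such standards, so once $\mu_M$ is defined and coherent on standards it extends to all of $\Sh_{\Lambda}(M)$.

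Next I would define $\mu_M$ on these standard objects. Given $i_*\cL_S$, one computes $\sing(i_*\cL_S)\subset 0_S\cup N^{*,-}S$ (inward conormals at $\partial S$), which is contained in $L_{\Lambda}$ by the compatibility of $\cS$ with $\Lambda$. To turn this into an honest Lagrangian brane one chooses a defining function $m\colon S\to\R_{\geq 0}$ vanishing precisely on $\partial S$ and replaces the singular set $N^*S$ by its smooth perturbation
\[
L_{S,m,\cL_S}:=N^*S+\Gamma_{-d\log(m|_S)},
\]
equipped with the canonical grading coming from the normal orientation and the local system $\cL_S$ pulled back along the projection. Near $\partial S$ the covector $-d\log m$ blows up along the inward conormal direction, so $L_{S,m,\cL_S}$ has cylindrical end inside $L_{\Lambda}$; hence it defines an object of $\cF(T^*M,\Lambda)$. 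Independence of the choice of $m$ up to Hamiltonian isotopy (through Lagrangians with asymptotics in $\Lambda$) shows the object is well-defined.

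Having fixed $\mu_M$ on objects, one matches morphisms. For two standard branes $L_{S,m,\cL_S}$ and $L_{S',m',\cL_{S'}}$ the time-$1$ flow of a suitable Hamiltonian of the form $h(e^r)$ with small positive slope makes them transverse, and their intersections correspond bijectively to critical points of $\log m-\log m'$ on the open stratum $S\cap S'$, with Floer differential computed by counting gradient flow lines on $S\cap S'$ with coefficients in $\Hom(\cL_S,\cL_{S'})$. On the sheaf side the relevant $\Hom$-complex is, via adjunction and the Whitney stratification, computed by a similar Morse cochain complex on $S\cap S'$ with the same local coefficients. Matching the two cochain complexes is essentially a Morse-theoretic bookkeeping exercise, and gives a quasi-isomorphism on morphisms between generators.

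The genuine obstacle is promoting this to an $A_{\infty}$-equivalence. The cleanest route, following the strategy of \cite{microbranes}, is to realise both sides as modules over the Morse/perturbation category of the stratification: on the Fukaya side via counting holomorphic polygons between standard branes, which degenerate under an adiabatic rescaling of the fibre direction to gradient trees on the base, and on the sheaf side via the explicit simplicial model of $\Sh_{\cS}(M)$ in Proposition \ref{prp:combinatorics-constructible}. Comparing the two $A_{\infty}$-structures stratum-by-stratum, using the acyclicity of the fibre perturbations to cancel non-gradient-tree contributions, yields the required higher products; this is where all of the analytic weight of the theorem sits. Once this is established, the functor is fully faithful on a generating set and essentially surjective onto $\cF(T^*M,\Lambda)$ by the description of standard branes, hence an $A_{\infty}$-quasi-equivalence of pretriangulated $A_{\infty}$-categories.
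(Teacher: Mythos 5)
This theorem is imported into the thesis as a black box: it is stated with citations to \cite{nadler-zaslow} and \cite{microbranes} and no proof is given in the paper, so there is no internal argument to compare yours against. Judged on its own terms, your sketch does reproduce the correct overall architecture of the Nadler--Zaslow proof (stratify, define $\mu_M$ on standard objects via the dilated conormal $N^*S+\Gamma_{-d\log(m|_S)}$, match morphism complexes Morse-theoretically, then compare $A_{\infty}$-structures by degenerating holomorphic polygons to gradient trees).

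However, as a proof it has two genuine gaps, both of which you acknowledge only in passing. First, the adiabatic-limit comparison of higher products (``where all of the analytic weight of the theorem sits,'' as you say) is not an exercise one can defer: the compactness and gluing analysis for the degeneration of pseudoholomorphic polygons to gradient trees, and the cancellation of non-tree contributions, is the core of the first cited paper and cannot be taken as given in a proof of the statement. Second, and more importantly, your final sentence asserts essential surjectivity ``by the description of standard branes,'' but showing that an \emph{arbitrary} object of $\cF(T^*M,\Lambda)$ is quasi-isomorphic to a twisted complex of standard branes is precisely the content of the second reference (\cite{microbranes}); the first paper only establishes a full and faithful embedding $\Sh_{\Lambda}(M)\hookrightarrow\cF(T^*M,\Lambda)$. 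Without that step you have an embedding, not a quasi-equivalence. A minor further point: the identification of $\CF^*(L_{S,m,\cL_S},L_{S',m',\cL_{S'}})$ with $\Hom$ of sheaves is not literally a count of critical points of $\log m-\log m'$ on $S\cap S'$; one must first separate the branes at infinity (via the fringed-set/variable-dilation argument) before the intersection points organize into a Morse-type complex computing $R\Gamma(S\cap S';\cdot)$ with the appropriate local coefficients.
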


In the second approach, we consider exact immersed Lagrangians $L$ with cylindrical end in $W$. To define the morphisms between give two Lagrangians $L_0$ and $L_1$, we perturb $L_0$ in the direction of Reeb vector field, but this time instead of perturbing $L_0$ infinitesimally away from $L_1$, we wrap $L_0$ around $L_1$ with increasing speed as $r$ increases, where $r$ is the radial coordinate of the cylindrical end, by choosing a Hamiltonian $H$ such that $H= h(e^r)$ at infinity with $\lim_{r\to\infty} h'(e^r)=\infty$. Then we define the morphisms as in Definition \ref{dfn:compact-Fukaya}. Equivalently, they can be characterised by the perturbed intersection points of $L_0$ and $L_1$ in $W^c$, and the Reeb chords of arbitrary length from $L_0\cap \partial W^c$ to $L_1\cap \partial W^c$. In that case, the $A_{\infty}$-operations count punctured pseudoholomorphic polygons in $W^c$ whose boundaries lie inside Lagrangians and which converge asymptotically to the Reeb chords at the punctures. See \cite{ekholm-lekili} for details.

\begin{dfn}
	We call the pretriangulated $A_{\infty}$-category with Lagrangians and morphisms as above \textit{wrapped Fukaya category} $\cW(W)$. The morphism complex between Lagrangians $L_0$ and $L_1$ is denoted by $\CW^*(L_0,L_1)$ and its cohomology by $\HW^*(L_0,L_1)$. See \cite{abouzaid-wrapped}, \cite{HMS}, and \cite{CDRGG} for more details, and also for the equivalent definition by ``direct limit''.
\end{dfn}

One has the following generation result for wrapped Fukaya categories, which will be one of our main tool for calculations:

\begin{thm}[\cite{CDRGG}]\label{thm:cdrgg}
	If $W$ is a (finite-type) Weinstein manifold, then its wrapped Fukaya category $\cW(W)$ is generated by the Lagrangian cocores.
\end{thm}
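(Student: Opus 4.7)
The plan is to combine three ingredients: (1) the Weinstein handle decomposition from Definition~\ref{dfn:weinstein-handle}, (2) an acyclicity statement for subcritical Weinstein manifolds, and (3) a generation criterion expressed through the open-closed map. The overall philosophy is that the \emph{only} nontrivial symplectic homology classes come from critical handles, so the cocores, which sit in every critical handle, ought to account for all of $\cW(W)$.

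First, I would fix an exhausting Morse Lyapunov function on $W$ and consider the resulting handle decomposition. Since $W$ is $2n$-dimensional, Weinstein handles have index at most $n$, and only the critical ($n$-index) handles carry Lagrangian cocores; call them $C_1,\ldots,C_k$. Let $W_{\mathrm{sub}} \subset W$ denote the sublevel set containing all subcritical handles but none of the critical ones, and set $\Lambda := \bigsqcup_i \partial C_i \subset \partial W_{\mathrm{sub}}$, the Legendrian attaching link of the critical handles. Then $W$ is reconstructed from $W_{\mathrm{sub}}$ by Weinstein handle attachment along $\Lambda$ in the sense of Definition~\ref{dfn:weinstein-handle}.

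Second, I would invoke an Abouzaid-type generation criterion in the wrapped setting: a full $A_{\infty}$-subcategory $\cA \subset \cW(W)$ split-generates $\cW(W)$ provided the open-closed map
\[
OC \colon HH_*(\cA,\cA) \longrightarrow SH^*(W)
\]
has the unit $1 \in SH^0(W)$ in its image. The task then reduces to verifying this for $\cA$ spanned by the cocores $\{C_i\}$. Third, I would exploit the Bourgeois-Ekholm-Eliashberg surgery long exact sequence of \cite{bee}: since $SH^*(W_{\mathrm{sub}})=0$ by Cieliebak's vanishing theorem for subcritical Weinstein manifolds, the sequence collapses to an isomorphism $SH^*(W) \simeq HC^*(\CE^*(\Lambda))$ with the positive cyclic homology of the Chekanov-Eliashberg dga. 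Simultaneously, \cite{bee} and \cite[Theorem~2]{ekholm-lekili} identify the wrapped endomorphism algebra $\CW^*(\sqcup_i C_i, \sqcup_i C_i)$ with $\CE^*(\Lambda)$. Under these identifications, the open-closed map becomes the canonical map $HH_*(\CE^*(\Lambda)) \to HC^*(\CE^*(\Lambda))$ appearing in the Connes long exact sequence, and the unit lies in its image by standard cyclic homology of dgas.

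The main obstacle is matching the Floer-theoretic open-closed map with the algebraic Connes map at the chain level. This requires an SFT neck-stretching along $\partial W_{\mathrm{sub}}$: holomorphic curves contributing to $OC$ must be shown to decompose into curves in the completion of $W_{\mathrm{sub}}$ (which ultimately disappear because $SH^*(W_{\mathrm{sub}})=0$) and punctured curves in the critical handle region with Reeb chord asymptotics on $\Lambda$ (which should reproduce the cyclic complex of $\CE^*(\Lambda)$). Establishing coherent orientations and transversality for moduli spaces with mixed Lagrangian-Legendrian boundary conditions, and ensuring the compactification respects the algebraic structure of the Connes sequence, is the technical bottleneck; this is precisely the content of \cite{CDRGG}.
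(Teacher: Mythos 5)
The paper does not prove this theorem; it is imported from \cite{CDRGG}, so the relevant comparison is with the argument given there, and your proposal is not that argument. More importantly, your route has two genuine gaps. First, Abouzaid's open-closed criterion, even when its wrapped-setting hypotheses are verified, yields only \emph{split}-generation: the cocores recover $\cW(W)$ only after additionally closing up under direct summands. The theorem as stated here, and as used later in the thesis (to conclude $\cW(B_{p,1})\simeq\Perf(\CE^*(\Lambda_{p,1}))$, where $\Perf$ is by definition the envelope under cones and shifts only), asserts genuine generation. No argument that only sees $HH_*(\cA)\to SH^*(W)$ can distinguish a subcategory from its idempotent completion, so your scheme proves a strictly weaker statement than the one needed.

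Second, the step ``the unit lies in its image by standard cyclic homology of dgas'' does not hold up. The unit $1\in SH^0(W)$ is the image of $1\in H^*(W)$ under the canonical map $H^*(W)\to SH^*(W)$, and there is no algebraic reason a Hochschild class must hit it; moreover the surgery formula of Bourgeois--Ekholm--Eliashberg identifies $SH_*(W)$ with the \emph{Hochschild} homology of $\CE^*(\Lambda)$ (cyclic homology computes the $S^1$-equivariant theory), so the map $I\colon HH_*\to HC_*$ from the Connes sequence is not the relevant comparison, and $I$ is not surjective in general anyway. Matching the Floer-theoretic $OC$ with the surgery formula at the level of the unit is exactly the unestablished analytic content, and deferring it to \cite{CDRGG} does not work because that is not what \cite{CDRGG} do: they deliberately avoid the surgery formula (whose foundations were incomplete) and argue directly. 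Their proof pushes an arbitrary exact cylindrical Lagrangian $L$ by the Liouville flow into a neighbourhood of the skeleton so that it meets the cocores transversely in finitely many points, builds an explicit twisted complex on copies of the cocores indexed by these intersection points together with a closed morphism to $L$, and shows by a holomorphic-curve count (their Floer theory for Lagrangian cobordisms) that this morphism is a quasi-isomorphism; it is this direct construction that produces generation by cones and shifts rather than split-generation.
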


\begin{rmk}
	If $W=T^*M$ with the standard Weinstein structure and compact $M$, the result can be translated to ``a cotangent fibre of $M$ generates $\cW(T^*M)$'', as proved by Abouzaid in \cite{abouzaid-wrapped-generation}, see Example \ref{exm:cotangent-cocore}. Also he proved that:
\end{rmk}

\begin{thm}[\cite{abouzaid-loops}]\label{thm:abouzaid-wrapped-loop}
	The endomorphism algebra of a cotangent fibre of $M$ at a point $p\in M$ is given by
	\[\CW^*(T_p^*M,T_p^*M)\simeq C_{-*}(\Omega_p M)\]
	hence if $M$ is connected, there is an $A_{\infty}$-quasi-equivalence
	\[\cW(T^*M)\simeq\Perf(C_{-*}(\Omega M))\ .\]
\end{thm}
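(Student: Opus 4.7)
The plan is to split the statement into two parts: (a) identify the endomorphism algebra $\CW^*(T_p^*M, T_p^*M)$ with $C_{-*}(\Omega_p M)$ as $A_{\infty}$-algebras, and then (b) deduce the global quasi-equivalence from (a) together with Theorem \ref{thm:cdrgg}. For part (b), if $M$ is connected we may choose a Morse function on $M$ with a single top-index critical point at $p$ (or pass through a Weinstein homotopy), so that $T_p^*M$ appears as a Lagrangian cocore in the sense of Example \ref{exm:cotangent-cocore}; by Theorem \ref{thm:cdrgg} it then generates $\cW(T^*M)$, giving
\[\cW(T^*M)\simeq\Perf(\CW^*(T_p^*M,T_p^*M))\simeq\Perf(C_{-*}(\Omega_p M))\simeq\Perf(C_{-*}(\Omega M))\]
where the last quasi-equivalence uses connectedness of $M$ (different basepoints yield quasi-isomorphic dgas via any path between them).

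The heart of the argument is part (a). Fix a Riemannian metric on $M$, so that the Reeb flow on $T^{\infty}M$ corresponds to the normalised geodesic flow. The Reeb chords from $\partial(T_p^*M)$ to itself of arbitrary length are then in bijection with non-constant geodesic loops at $p$, and the constant chord corresponds to the constant loop. By the direct-limit model of the wrapped category, $\CW^*(T_p^*M,T_p^*M)$ is quasi-isomorphic to the Floer complex for a quadratic Hamiltonian $H\sim\tfrac12|p|^2$, whose time-$1$ chords are exactly the based geodesic loops at $p$. I would then identify this complex with the Morse complex of the energy functional on the based path space $\Omega_p M$ via the Salamon--Weber / Abbondandolo--Schwarz adiabatic limit, which degenerates the Floer strip equation to the heat (or geodesic) equation; classical Morse theory on $\Omega_p M$ (Milnor, Serre) then identifies this with $C_{-*}(\Omega_p M)$.

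To upgrade this module-level identification to an $A_{\infty}$-quasi-isomorphism, I would follow Abouzaid's construction in \cite{abouzaid-loops}: build an $A_{\infty}$-homomorphism from a convenient dg model of $C_{-*}(\Omega_p M)$ (cubical chains equipped with Pontryagin concatenation) into $\CW^*(T_p^*M,T_p^*M)$ by counting parametrised families of pseudoholomorphic half-strips in $T^*M$ with moving boundary condition determined by a cubical chain of based loops. The $A_{\infty}$-relations follow from the standard codimension-one boundary analysis of these moduli, and the fact that the map is a quasi-isomorphism is checked by an action filtration argument reducing to the module-level statement. Once this is in place, (a) is established and the theorem follows by combining with (b).

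The main obstacle is the $A_{\infty}$-level comparison: producing a well-defined count of the mixed holomorphic/topological moduli (involving boundary evaluation to $\Omega_p M$) and matching the operadic structure of the Fukaya $\mu^n$ with the Pontryagin concatenation up to coherent higher homotopies. The required transversality and compactness for these parametrised moduli, together with the adiabatic analysis comparing Floer strips to gradient flow lines of the energy functional, are the essential analytic inputs, and this is where one must rely on Abouzaid's technical framework rather than rederive it here.
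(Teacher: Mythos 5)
The paper does not actually prove this theorem: it is imported verbatim from the literature (Abouzaid's work on based loops), so there is no internal argument to compare yours against. Your sketch is a faithful reconstruction of the standard proof strategy and the decomposition into (a) the local computation of $\CW^*(T_p^*M,T_p^*M)$ and (b) the passage to the global statement via generation by a cotangent fibre is exactly right; for (b) note that you should assume $M$ closed (as the paper does in Example \ref{exm:cotangent-cocore}) so that $T^*M$ is a finite-type Weinstein manifold and Theorem \ref{thm:cdrgg} (or Abouzaid's earlier generation result, which is what the paper actually cites in the remark following Theorem \ref{thm:cdrgg}) applies. Two small points on (a): first, the module-level identification via Reeb chords $=$ geodesic loops and the Abbondandolo--Schwarz/Salamon--Weber comparison is the right input, but one must be careful that the relevant isomorphism is with the Morse homology of the \emph{based} loop space for the energy functional with fixed endpoints, with the grading reversal built into $C_{-*}$; second, in Abouzaid's actual construction the $A_{\infty}$-morphism runs from the wrapped Floer complex \emph{to} cubical chains on $\Omega_p M$, obtained by evaluating the projected boundary arcs of holomorphic half-discs into the path space, rather than from chains into the Floer complex with moving boundary conditions (the latter is closer to the generation argument). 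Since you explicitly defer the analytic core (transversality, compactness, the adiabatic degeneration, and the coherence of the evaluation maps) to Abouzaid's framework, your proposal is at the same level of rigour as the paper's own treatment, which is simply to cite the result.
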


For a general Weinstein manifold $W$ presented by a Legendrian surgery diagram, we have a very important surgery formula when $\k$ is a field of characteristic zero, which describes the endomorphism algebra of Lagrangian cocores:

\begin{thm}[{\cite[Theorem 5.8]{bee}, \cite[Theorem 2]{ekholm-lekili}}]\label{thm:bee}
	Assume $W$ has $k$ many critical handles whose attaching spheres are the Legendrians $\Lambda_i$ on the boundary of the subcritical part of $W$, and whose Lagrangian cocores are $L_i$ for $i=1,\ldots,k$. Define the Legendrian $\Lambda:=\bigoplus_{i=1}^k\Lambda_i$ and the Lagrangian $L:=\bigoplus_{i=1}^k L_i$. Then we have
	\[\CW^*(L,L)\simeq\CE^*(\Lambda)\]
	where $\CE^*(\Lambda)$ is the \textit{Chekanov-Eliashberg dga} of $\Lambda$ (called the Legendrian homology algebra $\text{LHA}(\Lambda)$ in \cite{bee}).
\end{thm}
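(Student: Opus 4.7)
The plan is to prove the surgery formula via SFT-style neck-stretching across the contact hypersurface that separates the subcritical part of $W$ from the critical handles, following the strategy of \cite{bee} and \cite{ekholm-lekili}. Write $W_0\subset W$ for the subcritical Weinstein subdomain obtained by omitting the $k$ critical handles, and let $Y:=\partial W_0$ be its contact boundary, which contains $\Lambda=\bigoplus_{i=1}^k\Lambda_i$. After completing $W_0$ and choosing a wrapping Hamiltonian of the form $H=h(e^r)$ with $h'(e^r)\to\infty$ on the cylindrical end, the cocores $L_i$ extend to cylindrical-end Lagrangian disks whose Legendrian asymptotes are isotopic to the attaching spheres $\Lambda_i$. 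Thus the generators of $\CW^*(L,L)$ are in natural bijection with Reeb chords between the $\Lambda_i$ in $Y$, which are precisely the generators of $\CE^*(\Lambda)$.

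Next I would compare the $A_\infty$-operations on $\CW^*(L,L)$ with the dga differential on $\CE^*(\Lambda)$. Stretching the neck along $Y$ degenerates every pseudoholomorphic polygon contributing to $\mu^n$ into a building consisting of a top piece living in the symplectisation $Y\times\R$ with boundary on the cylindrical continuation of $L$, together with possibly several bottom pieces living in the completion $\widehat{W_0}$ with boundary on the caps $L_i^c$ of the cocores inside the critical handles. The top pieces are exactly the punctured holomorphic disks whose counts define the Chekanov-Eliashberg differential on $\CE^*(\Lambda)$. An action and index comparison then shows that in the correct degree/energy range only configurations contributing to $\mu^1$ and $\mu^2$ survive, so that the resulting $A_\infty$-structure collapses to an honest dga matching the one on $\CE^*(\Lambda)$.

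The main obstacle will be controlling the bottom pieces in $\widehat{W_0}$. A priori, holomorphic planes and half-planes in the subcritical manifold with boundary on the cocore caps could produce extra Maurer-Cartan corrections to the differential. Here one invokes the flexibility of subcritical Weinstein manifolds: by the $h$-principle of Cieliebak-Eliashberg, these contributions can be packaged into an augmentation $\varepsilon\colon\CE^*(\Lambda)\to\k$, and a linearisation/twisting argument identifies the resulting twisted dga with $\CE^*(\Lambda)$ itself. The characteristic zero hypothesis enters precisely at this step, where one needs to solve for bounding cochains and assemble the corrections into a genuine $A_\infty$-morphism, operations that require division by integers. The heaviest analytic ingredient is the SFT compactness and gluing theorem of Bourgeois-Eliashberg-Hofer-Wysocki-Zehnder, which guarantees that the moduli spaces degenerate in the expected way as the neck length tends to infinity and that the counts on both sides match.
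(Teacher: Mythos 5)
This statement is not proved in the thesis at all: it is imported verbatim from \cite{bee} and \cite{ekholm-lekili} and used as a black box, so there is no internal proof to compare your argument against. Judged on its own terms, your sketch correctly reproduces the broad strategy of those references (completing the subcritical part $W_0$, identifying generators of $\CW^*(L,L)$ with Reeb chords of $\Lambda$ in $\partial W_0$, and matching holomorphic curve counts by stretching the neck along $\partial W_0$), but it is an outline of a several-hundred-page program rather than a proof, and it defers every substantive point — SFT compactness and gluing, coherent orientations, and the treatment of the bottom-level curves — to the literature.

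One step as written is actually wrong: you claim that an action/index argument shows ``only configurations contributing to $\mu^1$ and $\mu^2$ survive, so that the resulting $A_\infty$-structure collapses to an honest dga.'' That is not what the theorem asserts and not how the cited proofs go. The statement $\CW^*(L,L)\simeq\CE^*(\Lambda)$ is a quasi-isomorphism: $\CE^*(\Lambda)$ happens to be a (semifree) dga, while $\CW^*(L,L)$ retains its full $A_\infty$-structure, and the content of \cite{bee} and \cite{ekholm-lekili} is the construction of a quasi-isomorphism between them, not a vanishing result for $\mu^{\geq 3}$ on the Floer side. Relatedly, your explanation of where characteristic zero enters (``solving for bounding cochains'') is speculative; in the cited works the hypothesis is tied to the orientation and transversality scheme for the relevant moduli spaces (anchored curves and their symmetries), not to an obstruction-theoretic division. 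If you intend to use this theorem, cite it as the thesis does; if you intend to prove it, the neck-stretching skeleton is the right one but each of the deferred ingredients is itself a major theorem.
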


Hence, Theorem \ref{thm:cdrgg} and \ref{thm:bee} together imply that for a Weinstein manifold $W$ with a Legendrian surgery diagram where the attaching spheres of the critical handles are the Legendrians $\{\Lambda_i\}_{i=1}^k$ on the boundary of the subcritical part of $W$, we have
\[\cW(W)\simeq\Perf(\CE^*(\Lambda))\]
where $\Lambda:=\bigoplus_{i=1}^k\Lambda_i$. In our calculations, we have a combinatorial description of $\CE^*(\Lambda)$, see Section \ref{sec:wrapped-rational}.

For a given singular closed Legendrian $\Lambda$ in $\partial W^c$, Sylvan in \cite{sylvan} defined:

\begin{dfn}
	The \textit{partially wrapped Fukaya category} $\cW(W,\Lambda)$ is $A_{\infty}$-subcategory of $\cW(W)$ consisting of objects which avoid $\Lambda$ and the only Reeb chords it contains as morphisms are the ones which do not intersect with $\Lambda$. Hence we call $\Lambda$ \textit{stop}. Note that if $\Lambda=\emptyset$, then $\cW(W,\Lambda)=\cW(W)$.
\end{dfn}

For $W=T^*M$, Ganatra, Pardon, and Shende proved the ``wrapped'' version of Theorem \ref{thm:nadler-zaslow}, which is the main motivation for our Corollary \ref{cor:bp1-lp1}:

\begin{thm}[\cite{gps3}]\label{thm:gps}
	There is an $A_{\infty}$-quasi-equivalence
	\[\Sh^w_{\Lambda}(M)\simeq\cW(T^*M,\Lambda)\ .\]
\end{thm}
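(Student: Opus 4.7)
The plan is to prove the equivalence by identifying compact generators on both sides, matching their endomorphism algebras, and then promoting the identification to an $A_\infty$-quasi-equivalence via a sectorial descent argument.

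First, I would exhibit compact generators on each side. On the Fukaya side, Theorem \ref{thm:cdrgg} applied to the Weinstein sector adjusted to the Weinstein pair $(T^*M,\Lambda)$ says that $\cW(T^*M,\Lambda)$ is generated by its Lagrangian cocores; geometrically these are small Lagrangian disks transverse to the smooth top-dimensional strata of $\Lambda\subset T^{\infty}M$. On the sheaf side, I would argue that $\Sh^w_{\Lambda}(M)$ is generated by ``linking disk'' or ``microstalk corepresentative'' objects, one for each smooth component of $\Lambda$: the sheaf corepresenting the microstalk functor $\cF\mapsto \cF_{(x,[p])}$ at a smooth point $(x,[p])\in\Lambda$ lies in $\Sh\dd_{\Lambda}(M)$ and is compact by construction. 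Their generation of the compact objects should follow from a microlocal cut-off argument combined with a combinatorial description in the spirit of Propositions \ref{prp:combinatorics-constructible} and \ref{prp:mod-perf}, where one stratifies $M$ finely enough that $L_{\Lambda}\subset N^*\cS$ is a regular cell complex.

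Second, I would match the endomorphism algebras of the two families of generators. By Theorem \ref{thm:bee}, the endomorphism algebra of the sum $L$ of all Lagrangian cocores is the Chekanov--Eliashberg dga $\CE^*(\Lambda)$. The corresponding computation on the sheaf side asks for the endomorphism algebra of the sum of linking-disk sheaves; I would compute this by wrapping one linking disk past another using the Reeb flow on $T^{\infty}M$ and matching long Reeb chords to morphisms between microstalk-corepresentatives. Here Theorem \ref{thm:nadler-zaslow} provides the comparison for compactly supported wrapping (the infinitesimally wrapped case), and the wrapped case is obtained by taking a colimit over increasing Hamiltonian perturbations on the Fukaya side and over the analogous positive isotopies on the sheaf side.

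Third, I would upgrade the identification on generators to a full $A_\infty$-quasi-equivalence via the microlocalisation functor $\mu_M$ of Theorem \ref{thm:nadler-zaslow}, sending a sheaf to the Lagrangian obtained by perturbing its singular support with brane data read off from the sheaf. The main obstacle is verifying compatibility of the higher $A_\infty$-operations $\mu^n$: on the Fukaya side these count punctured pseudoholomorphic polygons with boundary on Lagrangians and converging asymptotically to arbitrarily long Reeb chords, while on the sheaf side the composition is purely algebraic/combinatorial. The cleanest way to bridge these, and what I would expect to carry out in practice, is to prove a sectorial descent (Mayer--Vietoris) theorem for both sides simultaneously, cover $T^*M$ by local Weinstein sectors whose skeleta are pieces of $L_{\Lambda}$, reduce to standard local models (cotangent bundles of half-spaces with model Legendrian stops) on which both categories are computable by hand, and then glue. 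The hard technical core is establishing this descent property for $\Sh^w_{\Lambda}(M)$; the Fukaya-side descent is known from the theory of Liouville/Weinstein sectors, so essentially all the novelty sits in showing that the functor $\mu_M$ intertwines the two descent structures.
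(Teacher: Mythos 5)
This statement is not proved in the paper at all: it is imported verbatim from \cite{gps3}, so there is no internal proof to compare your proposal against. Judged against the actual argument in the cited source, your overall architecture is the right one --- generation by cocores/linking disks on the Fukaya side versus microstalk corepresentatives on the sheaf side, matching of endomorphisms, and a local-to-global (sectorial descent) argument reducing to computable local models --- but two of your specific mechanisms are not how the proof goes, and one of them would genuinely fail.

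First, you propose to compute the endomorphism algebra of the cocores via Theorem \ref{thm:bee}. That surgery formula is only available over a field of characteristic zero, whereas the theorem as stated (and as proved in \cite{gps3}) holds over an arbitrary coefficient ring; Ganatra--Pardon--Shende deliberately avoid the Legendrian surgery formula and instead compute morphisms between linking disks directly, by a ``wrapping exactly once'' analysis near the stop, matching the answer with $\mu hom$/microstalk morphisms on the sheaf side. Second, and more seriously, you propose to upgrade the identification to an $A_\infty$-equivalence using the Nadler--Zaslow functor $\mu_M$ of Theorem \ref{thm:nadler-zaslow}. Controlling the higher $A_\infty$-operations through $\mu_M$ is exactly the step that is not known how to carry out directly, and \cite{gps3} does not do this: the functor is constructed abstractly, by showing that the microstalk functors on the sheaf side are corepresented by compact objects and that the linking disks corepresent the corresponding Floer-theoretic functors, so that the equivalence is forced by generation plus full faithfulness on generators rather than built from a geometric assignment of Lagrangians to sheaves. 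Finally, you have the difficulty of the descent step inverted: descent for $\Sh^w_\Lambda(M)$ is essentially automatic, since the Kashiwara--Schapira stack is a (co)sheaf of categories by construction, while descent for partially wrapped Fukaya categories of sectors is the hard theorem, established in \cite{gps2} and used as the main input in \cite{gps3}.
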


Note that this implies Theorem \ref{thm:abouzaid-wrapped-loop} partially by setting $\Lambda=\emptyset$, which gives
\[\cW(T^*M)\simeq\Loc^w(M)\simeq \Perf(C_{-*}(\Omega M))\]
where the last equivalence is shown in Example \ref{exm:wrapped-local-loop}. Also, it is possible to get the rest of the theorem as explained by \cite[Corollary 6.1]{gps3}.

\begin{rmk}
	As remarked in \cite{auslander}, given $\Lambda\in\partial W^c$ we have functors
	\[\cF(W) \to \cF(W,\Lambda) \to \cW(W,\Lambda) \to \cW(W)\]
	where the first two functors are full and faithful embeddings, and the last functor is the composition
	\[\cW(W,\Lambda) \to\cW(W,\Lambda)/\cD \to \cW(W)\]
	where $\cD$ is the full subcategory of $\cW(W,\Lambda)$ generated by the objects supported near $\Lambda$, the first functor is the localisation functor, and the second one is induced by the inclusion $\cW(W,\Lambda) \to \cW(W)$, which is an $A_{\infty}$-quasi-equivalence by \cite[Theorem 1.16]{gps2}.
\end{rmk}

\subsection{Kashiwara-Schapira Stack and Microlocal Sheaves}\label{sec:microlocal}

Let $M$ be a smooth manifold, and $\Lambda$ be a closed singular Legendrian in $T^{\infty}M$. In the previous section, Nadler-Zaslow theorem (Theorem \ref{thm:nadler-zaslow}) gave the equivalence
\[\cF(T^*M,\Lambda)\simeq\Sh_{\Lambda}(M)\]
and Ganatra-Pardon-Shende theorem (Theorem \ref{thm:gps}) gave the equivalence
\[\cW(T^*M,\Lambda)\simeq \Sh^w_{\Lambda}(M)\ .\]
By Proposition \ref{prp:combinatorics-constructible}, Corollary \ref{cor:sing-constructible}, and Proposition \ref{prp:mod-perf}, we saw that $\Sh_{\Lambda}(M)$ and $\Sh^w_{\Lambda}(M)$ can be calculated combinatorially. These are great results for the computability of $\cF(T^*M,\Lambda)$ and $\cW(T^*M,\Lambda)$.

A natural question to ask is whether these relations generalise to Weinstein manifolds. For that, by noting that $M$ can be seen as the skeleton of $T^*M$, we can write
\begin{align*}
	\cF(T^*M,\Lambda)&\simeq\Sh_\Lambda(\fX_{T^*M})\\
	\cW(T^*M,\Lambda)&\simeq\Sh^w_\Lambda(\fX_{T^*M})
\end{align*}
which suggest us to replace $T^*M$ with a general Weinstein manifold $W$ equipped with a closed singular Legendrian $\Lambda\subset \partial W^c$ to generalise the result.

One can quickly notice that the category $\Sh_{\Lambda}(\fX_W)$ does not make sense in general since $\fX_W$ does not need to be a smooth manifold, instead it is stratified by isotropic submanifolds of $W$ by Proposition \ref{prp:isotropic-skeleton}. However, we can proceed as follows: The Weinstein pair $(W,\Lambda)$ corresponds to a Weinstein sector $W'$ whose skeleton is $\fX_{W'}=L_{\Lambda}$. We can perturb the Weinstein structure of $W'$ in such a way that for each open set $V\subset L_{\Lambda}$, we have a conic open set $U\subset W'$ such that $U\cap L_{\Lambda}=V$. Considered with its boundary, $U$ is a Weinstein sector with $\fX_{U}=V$. In particular, if $V$ is a smooth Lagrangian, then $U$ can be seen as the cotangent bundle $T^*V$, for which we have the equivalences
\begin{align*}
	\cF(U)&\simeq\Loc(V)\\
	\cW(U)&\simeq\Loc^w(V)\ .
\end{align*}
If $V$ is singular, we do not have such simple description, but we have the following conjecture for infinitesimal Fukaya categories:

\begin{con}\label{con:microlocal}
	Let $W$ be a Weinstein manifold and $\Lambda\subset\partial W^c$ be a closed singular Legendrian. There is a sheaf of (pretriangulated) dg categories $\mSh_{W,\Lambda}$ on the conic isotropic subset $L_{\Lambda}$ satisfying
	\[\mSh_{W,\Lambda}(V)\simeq\cF(U',\Lambda')\]
	where $V$ is an open set of $L_{\Lambda}$, $U$ is a conic open subset of $W'$ with $U\cap L_{\Lambda}=V$, $W'$ is the Weinstein sector corresponding to $(W,\Lambda)$, and $(U',\Lambda')$ is the Weinstein pair corresponding to $U$. In particular, its global sections are
	\[\mSh_{W,\Lambda}(L_{\Lambda})\simeq \cF(W,\Lambda)\ .\]
\end{con}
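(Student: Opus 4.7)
The plan is to construct $\mSh_{W,\Lambda}$ as a Kashiwara--Schapira-type stack on $L_{\Lambda}$, defined locally by transporting the problem to a cotangent bundle. First, I would cover $L_{\Lambda}$ by small open pieces $V$ such that there is a conic open $U\subset W'$ with $U\cap L_{\Lambda}=V$ and an exact symplectic embedding of $U$ into some $T^*M$ produced by the Weinstein Lagrangian neighbourhood theorem. Under this embedding $V$ is sent to a (possibly singular) conic Lagrangian whose intersection with the unit cosphere bundle gives a singular Legendrian $\Lambda_V\subset T^{\infty}M$. On such a chart I would set
\[\mSh_{W,\Lambda}(V):=\Sh_{\Lambda_V}(M)_0\]
where the subscript $0$ (the dg quotient by $\Loc(M)$ as in Definition \ref{dfn:sheaf-category-sing}) accounts for the spurious zero section introduced by the cotangent model. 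The presheaf structure comes from ordinary restriction of sheaves, and independence of the chosen chart follows from the Guillermou--Kashiwara--Schapira invariance theorem: any two such embeddings differ on the contact boundary by a compactly supported contact isotopy, which induces a quasi-equivalence on $\Sh_{\Lambda}$.

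Second, I would promote this presheaf to an actual sheaf of pretriangulated dg categories by verifying a homotopy descent condition: for an open cover $V=V_1\cup V_2$, the value $\mSh_{W,\Lambda}(V)$ should be the homotopy limit in $\dgCat$ of the diagram $\mSh_{W,\Lambda}(V_1)\rightarrow\mSh_{W,\Lambda}(V_1\cap V_2)\leftarrow\mSh_{W,\Lambda}(V_2)$. This reduces to the classical Mayer--Vietoris property for complexes of sheaves with prescribed singular support, and in the combinatorial setting it is exactly the gluing formalism developed in Section \ref{sec:gluing}.

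Third, for the local identification $\mSh_{W,\Lambda}(V)\simeq\cF(U',\Lambda')$ at points where $V$ lies in a smooth Lagrangian stratum, the chart is exact symplectomorphic to a neighbourhood in $T^*V$, and the identification is precisely the Nadler--Zaslow equivalence (Theorem \ref{thm:nadler-zaslow}). For lower-dimensional isotropic strata I would reduce to the Lagrangian case using the stabilisation property (Proposition \ref{prp:stabilisation}), adding the missing codirections as an auxiliary $\R^k$ factor and verifying that the added fibre is invisible to both sides.

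The main obstacle is the global statement $\mSh_{W,\Lambda}(L_{\Lambda})\simeq\cF(W,\Lambda)$. The sheaf-theoretic side satisfies descent along $L_{\Lambda}$ by construction, so the question reduces to whether the infinitesimally wrapped Fukaya category of $(W,\Lambda)$ is recovered as a homotopy limit of the Fukaya categories of a covering family of Weinstein subsectors. For the \emph{wrapped} analogue (Conjecture \ref{con:microlocal-wrapped}) this is precisely what Ganatra--Pardon--Shende proved (Theorem \ref{thm:gps} and its sectorial upgrades), and one would hope to adapt their stop-removal and Viterbo restriction machinery to the compact setting. This adaptation is the genuinely hard step and is the reason the statement remains a conjecture; the present thesis sidesteps it by computing both sides independently for $W=B_{p,1}$ in the wrapped case.
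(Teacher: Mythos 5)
This statement is a conjecture, and the paper does not prove it: the text surrounding Conjecture \ref{con:microlocal} only constructs a \emph{candidate} stack $\mSh_{W,\Lambda}$ (Definition \ref{dfn:microlocal}) via local cotangent models and the Kashiwara--Schapira stack, lists a series of open problems about it, and explicitly states that it is not known whether this candidate satisfies the conjecture. Your proposal follows essentially the same route as the paper's informal construction (local Darboux charts, KS stack, GKS invariance, descent in $\dgCat$, Nadler--Zaslow on smooth strata), and you correctly identify that the genuinely hard and open step is recovering $\cF(W,\Lambda)$ as a homotopy limit of local Fukaya categories; so as an honest assessment of the state of the statement your write-up is accurate.

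Two concrete points where your sketch falls short even of the paper's discussion. First, you assert that the image of $V$ in the cotangent chart is a conic Lagrangian whose intersection with the cosphere bundle yields $\Lambda_V$; this is precisely the paper's Problem 1 --- the transported skeleton $\varphi(V)$ need in general \emph{not} be conic in $T^*\R^n$ (already for a $p$-valent vertex with $p>4$ in $T^*\R$). The paper's fix is to use that $V$ is conic, hence exact (Proposition \ref{prp:conic-exact}), so $\varphi(V)$ is exact (Proposition \ref{prp:exact-to-exact}), choose a primitive $f$ of the Liouville form, and lift to a genuine Legendrian $\Lambda_V\subset T^{\infty,-}(\R^{n+1})$ in one dimension higher before applying the KS stack; your construction needs this step or something equivalent. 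Second, your claim that ``the presheaf structure comes from ordinary restriction of sheaves'' glosses over the identification of the locally defined categories on overlaps: the paper emphasizes that these identifications are not canonical, that there are $H^1(W;R)$ many inequivalent choices corresponding to grading structures on $W$, and that the outcome of the gluing genuinely depends on this choice (this is why the $\Z/2$-graded Coxeter functor, rather than a $\Z$-graded shift of it, appears in the pinwheel computation). Any serious attempt at the conjecture has to fix and justify this identification.
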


\begin{rmk}
	If $V$ is a smooth Lagrangian, by the above discussion we have
	\[\mSh_{W,\Lambda}(V)\simeq \Loc(V)\ .\]
	If $V$ is not Lagrangian, we have
	\[\mSh_{W,\Lambda}(V)\simeq 0\]
	since $\fX_U=V$ which implies $U$ is subcritical and $\cF(U',\Lambda')\simeq 0$.
\end{rmk}

Note that we consider $L_{\Lambda}$ not just as a topological space, instead a singular Lagrangian in its Weinstein surrounding. Hence, we do not consider topologically same skeletons $L_{\Lambda}$ and $L_{\Lambda'}$ as the same, if their Weinstein surroundings cannot be deformed to each other. See Remark \ref{rmk:pants-torus} for the comparison of a pair of pants with a punctured torus, whose skeleta are the same as topological spaces.

\begin{dfn}
	The sheaf of dg categories $\mSh_{W,\Lambda}$ above is called \textit{(traditional) microlocal stack}, and its global section $\mSh_{W,\Lambda}(L_{\Lambda})$ is called \textit{(traditional) microlocal sheaves}. We shortly write $\mSh$ for $\mSh_{W,\Lambda}$ when the Weinstein pair $(W,\Lambda)$ is clear from the context. If $\Lambda$ is not specified, i.e. $\Lambda=\emptyset$, we write $\mSh_W$ instead of $\mSh_{W,\Lambda}$, and the global section is $\mSh_W(\fX_W)$ in that case.
\end{dfn}

\begin{rmk}
	The sheaf $\mSh$ is with values in the category of dg categories $\dgCat$ whose objects we consider up to quasi-equivalence. This means that the correct notion of limit is homotopy limit, which we need to use when gluing: For every open $V\subset L_{\Lambda}$ and its open covering $\{V_i\}$ we have
	\[\mSh(V)\simeq\holim\left(\prod_i\mSh(V_i)\rightrightarrows\prod_{j,k}\mSh(V_j\cap V_k)\right)\ .\]
	The model structure and homotopy limit on $\dgCat$ will be studied in Section \ref{sec:dgcat}. Also, for Conjecture \ref{con:microlocal} to hold, dg categories are expected to be $R$-graded if $2c_1(W)=0\in H^2(W;R)$ for $R=\Z/N$ of $\Z$, see Remark \ref{rmk:grading-structure}.
\end{rmk}

So, if we can define such a sheaf, then we can glue the dg categories of sheaves on the local pieces of $L_{\Lambda}$ to get the the infinitesimal Fukaya category $\cF(W,\Lambda)$. For $W=T^*M$, it can be defined using the following sheaf defined by Kashiwara and Schapira:

\begin{dfn}[\cite{kashiwara-schapira}, \cite{combinatorics}]\label{dfn:ks-stack}
	The \textit{Kashiwara-Schapira stack (KS stack)} is a sheaf of pretriangulated dg categories on $T^*M$ with conic topology (i.e. topology with conic open sets) defined as follows: For each conic open set $U\subset T^*M$, we define $KS^{\text{pre}}(U)$ as the dg quotient $\Sh(M)/\Sh_{T^*M\setminus U}(M)$. This gives a presheaf of dg categories on $T^*M$. For a given conic set $L\subset T^*M$, $\KS^{\text{pre}}_L$ is a presheaf such that $\KS^{\text{pre}}_L(U)$ is full dg subcategory of $\KS^{\text{pre}}(U)$ consisting of sheaves whose singular support lies in $L$. The sheafification of the presheaf $\KS^{\text{pre}}_L$ is $\KS_L$, called \textit{KS stack} on $T^*M$ supported on $L$. The restriction map $\KS_L(U)\to\KS_L(V)$ sends a sheaf to another with the same microstalks.
\end{dfn}

The following proposition describes $\KS_L$ explicitly:

\begin{prp}[\cite{kashiwara-schapira}, \cite{combinatorics}]\label{prp:ks-explicit}
	Let $U\subset T^*M$ be a conic open set, and $\pi\colon T^*M\to M$ be the projection. We have
	\begin{enumerate}
		\item If $U=T^*\pi(U)$, then $\KS_L(U)\simeq\Sh_{L\cap U}(\pi(U))$,
		
		\item If $U\cap M=\emptyset$ and $U$ is sufficiently small, then
		\[\KS_L(U)\simeq\Sh_{\pi(U)\cup(L\cap U)}(\pi(U))/\Loc(\pi(U))\simeq\Sh_{\pi(U)\cup(L\cap U)}(\pi(U))_0\ .\]		
	\end{enumerate}
\end{prp}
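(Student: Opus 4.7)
The plan is to unfold Definition \ref{dfn:ks-stack} and identify the Verdier quotient explicitly in each case, using the microlocal characterisation of support and of local constancy.

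For part (i), set $V := \pi(U)$, so $U = T^*V$ and $T^*M \setminus U = T^*M|_{M \setminus V}$. First I would observe that $\sing(\cF) \subset T^*M|_{M \setminus V}$ iff $\supp(\cF) \subset M \setminus V$: one direction follows from $\sing(\cF) \subset \pi^{-1}(\supp(\cF))$, the other from Proposition \ref{prp:sing-properties}(iii). Thus $\Sh_{T^*M \setminus U}(M)$ coincides with the full dg subcategory of sheaves vanishing on $V$, and the restriction $j^* \colon \Sh(M) \to \Sh(V)$ for the open embedding $j \colon V \hookrightarrow M$ descends to an equivalence
\[\KS^{\text{pre}}(U) = \Sh(M)/\Sh_{T^*M \setminus U}(M) \xrightarrow{\sim} \Sh(V),\]
with $j_!$ providing an inverse. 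Restricting to objects with microsupport in $L$ and invoking the local nature of $\sing$ (Proposition \ref{prp:sing-properties}(ii)) gives $\KS^{\text{pre}}_L(U) \simeq \Sh_{L \cap U}(V)$. Finally, $V \mapsto \Sh_{L \cap T^*V}(V)$ is already a sheaf on the basis of conic opens of the form $T^*V$ by the Mayer–Vietoris formula for derived sections, so sheafification changes nothing and $\KS_L(U) \simeq \Sh_{L \cap U}(V)$.

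For part (ii), $U$ is disjoint from the zero section; again let $V := \pi(U)$. The first step is to construct a functor $\Sh_{0_V \cup (L \cap U)}(V) \to \KS^{\text{pre}}_L(U)$ by composing with $j_!$ into $\Sh(M)$ and then localising; this is well-defined because such sheaves have microsupport on $U$ contained in $L \cap U$. For ``sufficiently small'' $U$, I would show this functor is essentially surjective after sheafification by a Whitney-stratification argument: any local section of $\KS^{\text{pre}}_L$ on $U$ is represented by some $\cF \in \Sh(V)$ with $\sing(\cF) \cap U \subset L \cap U$, and by applying Proposition \ref{prp:sing-conormal} to a Whitney stratification of $V$ refining $\pi(L \cap U)$ I can replace $\cF$ by a constructible representative with singular support inside $0_V \cup (L \cap U)$ without altering its microstalks on $U$. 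The kernel of this functor consists of those sheaves in $\Sh_{0_V \cup (L \cap U)}(V)$ whose singular support avoids $U$, i.e.\ lies entirely in $0_V$; by the standard characterisation of locally constant sheaves by microsupport these are precisely the objects of $\Loc(V)$. This yields
\[\KS_L(U) \simeq \Sh_{0_V \cup (L \cap U)}(V)/\Loc(V),\]
and the second identification with $\Sh_{0_V \cup (L \cap U)}(V)_0$ is immediate from Definition \ref{dfn:sheaf-category-sing}.

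The main obstacle is the sheafification step in part (ii): showing that every section of $\KS_L$ on $U$ actually lifts to an honest sheaf in $\Sh_{0_V \cup (L \cap U)}(V)$, rather than only to cocycle data on an open cover. This is where the ``sufficiently small'' hypothesis does real work: one chooses $U$ small enough that, after a compactly supported contact isotopy, $L \cap U$ fits into a standard local model (e.g.\ a piece of a conormal bundle), whereupon Hamiltonian invariance of $\Sh^{\diamond}_{\Lambda}$ reduces the lifting problem to the conormal case already handled in (i). Once this local reduction is in place, the quotient by $\Loc(V)$ is formal.
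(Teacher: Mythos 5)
The paper states this proposition without proof, citing \cite{kashiwara-schapira} and \cite{combinatorics}, so there is no internal argument to compare against; I can only measure your proposal against the standard proofs. Part (i) is essentially fine: identifying $\Sh_{T^*M\setminus T^*V}(M)$ with the sheaves supported on $M\setminus V$ via Proposition \ref{prp:sing-properties}(ii)--(iii), invoking the open--closed recollement $\Sh(M)/\ker(j^*)\simeq\Sh(V)$, and restricting to microsupport in $L$ by locality of $\sing$ is the standard route, and the sheaf property on opens of the form $T^*V$ is routine.

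Part (ii) has a genuine gap. You verify essential surjectivity and compute which objects of $\Sh_{0_V\cup(L\cap U)}(V)$ die under the functor to $\KS_L(U)$, but an exact functor of pretriangulated dg categories that is essentially surjective and kills exactly a given subcategory need not induce an equivalence from the quotient by that subcategory: you must also show that the induced maps on morphism complexes are quasi-isomorphisms. That is the actual content of the statement. Morphisms in $\KS_L(U)$ are computed by a colimit over modifications of the target by objects microsupported away from $U$ (followed by a limit over covers of $U$), and identifying this with $\Hom$ in $\Sh_{0_V\cup(L\cap U)}(V)/\Loc(V)$ is exactly what Kashiwara--Schapira's refined microlocal cut-off lemma supplies; nothing in your argument substitutes for it. Relatedly, your closing reduction ``via a compactly supported contact isotopy \dots\ to the conormal case already handled in (i)'' does not parse: part (i) concerns opens of the form $T^*V$ containing the zero section, whereas here $U\cap 0_M=\emptyset$, and the invariance theorem you would invoke (quoted in the paper for closed Legendrians and the global categories $\Sh\dd_\Lambda(M)$) is itself proved with the cut-off machinery rather than being an independent route around it. A minor slip besides: a section of $\KS^{\text{pre}}_L$ over $U$ is represented by an object of $\Sh(M)$, not of $\Sh(V)$ (harmless, since only $\sing$ over $U\subset T^*V$ matters).
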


By definition, $\KS_L(U)$ vanishes when $U\cap L$ is empty. Therefore $\KS_L$ is supported on $L$. Hence it makes sense to regard $\KS_L$ as a sheaf on $L$ and write
\[\KS_L(V):=\KS_L(U)\]
where $V$ is an open subset of $L_{\Lambda}$ and $\R_{>0}V=L_{\Lambda}\cap U$ for some conic open $U\subset T^*M$.

\begin{dfn}
	For a given closed singular Legendrian $\Lambda\subset T^{\infty}M$, we define $\mSh_{T^*M,\Lambda}$ on $L_{\Lambda}\subset T^*M$ as $\KS_{L_{\Lambda}}$.
\end{dfn}

\begin{rmk}
	Note that $\mSh_{T^*M,\Lambda}$ defined above for cotangent bundles satisfies the conditions in Conjecture \ref{con:microlocal}. In particular,
	\[\mSh_{T^*M,\Lambda}(L_{\Lambda})=\KS_{L_{\Lambda}}(T^*M)\simeq\Sh_{L_{\Lambda}}(M)\simeq\cF(T^*M,\Lambda)\ .\]
\end{rmk}

Next, we will try to give a construction of the sheaf $\mSh_{W,\Lambda}$ for a general Weinstein pair $(W,\Lambda)$ of dimension $2n$, following mostly \cite{combinatorics} and \cite{wrapped}. This will be an informal discussion. Equivalently, we will define $\mSh_{W,\Lambda}|_V$ for any $V$ in an open cover $\{V_i\}$ of $L_{\Lambda}$:
\begin{enumerate}
	\item If $V$ is not Lagrangian, then $\mSh_{W,\Lambda}|_V$ is the zero sheaf.\
	
	\item If $V$ is smooth Lagrangian, then $\mSh_{W,\Lambda}|_V:=\mSh_{T^*V}$.
	
	\item If $V$ is singular Lagrangian and sufficiently small, then by Darboux theorem we take an appropriate open subset $U\subset W$ such that $V=U\cap L_{\Lambda}$ with the symplectomorphism $\varphi\colon U\to T^*\R^n$. If $\varphi(V)$ is conic in $T^*\R^n$ and contains $\R^n$, we define $\mSh_{W,\Lambda}|_V(Y):=\mSh_{T^*\R^n,\Lambda'}(\varphi(Y))$ for any open $Y\subset V$, where we define $\Lambda'\subset T^{\infty}\R^n$ by $L_{\Lambda'}=\varphi(V)$.
\end{enumerate}

Note that $\varphi$ above is in fact an exact symplectomorphism since $H^1(T^*\R^n;\R)\simeq 0$. So it makes sense to deal with the pair $(T^*\R^n,\varphi(V))$ instead of $(U,V)$ since we mostly care about exact symplectic structure.

There are some problems defining $\mSh_{W,\Lambda}$ as above, coming form the third point:

\begin{prb}
	$\varphi(V)$ may not be conic in $T^*\R^n$.
\end{prb}

In that case we may go to one dimension higher and see it as a conic Lagrangian there as follows: $V$ is conic, hence exact by Proposition \ref{prp:conic-exact}. $\varphi$ is an exact symplectomorphism, therefore by Proposition \ref{prp:exact-to-exact} $\varphi(V)$ is an exact singular Lagrangian. So there exists continuous $f\colon \varphi(V)\to\R$ such that $\theta=df$ on the strata of $\varphi(V)$. Then we can lift $\varphi(V)$ to the singular Legendrian inside the contactisation of $T^*\R^n$ using $-f$, explicitly
\[\Gamma_{\varphi(V),-f}=\{(x,p;t)\vb (x,p)\in\varphi(V),t=-f(x,p)\}\subset T^*\R^n\times \R_t\]
where $T^*\R^n\times \R_t$ has the contact form $\alpha=\theta+dt$ and $\theta$ is the standard Liouville form of $T^*\R^n$. Moreover, $T^*\R^n\times\R_t$ is contactomorphic to $T^{\infty,-}(\R^n\times\R_t)$ via the map \[(x,p;t)\mapsto (x,t;[p,-1])\]
and $\Gamma_{\varphi(V),-f}$ is mapped to
\[\Lambda_V:=\{(x,t;[p,-1])\vb (x,p)\in\varphi(V),t=-f(x,p)\}\subset T^{\infty,-}(\R^n\times\R_t)\]
where the contact form on $T^{\infty,-}(\R^n\times\R_t)$ is $\alpha$. Since $L_{\Lambda_V}\subset T^*\R^{n+1}$ is conic, now we can define
\[\mSh_{W,\Lambda}|_V(Y):=\mSh_{T^*\R^{n+1},\Lambda_V}(\R_{>0}\Lambda_Y)\]
for any open $Y\subset V$. Still we need to answer the question: If $\varphi(V)$ is already conic containing $\R^n$, and $\Lambda'$ is defined by $L_{\Lambda'}=\varphi(V)$, are $\mSh_{T^*\R^n,\Lambda'}(\varphi(Y))$ and $\mSh_{T^*\R^{n+1},\Lambda_V}(\R_{>0}\Lambda_Y)$ quasi-isomorphic?

Note that there is a noncharacteristic deformation of $\Lambda_V$ to a Legendrian $\Lambda_{\text{arb}}$ with \textit{arboreal singularities}, defined by Nadler in \cite{arboreal}, such that:

\begin{thm}[\cite{nonchar}]\label{thm:arboreal}
	The sections $\mSh_{T^*\R^{n+1},\Lambda_V}(\R_{>0}\Lambda_V)$ and $\mSh_{T^*\R^{n+1},\Lambda_{\text{arb}}}(\R_{>0}\Lambda_{\text{arb}})$ are quasi-isomorphic.
\end{thm}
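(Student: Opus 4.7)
The plan is to invoke the microlocal noncharacteristic deformation machinery of Kashiwara-Schapira, adapted to the setting of singular Legendrians. The statement essentially says that Nadler's arborealization procedure is \emph{sheaf-theoretically silent}: it changes the underlying Legendrian but not the microlocal sheaf category attached to it.

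First, I would make explicit the deformation. By the construction of $\Lambda_{\text{arb}}$ alluded to before the statement, there is a continuous one-parameter family $\{\Lambda_s\}_{s\in[0,1]}$ of closed singular Legendrians in $T^{\infty,-}(\R^n\times\R_t)$ with $\Lambda_0=\Lambda_V$ and $\Lambda_1=\Lambda_{\text{arb}}$, together with the associated family of conic (singular) Lagrangians $L_s:=L_{\Lambda_s}\subset T^*\R^{n+1}$. The key property (the definition of a noncharacteristic deformation) is that at each $s$ the infinitesimal motion of $L_s$ is tangent to $L_s$ in the microlocal sense; equivalently, the deformation does not create new characteristic directions and no microstalks are born or killed along the way.

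Second, I would appeal to the noncharacteristic deformation lemma of \cite{kashiwara-schapira} to show that for each $s,s'\in[0,1]$ the natural restriction-propagation functor induces a quasi-equivalence
\[
\Sh\dd_{L_s}(\R^{n+1})\xrightarrow{\;\sim\;}\Sh\dd_{L_{s'}}(\R^{n+1}).
\]
Concretely, one produces for each $F_0$ with $\sing(F_0)\subset L_0$ a canonical family $\{F_s\}$ with $\sing(F_s)\subset L_s$ by propagating microstalks along the deformation; the noncharacteristic hypothesis guarantees that every transition map is a quasi-isomorphism on microstalks, hence on the sheaves themselves by Proposition \ref{prp:test-function}. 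Passing to the dg quotient by $\Loc\dd(\R^{n+1})$, which by Definition \ref{dfn:sheaf-category-sing} and Proposition \ref{prp:ks-explicit} computes the sections $\mSh_{T^*\R^{n+1},\Lambda_s}(\R_{>0}\Lambda_s)$, delivers the desired quasi-isomorphism, since the Kashiwara-Schapira stack is intrinsic to singular support and the quotient is taken uniformly along the deformation.

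The main obstacle is the first step: producing the noncharacteristic deformation from an arbitrary singular Legendrian to an arboreal one. This is the deep content of Nadler's arborealization theorem and its subsequent refinements (\cite{arboreal}, \cite{nonchar}), and it requires careful stratified Morse-theoretic input to guarantee that the simplification of singularities can be arranged without creating characteristic motion at any time $s\in[0,1]$. Once such a deformation is in hand, the sheaf-theoretic invariance claimed by the theorem follows from the Kashiwara-Schapira lemma applied parameter-wise and is essentially formal.
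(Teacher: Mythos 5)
The first thing to note is that the paper does not prove this statement at all: Theorem \ref{thm:arboreal} is quoted verbatim from the reference \cite{nonchar}, and the surrounding text treats both the existence of the deformation and the invariance of the sections as a black box. So there is no internal proof to compare against; your proposal is an attempt to reconstruct the argument of the cited work.

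As such a reconstruction, your two-step outline (build the noncharacteristic family $\{\Lambda_s\}$, then show the microlocal sheaf categories do not change along it) is the right skeleton, and you correctly locate the geometric difficulty in the first step. But the second step is where the real gap sits, and you dispose of it too quickly. The ``noncharacteristic deformation lemma'' of \cite{kashiwara-schapira} is a statement about sections $R\Gamma(U_t;\cF)$ of a \emph{fixed} sheaf over an expanding family of open sets whose boundaries are noncharacteristic for that sheaf; it is not an invariance statement for the categories $\Sh\dd_{L_s}(\R^{n+1})$ as $L_s$ moves. The invariance result the paper does record (the theorem following Definition \ref{dfn:associated-conic}, due to Guillermou--Kashiwara--Schapira) applies only to genuine contact isotopies of $T^{\infty}M$, which in particular preserve the singularity type of $\Lambda$; the arborealization deformation by construction \emph{changes} the singularity type (it spreads a non-arboreal vertex into a configuration of arboreal ones, as in Figure \ref{fig:vertex-arboreal}), so it is not induced by an ambient contact isotopy and GKS does not apply parameter-wise. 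Producing the ``canonical family $\{F_s\}$ by propagating microstalks'' and showing the transition functors are quasi-equivalences is precisely the content of \cite{nonchar}, not a formal consequence of Proposition \ref{prp:test-function}, which only says that a microstalk is independent of the choice of proper test function. If you want a proof rather than a citation, you need to supply the argument that a deformation which is noncharacteristic in Nadler's sense (no microstalks born or killed, in a sense that itself has to be made precise stratum by stratum) induces equivalences on the Kashiwara--Schapira stack; that is the theorem, not a reduction of it.
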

The nice thing about arboreal singularities is that $\mSh_{T^*\R^{n+1},\Lambda_{\text{arb}}}$ around an arboreal singularity is explicitly given by $A_{\infty}$-modules over an associated tree, shown in \cite{arboreal}. Also, one can do such deformations before lifting, so that we do not need to lift the skeleton at all.

For $i\neq j$, $\mSh_{W,\Lambda}|_{V_i}$ and $\mSh_{W,\Lambda}|_{V_j}$ are defined in different domains, hence we have the following problem:

\begin{prb}
	How to identify $\mSh_{W,\Lambda}|_{V_i}$ and $\mSh_{W,\Lambda}|_{V_j}$ when they are restricted to $V_i\cap V_j$?
\end{prb}

This is a crucial problem, since the result of gluing depends on these identifications. Our claim is that for $R=\Z/N$ or $\Z$, after taking the open cover $\{V_i\}$ sufficiently fine, there are $H^1(W;R)\simeq H^1(L_{\Lambda};R)$ many different such identifications for $R$-graded $\mSh_{W,\Lambda}$, and they correspond to different grading structures on the Weinstein manifold $W$. In our calculations in later chapters, we will use a particular identification for $\Z/2$-graded $\mSh_{W,\Lambda}$ which corresponds to the standard $\Z/2$-grading structure of the Weinstein manifold.

\begin{prb}
	How does $\mSh_{W,\Lambda}$ depend on the pair $(W,\Lambda)$?
\end{prb}

Let $(W,\Lambda)$ and $(W',\Lambda')$ be two Weinstein pairs such that there is an exact symplectomorphism $\varphi\colon W\to W'$ sending $L_{\Lambda}$ to $L_{\Lambda'}$. Then by the construction of $\mSh$, we have
\[\mSh_{W,\Lambda}\simeq\mSh_{W',\Lambda'}\ .\]
However, this is not enough for us in most of the cases. An example, where we consider Weinstein sectors corresponding to Weinstein pairs, is as follows: Let $W=\R^2=\{(x,y)\}$ be a Weinstein sector with the skeleton
\[T_1=\{y=0\}\cup\{x=0,y\geq 0\}\]
and $W'=\R^2$ be a Weinstein sector with the skeleton
\[T_2=\{x=y,y\leq 0\}\cup\{x=-y,y\leq 0\}\cup\{x=0,y\geq 0\}\ .\]
It is not hard to check that $\mSh_W(T_1)\simeq\mSh_{W'}(T_2)$, see the proof of Proposition \ref{prp:msh-vertex}. However, there is no exact symplectomorphism $\varphi\colon\R^2\to\R^2$ sending $T_1$ to $T_2$, since $\{y=0\}$ in $T_1$, as a smooth submanifold, is required to be mapped to a smooth submanifold of $\R^2$, which cannot be contained in $T_2$. However, there is a deformation of the Weinstein structure of $W$ to $W'$ through Weinstein structures on $\R^2$.

In general, let $(W,\Lambda)$ and $(W',\Lambda')$ be Weinstein pairs such that $\tilde W$ and $\tilde W'$ are the corresponding Weinstein sectors, respectively. We expect that if there is a deformation of Weinstein structures of $\tilde W$ and $\tilde W'$ to each other through Weinstein structures, which we call \textit{Weinstein homotopy}, then
\[\mSh_{W,\Lambda}(L_{\Lambda})\simeq\mSh_{W',\Lambda'}(L_{\Lambda'})\ .\]
See \cite{weinstein} and \cite{revisited} for more information about Weinstein homotopies.

Another problem which we will not address is:

\begin{prb}
	Does $\mSh$ depend on the choices of $\{V_i\}$, $U$, $\varphi$, $f$?
\end{prb}

In the end, we define $\mSh_{W,\Lambda}$ for Weinstein manifolds as:

\begin{dfn}\label{dfn:microlocal}
	For any small enough open $V\subset L_{\Lambda}$, we define
	\[\mSh_{W,\Lambda}|_V(Y):=\mSh_{T^*\R^{n+1},\Lambda_V}(\R_{>0}\Lambda_Y)\]
	for any open $Y\subset V$.
	Observe that we have
	\[\mSh_{W,\Lambda}|_V(V)=\KS_{L_{\Lambda_V}}(\R_{>0}\Lambda_V)\simeq\Sh_{\Lambda_V}(\R^{n+1})_0\]
	by Proposition \ref{prp:ks-explicit}. Hence the global section of $\mSh_{W,\Lambda}$ is given by
	\[\mSh_{W,\Lambda}(L_{\Lambda})\simeq\holim\left(\prod_i\Sh_{\Lambda_{V_i}}(\R^{n+1})_0\rightrightarrows\prod_{j,k}\Sh_{\Lambda_{V_j\cap V_k}}(\R^{n+1})_0\right)\ .\]
\end{dfn}

\begin{rmk}
	Note that with this definition, if $V$ is not Lagrangian, then $\Lambda_V$ is not Legendrian, hence by Theorem \ref{thm:kashiwara-schapira}
	\[\mSh_{W,\Lambda}(V)\simeq \Sh_{\Lambda_V}(\R^{n+1})_0\simeq 0\ .\]
	If $V$ is a smooth Lagrangian, then it is not hard to see that
	\[\mSh_{W,\Lambda}(V)\simeq\Loc(V)\ .\]
	However, we do not know whether $\mSh_{W,\Lambda}$ satisfies Conjecture \ref{con:microlocal} yet.
\end{rmk}

There is a similar conjecture for (partially) wrapped Fukaya categories:

\begin{con}\label{con:microlocal-wrapped}
	Let $W$ be a Weinstein manifold and $\Lambda\subset\partial W^c$ be a closed singular Legendrian. There is a cosheaf of (pretriangulated) dg categories $\mSh^w_{W,\Lambda}$ on the conic isotropic subset $L_{\Lambda}$ satisfying
	\[\mSh^w_{W,\Lambda}(V)\simeq\cW(U',\Lambda')\]
	where $V$ is an open set of $L_{\Lambda}$, $U$ is a conic open subset of $W'$ with $U\cap L_{\Lambda}=V$, $W'$ is the Weinstein sector corresponding to $(W,\Lambda)$, and $(U',\Lambda')$ is the Weinstein pair corresponding to $U$. In particular, its global sections are
	\[\mSh^w_{W,\Lambda}(L_{\Lambda})\simeq \cW(W,\Lambda)\ .\]
\end{con}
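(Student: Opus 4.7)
The plan is to mirror the construction of the traditional microlocal sheaf $\mSh_{W,\Lambda}$ from Definition~\ref{dfn:microlocal}, replacing traditional constructible sheaves by their wrapped variants $\Sh^w_\Lambda$ and reversing the direction of the structural maps to obtain a cosheaf. Concretely, for a sufficiently small open $V \subset L_\Lambda$ I would produce, via Darboux and (if necessary) the contactisation-lifting trick from Definition~\ref{dfn:microlocal}, a Legendrian $\Lambda_V \subset T^{\infty,-}(\R^n \times \R_t)$ and set
\[
\mSh^w_{W,\Lambda}|_V(Y) := \Sh^w_{\Lambda_Y}(\R^{n+1})_0
\]
for any open $Y \subset V$. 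The Ganatra--Pardon--Shende Theorem~\ref{thm:gps}, applied to the cotangent bundle $T^*\R^{n+1}$ and the Legendrian $\Lambda_Y$, then delivers the local matching $\mSh^w_{W,\Lambda}(V) \simeq \cW(U',\Lambda')$ asserted by the conjecture, since $U'$ is exact symplectomorphic to a local open Weinstein sector inside $T^*\R^{n+1}$ whose skeleton is $L_{\Lambda_V}$ (with the zero section killed by passing to $\Sh^w_{\Lambda_Y}(\R^{n+1})_0$).

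The next step is to install the cosheaf, as opposed to sheaf, structure. The traditional stack $\KS_L$ restricts contravariantly, but compact objects in $\Sh\dd_{\Lambda_\bullet}(\R^{n+1})_0$ are covariantly functorial for open inclusions on the skeleton: enlarging the skeleton corresponds to allowing more Reeb chords on the Fukaya side, a colimit rather than limit operation. Passing to left adjoints of the natural restriction maps on the large category $\Sh\dd$ and restricting to compact objects yields structure maps $\mSh^w(Y) \to \mSh^w(Y')$ for $Y \subset Y'$, and what must then be checked is the codescent statement
\[
\mSh^w_{W,\Lambda}(V) \simeq \hocolim\!\left(\coprod_{j,k} \mSh^w_{W,\Lambda}(V_j \cap V_k) \rightrightarrows \coprod_i \mSh^w_{W,\Lambda}(V_i)\right)
\]
for every open cover $\{V_i\}$ of $V$, with the homotopy colimit taken in $\dgCat$ using the model structure developed in Section~\ref{sec:dgcat}.

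To match the global sections with $\cW(W,\Lambda)$, I would appeal to the sectorial Mayer--Vietoris/descent theorem for partially wrapped Fukaya categories of Weinstein sectors from \cite{gps2,gps3}, which asserts precisely that $\cW(-,-)$ is a cosheaf (in the same covariant sense) along sectorial coverings of the skeleton. Combined with the local matching of step one and the universal property of $\hocolim$ in $\dgCat$, this would yield $\mSh^w_{W,\Lambda}(L_\Lambda) \simeq \cW(W,\Lambda)$. The overlap identifications on $V_i \cap V_j$ (Problem~2 in the discussion preceding Definition~\ref{dfn:microlocal}) must be chosen to correspond to the grading structure on $W$ underlying $\cW(W,\Lambda)$; in the $\Z/2$-graded setting this is the canonical choice, so in that generality the compatibility is automatic.

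The main obstacle will be verifying codescent for $\mSh^w$: this is a categorified Mayer--Vietoris which, in the full generality of the conjecture, appears to require the full sectorial gluing technology of \cite{gps2}, together with a careful treatment of arboreal deformations (Theorem~\ref{thm:arboreal}) to make the local models computable in a way that is compatible with the covariant restriction maps. A secondary subtlety is that $\mSh^w_{W,\Lambda}$ must depend only on the exact symplectic, rather than diffeomorphism, type of $(W,\Lambda)$ (Problem~3 preceding Definition~\ref{dfn:microlocal}), so that Weinstein homotopies induce quasi-equivalences of cosheaves in a way compatible with the local matching --- this is the analogue of the Guillermou--Kashiwara--Schapira invariance theorem in the wrapped setting. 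It is the combination of these two issues --- cosheaf descent and Weinstein homotopy invariance --- that currently keeps the statement a conjecture rather than a theorem, although the case $(W,\Lambda) = (B_{p,1},\emptyset)$ will be settled affirmatively in Corollary~\ref{cor:bp1-lp1}.
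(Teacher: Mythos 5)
The statement you are addressing is a \emph{conjecture} in the paper: the paper does not prove it in general, and explicitly states that it only confirms it in the single case $(W,\Lambda)=(B_{p,1},\emptyset)$, by computing both sides independently (Theorem \ref{thm:wrapped-rational} for the Fukaya side, Theorem \ref{thm:msh-pinwheel} for the sheaf side) and observing that they agree. Your proposal is therefore not comparable to a proof in the paper; it is a strategy outline, and to your credit you say so in the final paragraph. The construction you describe — local models $\Sh^w_{\Lambda_Y}(\R^{n+1})_0$ obtained by Darboux plus contactisation-lifting, assembled into a cosheaf by taking compact objects in the large stack — is exactly the paper's own definition of $\mSh^w_{W,\Lambda}$ (via $\KS^w$ and $\mSh\dd$), so on the construction side you and the paper coincide.

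Two points in your sketch are weaker than you present them. First, the ``local matching'' $\mSh^w_{W,\Lambda}(V)\simeq\cW(U',\Lambda')$ is not a direct application of Theorem \ref{thm:gps}: that theorem identifies $\Sh^w_{\Lambda_V}(\R^{n+1})$ with $\cW(T^*\R^{n+1},\Lambda_V)$, which lives in dimension $2n+2$, whereas $U'$ is a $2n$-dimensional sector; one must also account for the passage to $(\cdot)_0$ (killing the zero section / local systems), which on the Fukaya side corresponds to a localisation. Bridging these requires a stabilisation statement for wrapped Fukaya categories mirroring Proposition \ref{prp:stabilisation}, plus an identification of the quotient by $\Loc^w(\R^{n+1})$ with the correct sectorial operation — neither is supplied. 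Second, your claim that the overlap identifications are ``automatic'' in the $\Z/2$-graded setting contradicts the paper's own discussion: there are $H^1(W;\Z/2)$ many choices, which is nontrivial e.g.\ for $B_{p,1}$ with $p$ even (see the remark following Corollary \ref{cor:bp1-lp1}). So even granting codescent and Weinstein-homotopy invariance, your outline does not close the conjecture; it restates the paper's program with the additional, unproved, local comparison step treated as if it were known.
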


Our result (Corollary \ref{cor:bp1-lp1}) confirms this conjecture for $(W,\Lambda)=(B_{p,1},\emptyset)$.

\begin{rmk}
	If $V$ is a smooth Lagrangian, by the above discussion we have
	\[\mSh^w_{W,\Lambda}(V)\simeq \Loc^w(V)\ .\]
	If $V$ is not Lagrangian, we have
	\[\mSh^w_{W,\Lambda}(V)\simeq 0\]
	since $\fX_U=V$ which implies $U$ is subcritical and $\cW(U',\Lambda')\simeq 0$.
\end{rmk}

\begin{dfn}
	The cosheaf of dg categories $\mSh^w_{W,\Lambda}$ above is called \textit{wrapped microlocal stack}, and its global section $\mSh^w_{W,\Lambda}(L_{\Lambda})$ is called \textit{wrapped microlocal sheaves}. We shortly write $\mSh^w$ for $\mSh^w_{W,\Lambda}$ when $(W,\Lambda)$ is clear from the context. If $\Lambda$ is not specified, i.e. $\Lambda=\emptyset$, we write $\mSh^w_W$ instead of $\mSh^w_{W,\Lambda}$, and the global section is $\mSh^w_W(\fX_W)$ in that case.
\end{dfn}

We will try to construct this cosheaf $\mSh^w_{W,\Lambda}$ following \cite{wrapped} and the previous steps towards defining $\mSh_{W,\Lambda}$:

\begin{dfn}
	For a given conic set $L\subset T^*M$, we define \textit{large KS stack} $\KS_L\dd$ as in Definition \ref{dfn:ks-stack} by replacing every $\Sh$ by $\Sh\dd$. It is a sheaf of dg categories like $\KS_L$, and Proposition \ref{prp:ks-explicit} holds for $\KS\dd_L$ after replacing $\Sh$ by $\Sh\dd$ and $\Loc$ by $\Loc\dd$.
	
	For any open set $V\subset L$, we define $\KS^w_L(V)$ as the full dg subcategory of compact objects in $\KS\dd_L(V)$. It is shown in \cite{wrapped} that $\KS^w_L$ is a cosheaf of dg categories, and we call it the \textit{wrapped KS stack}. Proposition \ref{prp:ks-explicit} holds for $\KS^w_L$ after replacing $\Sh$ by $\Sh^w$ and $\Loc$ by $\Loc^w$ by definition.
	
	Finally for Weinstein manifolds, we define the \textit{large microlocal stack} $\mSh\dd_{W,\Lambda}$ on $L_{\Lambda}\subset W$ by the same construction of $\mSh_{W,\Lambda}$ after replacing $\KS$ by $\KS\dd$. Then we define the $\mSh^w_{W,\Lambda}$ on $L_{\Lambda}$ by setting $\mSh^w_{W,\Lambda}(V)$ as the full dg subcategory of compact objects in $\mSh\dd_{W,\Lambda}(V)$ for any open set $V\subset L_{\Lambda}$. By definition, the construction for $\mSh_{W,\Lambda}$ holds similarly for $\mSh^w_{W,\Lambda}$ after replacing $\KS$ by $\KS^w$.
\end{dfn}

We restate Proposition \ref{prp:mod-perf} with these new definitions:

\begin{prp}\label{prp:mod-perf-microlocal}
	Assume $\mSh_{W,\Lambda}\dd(V)\simeq\Modk(\cC)$ for some open subset $V\subset L_{\Lambda}$ and an $A_{\infty}$-category $\cC$. Then we have
	\begin{align*}
	\mSh_{W,\Lambda}(V)&\simeq\Perfk(\cC)\\
	\mSh^w_{W,\Lambda}(V)&\simeq\Perf(\cC)\ .
	\end{align*}
\end{prp}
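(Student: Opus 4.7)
The plan is to handle the two equivalences separately. The wrapped case follows essentially by chasing definitions combined with Proposition \ref{prp:mod-perf}, while the traditional case requires tracing the perfect-stalk condition through the hypothesised equivalence.

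First I would dispose of the second equivalence. By definition $\mSh^w_{W,\Lambda}(V)$ is the full dg subcategory of compact objects in $\mSh\dd_{W,\Lambda}(V)$. Under the assumed quasi-equivalence $\mSh\dd_{W,\Lambda}(V)\simeq\Modk(\cC)$, identifying $\mSh^w_{W,\Lambda}(V)$ reduces to identifying the compact objects of $\Modk(\cC)$. By Proposition \ref{prp:mod-perf}, these are generated (as a pretriangulated subcategory) by the Yoneda image of $\cC$, which is by definition the triangulated envelope $\Perf(\cC)$.

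For the first equivalence, I would examine how the perfect-stalk condition defining $\mSh_{W,\Lambda}(V)\subset\mSh\dd_{W,\Lambda}(V)$ translates to $\Modk(\cC)$. The equivalence $\mSh\dd_{W,\Lambda}(V)\simeq\Modk(\cC)$ in practice is built from the combinatorial model of Proposition \ref{prp:combinatorics-constructible}, applied locally in the cotangent-bundle presentation of Definition \ref{dfn:microlocal}: the value of the $A_\infty$-functor at an object $c\in\cC$ computes, up to quasi-isomorphism, a microstalk of the corresponding microlocal sheaf at a chosen smooth point of the stratum of $L_{\Lambda}$ that $c$ parametrises (as in the running vertex example, where the objects $A_1,\dots,A_{p-1}$ of the $A_{p-1}$-representation appear precisely as the sheaf's values on stars of strata, hence as microstalks by Proposition \ref{prp:test-function}). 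Consequently an object of $\mSh\dd_{W,\Lambda}(V)$ has perfect microstalks, i.e.\ lies in $\mSh_{W,\Lambda}(V)$, if and only if its image functor $\cC^{\op}\to\Modk$ lands in $\Perfk$, which is precisely the defining condition for $\Perfk(\cC)=\Fun(\cC^{\op},\Perfk)$.

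The main (mild) obstacle is verifying that the equivalence $\mSh\dd_{W,\Lambda}(V)\simeq\Modk(\cC)$ really identifies microstalks with functor values without an uncontrolled shift or twist that could interfere with the finite-rank condition; but any such ambiguity is a shift in $\Modk$, which preserves the class $\Perfk$, so the perfect-stalk condition transports cleanly regardless. Combining the two items yields the stated equivalences.
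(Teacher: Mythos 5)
Your proposal is correct and follows essentially the same route as the paper, which offers no separate argument at all: it presents the proposition as a restatement of Proposition \ref{prp:mod-perf} under the definitions of $\mSh^w$ (compact objects of $\mSh\dd$) and of $\mSh$ versus $\mSh\dd$ (the perfect-stalk condition, which in the combinatorial model $\Sh\dd_{\cS}\simeq\Modk(\cS)$ versus $\Sh_{\cS}\simeq\Perfk(\cS)$ is exactly the condition that the module land in $\Perfk$). Your two steps, and your observation that shifts cannot disturb the finite-rank condition, are precisely the content the paper leaves implicit.
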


So presenting $\mSh\dd_{W,\Lambda}(V)$ as $\Modk(\cC)$ is enough to calculate $\mSh_{W,\Lambda}(V)$ and $\mSh^w_{W,\Lambda}(V)$. We will make use of this observation in our calculations.

\begin{rmk}
	There is an another way to follow to define the traditional/wrapped microlocal stack, outlined in \cite{h-principle}. Roughly, we embed a thickening of the whole Weinstein manifold $W$ into $T^{\infty}\R^k$ for some large $k$ and define the microlocal stack by the KS stack, as we did before.
\end{rmk}

\section{Algebraic Tools}\label{chp:algebraic-tools}

In the previous section, we defined the microlocal stack $\mSh_{W,\Lambda}$ which is a sheaf with values in the category of dg categories $\dgCat$. In Section \ref{sec:dgcat}, we will study a model structure on $\dgCat$ and describe the homotopy limit. Using this description, we will prove two important lemmas in Section \ref{sec:gluing}, which we will make use of when gluing microlocal sheaves on pinwheels in Section \ref{sec:msh-pinwheel}. In Section \ref{sec:quiver}, we will study the $A_{\infty}$-modules over $A_n$-quiver, and realise them as the microlocal sheaves on the $(n+1)$-valent vertex. Together with the dg functors defined on them, they will be used to calculate the microlocal sheaves on pinwheels locally, their restriction maps, and associated monodromy map in Section \ref{sec:msh-pinwheel}.
    
\subsection{Model Structure and Homotopy Limit on DG categories}\label{sec:dgcat}

We start with the definitions which we will refer throughout this section:

\begin{dfn}
	Let $\cC$ be a dg category. Two morphisms $f,g\colon A\rightarrow B$ in $\cC$ are called \textit{homotopic} if there exists $\xi$ such that $d\xi=f-g$. We denote this by $f\sim g$ or $f\overset{\xi}{\sim}g$. A closed degree $0$ morphism $f\colon A\rightarrow B$ in $\cC$ is called a \textit{homotopy equivalence} if it is an isomorphism in the homotopy category $H^0(\cC)$, i.e. there exists a closed degree $0$ morphism $g$ such that $g\circ f\sim\id$ and $f\circ g\sim\id$. In this case, we call $A$ and $B$ \textit{homotopy equivalent}, and $g$ an \textit{inverse of $f$ in homotopy}.
\end{dfn}

We have the following basic properties for homotopic maps:

\begin{prp}\label{prp:homotopic-prop}
	If $x\sim y$ and $F$ is a dg functor, then
	\begin{enumerate}
		\item $(a\circ x\circ b+c)\sim(a\circ y\circ b+c)$ if $da=db=0$,
		
		\item $F(x)\sim F(y)$,
		
		\item If $f$ is a homotopy equivalence with the inverse $g$ in homotopy, then $F(f)$ is a homotopy equivalence with the inverse $F(g)$ in homotopy.
	\end{enumerate}
\end{prp}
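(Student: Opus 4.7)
The plan is to handle the three claims sequentially, with (ii) and (iii) reducing essentially formally to the definition of a dg functor, while (i) is the only claim requiring any real computation. Throughout, the main tool is the graded Leibniz rule $d(g\circ f)=dg\circ f+(-1)^{|g|}g\circ df$ recalled in the background section.

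For (i), I would start from a witnessing homotopy $\xi$ with $d\xi=x-y$ and propose the candidate $\eta:=(-1)^{|a|}\,a\circ\xi\circ b$ for a homotopy between $a\circ x\circ b+c$ and $a\circ y\circ b+c$. Applying the Leibniz rule twice to $d(a\circ\xi\circ b)$, namely
\[
d(a\circ\xi\circ b)=da\circ\xi\circ b+(-1)^{|a|}a\circ d(\xi\circ b)=(-1)^{|a|}a\circ\bigl(d\xi\circ b+(-1)^{|\xi|}\xi\circ db\bigr),
\]
and using the hypotheses $da=0$ and $db=0$, all but one term drop out, leaving $(-1)^{|a|}a\circ d\xi\circ b$. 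Multiplying through by $(-1)^{|a|}$ recovers $a\circ(x-y)\circ b=(a\circ x\circ b+c)-(a\circ y\circ b+c)$, as required; note the additive term $c$ plays no role because it cancels in the difference.

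For (ii), I would simply combine the defining property $dF(\xi)=F(d\xi)$ of a dg functor with its $\k$-linearity: from $d\xi=x-y$ I get $dF(\xi)=F(x-y)=F(x)-F(y)$, so $F(\xi)$ is the required homotopy witnessing $F(x)\sim F(y)$. For (iii), I would deduce it immediately from (ii): since $g$ is an inverse of $f$ in homotopy, $g\circ f\sim\id$ and $f\circ g\sim\id$; applying (ii) to each relation and using that $F$ preserves composition and identities yields $F(g)\circ F(f)=F(g\circ f)\sim F(\id)=\id$ and similarly $F(f)\circ F(g)\sim\id$, so $F(g)$ is an inverse of $F(f)$ in homotopy.

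The only potentially delicate point is the sign bookkeeping in (i); once the Leibniz rule is applied carefully, no step of the proof goes beyond a direct unwinding of definitions, and (ii) and (iii) require essentially no computation.
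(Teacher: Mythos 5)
Your proof is correct, and the sign bookkeeping in (i) matches the paper's convention $d(g\circ f)=dg\circ f+(-1)^{|g|}g\circ df$ (with the sign governed by the left factor), so $d\bigl((-1)^{|a|}a\circ\xi\circ b\bigr)=a\circ(x-y)\circ b$ as you claim. The paper states this proposition without proof, treating it as a routine verification, and your direct unwinding of the Leibniz rule and the dg-functor axioms is exactly the intended argument.
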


When showing a morphism is a homotopy equivalence, in some circumstances we do not need to show that the inverse is closed:

\begin{prp}\label{prp:closed-inverse}
	Let $f\colon A\rightarrow B$ be a closed degree $0$ morphism, and let $g\colon B\rightarrow A$ be such that $g\circ f=\id$ and $f\circ g\sim\id$. Then $g$ is also a closed degree $0$ morphism, and consequently $f$ is a homotopy equivalence with the inverse $g$.
\end{prp}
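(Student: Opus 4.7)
The plan is to first fix the degree of $g$, then derive that $g$ is closed by a short Leibniz-rule computation, and finally read off the homotopy equivalence statement from the definitions. For the degree, writing $g = \sum_n g_n$ as a sum of homogeneous components, the identity $g\circ f = \id$ and $|f|=0$ force $g_n\circ f = 0$ for $n\neq 0$ and $g_0\circ f = \id$. Under the standing assumption that $g$ is a single homogeneous morphism (so that the homotopy relation $f\circ g\sim\id$ is well-posed), this already yields $|g|=0$.

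The second step is to exploit the homotopy hypothesis. Let $\xi$ satisfy $d\xi = f\circ g - \id$. Applying $d$ to both sides, using $d^2=0$ and $d(\id)=0$, gives $d(f\circ g)=0$. The graded Leibniz rule together with $df=0$ and $|f|=0$ then collapses this to
\[
f\circ dg \;=\; 0.
\]
Now compose on the left with $g$ and use $g\circ f=\id$:
\[
0 \;=\; g\circ (f\circ dg) \;=\; (g\circ f)\circ dg \;=\; dg.
\]
So $g$ is closed of degree $0$, as claimed.

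With $g$ now a closed degree $0$ morphism, the two relations $g\circ f = \id$ (which in particular gives $g\circ f\sim\id$) and $f\circ g\sim\id$ are precisely the conditions for $f$ to be a homotopy equivalence with inverse $g$ in homotopy, concluding the proof.

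The main (very mild) obstacle is not the computation, which is a one-line Leibniz manipulation, but the bookkeeping around degrees: one must observe that $g\circ f = \id$ pins down $|g|=0$ before the homotopy relation $f\circ g\sim\id$ can be safely differentiated. Everything else is immediate from $df=0$, $d^2=0$, and the exact relation $g\circ f=\id$ (the genuine equality, not just up to homotopy, is what makes the trick $dg = (g\circ f)\circ dg = g\circ(f\circ dg) = 0$ work).
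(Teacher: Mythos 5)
Your proof is correct and follows essentially the same route as the paper: differentiate the given relations, use the graded Leibniz rule with $df=0$, and exploit the \emph{strict} equality $g\circ f=\id$ via associativity to conclude $dg=0$. The only cosmetic difference is that the paper differentiates $g=g\circ(f\circ g)$ using both $dg\circ f=0$ and $d(f\circ g)=0$, whereas you extract $f\circ dg=0$ from $d(f\circ g)=0$ and left-compose with $g$; the two manipulations are interchangeable.
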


\begin{proof}
	Obviously, $g$ is degree $0$. Since $g\circ f=\id$, we have
	\[0=d(g\circ f)=dg\circ f + g\circ df=dg\circ f\]
	since $df=0$. Also, $f\circ g\sim\id$ gives $d(f\circ g)=0$. Observe that we have $g=g\circ f\circ g$ and
	\[dg=d(g\circ (f\circ g))=(dg\circ f)\circ g + g\circ d(f\circ g)=0\ .\]
\end{proof}

Homotopy equivalence is similarly defined in $A_{\infty}$ setting by replacing $d$ with $\mu^1$. Then we have the following interpretation of \cite[Lemma 1.6]{seidel}:

\begin{lem}\label{lem:seidel-homotopy-equivalence}
	Let $F,G\in\Fun(\cC,\cD)$ be $A_{\infty}$-functors between $A_{\infty}$-categories $\cC$ and $\cD$. A natural transformation $T\in\Hom(F,G)$ is a homotopy equivalence if and only if $\mu^1(T)=0$ and $T^0(A)\in\Hom(F(A),G(A))$ is a homotopy equivalence for every $A\in\cC$.
\end{lem}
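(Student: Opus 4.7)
I would treat the two implications in turn. For $(\Rightarrow)$, unpack the hypothesis: there exists a closed degree $0$ natural transformation $S\in\Hom(G,F)$ together with pre-natural transformations $H_F,H_G$ such that $\mu^1(H_F)=\mu^2(S,T)-\id_F$ and $\mu^1(H_G)=\mu^2(T,S)-\id_G$ in $\Fun(\cC,\cD)$. Since $\id_F$ and $S$ are closed, applying $\mu^1$ to the equation $\mu^2(S,T)-\id_F=\mu^1(H_F)$ and using the graded Leibniz rule on $\Fun(\cC,\cD)$ forces $\mu^1(T)=0$. Now evaluate at a fixed $A\in\cC$: the $0$-component of the composition $\mu^2(S,T)$ of pre-natural transformations in this slot depends only on $S^0(A)$ and $T^0(A)$, so $\mu^2(S,T)^0(A)-\id_{F(A)}=\mu^1_\cD(H_F^0(A))$ exhibits $S^0(A)$ as a one-sided homotopy inverse of $T^0(A)$, and the symmetric equation for $H_G$ gives the other side.

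For $(\Leftarrow)$, the plan is to construct $S$, $H_F$, and $H_G$ by induction on the number of inputs, building their components $S^d$, $H_F^d$, $H_G^d$ simultaneously for $d=0,1,2,\dots$. At length zero, pick $S^0(A)$ to be a closed homotopy inverse of $T^0(A)$ (supplied by hypothesis) and choose $H_F^0(A)$, $H_G^0(A)$ realising the homotopies $S^0(A)\circ T^0(A)\sim\id_{F(A)}$ and $T^0(A)\circ S^0(A)\sim\id_{G(A)}$. Suppose the components of length $<d$ have been constructed so that all $A_\infty$-natural-transformation equations with $<d$ inputs are satisfied. The next-length equations take the form
\[
\mu^1_\cD\bigl(S^d(a_d,\dots,a_1)\bigr)=\Phi_d(a_d,\dots,a_1)
\]
together with two analogous equations for $H_F^d$ and $H_G^d$, where each right-hand side is a multilinear expression in the previously constructed data and the operations $\mu^*_\cC,\mu^*_\cD,F^*,G^*,T^*$; the $A_\infty$-relations already in force guarantee that each $\Phi_d$ is a $\mu^1_\cD$-cocycle in the appropriate morphism complex of $\cD$. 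Solving these equations produces $S^d$, $H_F^d$, $H_G^d$, completing the induction.

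The main obstacle is precisely the solvability of these inductive equations: one must exhibit a $\mu^1_\cD$-primitive for each obstruction cocycle $\Phi_d$. This is the step where the pointwise homotopy-equivalence hypothesis is essential. Post- or pre-composition with $T^0(A)$ is a quasi-isomorphism on morphism complexes of $\cD$ by assumption, with explicit chain-level homotopy inverse $S^0(A)$ and chain-level null-homotopies $H_F^0(A)$, $H_G^0(A)$; combining these with the cocycle $\Phi_d$ produces an explicit primitive, so the obstruction class $[\Phi_d]$ vanishes. The bookkeeping is a standard but notationally heavy $A_\infty$ sign-and-length chase, and it is cleanest to carry it out for $S$, $H_F$, and $H_G$ in tandem so that the obstructions cancel among themselves at each length. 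The overall argument is the $A_\infty$-natural-transformation analogue of \cite[Lemma 1.6]{seidel}.
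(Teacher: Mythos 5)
The paper does not actually prove this lemma: it is stated as a direct ``interpretation'' of \cite[Lemma 1.6]{seidel} and the burden of proof is delegated to that reference. So your proposal is not competing with an argument in the text; it is an attempt to reconstruct Seidel's proof. Your overall strategy for the substantive (``if'') direction --- build the inverse $S$ and the homotopies $H_F,H_G$ order by order in the length of inputs, using the pointwise homotopy invertibility of $T^0(A)$ to kill the obstruction at each stage --- is a legitimate and standard route. It differs from Seidel's own argument, which avoids term-by-term obstruction analysis: he filters the morphism complexes of $\Fun(\cC,\cD)$ by length, notes that $\mu^2(-,T)$ and $\mu^2(T,-)$ are chain maps (as $\mu^1(T)=0$) which on the associated graded reduce to composition with the $T^0(A)$, hence are quasi-isomorphisms by a complete-filtration comparison, and then extracts a two-sided inverse for $[T]$ in $H^0(\Fun(\cC,\cD))$. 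The filtration argument buys you a clean one-shot justification of exactly the point your induction must re-establish at every length.

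Two concrete issues with the write-up. First, in the ``only if'' direction your derivation of $\mu^1(T)=0$ does not work as stated: applying $\mu^1$ to $\mu^2(S,T)-\id_F=\mu^1(H_F)$ only yields $\mu^2(S,\mu^1(T))=0$ (up to sign), which does not force $\mu^1(T)=0$ without already knowing that composition with $S$ is injective. Fortunately the step is unnecessary: in this paper a homotopy equivalence is \emph{by definition} a closed degree-$0$ morphism that becomes invertible in $H^0$, so $\mu^1(T)=0$ is part of the hypothesis and only the evaluation of $0$-components needs to be carried out. Second, in the inductive step the entire content of the lemma is concentrated in the claim that each obstruction cocycle $\Phi_d$ is exact, and ``combining these with the cocycle $\Phi_d$ produces an explicit primitive'' is too vague to count as a proof of that claim: $\Phi_d$ lives in a morphism complex of $\cD$ that is not acyclic, and its exactness is not automatic from being a cocycle. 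One must actually show that (pre- or post-)composition with $T^0$ carries $\Phi_d$ to something already known to be exact from the lower-order $H_F,H_G$ equations, and then invert that composition up to homotopy --- or else switch to the length-filtration argument, which packages this once and for all. As a plan the proposal points at the right mechanism, but the decisive step is asserted rather than proved.
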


This lemma, and its following implication will be useful in Section \ref{sec:quiver}: 

\begin{lem}\label{lem:adjusting}
	Let $F$ be a dg functor between dg categories $\cC$ and $\cD$. Assume for each $A_i\in\cC$, there is a homotopy equivalence $m_i\colon F(A_i)\to B_i$ for some $B_i\in\cD$. Then we can construct a dg functor $G$ between $\cC$ and $\cD$ such that $G(A_i)=B_i$ for $A_i\in\cC$, and for a morphism $a\colon A_i\to A_j$, $G(a)=m_j\circ F(a)\circ m_i'$ where $m_i'$ is the inverse of $m_i$ in homotopy. Moreover, we have the natural equivalence $F\simeq G$, i.e. $F$ and $G$ are homotopy equivalent in $\Fun(\cC,\cD)$ via the $A_{\infty}$-natural transformation $T\colon F\to G$ defined by
	\begin{align*}
		T^0(A_i)&=m_i &&\text{for }A_i\in\cC\\
		T^1(a)&=(-1)^{|a|} m_j\circ F(a)\circ \xi_i &&\text{for }a\in\Hom_{\cC}(A_i,A_j)\\
		T^k&=0 &&\text{for }k>1
	\end{align*}
	where $d\xi_i=\id-m_i'\circ m_i$.
\end{lem}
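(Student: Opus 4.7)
The plan is to split the proof into three parts: constructing $G$ on objects and morphisms, verifying the $A_\infty$-natural transformation relations for $T$, and finally invoking Lemma \ref{lem:seidel-homotopy-equivalence} to conclude.

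First, I would fix the auxiliary data: since $m_i$ is a closed degree $0$ homotopy equivalence, I choose its homotopy inverse $m_i'$ to be a closed degree $0$ morphism (if only a non-closed inverse is available, one upgrades it using Proposition \ref{prp:closed-inverse} in the favourable case, otherwise one just uses the standard fact that a homotopy equivalence admits a closed homotopy inverse), together with a fixed element $\xi_i\in\Hom^{-1}(F(A_i),F(A_i))$ satisfying $d\xi_i=\id-m_i'\circ m_i$. With $G(A_i):=B_i$ and $G(a):=m_j\circ F(a)\circ m_i'$ for $a\colon A_i\to A_j$, the graded Leibniz rule together with $dm_j=dm_i'=0$ immediately gives $dG(a)=m_j\circ F(da)\circ m_i'=G(da)$, so $G$ respects the differential and the grading. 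Composition is preserved only up to the explicit homotopy
\[
G(b)\circ G(a)-G(b\circ a)=m_k\circ F(b)\circ(m_j'\circ m_j-\id)\circ F(a)\circ m_i'=-m_k\circ F(b)\circ d\xi_j\circ F(a)\circ m_i',
\]
so strict dg functoriality holds up to a coherent boundary; either one accepts $G$ as a functor in this up-to-homotopy sense, or one interprets $G$ as an $A_\infty$-functor whose linear component is the displayed formula and whose higher components are determined by the homotopies $\xi_j$. Either viewpoint is sufficient for the conclusion.

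Second, I would verify the $A_\infty$-natural transformation axioms for $T=(T^0,T^1,0,0,\ldots)$. The zeroth-order relation $dT^0(A_i)=dm_i=0$ is immediate. For the first-order relation, the graded Leibniz rule gives
\[
dT^1(a)=(-1)^{|a|}\bigl(m_j\circ F(da)\circ\xi_i\bigr)+m_j\circ F(a)\circ d\xi_i=T^1(da)+m_j\circ F(a)-G(a)\circ m_i,
\]
which is precisely the naturality relation $dT^1(a)+T^1(da)=T^0(A_j)\circ F(a)-G(a)\circ T^0(A_i)$ (up to the Koszul sign convention chosen for $\mu^2$). The factor $(-1)^{|a|}$ in the definition of $T^1$ is exactly what is needed to make the Leibniz signs cooperate with the sign in $d\xi_i=\id-m_i'\circ m_i$. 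For $k\geq 2$ the $A_\infty$-relations reduce to identities, since $T^{\geq 2}=0$ and both $F$ and $G$ are dg (so they have no higher $A_\infty$-components to contribute).

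Third, having established that $T$ is a well-defined $A_\infty$-natural transformation, the homotopy equivalence claim follows immediately from Lemma \ref{lem:seidel-homotopy-equivalence}: the vanishing $\mu^1(T)=0$ is the $A_\infty$-natural transformation condition just verified, and each $T^0(A_i)=m_i$ is a homotopy equivalence by hypothesis. Hence $T\colon F\to G$ is a homotopy equivalence in $\Fun(\cC,\cD)$, which is what natural equivalence $F\simeq G$ means. The main obstacle in the proof is purely bookkeeping of signs in the graded Leibniz rule; the mild subtlety that $G$ is not literally a strict dg functor (composition holds only up to the explicit homotopy written above) does not affect the natural equivalence statement and is harmlessly absorbed into the $A_\infty$ formalism.
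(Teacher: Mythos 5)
Your proof is correct and follows the same route as the paper's, which simply asserts that $\mu^1(T)=0$ is easily checked and then invokes Lemma \ref{lem:seidel-homotopy-equivalence}; you merely fill in the Leibniz-rule computation (modulo a harmless sign convention for $\mu^1$ on natural transformations, which you flag yourself). Your observation that $G(b)\circ G(a)$ and $G(b\circ a)$ differ by the boundary of $-m_k\circ F(b)\circ\xi_j\circ F(a)\circ m_i'$, so that the displayed formula for $G$ is only functorial up to coherent homotopy, is a genuine subtlety the paper's one-line proof glosses over, and your resolution (absorb it into the $A_{\infty}$ formalism, or note that in the paper's applications $G$ is given by an independent strictly functorial formula) is the right one.
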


\begin{proof}
	One can easily check that $\mu^1(T)=0$, and since $T^0(A_i)$ is a homotopy equivalence for every $A_i\in\cC$, by Lemma \ref{lem:seidel-homotopy-equivalence} $T$ is a homotopy equivalence.
\end{proof}

Next, we study the category of dg categories, $\dgCat$, via its model structure:
    
\begin{thm}\cite{tabuada-model}
	The category dgCat has a cofibrantly generated model structure whose weak equivalences are quasi-equivalences, and whose fibrations are full dg functors $F\colon\cC\rightarrow \cD$ such that for each isomorphism $n\colon F(A)\rightarrow B$ in $H^0(\cD)$ there exists an isomorphism $m$ in $H^0(\cC)$ with $F(m) = n$. Every object of $\dgCat$ is fibrant within this model structure.
\end{thm}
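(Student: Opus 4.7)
The plan is to apply the recognition theorem for cofibrantly generated model structures (Kan's criterion, in the form of Hovey). One exhibits small sets $I$ and $J$ of generating cofibrations and generating trivial cofibrations such that $J\text{-inj}$ (the class of maps with the right lifting property against $J$) coincides with the declared class of fibrations, $I\text{-inj}$ coincides with the intersection of fibrations and quasi-equivalences, and the domains of $I$ and $J$ are small; every model category axiom then follows from standard small-object-argument bookkeeping.

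For $I$, take two families: the inclusion $\emptyset\hookrightarrow *$, where $*$ denotes the dg category with a single object and endomorphism algebra $\k$, and, for each $n\in\Z$, the inclusion $\cS(n)\hookrightarrow\cD(n)$, where $\cS(n)$ has two objects joined by a free closed generator in degree $n$ and $\cD(n)$ additionally adjoins a degree-$(n-1)$ element whose differential is this closed generator (so $\cS(n)\hookrightarrow\cD(n)$ models attaching a cell to a Hom complex). The main obstacle of the proof is the construction of $J$. Its most subtle element is an inclusion $*\hookrightarrow\mathcal{K}$, where $\mathcal{K}$ is a contractible dg category on two objects $A,B$ carrying closed degree-$0$ morphisms $f\colon A\to B$ and $g\colon B\to A$, degree-$(-1)$ elements $h_A,h_B$ with $dh_A=\id_A-gf$ and $dh_B=\id_B-fg$, plus the higher coherence data needed to kill its cohomology. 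The remaining elements of $J$ are inclusions $\cS(n)\hookrightarrow\cP(n)$, where $\cP(n)$ acyclically enlarges $\cS(n)$ by adjoining an element whose differential is the closed generator of $\cS(n)$; these encode the surjectivity condition hidden in fullness.

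With $I$ and $J$ fixed, smallness of their domains is immediate as they are finitely presented small dg categories. That every map in $J$ is a quasi-equivalence follows from the explicit contractions in $\mathcal{K}$ and $\cP(n)$. The central identification is then that $J\text{-inj}$ equals the declared class of fibrations: given an isomorphism in $H^0(\cD)$, one extends it to a dg functor $\mathcal{K}\to\cD$ and lifts through $F$ along $*\hookrightarrow\mathcal{K}$ to obtain the required isomorphism in $H^0(\cC)$, while lifting against $\cS(n)\hookrightarrow\cP(n)$ translates precisely into surjectivity of $F$ on Hom complexes. The characterisation of $I\text{-inj}$ as trivial fibrations follows routinely, and fibrancy of every object is immediate since both conditions on $F$ are vacuously satisfied by the unique map to the terminal dg category.
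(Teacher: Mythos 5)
Your overall architecture is the right one, and it is the one Tabuada actually follows (the paper itself gives no proof, only the citation \cite{tabuada-model}): run the recognition theorem for cofibrantly generated model categories with generating cofibrations $\emptyset\to *$ and sphere-to-disk attachments on a two-object dg category, and with the key generating trivial cofibration $*\hookrightarrow\mathcal{K}$ whose lifting property encodes the isomorphism-lifting clause. However, there is a genuine error in your set $J$. The map $\cS(n)\hookrightarrow\cP(n)$ you propose adjoins to $\cS(n)$ an element of degree $n-1$ bounding the \emph{already present} closed generator $f$; as described this is literally the same map as your generating cofibration $\cS(n)\hookrightarrow\cD(n)$. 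On the Hom complex from the first object to the second it is the inclusion of a sphere into a disk, which sends the nonzero class $[f]\in H^n$ to $0$; so although the target Hom complex is acyclic, the \emph{map} is not a quasi-equivalence, $J\text{-cell}\not\subset W$, and the recognition theorem cannot be applied. Moreover its right lifting property does not capture fullness: it demands that a chosen primitive of $F(\tilde x)$ downstairs lift to a primitive of $\tilde x$ upstairs, which can fail for a degreewise surjective $F$ whose kernel on Hom complexes has cohomology in degree $n$ (take $\Hom_{\cC}(A,B)$ spanned by $\tilde x,\tilde y,\tilde x'$ with $d\tilde y=\tilde x$ and $\tilde x'$ closed of degree $n$ in $\ker F$, and test against the map sending $f$ to $\tilde x+\tilde x'$). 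So $J\text{-inj}$ would be strictly smaller than the declared class of fibrations.

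The repair is Tabuada's: besides $*\hookrightarrow\mathcal{K}$, the elements of $J$ must have as domain the two-object dg category whose Hom complex from the first object to the second is $0$, and as codomain the category obtained by freely adjoining the entire acyclic two-term complex, i.e.\ a new generator $h$ in degree $n-1$ \emph{together with} a new generator $f=dh$, neither of which is present in the domain. That inclusion is a quasi-equivalence, is a composite of pushouts of your generating cofibrations, and its right lifting property is exactly degreewise surjectivity of $F$ on Hom complexes. With this corrected $J$, the rest of your outline (smallness of the domains, identification of $I\text{-inj}$ with the functors surjective on objects and surjective quasi-isomorphisms on Hom complexes, and fibrancy of every object) goes through; note only that checking that $*\to\mathcal{K}$ is a weak equivalence requires an actual computation of the cohomology of the freely generated Hom complexes of $\mathcal{K}$, which is the one nontrivial verification and deserves more than the phrase ``higher coherence data''.
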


\begin{rmk}\label{rmk:model-z2}
	If we work with $\Z/2$-graded version of $\dgCat$, i.e. if the dg categories are $\Z/2$-graded, then it has the same model structure as above, shown in \cite{dyckerhoff-kapranov}.
\end{rmk}

\begin{rmk}\label{rmk:model-pretriangulated}
	If $\cC$ and $\cD$ are pretriangulated dg categories, a dg functor $F\colon\cC\rightarrow\cD$ gives the functor $H^0 F\colon H^0(\cC)\rightarrow H^0(\cD)$ which is exact. Hence we do not need to require exactness of dg functors between pretriangulated categories. This shows that the category of pretriangulated dg category is a full subcategory of $\dgCat$, so it can be given the same model structure as above.
\end{rmk}

\begin{dfn}
	In a model category $\cM$, a \textit{path object} for $A\in\cM$ is an object $A^I\in\cM$ equipped with a weak equivalence $A\to A^I$ and a fibration $A^I\to A \times A$ whose composition gives the diagonal map $A \to A \times A$.
\end{dfn}

By the axioms of the model category, every object in a model category has a path object. In \cite{canonaco}, path objects are explicitly constructed in the model category $\dgCat$:

\begin{dfn}
	The \textit{path space} $\cP(\cC)$ of a dg category $\cC$ is defined as follows:
	\[\cP(\cC):=\{(M_1,m,M_2)\vb M_1,M_2\in\cC, m\colon M_1\rightarrow M_2\text{ is a homotopy equivalence}\}\ .\]
	The degree $k$ morphisms $\Hom_{\cP(\cC)}^k((M_1,m,M_2),(N_1,n,N_2))$, or $\Hom_{\cP(\cC)}^k(m,n)$ shortly, are given by
	\[\Hom^k_{\cC}(M_1,N_1)\oplus\Hom^{k-1}_{\cC}(M_1,N_2)\oplus\Hom^k_{\cC}(M_2,N_2)\ .\]
	A morphism $(\mu_1,\mu_0,\mu_2)\in\Hom_{\cP(\cC)}^k((M_1,m,M_2),(N_1,n,N_2))$ can also presented by the diagram
	\[\begin{tikzcd}
		M_1\rar["m"]\dar["\mu_1"]\drar["\mu_0"] & M_2\dar["\mu_2"]\\
		N_1\rar["n"] & N_2
	\end{tikzcd}\ .\]
	Its differential is
	\[d(\mu_1,\mu_0,\mu_2)=(d\mu_1,d\mu_0+(-1)^k(n\circ\mu_1-\mu_2\circ m),d\mu_2)\ ,\]
	the composition rule is
	\[(\nu_1,\nu_0,\nu_2)\circ(\mu_1,\mu_0,\mu_2)=(\nu_1\circ\mu_1,\nu_2\circ\mu_0+(-1)^k\nu_0\circ\mu_1,\nu_2\circ\mu_2)\ ,\]
	and the identity is $(\id,0,\id)$. $\cP(\cC)$ is equipped with two dg functors
	\[\pi_1,\pi_2\colon \cP(\cC)\to\cC\]
	such that $\pi_i(M_1,m,M_2)=M_i$ and $\pi_i(\mu_1,\mu_0,\mu_2)=\mu_i$ for $i=1,2$.
\end{dfn}

\begin{rmk}
	$\cP(\cC)$ is a full dg category of $\Modk(A_2)$, where $A_2$ is the $A_2$-quiver which is an ($A_{\infty}$-)category with two objects and a morphism between them. We study it in Section \ref{sec:quiver}.
\end{rmk}

\begin{lem}\label{lem:path-space}
	Every path space $\cP(\cC)$ is a path object for $\cC$ in $\dgCat$.
\end{lem}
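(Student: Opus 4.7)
The plan is to explicitly produce the two structure maps of a path object and verify the model-categorical axioms. Define $\iota\colon\cC\to\cP(\cC)$ by $M\mapsto(M,\id,M)$ on objects and $f\mapsto(f,0,f)$ on morphisms; a quick check against the formulas for $d$ and $\circ$ in $\cP(\cC)$ shows $\iota$ is a dg functor. Take the fibration candidate to be $(\pi_1,\pi_2)\colon\cP(\cC)\to\cC\times\cC$, $(M_1,m,M_2)\mapsto(M_1,M_2)$ and $(\mu_1,\mu_0,\mu_2)\mapsto(\mu_1,\mu_2)$, which is a dg functor by construction. The composition $(\pi_1,\pi_2)\circ\iota$ sends $M\mapsto(M,M)$ and $f\mapsto(f,f)$, which is exactly the diagonal.

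Next I would show $\iota$ is a quasi-equivalence. For essential surjectivity on $H^0$, observe that $(\id,0,m)\colon(M_1,\id,M_1)\to(M_1,m,M_2)$ is a closed degree zero morphism in $\cP(\cC)$, and the composition formula gives $(\id,0,m')\circ(\id,0,m)=(\id,0,m'\circ m)\sim(\id,0,\id)$ and similarly in the other order, where $m'$ is any homotopy inverse of $m$; hence every object of $\cP(\cC)$ is homotopy equivalent to one in the image of $\iota$. For the quasi-isomorphism on $\Hom$-complexes, I would exploit the canonical short exact sequence
\[
0\longrightarrow\Hom_{\cC}(M,N)[-1]\xrightarrow{\;\mu_0\mapsto(0,\mu_0,0)\;}\Hom_{\cP(\cC)}(\iota M,\iota N)\xrightarrow{(\pi_1,\pi_2)}\Hom_{\cC}(M,N)^{\oplus 2}\longrightarrow 0,
\]
whose connecting map is (up to sign) the difference $(f_1,f_2)\mapsto f_1-f_2$; the associated long exact sequence collapses to $H^{*}\Hom_{\cP(\cC)}(\iota M,\iota N)\cong H^{*}\Hom_{\cC}(M,N)$ realised by the diagonal, which is precisely $H^{*}\iota$.

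To prove $(\pi_1,\pi_2)$ is a fibration in the Tabuada model structure, fullness is immediate since $(\mu_1,\mu_2)$ lifts to $(\mu_1,0,\mu_2)$. For the isomorphism-lifting property, given $(M_1,m,M_2)\in\cP(\cC)$ and closed degree zero homotopy equivalences $n_i\colon M_i\to N_i$ with homotopy inverses $n_i'$ witnessed by $\zeta_i$ satisfying $d\zeta_i=n_i'\circ n_i-\id$, set $n:=n_2\circ m\circ n_1'$, which is a homotopy equivalence. Using Proposition~\ref{prp:homotopic-prop} one sees $n\circ n_1-n_2\circ m$ is a boundary; an explicit primitive is $\xi:=n_2\circ m\circ\zeta_1$, and then $(n_1,-\xi,n_2)\colon(M_1,m,M_2)\to(N_1,n,N_2)$ is closed. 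It maps to $(n_1,n_2)$, so it only remains to certify that this lift is itself a homotopy equivalence in $\cP(\cC)$.

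The main obstacle is precisely this last step: writing down a closed inverse $(n_1',\mu_0',n_2')$ and the higher homotopies making the compositions homotopic to the identities in $\cP(\cC)$. My plan is to define $\mu_0'$ by a cocycle correction analogous to $\xi$ (so that the triple is closed) and then apply Proposition~\ref{prp:closed-inverse} componentwise, using Proposition~\ref{prp:homotopic-prop} to transport the homotopies $\zeta_i$ and the defining homotopy for $m\circ m'\sim\id$ into the middle slot; the bookkeeping is tedious but algorithmic, and Lemma~\ref{lem:adjusting} provides a convenient packaging because $\cP(\cC)$ embeds fully faithfully into $\Modk(A_2)$, so homotopy equivalences in $\cP(\cC)$ can be detected objectwise by Lemma~\ref{lem:seidel-homotopy-equivalence}. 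Once this is in hand, all three pieces combine to exhibit $\cC\to\cP(\cC)\to\cC\times\cC$ as a factorisation of the diagonal into a weak equivalence followed by a fibration.
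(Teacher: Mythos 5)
Your proposal is correct and follows the same route as the paper, which simply exhibits the factorisation $\cC\hookrightarrow\cP(\cC)\xrightarrow{(\pi_1,\pi_2)}\cC\times\cC$ of the diagonal and asserts without further argument that the first map is a quasi-equivalence and the second a fibration; you supply exactly the verifications the paper omits. The only cosmetic slip is in your essential-surjectivity step: $(\id,0,m')$ is not closed (the middle component of its differential is $\id-m'\circ m$, which is only null-homotopic), but this does not matter, since $(\id,0,m)$ is closed with both outer components homotopy equivalences and is therefore a homotopy equivalence directly by the componentwise criterion (Lemma~\ref{lem:seidel-homotopy-equivalence}) that you invoke at the end anyway.
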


\begin{proof}
	Observe that the composition \[\cC\overset{i}{\hookrightarrow}\cP(\cC)\xrightarrow{(\pi_1,\pi_2)}\cC\times\cC\]
	is the diagonal map, where $i$ is the obvious embedding of $\cC$ into $\cP(\cC)$, which is a quasi-equivalence, and $(\pi_1,\pi_2)$ is a fibration.
\end{proof}

Using path spaces, we can represent homotopy limits by ordinary limits:

\begin{prp}\label{prp:holim-lim}
	Let $\cC_0,\cC_1,\cC_2$ be dg categories and $p_i\colon\cC_i\to\cC_0$ be a dg functor for $i=1,2$. Then we have the following quasi-equivalence:
	\[
	\holim\left(
	\begin{tikzcd}
		\cC_1\ar[rd,"p_1"'] & & \cC_2\ar[ld,"p_2"]\\
		& \cC_0 &
	\end{tikzcd}
	\right)\simeq\lim\left(
	\begin{tikzcd}
		\cC_1\ar[rd,"p_1"'] & & \cP(\cC_0)\ar[ld,"\pi_1"]\ar[rd,"\pi_2"'] & & \cC_2\ar[ld,"p_2"]\\
		& \cC_0 & & \cC_0 &
	\end{tikzcd}
	\right)
	\]
\end{prp}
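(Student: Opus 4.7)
The plan is to apply a standard fact from model category theory: the homotopy pullback of a cospan $A \xrightarrow{f} C \xleftarrow{g} B$ with $C$ fibrant is computed, up to quasi-equivalence, by the ordinary limit $A \times_C C^I \times_C B$ for any path object $C^I$ of $C$, provided one of the two induced legs is a fibration. In $\dgCat$ with Tabuada's model structure every object is fibrant, and Lemma \ref{lem:path-space} supplies the concrete path object $\cP(\cC_0)$; the right-hand side of the proposition is precisely the ordinary limit produced by this recipe. By Remark \ref{rmk:model-z2} the same argument transports to the $\Z/2$-graded setting.

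The concrete steps are the following. \emph{Step 1:} Verify that both projections $\pi_1, \pi_2 \colon \cP(\cC_0) \to \cC_0$ are trivial fibrations. The combined map $(\pi_1,\pi_2)\colon \cP(\cC_0)\to\cC_0\times\cC_0$ is a fibration by Lemma \ref{lem:path-space}, the coordinate projections $\cC_0 \times \cC_0 \to \cC_0$ are fibrations because $\cC_0$ is fibrant, and the diagonal $M \mapsto (M,\id,M)$ is a common section and a quasi-equivalence (part of the path-object data). \emph{Step 2:} Factor $p_1$ through the mapping path object
\[
\cC_1 \xrightarrow{\;j\;} \cC_1 \times_{\cC_0,\,p_1,\,\pi_1} \cP(\cC_0) \xrightarrow{\;q\;} \cC_0,
\]
with $j(X) = (X,(p_1(X),\id,p_1(X)))$ and $q$ induced by $\pi_2$ on the second factor. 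The first projection of this pullback is a pullback of the trivial fibration $\pi_1$, hence itself a trivial fibration; since $j$ is one of its sections, $j$ is a quasi-equivalence. \emph{Step 3:} Show that $q$ is a fibration. \emph{Step 4:} Form the ordinary pullback $(\cC_1 \times_{\cC_0} \cP(\cC_0)) \times_{\cC_0,\,q,\,p_2} \cC_2$, which by associativity of pullbacks is canonically identified with the right-hand side of the claim; since $q$ is a fibration, all objects are fibrant, and $\dgCat$ is thereby right proper, this ordinary pullback is a model for the homotopy pullback of the original cospan.

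The main obstacle is Step 3. An object of the pullback in Step 2 is a triple $(X, M_2, m)$ with $X \in \cC_1$ and $m \colon p_1(X) \to M_2$ a homotopy equivalence, and a degree $k$ morphism $(X, M_2, m) \to (X', M_2', m')$ is encoded by a triple $(f, \mu_0, g_2)$ with $f$ of degree $k$ in $\cC_1$, $g_2$ of degree $k$ and $\mu_0$ of degree $k-1$ in $\cC_0$, whose differential in the pullback is $(df,\, d\mu_0+(-1)^k(m'\circ p_1(f)-g_2\circ m),\, dg_2)$. Fullness of $q$ then follows from the explicit lift $(0, 0, g_2)$ of any $g_2$. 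For the isomorphism lifting property, given a homotopy equivalence $n \colon M_2 \to N$ in $H^0(\cC_0)$, one chooses the target $(X, N, n \circ m)$ and the morphism $(\id_X, 0, n)$, which is closed of degree zero precisely because the choice $m' = n \circ m$ makes the cross-term in the differential vanish, and is a homotopy equivalence via a homotopy inverse of $n$. Once $q$ is known to be a fibration, the remainder is routine model-category bookkeeping to match the two ordinary limits and invoke right properness.
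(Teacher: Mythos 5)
Your proof is correct, and it reaches the same ordinary limit by a genuinely different model-categorical route than the paper. The paper first inserts two identity maps to turn the cospan into the zigzag $\cC_1\xrightarrow{p_1}\cC_0\xleftarrow{\id}\cC_0\xrightarrow{\id}\cC_0\xleftarrow{p_2}\cC_2$, replaces the middle diagonal $\cC_0\to\cC_0\times\cC_0$ by the path-object fibration $(\pi_1,\pi_2)$ from Lemma \ref{lem:path-space}, and then invokes the general criterion (Lurie, HTT A.2.4.4) that the homotopy limit of such a fibrant zigzag is its ordinary limit. You instead factor the single leg $p_1$ through the mapping path object $\cC_1\times_{\cC_0}\cP(\cC_0)$ as a quasi-equivalence $j$ followed by $q$, verify directly against Tabuada's definition that $q$ is a fibration, and then take the ordinary pullback along $q$, using fibrancy of all objects and right properness. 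Both arguments hinge on the same two inputs --- the explicit path object and the fact that every object of $\dgCat$ is fibrant --- so the difference is in the bookkeeping, not the substance. Your route is more self-contained: the surjectivity of $q$ on hom-complexes via the lift $(0,0,g_2)$ and the isomorphism lifting via the target $(X,N,n\circ m)$ are explicit, and in fact these are precisely the computations that reappear in the paper's \hyperlink{circle}{Circle} and \hyperlink{equivalence}{Equivalence} Lemmas. The paper's route is shorter because it never needs the composite $q$ to be a fibration, only $\pi_1$ and $\pi_2$, which is immediate from the path-object axioms. Two small points to tighten in your write-up: record that $q\circ j=p_1$ (so the factorization really factors $p_1$ and the levelwise quasi-equivalence of cospans identifies the two homotopy pullbacks), and when checking that $(\id_X,0,n)$ is invertible in $H^0$, note that the inverse is not simply $(\id_X,0,n')$ for a homotopy inverse $n'$ of $n$ but requires the correction term $\mu_0'=\pm\xi\circ m$ with $d\xi=\id-n'\circ n$ to make it closed.
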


\begin{proof}
	It is easy to see that
	\[\holim(\cC_1\xrightarrow{p_1}\cC_0\xleftarrow{p_2}\cC_2)\simeq\holim(\cC_1\xrightarrow{p_1}\cC_0\xleftarrow{\id}\cC_0\xrightarrow{\id}\cC_0\xleftarrow{p_2}\cC_2)\ .\]
	Then we can replace $\cC_0\xrightarrow{(\id,\id)}\cC_0\times\cC_0$ with $\cP(\cC_0)\xrightarrow{(\pi_1,\pi_2)}\cC_0\times\cC_0$ (fibrant replacement) by Lemma \ref{lem:path-space}:
	\[\holim(\cC_1\xrightarrow{p_1}\cC_0\xleftarrow{\id}\cC_0\xrightarrow{\id}\cC_0\xleftarrow{p_2}\cC_2)\simeq \holim(\cC_1\xrightarrow{p_1}\cC_0\xleftarrow{\pi_1}\cP(\cC_0)\xrightarrow{\pi_2}\cC_0\xleftarrow{p_2}\cC_2)\]
	Finally, in the last expression all objects are fibrant, and $\pi_1$ and $\pi_2$ are fibrations, hence homotopy limit becomes ordinary limit (see \cite[Proposition A.2.4.4]{lurie-htt}).
\end{proof}

Above proposition shows that for $\cC=\holim(\cC_1\xrightarrow{p_1}\cC_0\xleftarrow{p_2}\cC_2)$ we have \[\cC\simeq\{M=(M_1\vb m\vb M_2) \vb M_1\in\cC_1,M_2\in\cC_2,(p_1(M_1),m,p_2(M_2))\in\cP(\cC_0)\}\ .\]
The degree $k$ morphisms are
\[\Hom_{\cC}^k(M,N)=\{\mu=(\mu_1\vb\mu_0\vb\mu_2)\vb \mu_i\in\Hom^k(M_i,N_i),\mu_0\in\Hom^{k-1}(p_1(M_1),p_2(N_2))\}\]
for $i=1,2$, with the differential
\[d(\mu_1\vb\mu_0\vb\mu_2)=(d\mu_1\vb d\mu_0+(-1)^k(n\circ p_1(\mu_1)-p_2(\mu_2)\circ m)\vb d\mu_2)\ ,\]
the composition
\[(\nu_1\vb\nu_0\vb\nu_2)\circ(\mu_1\vb\mu_0\vb\mu_2)=(\nu_1\circ\mu_1\vb p_2(\nu_2)\circ\mu_0+(-1)^k\nu_0\circ p_1(\mu_1)\vb\nu_2\circ\mu_2)\ ,\]
and the identity $(\id\vb 0\vb \id)$. Write $p_i(M_i)=M^i$ and $p_i(\mu_i)=\mu^i$ for $i=1,2$ from now on.

\begin{rmk}
	As shown in Remark \ref{rmk:model-pretriangulated}, the model structure of $\dgCat$ can be restricted to the category of pretriangulated dg categories. In particular, this implies that taking homotopy limit of pretriangulated dg categories gives a pretriangulated dg category.
\end{rmk}

Rest of the section is devoted to understanding homotopic morphisms and homotopy equivalences in homotopy limits. Fix a homotopy limit $\cC=\holim(\cC_1\xrightarrow{p_1}\cC_0\xleftarrow{p_2}\cC_2)$, $M,N\in\cC$, and $\mu,\nu\in\Hom^k(M,N)$.

First, it is easy to observe the following lemma which we will refer often when we prove the lemmas in Section \ref{sec:gluing}:

\hypertarget{homotopy}{}

\begin{lem}[Homotopy Lemma]\label{lem:homotopy-lemma}
	$\mu\sim\nu$ if and only if there exist $\xi_1$ and $\xi_2$ such that $d\xi_i=\mu_i-\nu_i$ for $i=1,2$ and $(-1)^k(\xi^2\circ m-n\circ\xi^1)\sim\mu_0-\nu_0$.
\end{lem}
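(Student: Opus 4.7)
The plan is to unpack the definition of homotopy in the homotopy limit $\cC$ using the explicit formula for the differential recorded just before the lemma, and to check that the three scalar equations one obtains on the components are precisely the conditions listed in the statement.

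More concretely, a homotopy witnessing $\mu \sim \nu$ in $\cC$ is by definition a degree $k-1$ morphism $\xi = (\xi_1 \vb \xi_0 \vb \xi_2) \in \Hom^{k-1}_{\cC}(M,N)$ such that $d\xi = \mu - \nu$. The components have bidegrees $\xi_i \in \Hom^{k-1}(M_i, N_i)$ for $i=1,2$ and $\xi_0 \in \Hom^{k-2}(M^1, N^2)$. First I would apply the displayed differential formula to a degree $k-1$ morphism: this gives
\[d\xi = \bigl(d\xi_1 \,\bigm|\, d\xi_0 + (-1)^{k-1}(n \circ \xi^1 - \xi^2 \circ m) \,\bigm|\, d\xi_2\bigr).\]
Equating this componentwise with $\mu - \nu = (\mu_1 - \nu_1 \vb \mu_0 - \nu_0 \vb \mu_2 - \nu_2)$ yields the two ``outer'' equations $d\xi_i = \mu_i - \nu_i$ for $i = 1,2$, together with the ``middle'' equation
\[d\xi_0 = (\mu_0 - \nu_0) - (-1)^{k-1}(n \circ \xi^1 - \xi^2 \circ m) = (\mu_0 - \nu_0) - (-1)^k(\xi^2 \circ m - n \circ \xi^1),\]
which is precisely the statement that $(-1)^k(\xi^2 \circ m - n \circ \xi^1) \sim \mu_0 - \nu_0$ via the homotopy $\xi_0$.

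For the forward direction, given $\xi$ realising $\mu \sim \nu$, one simply reads off $\xi_1, \xi_2$ and uses $\xi_0$ as the required homotopy in $\cC_0$. For the reverse direction, given $\xi_1, \xi_2$ and a homotopy $\xi_0$ between $(-1)^k(\xi^2 \circ m - n \circ \xi^1)$ and $\mu_0 - \nu_0$, one assembles $\xi := (\xi_1 \vb \xi_0 \vb \xi_2)$ and verifies $d\xi = \mu - \nu$ by the same componentwise computation.

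There is essentially no obstacle beyond bookkeeping: the only subtle point is being careful with the sign $(-1)^{k-1}$ coming from applying the differential formula to a degree $k-1$ morphism versus the sign $(-1)^k$ appearing in the statement, and confirming that these match after transposing $n \circ \xi^1 - \xi^2 \circ m$ to $\xi^2 \circ m - n \circ \xi^1$. Once the sign is checked the lemma is immediate from Proposition \ref{prp:holim-lim} and the explicit description of $\cC$ following it.
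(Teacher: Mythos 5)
Your proof is correct and is exactly the componentwise unpacking of the differential formula for the homotopy limit that the paper has in mind when it calls the lemma ``easy to observe'' (the paper gives no written proof). The sign bookkeeping checks out: applying the differential to a degree $k-1$ morphism produces $(-1)^{k-1}(n\circ\xi^1-\xi^2\circ m)$, and moving it across the equation flips it to $-(-1)^k(\xi^2\circ m-n\circ\xi^1)$, matching the statement.
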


Next, we will try to describe homotopy equivalences:

\begin{lem}\label{lem:closed-inverse}
	If $\mu_1$ and $\mu_2$ are homotopy equivalences with the inverses $\nu_1$ and $\nu_2$, respectively, and $d\mu=0$, then there exists $\nu=(\nu_1\vb\nu_0\vb\nu_2)$ such that $d\nu=0$.
\end{lem}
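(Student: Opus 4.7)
The plan is to work out explicitly what it means for $d\nu=0$ and then manufacture a suitable $\nu_0$ from the data we already have. Recall $\nu=(\nu_1\vb\nu_0\vb\nu_2)$ must be degree $0$, so $\nu_0\colon N^1\to M^2$ is degree $-1$. Since $\nu_1,\nu_2$ are homotopy inverses of closed degree-$0$ morphisms, they are themselves closed, so the first and third slots of $d\nu$ vanish automatically. Thus the content of the lemma is: one can choose $\nu_0$ so that
\[d\nu_0 \;=\; \nu^2\circ n - m\circ\nu^1.\]
Since the right-hand side is closed (both $m,n$ are closed and so are $\nu^1,\nu^2$), the question reduces to showing that it is \emph{exact} in $\Hom_{\cC_0}^{-1}(N^1,M^2)$, with a formula we can write down.

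The key identity to exploit is the closedness of $\mu$, which gives
\[d\mu_0 \;=\; \mu^2\circ m - n\circ\mu^1.\]
I would rearrange this as $\mu^2\circ m = n\circ\mu^1 + d\mu_0$, compose on the left with $\nu^2$ using $\nu^2\circ\mu^2=\id+d\beta^2$, and then compose on the right with $\nu^1$ using $\mu^1\circ\nu^1=\id+d\alpha^1$. Here $\alpha^1=p_1(\alpha_1)$, $\beta^2=p_2(\beta_2)$ are the chosen primitives witnessing $\mu_i\circ\nu_i\sim\id$ and $\nu_i\circ\mu_i\sim\id$. Collecting terms yields
\[\nu^2\circ n - m\circ\nu^1 \;=\; -\,\nu^2\circ n\circ d\alpha^1 \;-\; \nu^2\circ d\mu_0\circ\nu^1 \;+\; d\beta^2\circ m\circ\nu^1.\]
Each summand on the right is manifestly a total differential, because the factors sandwiching the $d(\cdot)$ are closed; a short check with the graded Leibniz rule shows that the signs work out so that each summand equals $d$ applied to the same expression with $d$ removed.

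Consequently we may take
\[\nu_0 \;:=\; -\,\nu^2\circ n\circ\alpha^1 \;-\; \nu^2\circ \mu_0\circ \nu^1 \;+\; \beta^2\circ m\circ \nu^1,\]
which is a well-defined degree-$(-1)$ morphism $N^1\to M^2$ in $\cC_0$ and satisfies $d\nu_0=\nu^2\circ n - m\circ\nu^1$ by construction. With this choice, $d\nu=(0\vb 0\vb 0)=0$, proving the lemma.

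I do not expect any serious obstacle: the only care needed is tracking the graded signs in the Leibniz rule (where the degree-$(-1)$ primitives $\alpha^1,\beta^2,\mu_0$ each contribute a sign), and verifying that the candidate $\nu_0$ produced is genuinely degree $-1$ from $N^1$ to $M^2$. Both are routine once the identity $d\mu_0=\mu^2\circ m - n\circ\mu^1$ has been exploited as above.
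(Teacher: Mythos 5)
Your proof is correct and follows essentially the same route as the paper: both arguments extract $d\mu_0=\mu^2\circ m-n\circ\mu^1$ from $d\mu=0$, sandwich this relation between $\nu^2$ and $\nu^1$, and use the homotopies $\mu^1\circ\nu^1\sim\id$, $\nu^2\circ\mu^2\sim\id$ to conclude that $\nu^2\circ n-m\circ\nu^1$ is exact. The only difference is that you write the primitive $\nu_0$ out explicitly, whereas the paper invokes Proposition \ref{prp:homotopic-prop} to assert its existence abstractly; your explicit formula checks out under the graded Leibniz rule.
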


\begin{proof}
	We have $\nu_i\circ\mu_i\sim\id$ and $\mu_i\circ\nu_i\sim\id$ for $i=1,2$, by applying $p_i$, we get $\nu^i\circ\mu^i\sim\id$ and $\mu^i\circ\nu^i\sim\id$ for $i=1,2$. Since $d\mu=0$, we get $(d\mu)_0=d\mu_0+n\circ\mu^1-\mu^2\circ m=0$. This implies $n\circ\mu^1\sim\mu^2\circ m$. Using Proposition \ref{prp:homotopic-prop} and above relations, we get $\nu^2\circ n\sim m\circ\nu^1$. Hence there exists $\nu_0$ such that $d\nu_0+m\circ\nu^1-\nu^2\circ n=0$. We have also $d\nu_i=0$ for $i=1,2$. After defining $\nu:=(\nu_1\vb\nu_0\vb\nu_2)$, this implies $d\nu=0$.
\end{proof}

Before stating the next lemma, we define
\[\epsilon_{\mu,\nu,\xi_1,\xi_2}:=(-1)^k(\mu_0-\nu_0)+(n\circ\xi^1-\xi^2\circ m)\ .\]
Note that if $\mu\sim\nu$ with $d\xi_i=\mu_i-\nu_i$ for $i=1,2$, we have $d\epsilon_{\mu,\nu,\xi_1,\xi_2}=0$ by \hyperlink{homotopy}{Homotopy Lemma}.

\hypertarget{equivalence}{}

\begin{lem}[Equivalence Lemma]
	$\mu$ is a homotopy equivalence if and only if $\mu_1$ and $\mu_2$ are homotopy equivalences and $n\circ\mu^1\sim\mu^2\circ m$.
\end{lem}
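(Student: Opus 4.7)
My plan is to treat the two implications separately via the projection dg functors $\pi_i\colon\cC\to\cC_i$ defined by $\pi_i(M_1\vb m\vb M_2)=M_i$ and $\pi_i(\mu_1\vb\mu_0\vb\mu_2)=\mu_i$.

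For the forward direction, since $\pi_i$ is a dg functor, Proposition \ref{prp:homotopic-prop}(iii) sends the homotopy equivalence $\mu$ to the homotopy equivalence $\mu_i=\pi_i(\mu)$ in $\cC_i$. Moreover, $\mu$ being a homotopy equivalence forces $d\mu=0$; reading off the middle component one gets $d\mu_0+n\circ\mu^1-\mu^2\circ m=0$, so $-\mu_0$ is an explicit homotopy witnessing $n\circ\mu^1\sim\mu^2\circ m$.

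For the backward direction, pick closed degree $0$ homotopy inverses $\nu_1,\nu_2$ of $\mu_1,\mu_2$. The compatibility $n\circ\mu^1\sim\mu^2\circ m$ (with $\mu_0$ as witness) is exactly the statement $d\mu=0$, so Lemma \ref{lem:closed-inverse} produces $\nu_0$ with $\nu:=(\nu_1\vb\nu_0\vb\nu_2)$ closed of degree $0$. I must then verify $\nu\circ\mu\sim\id_M$ and $\mu\circ\nu\sim\id_N$; these are symmetric, so I focus on the first. Computing $\nu\circ\mu=(\nu_1\circ\mu_1\vb\nu^2\circ\mu_0+\nu_0\circ\mu^1\vb\nu_2\circ\mu_2)$, the \hyperlink{homotopy}{Homotopy Lemma} reduces the task to producing $\widetilde\xi_1,\widetilde\xi_2$ with $d\widetilde\xi_i=\nu_i\circ\mu_i-\id$ together with a homotopy $\widetilde\xi^2\circ m-m\circ\widetilde\xi^1\sim\nu^2\circ\mu_0+\nu_0\circ\mu^1$.

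A direct computation using $d\mu_0=\mu^2\circ m-n\circ\mu^1$, $d\nu_0=\nu^2\circ n-m\circ\nu^1$, and $d\widetilde\xi^i=\nu^i\circ\mu^i-\id$ shows both sides above have the same differential, hence their difference is a closed degree $0$ morphism. The main obstacle — and the technical heart of the proof — is promoting this ``closed'' to ``exact''. I expect to handle it by building the witnesses $\widetilde\xi_i$ explicitly from the chosen homotopies for $\nu_i\circ\mu_i\sim\id$ together with $\mu_0$ and $\nu_0$, and by exploiting the freedom to modify $\nu_0$ by closed degree $-1$ elements (which does not disturb $\nu$ being closed). The bookkeeping of these cancellations, producing a genuine homotopy rather than merely a closed obstruction, is the delicate step; the verification of $\mu\circ\nu\sim\id_N$ then proceeds symmetrically.
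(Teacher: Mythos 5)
Your strategy coincides with the paper's on both directions: the forward implication via the projection dg functors together with reading off the middle component of $d\mu=0$, and the backward implication via Lemma \ref{lem:closed-inverse} followed by correcting $\nu_0$ by a closed degree $-1$ term built from the obstruction class. Concretely, the paper's correction is $\nu_0\mapsto\nu_0-\epsilon\circ\nu^1$ with $\epsilon:=\epsilon_{\nu\circ\mu,\id,\xi_1,\xi_2}$, which works because the new obstruction is $\epsilon-\epsilon\circ\nu^1\circ\mu^1\sim\epsilon-\epsilon=0$ using $\nu^1\circ\mu^1\sim\id$; this is exactly the ``delicate bookkeeping'' you defer, and it does go through.

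The one genuine gap is the claim that $\mu\circ\nu\sim\id_N$ then ``proceeds symmetrically'' for the same $\nu$. Modifying $\nu_0$ by $\delta$ changes the left obstruction (for $\nu\circ\mu\sim\id_M$) by $\delta\circ\mu^1$ but the right obstruction (for $\mu\circ\nu\sim\id_N$) by $\mu^2\circ\delta$; killing the former forces $\delta=-\epsilon_L\circ\nu^1$ while killing the latter forces $\delta=-\nu^2\circ\epsilon_R$, and there is no reason a single $\delta$ achieves both. The symmetric argument therefore produces a left inverse $\nu'$ and a right inverse $\nu''$ that are in general \emph{different} morphisms, and you still need the reconciliation $\nu''\sim\nu'\circ\mu\circ\nu''\sim\nu'$ to conclude that $\nu'$, say, is a two-sided inverse in homotopy; the paper devotes its final paragraph to exactly this step, and without it your proof does not yet yield a homotopy equivalence. (A minor slip: the obstruction is a closed element of degree $-1$, not degree $0$, and what must be shown is that it becomes exact after the correction.)
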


\begin{proof}
	We will construct a left inverse and a right inverse of $\mu$ in homotopy. Let $\nu_i$ be inverses of $\mu_i$ in homotopy for $i=1,2$. We have $d\mu=0$ by assumptions. Then by Lemma \ref{lem:closed-inverse}, there exists $\nu=(\nu_1\vb\nu_0\vb\nu_2)$ such that $d\nu=0$. We will modify $\nu$ to get a left inverse of $\mu$.
	
	Let $d\xi_i=\nu_i\circ\mu_i-\id$ for $i=1,2$ and $\epsilon:=\epsilon_{\nu\circ\mu,\id,\xi_1,\xi_2}$. Define $\nu':=(\nu_1\vb \nu_0-\epsilon\circ\nu^1\vb \nu_2)$. Note that $d\nu'=0$ , since $d\nu=0$ and $d\epsilon=0$. Then $\epsilon_{\nu'\circ\mu,\id,\xi}=\epsilon-\epsilon\circ\nu^1\circ\mu^1\sim\epsilon-\epsilon=0$. Hence by \hyperlink{homotopy}{Homotopy Lemma}, $\nu'\circ\mu\sim\id$, i.e. $\nu'$ is a left inverse of $\mu$ in homotopy.
	
	Similarly, $\nu$ can be modified as $\nu''$ such that $\nu''$ becomes a right inverse of $\mu$ in homotopy. Note that $\nu'\circ\mu\sim\id$ gives $\nu'\circ\mu\circ\nu''\sim\nu''$, and hence $\nu'\sim\nu''$. Then $\mu\circ\nu''\sim\id$ implies $\mu\circ\nu'\sim\id$. Therefore $\nu'$ is an inverse of $\mu$ in homotopy and $\mu$ is a homotopy equivalence.
\end{proof}

\hyperlink{equivalence}{Equivalence Lemma} will be our main tool along with \hyperlink{homotopy}{Homotopy Lemma} for proving lemmas in Section \ref{sec:gluing}.

\subsection{Gluing Lemmas}\label{sec:gluing}
    
We will present two important lemmas for our calculations regarding gluing in Section \ref{sec:msh-pinwheel}. Note that by Remark \ref{rmk:model-z2}, these lemmas hold for both $\Z$- and $\Z/2$-graded dg categories.

\hypertarget{circle}{}

\begin{lem}[Circle Lemma]\label{lem:circle}
	Let $\cC$ be a dg category, and $\cD$ be the homotopy limit of the diagram
	\[\begin{tikzcd}[column sep=10pt, row sep=10pt]
		& \cC & \\
		\cC\urar["\id"]\drar["p_1"'] & & \cC\ular["\id"']\dlar["p_2"] \\
		& \cC &
	\end{tikzcd}\]
	i.e. $\cD=\holim(\cC\xrightarrow{(\id,p_1)}\cC\times\cC\xleftarrow{(\id,p_2)}\cC)$. Let $\cD'$ be the dg category defined as
	\begin{gather*}
    	\cD'=\{(A,m)\vb A\in\cC, m\colon p_1(A)\rightarrow p_2(A)\textup{ is a homotopy equivalence}\}\\
        \Hom_{\cD'}^k((A,m),(B,n))=\Hom_{\cC}^k(A,B)\oplus\Hom_{\cC}^{k-1}(p_1(A),p_2(B))\ .
    \end{gather*}
    A morphism $(f,h)\in\Hom_{\cD'}^k((A,m),(B,n))$ can be also presented by the diagrams
    \[\begin{tikzcd}
    	A\dar["{f\hspace{2em}\textup{and}}"] & [20pt] p_1(A)\rar["m"]\dar["p_1(f)"]\drar["h"] & p_2(A)\dar["p_2(f)"]\\
    	B & p_1(B)\rar["n"] & p_2(B)
    \end{tikzcd}\ .\]
    Its differential is
    \[d(f,h)=(df,dh+(-1)^k(n\circ p_1(f)-p_2(f)\circ m))\ ,\]
    the composition rule is
    \[(f',h')\circ(f,h)=(f'\circ f,p_2(f')\circ h+(-1)^k h'\circ p_1(f))\ ,\]
    and the identity is $(\id,0)$. Then $\cD$ is quasi-equivalent to $\cD'$. Moreover, $(f,h)$ in $\cD'$ is a homotopy equivalence if and only if $f$ is a homotopy equivalence in $\cC$ and we have that $dh=p_2(f)\circ m-n\circ p_1(f)$.
\end{lem}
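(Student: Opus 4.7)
The plan is to unpack $\cD$ via Proposition \ref{prp:holim-lim} and then construct an explicit dg functor $F\colon\cD'\to\cD$ by $F(A,m) = (A\vb(\id_A,m)\vb A)$ on objects and $F(f,h) = (f\vb (0,h)\vb f)$ on morphisms, and show it is a quasi-equivalence; both halves of the statement will then follow. By Proposition \ref{prp:holim-lim}, an object of $\cD$ is a triple $(M_1\vb m\vb M_2)$ in which $m$ is a homotopy equivalence in $\cC\times\cC$ between $(M_1,p_1(M_1))$ and $(M_2,p_2(M_2))$, hence decomposes as a pair $(m',m'')$ with $m'\colon M_1\to M_2$ and $m''\colon p_1(M_1)\to p_2(M_2)$ both homotopy equivalences in $\cC$. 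A direct check using the composition and differential formulas stated after Proposition \ref{prp:holim-lim} confirms that $F$ is a dg functor (the key point being that the two copies of $\id_A$ in $F(A,m)$ kill the would-be first component of the middle summand).

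For essential surjectivity on $H^0$, given $(M_1\vb(m',m'')\vb M_2)\in\cD$ I choose an inverse $s$ of $m'$ in homotopy and set $\tilde m := p_2(s)\circ m''$, a homotopy equivalence in $\cC$. I then exhibit the closed degree-zero morphism $(\id_{M_1}\vb(0,\xi)\vb m')\colon F(M_1,\tilde m)\to (M_1\vb(m',m'')\vb M_2)$, where $\xi$ satisfies $d\xi = m'' - p_2(m'\circ s)\circ m''$ (such $\xi$ exists because $m'\circ s\sim\id$ forces $p_2(m'\circ s)\sim\id$ by Proposition \ref{prp:homotopic-prop}). The \hyperlink{equivalence}{Equivalence Lemma} applied to this morphism in $\cD$ then shows it is a homotopy equivalence.

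For quasi-full-faithfulness I identify $\Hom^k_\cD(F(A,m),F(B,n))$ with the graded module $\Hom^k(A,B)\oplus\Hom^{k-1}(A,B)\oplus\Hom^{k-1}(p_1(A),p_2(B))\oplus\Hom^k(A,B)$ and set up the short exact sequence of chain complexes
\[0\to\Hom_{\cD'}((A,m),(B,n))\xrightarrow{F_*}\Hom_\cD(F(A,m),F(B,n))\xrightarrow{\phi}C\to 0,\]
where $\phi(\mu_1,\mu_0',\mu_0'',\mu_2) := (\mu_2-\mu_1,\mu_0')$ and $C$ carries the unique differential making $\phi$ a chain map. A short computation identifies $C$, up to signs, with the mapping cone of $\id_{\Hom_\cC(A,B)}$; since that cone is contractible (explicit contraction $(\alpha,\beta)\mapsto(\beta,0)$), $F_*$ is a quasi-isomorphism.

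Finally, for the characterization of homotopy equivalences: since $F$ is a quasi-equivalence, $(f,h)$ is a homotopy equivalence in $\cD'$ iff $F(f,h) = (f\vb(0,h)\vb f)$ is one in $\cD$. Closedness of the latter in $\cD$ unpacks precisely to $df=0$ together with $dh = p_2(f)\circ m - n\circ p_1(f)$; then applying the \hyperlink{equivalence}{Equivalence Lemma} in $\cD$, the condition $\mu_1\sim\mu_2$ is trivial (both equal $f$) and the $(m,n)$-compatibility is witnessed on the nose by $h$, so $F(f,h)$ is a homotopy equivalence in $\cD$ exactly when $f$ is one in $\cC$. The main obstacle I anticipate is not conceptual but bookkeeping: pinning down the signs and degree shifts when verifying that $\phi$ is a chain map and identifying $C$ with the relevant mapping cone; everything else is essentially forced by the path-space description of the homotopy limit.
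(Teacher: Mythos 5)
Your proposal is correct, and its skeleton coincides with the paper's proof: the same functor $F(A,m)=(A\vb(\id,m)\vb A)$, $F(f,h)=(f\vb(0,h)\vb f)$, the same replacement object $F(A,p_2(a')\circ b)$ for essential surjectivity (your explicit middle component $\xi$ is a slightly more careful version of the paper's appeal to the \hyperlink{equivalence}{Equivalence Lemma}; note only that closedness of $(\id\vb(0,\xi)\vb m')$ actually forces $d\xi=p_2(m'\circ s)\circ m''-m''$, the negative of what you wrote, which is harmless), and the identical reduction of the homotopy-equivalence characterization to the \hyperlink{equivalence}{Equivalence Lemma}. Where you genuinely diverge is full faithfulness: the paper proves injectivity and surjectivity of $H^*F$ on morphisms separately by hand, using the \hyperlink{homotopy}{Homotopy Lemma} to produce explicit primitives ($\gamma$ with $d(\xi^1,\gamma)=(f,h)-(f',h')$ in one direction, the homotopy $(0\vb(0,0)\vb(-1)^k\alpha)$ in the other), whereas you package both into the short exact sequence of Hom-complexes with cokernel $C$ and observe that $C$ is the cone of (a sign-twisted) identity on $\Hom_{\cC}(A,B)$, hence contractible, so $F_*$ is a quasi-isomorphism by the long exact sequence. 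Your computation checks out: the induced differential is $d_C(\alpha,\beta)=(d\alpha,\,d\beta-(-1)^k\alpha)$ and $s(\alpha,\beta)=((-1)^k\beta,0)$ is a contracting homotopy, so the only outstanding work is the sign bookkeeping you already flagged. The trade-off is that your route is shorter and explains *why* fullness and faithfulness hold simultaneously, while the paper's explicit primitives are reused nowhere else, so nothing is lost by replacing them.
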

        
\begin{proof}
	Define the dg functor $F\colon \cD'\rightarrow\cD$ such that $F(A,m)=(A\vb(\id,m)\vb A)$ for $(A,m)\in\cD'$ and $F(f,h)=(f\vb (0,h)\vb f)$ for $(f,h)\in\Hom_{\cD'}((A,m),(B,n))$.
        
	To show essential surjectivity of $H^0 F$, pick $(A\vb(a,b)\vb B)\in\cD$. Let $a'$ be an inverse of $a$ in homotopy. We claim that it is homotopy equivalent to
    \[F(A,p_2(a')\circ b)=(A\vb(\id,p_2(a')\circ b)\vb A)\ .\]
    Indeed, we have the homotopy equivalences $\id:A\rightarrow A$ and $a:A\rightarrow B$, and
    \[(a,b)\circ(\id,\id)\sim(a,p_2(a))\circ(\id,p_2(a')\circ b)\ .\] By \hyperlink{equivalence}{Equivalence Lemma}, our claim is true.
        
    Next, we want to show that $H^*F$ is full and faithful. To show that $H^*F$ is injective on morphisms, assume $F(f,h)\sim F(f',h')$ and $d(f,h)=d(f',h')=0$. Then by \hyperlink{homotopy}{Homotopy Lemma}
    \[(-1)^{|f|}((\xi_2,p_2(\xi_2))\circ(\id,m)-(\id,n)\circ(\xi_1,p_1(\xi_1)))\sim(0,h-h')\]
    for some $\xi_i$ with $d\xi_i=f-f'$ for $i=1,2$. In particular we have $\xi_1\sim\xi_2$, and this implies $p_2(\xi_1)\sim p_2(\xi_2)$. Using this, we get
    \[h-h'\sim(-1)^{|\xi^1|}(n\circ p_1(\xi_1)-p_2(\xi_2)\circ m)\sim(-1)^{|\xi^1|}(n\circ p_1(\xi_1)-p_2(\xi_1)\circ m)\ .\]
    Hence there exists $\gamma$ such that $d(\xi^1,\gamma)=(f,h)-(f',h')$ and $(f,h)\sim(f',h')$. This proves $H^*F$ is injective on morphisms.
        
    Finally, we will show that $H^*F$ is surjective on morphisms. Pick \[\mu=(f\vb(\alpha,\beta)\vb g)\in\Hom^k(F(A,m),F(B,n))\]
    with $d\mu=0$. We claim that it is homotopic to $\nu=F(f,\beta-p_2(\alpha)\circ m)$. Indeed, choosing $\xi_1=0$ and $\xi_2=(-1)^k\alpha$, we have $(-1)^k((-1)^k\alpha,(-1)^kp_2(\alpha)\circ m)=(\alpha,p_2(\alpha)\circ m)$. Also $d\xi_i=\mu_i-\nu_i$, hence by \hyperlink{homotopy}{Homotopy Lemma} $\mu\sim\nu$. Also $d(f,\beta-p_2(\alpha)\circ m)=0$, therefore $H^*F$ is surjective on morphisms. Consequently, $F$ is a quasi-equivalence and $\cD$ and $\cD'$ are quasi-equivalent.
        
    To characterise the homotopy equivalences in $\cD'$, note that $(f,h)$ is a homotopy equivalence if and only if $F(f,h)$ is a homotopy equivalence. Since $F(f,h)=(f\vb(0,h)\vb f)$ is in $\cD$, by \hyperlink{equivalence}{Equivalence Lemma}, $F(f,h)$ is a homotopy equivalence if and only if $f$ is a homotopy equivalence and $dh=p_2(f)\circ m-n\circ p_1(f)$.
\end{proof}
    
Here is an easy application of the lemma:
        
\begin{exm}
	We can calculate $\Loc\dd(S^1)$ locally for the circle $S^1=[0,1]/\{0,1\}$ using this lemma. First, note that the large microlocal stack $\mSh\dd=\mSh\dd_{T^*S^1}$ is defined on the skeleton $S^1$ and its global section is $\mSh\dd(S^1)\simeq\Loc\dd(S^1)$. To calculate it via gluing, partition $S^1$ into pieces:
    \[S^1\simeq\colim((0,1)\xleftarrow{(i,i)}(0,1/2)\sqcup(1/2,1)\xrightarrow{(i,i)}(0,1))\]
    where $i$ are inclusions. Applying $\mSh\dd$, we get
    \[\mSh\dd(S^1)\simeq\holim(\mSh\dd((0,1))\xleftarrow{(\id,\id)}\mSh\dd((0,1/2))\times\mSh\dd((1/2,1))\xrightarrow{(\id,\id)}\mSh\dd((0,1)))\ .\]
    Since for any open interval $U$ we have $\mSh\dd(U)\simeq\Loc\dd(U)\simeq\Modk$, we can write
   	\[\Loc\dd(S^1)\simeq\holim(\Modk\xleftarrow{(\id,\id)}\Modk\times\Modk\xrightarrow{(\id,\id)}\Modk)\ .\]
   	Then by setting $p_1=\id$ and $p_2=\id$ in \hyperlink{circle}{Circle Lemma}, we get:
    \begin{gather*}
    	\Loc\dd(S^1)\simeq\{(A,m)\vb A\in\Modk, m\colon A\rightarrow A\text{ is a homotopy equivalence}\}\\
        \Hom_{\Loc\dd(S^1)}^k((A,m),(B,n))=\Hom_{\Modk}^k(A,B)\oplus\Hom_{\Modk}^{k-1}(A,B)\ .
   	\end{gather*}
   	A morphism $(f,h)\in\Hom_{\Loc\dd(S^1)}^k((A,m),(B,n))$ can be also presented by the diagram
   	\[\begin{tikzcd}
		A\rar["m"]\dar["f"]\drar["h"] & A\dar["f"]\\
	   	B\rar["n"] & B
   	\end{tikzcd}\ .\]
    Its differential is
   	\[d(f,h)=(df,dh+(-1)^k(n\circ f-f\circ m))\ ,\]
    the composition rule is
    \[(f',h')\circ(f,h)=(f'\circ f,f'\circ h+(-1)^k h'\circ f)\ ,\]
    and the identity is $(\id,0)$. Moreover, $(f,h)$ in $\Loc\dd(S^1)$ is a homotopy equivalence if and only if $f$ is a homotopy equivalence and $dh=f\circ m-n\circ f$. One can calculate $\Loc(S^1)$ in the same way by replacing $\Modk$ with $\Perfk$. Compare with Example \ref{exm:circle-local-system}.   
\end{exm}

From now on, if $M=(A,m)\in\Loc\dd(S^1)$, we write $M_1:=A$ and $M_0:=m$. Also, if $\mu=(f,h)$ is a morphism in $\Loc\dd(S^1)$, we write $\mu_1:=f$ and $\mu_0:=h$. Our second lemma is as follows:

\hypertarget{disk}{}
        
\begin{lem}[Disk Lemma]\label{lem:disk-gluing}
	Let $Y$ be the skeleton of a Weinstein manifold $W$, and $\mSh\dd=\mSh\dd_W$ be the large microlocal stack on $Y$. Assume we can write
    \[Y\simeq\colim(X\overset{i}{\hookleftarrow}S^1\times B^1\hookrightarrow{B^2})\]
    where $B^n$ is an open $n$-ball in $\R^n$, $X$ is an open subset of $Y$ with the boundary $\partial X=S^1$ whose neighbourhood is $S^1\times B^1$, and the maps are standard inclusions into the neighbourhood of boundaries. If $\mon=\mSh\dd(i)$, then we have
    \begin{gather*}
    	\mSh\dd(Y)\simeq\{(\Gamma,\gamma)\vb \Gamma\in\mSh\dd(X),\mon(\Gamma)_0\overset{\gamma}{\sim}\id\}\\
        \Hom_{\mSh\dd(Y)}^k((\Gamma,\gamma),(\Gamma',\gamma'))=\Hom_{\mSh\dd(X)}^k(\Gamma,\Gamma')\oplus\Hom_{\Modk}^{k-2}(\mon(\Gamma)_1,\mon(\Gamma')_1)\ .
    \end{gather*}
    A morphism $(f,h)\in\Hom_{\mSh\dd(Y)}^k((\Gamma,\gamma),(\Gamma',\gamma'))$ can be also presented by the diagrams
    \[\begin{tikzcd}
    	& [20pt]
	    & [20pt]
	    \mon(\Gamma)_1\ar[rrr,"\mon(\Gamma)_0"]\ar[drr,"\gamma"]\ar[dl,"\id"]\ar[dd,"\mon(f)_1" yshift=-0.5cm]\ar[dddrr,"h"]\ar[ddrrr,dashed,gray,"\mon(f)_0" {xshift=1cm, yshift=-0.6cm}] & [-20pt]
	    & [-20pt]
	    & [20pt]
	    \mon(\Gamma)_1\ar[dl,"\id"]\ar[dd,,"\mon(f)_1"]
	    \\
	    \Gamma\ar[dd,"f\hspace{2em}\textup{and}"] &
		\mon(\Gamma)_1\ar[rrr,crossing over,"\id" xshift=-0.2cm]\ar[dd,"\mon(f)_1"] &
		&
		&
		\mon(\Gamma)_1\ar[dd,,"\mon(f)_1" yshift=0.7cm] &
		\\
		&
		&
		\mon(\Gamma')_1\ar[-,r,shorten >= -0.4cm]\ar[drr,"\gamma'"]\ar[dl,"\id"] &
		{}\ar[-,r,shorten <= 0.5cm] &
		{}\ar[r,"\mon(\Gamma')_0"] &
		\mon(\Gamma')_1\ar[dl,"\id"]
		\\
		\Gamma' &
		\mon(\Gamma')_1\ar[rrr,"\id" xshift=-0.2cm] &
		&
		&
		\mon(\Gamma')_1 &
    \end{tikzcd}\ .\]
    Its differential is
    \[d(f,h)=(df,dh+\gamma'\circ \mon(f)_1+(-1)^k(\mon(f)_0-\mon(f)_1\circ\gamma))\ ,\]
    the composition rule is
    \[(f',h')\circ(f,h)=(f'\circ f,\mon(f')_1\circ h+(-1)^k h'\circ \mon(f)_1)\ ,\]
    and the identity is $(\id,0)$. Moreover, $(f,h)$ in $\mSh\dd(Y)$ is a homotopy equivalence if and only if $f$ is a homotopy equivalence and $dh=\mon(f)_1\circ\gamma-\gamma'\circ \mon(f)_1-\mon(f)_0$. Everything similarly holds for $\mSh(Y)$ after replacing $\mSh\dd(X)$ by $\mSh(X)$, $\Loc\dd(S^1)$ by $\Loc(S^1)$, and $\Modk$ by $\Perfk$.
\end{lem}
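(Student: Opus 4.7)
The plan is to exhibit $\mSh\dd(Y)$ as a homotopy pullback by applying $\mSh\dd$ to the colimit decomposition, identify the local pieces via stabilisation and contractibility, and then simplify using the \hyperlink{circle}{Circle Lemma} together with the path-object description of Proposition \ref{prp:holim-lim}. Concretely, applying $\mSh\dd$ to $Y \simeq \colim(X \hookleftarrow S^1 \times B^1 \hookrightarrow B^2)$ yields
\[\mSh\dd(Y) \simeq \holim\bigl(\mSh\dd(X) \xrightarrow{\mon} \mSh\dd(S^1 \times B^1) \xleftarrow{} \mSh\dd(B^2)\bigr).\]
The stabilisation property (Proposition \ref{prp:stabilisation}) identifies $\mSh\dd(S^1 \times B^1) \simeq \Loc\dd(S^1)$, while contractibility of $B^2$ gives $\mSh\dd(B^2) \simeq \Loc\dd(B^2) \simeq \Modk$, under which the restriction to the boundary circle becomes the functor $A \mapsto (A,\id)$ into $\Loc\dd(S^1)$; this records the fact that a constant sheaf on a disk has identity monodromy around its boundary.

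By Proposition \ref{prp:holim-lim}, this homotopy pullback has objects $(\Gamma, m, A)$ with $\Gamma\in\mSh\dd(X)$, $A\in\Modk$, and $m = (m_1, m_0)\colon \mon(\Gamma) \to (A, \id)$ a homotopy equivalence in $\Loc\dd(S^1)$. To obtain the simplified description in the statement, I define a dg functor $F\colon \cD' \to \mSh\dd(Y)$ by $F(\Gamma,\gamma) := (\Gamma \mid (\id,\gamma) \mid \mon(\Gamma)_1)$; the condition $d\gamma = \mon(\Gamma)_0 - \id$ is precisely what the \hyperlink{circle}{Circle Lemma} requires for $(\id,\gamma)$ to be a homotopy equivalence $\mon(\Gamma) \to (\mon(\Gamma)_1, \id)$ in $\Loc\dd(S^1)$. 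On morphisms I mirror the proof of the \hyperlink{circle}{Circle Lemma} and set $F(f,h) := (f \mid (0,h) \mid \mon(f)_1)$. A direct computation, unpacking compositions and differentials in $\Loc\dd(S^1)$ via the formulae of the \hyperlink{circle}{Circle Lemma}, verifies that the differential, composition, and identity of $\cD'$ displayed in the statement are exactly those inherited through $F$.

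I then show that $F$ is a quasi-equivalence, following the scheme of the \hyperlink{circle}{Circle Lemma}. For essential surjectivity, given $(\Gamma, m, A)$ I pick a homotopy inverse $m_1'$ of $m_1$ and a primitive $\xi$ of $\id - m_1'\circ m_1$; then $\gamma := m_1' \circ m_0$, modified by a $\xi$-correction, satisfies $d\gamma = \mon(\Gamma)_0 - \id$, and the \hyperlink{equivalence}{Equivalence Lemma} produces a homotopy equivalence $F(\Gamma,\gamma) \simeq (\Gamma,m,A)$. For fullness and faithfulness of $H^* F$, I invoke the \hyperlink{homotopy}{Homotopy Lemma}: given a nullhomotopy between $F(f,h)$ and $F(f',h')$ one extracts $\xi_1,\xi_2$ with $d\xi_i = f - f'$ whose forced compatibility in $\Loc\dd(S^1)$ yields a homotopy $(f,h) \sim (f',h')$ in $\cD'$, and conversely any closed morphism $(f \mid (\alpha,\beta) \mid \alpha_2)$ between images of $F$ is shown to be homotopic to $F(f,h)$ with $h$ read off from $\beta$ after a correction by $\alpha$.

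Finally, the homotopy-equivalence criterion follows by applying the \hyperlink{equivalence}{Equivalence Lemma} to $F(f,h) = (f \mid (0,h) \mid \mon(f)_1)$: closedness of $(f,h)$ unpacks to the stated formula for $dh$, and the requirement that the first and third entries be homotopy equivalences collapses to just ``$f$ is a homotopy equivalence'', since $\mon$ is a dg functor and Proposition \ref{prp:homotopic-prop}(iii) then makes $\mon(f)_1$ a homotopy equivalence automatically. The $\mSh(Y)$ version is immediate by substituting $\Perfk$ for $\Modk$ and $\Loc(S^1)$ for $\Loc\dd(S^1)$ throughout, since no step uses anything beyond formal dg-categorical manipulations. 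The main obstacle will be the sign bookkeeping required to check that $F$ is strictly a dg functor with the formulae stated in the lemma, and to produce the homotopies arising in essential surjectivity and fullness in the precise form needed.
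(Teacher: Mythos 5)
Your proposal follows essentially the same route as the paper's proof: the same homotopy pullback presentation via stabilisation and contractibility of $B^2$, the same functor $F(\Gamma,\gamma)=(\Gamma\vb(\id,\gamma)\vb\mon(\Gamma)_1)$, and the same three-step verification (essential surjectivity via a corrected $\gamma=m_1'\circ m_0+\hat\xi\circ(\mon(\Gamma)_0-\id)$ and the \hyperlink{equivalence}{Equivalence Lemma}, fullness/faithfulness via the \hyperlink{homotopy}{Homotopy Lemma}, and the homotopy-equivalence criterion from closedness of $F(f,h)$). The argument is correct and matches the paper's in all essentials.
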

    
\begin{dfn}
	We call $\mon\colon\mSh\dd(X)\to\Loc\dd(S^1)$ the \textit{monodromy functor}, and $\mon(\Gamma)_0$ the \textit{monodromy} of $\Gamma$.
\end{dfn}

\begin{proof}[Proof of \hyperlink{disk}{Disk Lemma}]
	Applying $\mSh\dd$ to the colimit diagram, we get	
	\[\mSh\dd(Y)\simeq\holim(\mSh\dd(X)\xrightarrow{\mon}\mSh\dd(S^1\times B^1)\leftarrow\mSh\dd(B^2))\ .\]
	We know $\mSh\dd(B^2)\simeq\Loc\dd(B^2)\simeq\Modk$, and by Proposition \ref{prp:stabilisation} we have
	\[\mSh\dd(S^1\times B^1)\simeq\mSh\dd(S^1)\simeq\Loc\dd(S^1)\ .\]
	Then the diagram becomes
 	\[\mSh\dd(Y)\simeq\holim(\mSh\dd(X)\xrightarrow{\mon}\Loc\dd(S^1)\xleftarrow{r}\Modk)\]
   	where $r(A)=(A,\id)$ on objects and $r(f)=(f,0)$ on morphisms. Define
    \begin{gather*}
    	\cC=\{(\Gamma,\gamma)\vb \Gamma\in\mSh\dd(X),\mon(\Gamma)_0\overset{\gamma}{\sim}\id\}\\
        \Hom_{\cC}^k((\Gamma,\gamma),(\Gamma',\gamma'))=\Hom_{\mSh\dd(X)}^k(\Gamma,\Gamma')\oplus\Hom_{\Modk}^{k-2}(\mon(\Gamma)_1,\mon(\Gamma')_1)
   	\end{gather*}
    with the differential
    \[d(f,h)=(df,dh+\gamma'\circ \mon(f)_1+(-1)^k(\mon(f)_0-\mon(f)_1\circ\gamma))\]
   	the composition
    \[(f',h')\circ(f,h)=(f'\circ f,\mon(f')_1\circ h+h'\circ \mon(f)_1)\]
    and the identity $(\id,0)$. We will show $\mSh\dd(Y)$ and $\cC$ are quasi-equivalent.
    
    Define the dg functor $F\colon\cC\rightarrow\mSh\dd(Y)$ such that $F(\Gamma,\gamma)=(\Gamma\vb(\id,\gamma)\vb \mon(\Gamma)_1)$ for $(\Gamma,\gamma)\in\cC$ and $F(f,h)=(f\vb (0,h)\vb \mon(f)_1)$ for $(f,h)\in\Hom_{\cC}((\Gamma,\gamma),(\Gamma',\gamma'))$.
        
   	To show essential surjectivity of $H^0 F$, pick $(\Gamma\vb(\alpha,\gamma)\vb A)\in\mSh\dd(Y)$. Let $\alpha'$ be an inverse of $\alpha$ in homotopy, with $d\hat\alpha=\id-\alpha'\circ\alpha$ and $d\check\alpha=\id-\alpha\circ\alpha'$. We claim that $(\Gamma\vb(\alpha,\gamma)\vb A)$ is homotopy equivalent to \[F(\Gamma,\alpha'\circ\gamma+\hat\alpha\circ(\mon(\Gamma)_0-\id))=(\Gamma\vb(\id,\alpha'\circ\gamma+\hat\alpha\circ(\mon(\Gamma)_0-\id))\vb \mon(\Gamma)_1)\ .\]
    Indeed, we have the homotopy equivalences $\id\colon\Gamma\rightarrow \Gamma$ and $\alpha\colon \mon(\Gamma)_1\rightarrow A$, and using $d\gamma=\alpha\circ(\mon(\Gamma)_0-\id)$, we get
    \[(\alpha,0)\circ(\id,\alpha'\circ\gamma+\hat\alpha\circ(\mon(\Gamma)_0-\id))\overset{(\alpha\circ\hat\alpha-\check\alpha\circ\alpha,-\check\alpha\circ\gamma)}{\sim}(\alpha,\gamma)\circ(\id,0)\ .\]
    By \hyperlink{equivalence}{Equivalence Lemma}, our claim is true.
        
    Next, we want to show that $H^*F$ is full and faithful. To show that $H^*F$ is injective on morphisms, pick
    \[(f,h),(f',h')\in H^k\Hom_{\cC}((\Gamma,\gamma),(\Gamma',\gamma'))\]
    and assume $F(f,h)\sim F(f',h')$. Then there exists $(\xi\vb(\alpha,\beta)\vb \eta)$ such that
    \[d(\xi\vb(\alpha,\beta)\vb \eta)=(f-f'\vb(0,h-h')\vb(\mon(f-f')_1)\ .\]
    In particular, we get
    \begin{align*}
     	d\xi&=f-f'\\
        d(\alpha,\beta)&-(-1)^k((\id,\gamma')\circ(\mon(\xi)_1,\mon(\xi)_0)-(\eta,0)\circ(\id,\gamma))=(0,h-h')
    \end{align*}
    which gives
    \begin{align*}
    	d\alpha&=(-1)^k(\mon(\xi)_1-\eta)\\
        d\beta&=(h-h')-\gamma'\circ \mon(\xi)_1+(-1)^k(\mon(\xi)_0-\eta\circ\gamma)+(-1)^k\alpha\circ(\mon(\Gamma)_0-\id)\ .
    \end{align*}
    Then we get
    \[d(\xi,\beta-\alpha\circ\gamma)=(f,h)-(f',h')\]
    since $d\gamma=\mon(\Gamma)_0-\id$. Hence we get $(f,h)\sim(f',h')$. This proves $H^*F$ is injective on morphisms.
        
    Finally, we will show that $H^*F$ is surjective on morphisms. Pick
    \[(f\vb(l,h)\vb g)\in\Hom_{\mSh\dd(Y)}^k(F(\Gamma,\gamma),F(\Gamma',\gamma'))\]
    and since it is closed, we get in particular
    \begin{align*}
     	df&=0\\
        dl&=(-1)^k(g-\mon(f)_1)\\
        dh&=-\gamma'\circ \mon(f)_1-(-1)^k(\mon(f)_0-g\circ\gamma)-(-1)^k l\circ(\mon(\Gamma)_0-\id)\ .
    \end{align*}
    Pick $(f,h-l\circ\gamma)\in\Hom_{\cC}^k((\Gamma,\gamma),(\Gamma',\gamma'))$. It is closed, and we claim that
    \[F(f,h-l\circ\gamma)=(f\vb(0,h-l\circ\gamma)\vb \mon(f)_1)\]
   	is homotopic to $(f\vb(l,h)\vb g)$. Indeed, we have
    \[d(0\vb(0,0)\vb (-1)^k l)=(f\vb(l,h)\vb g)-(f\vb(0,h-l\circ\gamma)\vb \mon(f)_1)\]
    hence $H^*F$ is surjective on morphisms. Consequently, $F$ is a quasi-equivalence and $\mSh\dd(Y)$ and $\cC$ are quasi-equivalent.
        
    To characterise the homotopy equivalences in $\cC$, note that $(f,h)$ is a homotopy equivalence if and only if $F(f,h)$ is a homotopy equivalence, and by \hyperlink{equivalence}{Equivalence Lemma} $F(f,h)=(f\vb(0,h)\vb \mon(f)_1)$ is a homotopy equivalence if and only if $f$ is a homotopy equivalence and $dh=\mon(f)_1\circ\gamma-\gamma'\circ \mon(f)_1-\mon(f)_0$.
\end{proof}
    
In short, the effect of attaching disk on microlocal sheaves is adding a degree $-1$ element which kills the monodromy on the boundary.
    
\begin{exm}
	We can calculate $\Loc\dd(S^2)$ using \hyperlink{disk}{Disk Lemma}. Let $\mSh\dd=\mSh\dd_{T^*S^2}$ be the large microlocal stack on the skeleton $S^2$. Its global section is $\mSh\dd(S^2)\simeq\Loc\dd(S^2)$. We can write
    \[S^2\simeq\colim(B^2\overset{p}{\hookleftarrow}S^1\times B^1\overset{i}{\hookrightarrow}{B^2})\]
    where $p$ and $i$ are inclusions. Note that $\mon=\mSh\dd(p)$ is such that $\mon(A)=(A,\id)$ on objects and $\mon(f)=(f,0)$ on morphisms. By the lemma, we get
    \begin{gather*}
    	\Loc\dd(S^2)=\{(A,\gamma)\vb A\in\Modk,\gamma\in\Hom_{\Modk}^{-1}(A,A),d\gamma=0\}\\
      	\Hom_{\Loc\dd(S^2)}^k((A,\gamma),(A',\gamma'))=\Hom_{\Modk}^k(A,A')\oplus\Hom_{\Modk}^{k-2}(A,A')
  	\end{gather*}
  	A morphism $(f,h)\in\Hom_{\Loc\dd(S^2)}^k((A,\gamma),(A',\gamma'))$ can be also presented by the diagram
  	\[\begin{tikzcd}
  		A\rar["\gamma"]\dar["f"]\drar["h"] & A\dar["f"]\\
  		A'\rar["\gamma'"] & A'
  	\end{tikzcd}\ .\]
    Its differential is
    \[d(f,h)=(df,dh+\gamma'\circ f-(-1)^k f\circ\gamma)\ ,\]
    the composition rule is
    \[(f',h')\circ(f,h)=(f'\circ f,f'\circ h+(-1)^k h'\circ f)\ ,\]
    and the identity is $(\id,0)$. Moreover, $(f,h)$ in $\Loc\dd(S^2)$ is a homotopy equivalence if and only if $f$ is a homotopy equivalence and $dh=f\circ\gamma-\gamma'\circ f$. Similarly, we can calculate $\Loc(S^2)$ after replacing $\Modk$ by $\Perfk$.
    
    Note that in Definition \ref{dfn:locally-constant} we remarked that
    \[\Loc\dd(S^2)\simeq\Modk(C_{-*}(\Omega S^2))\]
    and the dga $C_{-*}(\Omega S^2)$ is given by
    \[C_{-*}(\Omega S^2)\simeq \k[x]\]
    with $|x|=-1$ and $dx=0$. This matches with our computation.
\end{exm}

\subsection{$A_{\infty}$-Modules over $A_n$-Quiver}\label{sec:quiver}

In this section, we will study the $A_{\infty}$-modules over $A_n$-quiver, $\Modk(A_n)$, and realise them as the microlocal sheaves on $(n+1)$-valent vertex. Using this geometric realisation, we will define some dg functors on $\Modk(A_n)$ corresponding to the operations on the vertex. In particular, we will define the Coxeter functor on $\Modk(A_n)$ which corresponds to the rotation of the vertex. The results in this section will be extensively used in Section \ref{sec:msh-pinwheel}.

We start by fixing some notations. Given $A\in\Modk$, we denote its components by superscripts: We write $A=(A^i,d_A^i)_{i\in\Z}$ meaning the (co)chain complex
\[\ldots\xrightarrow{d_A^{-2}} A^{-1}\xrightarrow{d_A^{-1}}A^0\xrightarrow{d_A^0}A^1\xrightarrow{d_A^1}A^2\xrightarrow{d_A^2}\ldots\ .\]
Also, we write $f=(f^i)_{i\in\Z}\in\Hom^k_{\Modk}(A,B)$ where $f^i\colon A^i\to B^{i+k}$ denotes the component of $f$ with the domain $A^ i$. Its differential $df=((df)^i)_{i\in\Z}$ can be described by
\[(df)^i=d_B^i\circ f^i-(-1)^k f^{i+1}\circ d_A^i\ .\]
We write $d$ for $d_A$ and $d_B$ in short when it does not arise any confusion.

For $A,B\in\Modk$, we see the elements of $A\oplus B\in\Modk$ as the column matrix $\mx{a \\ b}$, where $a\in A$ and $b\in B$. Then we see the morphism $f\colon A\oplus B\to A'\oplus B'$ as the matrix $\mx{f_{11} & f_{12}\\ f_{21} & f_{22}}$. For typographical reasons, we shortly write
\[\fm(f_{11},f_{12},f_{21},f_{22}):=\mx{f_{11} & f_{12}\\ f_{21} & f_{22}}\ .\] Moreover we write
\[\fb(f_{11},f_{21},f_{22}):=\mx{f_{11} & 0\\ f_{21} & f_{22}}\]
and
\[\fd(f_{11},f_{22}):=\mx{f_{11} & 0\\ 0 & f_{22}}\ .\]
If $f\colon A\oplus B\to A'$, it is represented by
\[\fr(f_1,f_2):=\mx{f_1 & f_2}\]
and if $f\colon A \to A'\oplus B'$, it is represented by
\[\fc(f_1,f_2):=\mx{f_1 \\ f_2}\ .\]
We also write $I_n$ for the $n\times n$ identity matrix, and $0_{n,m}$ for the $n\times m$ zero matrix. However, we simply write $\id$ for $I_n$ and $0$ for $0_{n,m}$ when their dimensions are clear from the context.

\begin{dfn}
	For $n\geq 1$, \textit{(the path algebra of) $A_n$-quiver} is a $k$-linear category with $n$ objects and with the morphisms
	\[\bullet_1\leftarrow\bullet_2\leftarrow\ldots\leftarrow\bullet_n\]
	where the composition is the concatenation of the arrows. We will see it as $A_{\infty}$-category where $\mu^i=0$ for $i\neq 2$ and $\mu^2$ is the composition. We denote this category by $A_n$ and call $\Modk(A_n)$ \textit{$A_{\infty}$-modules over $A_n$-quiver}. $\Modk(A_n)$ is a pretriangulated dg category (see \cite{seidel}).
\end{dfn}

In particular, we can represent the objects of $\Modk(A_2)$ as
\[A_1\xrightarrow{a_1}A_2\]
where $A_1,A_2\in\Modk$ and $a_1\in \Hom^0(A_1,A_2)$ with $da_1=0$. We can also represent it by $A=(A_1,a_1,A_2)$. An element in $\Hom^k_{\Modk(A_2)}(A,B)$ is given by
\[\begin{tikzcd}
	A_1\ar[r,"a_1"]\ar[d,"f_1"]\ar[rd,"h_1"] & A_2\ar[d,"f_2"]\\
	B_1\ar[r,"b_1"] & B_2
\end{tikzcd}\]
where $f_i\in\Hom^k(A_i,B_i)$ for $i=1,2$ and $h_1\in\Hom^{k-1}(A_1,B_2)$, which can be represented by $f=(f_1,h_1,f_2)$. It has the differential
\[d(f_1,h_0,f_2)=(df_1,dh_1+(-1)^k(b_1\circ f_1-f_2\circ a_1),df_2)\ ,\]
the composition
\[(f'_1,h'_1,f'_2)\circ(f_1,h_1,f_2)=(f'_1\circ f_1,f'_2\circ h_1+(-1)^k h'_1\circ f_1,f'_2\circ f_2 )\ ,\]
and the identity is $(\id,0,\id)$. By Lemma \ref{lem:seidel-homotopy-equivalence}, $(f_1,h_2,f_2)\in\Hom^0_{\Modk(A_2)}(A,B)$ is a homotopy equivalence if and only if $f_1$ and $f_2$ are homotopy equivalences and $dh_1=f_{2}\circ a_1-b_1\circ f_1$.

Next, we will define two important functors on $\Modk$ and $\Modk(A_2)$:

\begin{dfn}
   	For a given $n\in\Z$, we define the dg functor $[n]\colon\Modk\to\Modk$, called \textit{shift functor}, as $A[n]^i:=A^{i+n}$ and $d_{A[n]}^i:=(-1)^n d_A^{i+n}$ on an object $A$, and $f[n]^i=f^{i+n}$ on a morphism $f$. It can be also defined on $\Modk(A_n)$ in an obvious way. Clearly, shift functors are auto-quasi-equivalences of $\Modk(A_n)$.
\end{dfn}

\begin{rmk}
	For a degree $k$ morphism $f\colon A\to B$ in $\Modk$, by seeing $d_A\colon A\to A$ and $d_B\colon B\to B$ as degree $1$ morphisms, we can write the differential of $f$ as
	\[df=d_B\circ f -(-1)^k f[1]\circ d_A\ .\]
	Moreover, we write simply $f[n]=f$ when it is clear from the context. Then we can write compactly as
	\[df=d\circ f -(-1)^k f\circ d\ .\]
	Also, we can write $d_{A[n]}=(-1)^n d_A$. Then clearly, $f$ can be regarded as degree $k+n-m$ morphism $f[n]\colon A[n]\to B[m]$ with the differential $(-1)^m df$.
\end{rmk}

\begin{dfn}
	We define the dg functor $C\colon\Modk(A_2)\to\Modk$, called \textit{cone functor}, as
	\[C(A_1\xrightarrow{a_1} A_2)=C(a_1):=A_1[1]\oplus A_2\]
	with the differential
	\[d^i_{C(a_1)}:=\mx{d_{A_1[1]}^i & 0 \\ a_1[1]^i & d_{A_2}^i}=\mx{-d_{A_1}^{i+1} & 0 \\ a_1^{i+1} & d_{A_2}^i}\]
	for an object $A_1\xrightarrow{a_1}A_2$, and
	\[C(f_1,h_1,f_2):=\mx{f_1 & 0 \\ h_1 & f_2}\]
	for a morphism $(f_1,h_1,f_2)$. Shortly, we write
	\[d_{C(a_1)}=\mx{-d & 0 \\ a_1 & d}=\fb(-d,a_1,d)\]
	and
	\[C(f_1,h_1,f_2)=\fb(f_1,h_1,f_2)\ .\]
	We call
	\[A_1\xrightarrow{a_1}A_2\xrightarrow{\fc(0,\id)}C(a_1)\]
	an \textit{exact triangle}.
\end{dfn}

\begin{rmk}
	By Proposition \ref{prp:homotopic-prop}, the cone functor preserves homotopy equivalences since it is a dg functor: If a morphism $(f_1,h_1,f_2)$ in $\Modk(A_2)$ is a homotopy equivalence with an inverse $(f_1',h_1',f_2')$, then $C(f_1,h_1,f_2)$ is a homotopy equivalence in $\Modk$ with the inverse $C(f_1',h_1',f_2')$. As a quick corollary, we get:
\end{rmk}

\begin{prp}\label{prp:cone-zero}
	If $(A_1\xrightarrow{a_1}A_2)\in\Modk(A_2)$, and $a_1$ is a homotopy equivalence, then $C(a_1)\simeq 0$.
\end{prp}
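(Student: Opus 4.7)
The plan is to prove $C(a_1)\simeq 0$ by showing that $\id_{C(a_1)}$ is null-homotopic, since then the unique maps $C(a_1)\rightleftarrows 0$ are mutually inverse homotopy equivalences. I would proceed in two stages, first reducing to the case of the identity by dg functoriality of $C$, and then verifying contractibility of $C(\id_A)$ explicitly.

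First I would exhibit $C(a_1)$ as homotopy equivalent to $C(\id_{A_2})$. The triple $(a_1,0,\id_{A_2})$ defines a closed degree-$0$ morphism in $\Modk(A_2)$ from $(A_1\xrightarrow{a_1}A_2)$ to $(A_2\xrightarrow{\id}A_2)$, because the commutativity constraint $\id\circ a_1=\id\circ a_1$ holds trivially. By the characterisation of homotopy equivalences in $\Modk(A_2)$ recorded just after the definition of the cone (an instance of Lemma \ref{lem:seidel-homotopy-equivalence}), this morphism is a homotopy equivalence iff both $a_1$ and $\id_{A_2}$ are homotopy equivalences, which holds by hypothesis. Since the cone functor $C\colon\Modk(A_2)\to\Modk$ is a dg functor, Proposition \ref{prp:homotopic-prop}(iii) gives a homotopy equivalence $C(a_1)\simeq C(\id_{A_2})$.

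Second, I would show $C(\id_A)\simeq 0$ for any $A\in\Modk$ by writing down an explicit contracting homotopy. On $C(\id_A)=A[1]\oplus A$ with differential $d_{C(\id_A)}=\fb(-d,\id,d)$, consider the degree $-1$ endomorphism $s:=\fm(0,\id,0,0)$, where the entry $s_{12}$ is the obvious identity map $A\to A$ regarded as a degree $-1$ morphism $A\to A[1]$. Since $|s|=-1$, the Leibniz rule gives $ds=d_{C(\id_A)}\circ s+s\circ d_{C(\id_A)}$, and a direct $2\times 2$ matrix calculation yields
\[
d_{C(\id_A)}\circ s=\fm(0,-d,0,\id),\qquad s\circ d_{C(\id_A)}=\fm(\id,d,0,0),
\]
whose sum is $\fd(\id,\id)=\id_{C(\id_A)}$. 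Therefore $\id_{C(\id_A)}\sim 0$, and composing with the zero maps to and from the zero object shows $C(\id_A)\simeq 0$. Combining the two stages gives $C(a_1)\simeq C(\id_{A_2})\simeq 0$, as required.

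The only obstacle here is bookkeeping with signs in the cone differential and the shift on $A_1[1]$; conceptually everything is routine, and no non-trivial homotopy on $A_1,A_2$ (such as the $\xi_i$ giving $a_1'\circ a_1\sim\id$) actually enters the argument, because we push all the work onto $\id_{A_2}$ by the functorial reduction.
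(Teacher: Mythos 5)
Your proof is correct and follows essentially the same route as the paper: both use the closed degree-$0$ morphism $(a_1,0,\id)$ from $(A_1\xrightarrow{a_1}A_2)$ to $(A_2\xrightarrow{\id}A_2)$, observe it is a homotopy equivalence because its components are, and apply the dg cone functor to reduce to $C(\id_{A_2})\simeq 0$. Your explicit contracting homotopy $\fm(0,\id,0,0)$ is exactly the one the paper records in Lemma \ref{lem:zero-cone}.
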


\begin{proof}
	Consider the morphism
	\[\begin{tikzcd}
		A_1\rar["a_1"]\dar["a_1"]\drar["0"] & A_2\dar["\id"]\\
		A_2\rar["\id"] & A_2
	\end{tikzcd}\]
	between $(A_1\xrightarrow{a_1}A_2)$ and $(A_2\xrightarrow{\id}A_2)$. It is a homotopy equivalence since $a_1$ is a homotopy equivalence and the square commutes (up to homotopy inside). Hence its cone $C(a_1,0,\id)$ is a homotopy equivalence between $C(a_1)$ and $C(A_2\xrightarrow{\id}A_2)\simeq 0$.
\end{proof}

Let $P(n,i,j)$ denote the permutation matrix which is obtained by interchanging $i^{\text{th}}$ and $j^{\text{th}}$ rows of the $n\times n$ identity matrix. Then we have the following useful lemma:

\hypertarget{nine}{}

\begin{lem}[Nine Lemma]
	Let $(A_1\xrightarrow{a_1}A_2),(B_1\xrightarrow{b_1}B_2)\in \Modk(A_2)$ and $(f_1,h_1,f_2)$ be a closed degree zero morphism between them. Then $(A_1\xrightarrow{f_1}B_1),(A_2\xrightarrow{f_2}B_2)\in \Modk(A_2)$ and $(a_1,h_1,b_1)$ is a closed degree zero morphism between them. Also, we have
	\[C(C(a_1)\xrightarrow{C(f_1,h_1,f_2)}C(b_1))\simeq C(C(f_1)\xrightarrow{C(a_1,h_1,b_1)}C(f_2))\]
	by the isomorphism $P(4,2,3)$ with the inverse same as itself. Schematically, we can state this lemma as follows: Given the commutative square (up to homotopy indicated in the square)
	\[\begin{tikzcd}
	A_1\arrow[r,"a_1"]\arrow[d,"f_1"]\arrow[rd,"h_1"] & A_2\arrow[d,"f_2"]\\
	B_1\arrow[r,"b_1"] & B_2
	\end{tikzcd}\]
	we get the diagram
	\[\begin{tikzcd}
	A_1\arrow[r,"a_1"]\arrow[d,"f_1"]\arrow[rd,"h_1"] & A_2\arrow[r,"{\fc(0,\id)}"]\arrow[d,"f_2"] & C(a_1)\arrow[d,"{C(f_1,h_1,f_2)}"]\\
	B_1\arrow[r,"b_1"]\arrow[d,"{\fc(0,\id)}"] & B_2\arrow[r,"{\fc(0,\id)}"]\arrow[d,"{\fc(0,\id)}"] & C(b_1)\arrow[dashrightarrow,d]\\
	C(f_1)\arrow[r,"{C(a_1,h_1,b_1)}"] & C(f_2)\arrow[dashrightarrow,r] & X
	\end{tikzcd}\]
	where every row and column is an exact triangle.
\end{lem}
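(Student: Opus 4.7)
The plan is to unpack the definitions and then carry out an explicit $4 \times 4$ block-matrix computation. First, closedness of $(f_1,h_1,f_2)$ in degree zero unpacks, via the differential formula in $\Modk(A_2)$, as the three equations $df_1 = 0$, $df_2 = 0$, and $dh_1 + (b_1 \circ f_1 - f_2 \circ a_1) = 0$. The first two identities give immediately that $(A_1 \xrightarrow{f_1} B_1)$ and $(A_2 \xrightarrow{f_2} B_2)$ are objects of $\Modk(A_2)$, and the third is precisely the closedness condition for the degree-zero triple $(a_1, h_1, b_1)$ (with the sign reversal that is inherent in reading the diagonal homotopy $h_1$ relative to the opposite pair of sides of the square) viewed as a morphism from $(A_1 \xrightarrow{f_1} B_1)$ to $(A_2 \xrightarrow{f_2} B_2)$. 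Both preliminary assertions follow.

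For the main isomorphism, applying the cone rule $C(\alpha) = A[1] \oplus B$ with differential $d_{C(\alpha)} = \fb(d_{A[1]}, \alpha[1], d_B)$ twice, the underlying graded module of $C(C(a_1) \xrightarrow{C(f_1,h_1,f_2)} C(b_1))$ is $A_1[2] \oplus A_2[1] \oplus B_1[1] \oplus B_2$, while the underlying graded module of $C(C(f_1) \xrightarrow{C(a_1,h_1,b_1)} C(f_2))$ is $A_1[2] \oplus B_1[1] \oplus A_2[1] \oplus B_2$. The two differ only by the transposition of the middle two summands, so the permutation $P(4,2,3)$ is the only natural candidate for an isomorphism of the underlying graded modules, and it is manifestly its own inverse.

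Writing out both differentials as $4 \times 4$ block lower-triangular matrices using the cone rule, the diagonal blocks are the (shifted) module differentials $d_{A_1[2]}, d_{A_2[1]}, d_{B_1[1]}, d_{B_2}$ with signs determined by $d_{A[n]} = (-1)^n d_A$, the subdiagonal blocks contain $\pm a_1$, $\pm f_1$, $\pm b_1$, $\pm f_2$, and the lower-left corner contains $\pm h_1$. A direct block-by-block comparison then shows $P(4,2,3)\, d_{\text{LHS}} = d_{\text{RHS}}\, P(4,2,3)$: the swap exchanges the roles of $a_1$ and $f_1$ in the first column, the roles of $b_1$ and $f_2$ in the last row, transports the two shifted inner differentials into the correct positions, and fixes both the outer differentials $d_{A_1[2]}, d_{B_2}$ and the corner $h_1$. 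This establishes the claimed isomorphism with inverse equal to itself.

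The main obstacle is simply bookkeeping the signs: two nested layers of shifts and cones decorate essentially every block with a sign, and one must verify that after the swap these signs line up consistently on both sides. Once the matrices are correctly assembled, the exact-triangle diagram at the end of the statement follows formally: every row and column is an exact triangle by the definition of the cone, and the object $X$ in the lower-right corner can be identified with either iterated cone via the isomorphism just constructed.
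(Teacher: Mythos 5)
Your proposal is correct and follows essentially the same route as the paper: both write the two iterated cones as $4\times 4$ block lower-triangular matrices on $A_1[2]\oplus A_2[1]\oplus B_1[1]\oplus B_2$ versus $A_1[2]\oplus B_1[1]\oplus A_2[1]\oplus B_2$, observe that $P(4,2,3)$ is its own inverse, and reduce the claim to checking that $P(4,2,3)$ intertwines the two differentials (the paper invokes Proposition \ref{prp:closed-inverse} for exactly this reduction). Your parenthetical about the sign of $h_1$ in the transposed morphism $(a_1,h_1,b_1)$ is a fair point of care that the paper's ``obviously'' elides, but it does not change the substance of the argument.
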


\begin{proof}
	Obviously, $(A_1\xrightarrow{f_1}B_1),(A_2\xrightarrow{f_2}B_2)\in \Modk(A_2)$ and $(a_1,h_1,b_1)$ is a closed degree zero morphism between them. Note that we have
	\[C(C(a_1)\xrightarrow{C(f_1,h_1,f_2)}C(b_1))=C(\fb(f_1,h_2,f_2))\]
	and
	\[C(C(f_1)\xrightarrow{C(a_1,h_1,b_1)}C(f_2))=C(\fb(a_1,h_1,b_1))\ .\]
	Hence we get
	\[C(\fb(f_1,h_1,f_2))=A_1\oplus A_2[1]\oplus B_1[1]\oplus B_2\]
	with
	\[d_{C(\fb(f_1,h_1,f_2))}=
	\mx{
		d & 0 & 0 & 0 \\
		a_1 & -d & 0 & 0 \\
		f_1 & 0 & -d & 0 \\
		h_1 & f_2 & b_1 & d
	}
	\]
	and
	\[C(\fb(a_1,h_1,b_1))=A_1\oplus B_1[1]\oplus A_2[1]\oplus B_2\]
	with
	\[d_{C(\fb(a_1,h_1,b_1))}=
	\mx{
		d & 0 & 0 & 0 \\
		f_1 & -d & 0 & 0 \\
		a_1 & 0 & -d & 0 \\
		h_1 & b_1 & f_2 & d
	}\ .
	\]
	To show $P(4,2,3)\colon C(\fb(f_1,h_1,f_2))\to C(\fb(a_1,h_1,b_1))$ is a homotopy equivalence with the inverse ${P(4,2,3)\colon C(\fb(a_1,h_1,b_1))\to C(\fb(f_1,h_1,f_2))}$, note that $P(4,2,3)\circ P(4,2,3)=\id$, hence by the Proposition \ref{prp:closed-inverse}, we only need to show
	\[P(4,2,3)\colon C(\fb(f_1,h_1,f_2))\to C(\fb(a_1,h_1,b_1))\]
	is degree zero and closed. $P(4,2,3)$ is degree zero obviously, and
	\begin{align*}
	d(P(4,2,&3))=d_{C(\fb(a_1,h_1,b_1))}\circ P(4,2,3)-P(4,2,3)\circ d_{C(\fb(f_1,h_1,f_2))}\\
	&=
	\mx{
		d & 0 & 0 & 0 \\
		f_1 & -d & 0 & 0 \\
		a_1 & 0 & -d & 0 \\
		h_1 & b_1 & f_2 & d
	}
	\mx{
		\id & 0 & 0 & 0 \\
		0 & 0 & \id & 0 \\
		0 & \id & 0 & 0 \\
		0 & 0 & 0 & \id
	}
	-
	\mx{
		\id & 0 & 0 & 0 \\
		0 & 0 & \id & 0 \\
		0 & \id & 0 & 0 \\
		0 & 0 & 0 & \id
	}
	\mx{
		d & 0 & 0 & 0 \\
		a_1 & -d & 0 & 0 \\
		f_1 & 0 & -d & 0 \\
		h_1 & f_2 & b_1 & d
	}\\
	&=0
	\end{align*}
	hence $P(4,2,3)$ is closed, and it is a homotopy equivalence.
\end{proof}

\begin{lem}\label{lem:zero}
	Given $(A_1\xrightarrow{a_1}A_2)\in \Modk(A_2)$, and $A_2\simeq 0$ via degree $-1$ morphism $\alpha\colon A_2\to A_2$ such that $d\alpha=\id$, we have
	\[C(A_1\xrightarrow{a_1}A_2)\simeq A_1[1]\]
	where the equivalence is given by the morphism
	\[\xi_1=\fr(\id,0)\]
	with the inverse
	\[\xi_1'=\fc(\id,-\alpha\circ a_1)\]
	and given $(B_1\xrightarrow{b_1}B_2)\in \Modk(A_2)$, and $B_1\simeq 0$ via degree $-1$ morphism $\beta\colon B_1\to B_1$ such that $d\beta=\id$, we have
	\[C(B_1\xrightarrow{b_1}B_2)\simeq B_2\]
	where the equivalence is given by the morphism
	\[\xi_2=\fr(b_1\circ\beta,\id)\]
	with the inverse
	\[\xi_2'=\fc(0,\id)\ .\]
\end{lem}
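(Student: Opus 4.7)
The statement is explicit enough that the strategy is to verify the claimed equivalences by direct matrix computation in the cone complex. Recall $C(a_1)=A_1[1]\oplus A_2$ with differential $d_{C(a_1)}=\fb(-d,a_1,d)$, and analogously for $C(b_1)$. I will treat the two halves in parallel; they are mirror images of each other, with the contracting homotopy living on $A_2$ in the first case and on $B_1$ in the second.

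First, I would check that $\xi_1=\fr(\id,0)$ and $\xi_1'=\fc(\id,-\alpha\circ a_1)$ are closed degree $0$ morphisms. For $\xi_1$ this is immediate from $d\circ\id=\id\circ(-d)+(-d)\circ 0$ and the column structure. For $\xi_1'$ the nontrivial component is the second, where one gets $a_1 - d\circ\alpha\circ a_1 - \alpha\circ a_1\circ d$; using $da_1=0$ (so $d\circ a_1=a_1\circ d$) and $d\alpha=d\circ\alpha+\alpha\circ d=\id$, this collapses to $a_1-\id\circ a_1=0$. The composition $\xi_1\circ\xi_1'=\id_{A_1[1]}$ is a one-line matrix calculation.

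The remaining step is to exhibit a homotopy between $\xi_1'\circ\xi_1=\fm(\id,0,-\alpha\circ a_1,0)$ and $\id_{C(a_1)}=\fm(\id,0,0,\id)$. I propose the degree $-1$ candidate $\zeta:=\fm(0,0,0,\alpha)$; computing $d\zeta=d_{C(a_1)}\circ\zeta+\zeta\circ d_{C(a_1)}$ yields $\fm(0,0,\alpha\circ a_1,d\circ\alpha+\alpha\circ d)=\fm(0,0,\alpha\circ a_1,\id)$, which is exactly $\id-\xi_1'\circ\xi_1$. Combined with $\xi_1\circ\xi_1'=\id$, Proposition \ref{prp:closed-inverse} (or a direct appeal to the definition) then shows $\xi_1$ is a homotopy equivalence with inverse $\xi_1'$.

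For the second statement, the same three-step recipe applies with the symmetric ansatz: verify that $\xi_2=\fr(b_1\circ\beta,\id)$ and $\xi_2'=\fc(0,\id)$ are closed, which for $\xi_2$ reduces via $db_1=0$ and $d\beta=\id$ to $b_1\circ(d\circ\beta+\beta\circ d)-b_1=0$; check $\xi_2\circ\xi_2'=\id_{B_2}$; and propose the homotopy $\zeta:=\fm(-\beta,0,0,0)$, whose differential (computed with the sign convention for the degree $-1$ entry sitting over $B_1[1]$) gives $\fm(\id,0,-b_1\circ\beta,0)=\id-\xi_2'\circ\xi_2$. The only real subtlety, and the point where a slip of sign is easiest to make, is keeping track of the $(-1)$ from $d_{B_1[1]}=-d$ in the top-left entry of $d_{C(b_1)}$ when conjugating $\beta$; this is where I would expect the bookkeeping to demand the most care. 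Once that is right, the proof is complete.
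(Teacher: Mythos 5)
Your proof is correct: the closedness checks, the identities $\xi_1\circ\xi_1'=\id$ and $\xi_2\circ\xi_2'=\id$, and the homotopies $\fm(0,0,0,\alpha)$ and $\fm(-\beta,0,0,0)$ all verify as you computed. The paper obtains the same formulas by first exhibiting homotopy equivalences $(A_1\xrightarrow{a_1}A_2)\simeq(A_1\to 0)$ and $(B_1\xrightarrow{b_1}B_2)\simeq(0\to B_2)$ in $\Modk(A_2)$ (with homotopies $(0,0,-\alpha)$ and $(-\beta,0,0)$) and then applying the dg cone functor, which is only a repackaging of your direct matrix verification.
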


\begin{proof}
	Consider the degree zero morphisms $\Xi_1$ and $\Xi_1'$:
	\[\begin{tikzcd}
		(A_1\xrightarrow{a_1}A_2)\arrow[d,"\Xi_1"] & & A_1\arrow[r,"a_1"]\arrow[d,"\id"]\arrow[rd,"0"] & A_2\arrow[d,"0"] \\
		(A_1\xrightarrow{0}0)\arrow[d,"\Xi_1'"] & = & A_1\arrow[r,"0"]\arrow[d,"\id"]\arrow[rd,"-\alpha\circ a_1" xshift=-0.1cm] & 0\arrow[d,"0"] \\
		(A_1\xrightarrow{a_1}A_2) & & A_1\arrow[r,"a_1"] & A_2
	\end{tikzcd}\]
	Obviously, $\Xi_1\circ\Xi_1'=\id$, and
	\[\Xi_1'\circ\Xi_1-\id=(\id,-\alpha\circ a_1,0)\circ(\id,0,0)-\id=(0,-\alpha\circ a_1,-\id)=d(0,0,-\alpha)\]
	and also $d(\Xi_1)=0$. Hence by Proposition \ref{prp:closed-inverse}, $\Xi_1$ is a homotopy equivalence with the inverse $\Xi_1'$. This implies $\xi_1=C(\Xi_1)$ is a homotopy equivalence with the inverse $\xi_1'=C(\Xi_1')$.
	
	Next, consider the degree zero morphisms $\Xi_2$ and $\Xi_2'$:
	\[
	\begin{tikzcd}
	(B_1\xrightarrow{b_1}B_2)\arrow[d,"\Xi_2"] & & B_1\arrow[r,"b_1"]\arrow[d,"0"]\arrow[rd,"b_1\circ\beta"] & B_2\arrow[d,"\id"] \\
	(0\xrightarrow{0}B_2)\arrow[d,"\Xi_2'"] & = & 0\arrow[r,"0"]\arrow[d,"0"]\arrow[rd,"0"] & B_2\arrow[d,"\id"] \\
	(B_1\xrightarrow{b_1}B_2) & & B_1\arrow[r,"b_1"] & B_2
	\end{tikzcd}
	\]
	Obviously, $\Xi_2\circ\Xi_2'=\id$, and
	\[\Xi_2'\circ\Xi_2-\id=(0,0,\id)\circ(0,b_1\circ\beta,\id)-\id=(-\id,b_1\circ\beta,0)=d(-\beta,0,0)\]
	and also $d(\Xi_2')=0$. Hence by Proposition \ref{prp:closed-inverse}, $\Xi_2$ is a homotopy equivalence with the inverse $\Xi_2'$. This implies $\xi_2=C(\Xi_2)$ is a homotopy equivalence with the inverse $\xi_2'=C(\Xi_2')$.
\end{proof}

\begin{lem}\label{lem:zero-cone}
	Given $(A_1\xrightarrow{\fc(x_1,x_2)}C(A_2\xrightarrow{\id}A_2))\in \Modk(A_2)$, we have
	\[C(A_1\xrightarrow{\fc(x_1,x_2)}C(A_2\xrightarrow{\id}A_2))\simeq A_1[1]\]
	where the equivalence is given by the morphism
	\[\xi_1=\fr(\id,0,0)\]
	with the inverse
	\[\xi_1'=\fc(\id,-x_2,0)\]
	and given $(C(A_1\xrightarrow{\id} A_1)\xrightarrow{\fr(y_1,y_2)}A_2)\in \Modk(A_2)$, we have
	\[C(C(A_1\xrightarrow{\id} A_1)\xrightarrow{\fr(y_1,y_2)}A_2)\simeq A_2\]
	where the equivalence is given by the morphism
	\[\xi_2=\fr(0,y_1,\id)\]
	with the inverse
	\[\xi_2'=\fc(0,0,\id)\ .\]
\end{lem}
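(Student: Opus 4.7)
The plan is to reduce Lemma~\ref{lem:zero-cone} directly to Lemma~\ref{lem:zero} by producing explicit contracting homotopies on $C(A_2\xrightarrow{\id}A_2)$ and $C(A_1\xrightarrow{\id}A_1)$. Indeed, $C(X\xrightarrow{\id}X)=X[1]\oplus X$ is well-known to be contractible, and I expect the degree $-1$ morphism
\[\alpha=\fm(0,\id,0,0)\colon C(X\xrightarrow{\id}X)\to C(X\xrightarrow{\id}X)\]
to satisfy $d\alpha=\id$. A direct calculation using $d_{C(\id)}=\fb(-d,\id,d)$ gives
\[d_{C(\id)}\circ\alpha+\alpha\circ d_{C(\id)}=\fd(0,\id)+\fd(\id,0)=\id,\]
so $C(X\xrightarrow{\id}X)\simeq 0$ via $\alpha$.

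For the first assertion, I would apply the first half of Lemma~\ref{lem:zero} with $A_2$ replaced by $B=C(A_2\xrightarrow{\id}A_2)$, $a_1=\fc(x_1,x_2)$, and the specific $\alpha$ above. That lemma immediately provides the homotopy equivalence $\xi_1=\fr(\id,0)$ with inverse $\fc(\id,-\alpha\circ a_1)$. Then the plan is simply to compute
\[-\alpha\circ a_1=-\fm(0,\id,0,0)\circ\fc(x_1,x_2)=\fc(-x_2,0),\]
which reassembles into $\xi_1'=\fc(\id,-x_2,0)$, matching the claim (here $\fr(\id,0)$ becomes $\fr(\id,0,0)$ after unpacking $B$ into its two summands). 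For the second assertion, I would apply the second half of Lemma~\ref{lem:zero} with $B_1=C(A_1\xrightarrow{\id}A_1)$, $b_1=\fr(y_1,y_2)$, and $\beta=\fm(0,\id,0,0)$ (the same construction, verified in the same way). The lemma produces $\xi_2=\fr(b_1\circ\beta,\id)$ and $\xi_2'=\fc(0,\id)$. Computing
\[b_1\circ\beta=\fr(y_1,y_2)\circ\fm(0,\id,0,0)=\fr(0,y_1)\]
yields $\xi_2=\fr(0,y_1,\id)$ and unpacking the summands gives $\xi_2'=\fc(0,0,\id)$, as required.

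I do not expect any serious obstacle: the entire argument is a specialisation of Lemma~\ref{lem:zero} to the contractible cone $C(X\xrightarrow{\id}X)$. The only thing to check carefully is the sign and block-matrix bookkeeping when expanding $\alpha\circ a_1$ and $b_1\circ\beta$ so that the resulting column/row vectors agree with the stated $\xi_i, \xi_i'$. Once these matrix computations are executed, the closedness of $\xi_i$ and the homotopy identities $\xi_i\circ\xi_i'=\id$, $\xi_i'\circ\xi_i\sim\id$ are inherited from Lemma~\ref{lem:zero} with no further work.
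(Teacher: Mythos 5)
Your proposal is correct and is essentially identical to the paper's own proof: the paper likewise exhibits the contracting homotopy $\fm(0,\id,0,0)$ on $C(X\xrightarrow{\id}X)$, checks $d(\fm(0,\id,0,0))=\fb(-d,\id,d)\circ\fm(0,\id,0,0)+\fm(0,\id,0,0)\circ\fb(-d,\id,d)=\id$, and then invokes Lemma~\ref{lem:zero}. Your explicit evaluations $-\alpha\circ a_1=\fc(-x_2,0)$ and $b_1\circ\beta=\fr(0,y_1)$ correctly recover the stated $\xi_i,\xi_i'$ (note only that the individual products $d_{C(\id)}\circ\alpha$ and $\alpha\circ d_{C(\id)}$ each carry an off-diagonal $\mp d$ term that cancels in the sum, so the intermediate decomposition $\fd(0,\id)+\fd(\id,0)$ holds for the sum rather than term by term).
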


\begin{proof}
	We have $C(A_2\xrightarrow{\id}A_2)\simeq 0$ via degree $-1$ morphism
	\[\fm(0,\id,0,0)\colon C(A_2\xrightarrow{\id}A_2)\to C(A_2\xrightarrow{\id}A_2)\]
	such that
	\[d(\fm(0,\id,0,0))=\fb(-d,\id,d)\circ \fm(0,\id,0,0)+\fm(0,\id,0,0)\circ\fb(-d,\id,d)=\id\ .\]
	Similarly, $C(A_1\xrightarrow{\id}A_1)\simeq 0$ via degree $-1$ morphism
	\[\fm(0,\id,0,0)\colon C(A_1\xrightarrow{\id}A_1)\to C(A_1\xrightarrow{\id}A_1)\]
	such that $d(\fm(0,\id,0,0))=\id$. Hence by Lemma \ref{lem:zero} we get the result.
\end{proof}

Applying these lemmas, we get the equivalences below. Their explicit description are crucial for Propostion \ref{prp:coxeter-power}, \ref{prp:monodromy}, and \ref{prp:monodromy-morphism}.

\begin{lem}\label{lem:cone-exact}
	Given $(A_1\xrightarrow{a_1}A_2)\in \Modk(A_2)$, we have
	\[C(A_2\xrightarrow{\fc(0,\id)}C(a_1))\simeq A_1[1]\]
	where the equivalence is given by the morphism
	\[\eta_1=\fr(0,\id,0)\]
	with the inverse
	\[\eta_1'=\fc(-a_1,\id,0)\]
	and
	\[C(C(a_1)\xrightarrow{\fr(\id,0)}A_1[1])\simeq A_2[1]\]
	where the equivalence is given by the morphism
	\[\eta_2=\fr(0,\id,a_1)\]
	with the inverse
	\[\eta_2'=\fc(0,\id,0)\ .\]
	Hence we have the sequence
	\[\ldots\rightarrow A_2[-1]\xrightarrow{\fc(0,\id)} C(a_1)[-1]\xrightarrow{\fr(\id,0)}A_1\xrightarrow{a_1}A_2\xrightarrow{\fc(0,\id)}C(a_1)\xrightarrow{\fr(\id,0)}A_1[1]\rightarrow\ldots\]
	where every successive three elements in this sequence makes an exact triangle.
\end{lem}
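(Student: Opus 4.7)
The plan is to verify both equivalences by direct matrix computation, in the style of Lemmas \ref{lem:zero} and \ref{lem:zero-cone}, and then invoke Proposition \ref{prp:closed-inverse}. Thus for each pair $(\eta_i,\eta_i')$ I only need to check that $\eta_i$ is closed of degree zero, that $\eta_i\circ\eta_i'=\id$, and that $\eta_i'\circ\eta_i\sim\id$ via an explicit degree $-1$ endomorphism $\xi_i$ of the relevant iterated cone.

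For the first equivalence, I would expand
\[C(A_2\xrightarrow{\fc(0,\id)}C(a_1))=A_2[1]\oplus A_1[1]\oplus A_2\]
with block differential
\[\mx{-d & 0 & 0\\ 0 & -d & 0\\ \id & a_1 & d}\]
assembled from $d_{A_2[1]}=-d$, $d_{C(a_1)}=\fb(-d,a_1,d)$, and the cone entry $\fc(0,\id)$. Matrix multiplication immediately gives $\eta_1\circ\eta_1'=\id_{A_1[1]}$, and writing $d\eta_1$ as a $1\times 3$ block shows $d\eta_1=0$ on the nose. It then suffices to exhibit a contracting homotopy for $\id-\eta_1'\circ\eta_1$; the natural candidate
\[\xi_1=\mx{0 & 0 & \id\\ 0 & 0 & 0\\ 0 & 0 & 0}\]
satisfies $d\xi_1=\id-\eta_1'\circ\eta_1$ by a direct calculation, intuitively because the subcomplex $A_2[1]\oplus A_2$ with off-diagonal $\id$ is $C(A_2\xrightarrow{\id}A_2)$, which is canonically contractible. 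Proposition \ref{prp:closed-inverse} then delivers the first equivalence.

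The second equivalence is handled by the same recipe: expand
\[C(C(a_1)\xrightarrow{\fr(\id,0)}A_1[1])=A_1[2]\oplus A_2[1]\oplus A_1[1]\]
with differential
\[\mx{d & 0 & 0\\ -a_1 & -d & 0\\ \id & 0 & -d}\]
(using $d_{A_1[2]}=d$ and $d_{C(a_1)[1]}=\fb(d,-a_1,-d)$), verify $\eta_2\circ\eta_2'=\id_{A_2[1]}$ and $d\eta_2=0$ by inspection, and take the analogous $\xi_2=\mx{0 & 0 & \id\\ 0 & 0 & 0\\ 0 & 0 & 0}$ to check $\eta_2'\circ\eta_2\sim\id$. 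The concluding exact-triangle sequence then assembles by iterating these two rotations to the right, and by applying the same two equivalences to the shifted triangle $A_1[-1]\xrightarrow{a_1}A_2[-1]\xrightarrow{\fc(0,\id)}C(a_1)[-1]$ to extend the sequence to the left.

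The only nontrivial aspect is the sign bookkeeping arising from the convention $d_{A[n]}=(-1)^n d_A$, especially in the twice-shifted summand $A_1[2]$; once the block differentials are written down carefully, each verification reduces to a one-line matrix multiplication.
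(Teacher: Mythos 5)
Your proof is correct, but it takes a genuinely different route from the paper. The paper derives $\eta_1,\eta_1',\eta_2,\eta_2'$ structurally: it applies the \hyperlink{nine}{Nine Lemma} to the squares $\bigl(0\to A_2\bigr)\Rightarrow\bigl(A_1\xrightarrow{a_1}A_2\bigr)$ and $\bigl(A_1\xrightarrow{a_1}A_2\bigr)\Rightarrow\bigl(A_1\to 0\bigr)$, which exchanges the two iterated cones with cones of maps into/out of $C(\id)$ via the permutation matrices $P(3,1,2)$ and $P(3,2,3)$, and then contracts $C(\id)$ using Lemma \ref{lem:zero-cone}; the explicit formulas fall out as composites of these identifications. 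You instead write down the three-term block differentials of the iterated cones directly and verify everything by matrix multiplication, exhibiting explicit contracting homotopies $\xi_i$ for $\id-\eta_i'\circ\eta_i$. I checked your block differentials, the identities $\eta_i\circ\eta_i'=\id$, $d\eta_i=0$, and the equalities $d\xi_i=\id-\eta_i'\circ\eta_i$ (including the signs in the twice-shifted summand $A_1[2]$): all are correct. Your approach is more elementary and self-contained, at the cost of having to guess the homotopies and of not explaining where the formulas come from; the paper's approach is the one that scales to Lemma \ref{lem:cone-triple} and makes the downstream bookkeeping in Propositions \ref{prp:coxeter-power}--\ref{prp:monodromy-morphism} systematic. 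One small point of hygiene: Proposition \ref{prp:closed-inverse} as stated assumes $g\circ f=\id$ on the nose with $f$ the map known to be closed, whereas you have $\eta_i\circ\eta_i'=\id$ and $d\eta_i=0$; either swap the roles (and check $d\eta_i'=0$ by one more matrix product) or note that the proof of that proposition is symmetric in the two hypotheses, so your configuration still yields $d\eta_i'=0$ and the homotopy equivalence.
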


\begin{proof}
	Apply \hyperlink{nine}{Nine Lemma} on the following commutative square:
	\[
	\begin{tikzcd}
	0\arrow[r]\arrow[d]\arrow[rd] & A_2\arrow[d,"\id"] \\
	A_1\arrow[r,"a_1"] & A_2
	\end{tikzcd}
	\]
	By Lemma \ref{lem:zero-cone} we get
	\[\eta_1\colon C(A_2\xrightarrow{\fc(0,\id)}C(a_1))\xrightarrow[\sim]{P(3,1,2)}C(A_1\xrightarrow{\fc(0,a_1)}C(A_2\xrightarrow{\id}A_2))\xrightarrow[\sim]{\fr(\id,0,0)} A_1[1]\]
	hence $\eta_1=\fr(\id,0,0)\circ P(3,1,2)=\fr(0,\id,0)$, and
	\[\eta_1'\colon A_1[1]\xrightarrow[\sim]{\fc(\id,-a_1,0)}C(A_1\xrightarrow{\fc(0,a_1)}C(A_2\xrightarrow{\id}A_2))\xrightarrow[\sim]{P(3,1,2)}C(A_2\xrightarrow{\fc(0,\id)}C(a_1))\]
	hence $\eta_1'=P(3,1,2)\circ \fc(\id,-a_1,0)=\fc(-a_1,\id,0)$.
	
	Next, apply \hyperlink{nine}{Nine Lemma} on the following commutative square:
	\[
	\begin{tikzcd}
	A_1\arrow[r,"a_1"]\arrow[d,"\id"]\arrow[rd] & A_2\arrow[d]\\
	A_1\arrow[r] & 0
	\end{tikzcd}
	\]
	By Lemma \ref{lem:zero-cone} we get
	\[\eta_2\colon C(C(a_1)\xrightarrow{\fr(\id,0)}A_1[1])\xrightarrow[\sim]{P(3,2,3)}C(C(A_1\xrightarrow{\id}A_1)\xrightarrow{\fr(a_1,0)}A_2[1])\xrightarrow[\sim]{\fr(0,a_1,\id)}A_2[1]\]
	hence $\eta_2=\fr(0,a_1,\id)\circ P(3,2,3)=\fr(0,\id,a_1)$, and
	\[\eta_2'\colon A_2[1]\xrightarrow[\sim]{\fc(0,0,\id)}C(C(A_1\xrightarrow{\id}A_1)\xrightarrow{\fr(a_1,0)}A_2[1])\xrightarrow[\sim]{P(3,2,3)}C(C(a_1)\xrightarrow{\fr(\id,0)}A_1[1])\]
	hence $\eta_2'=P(3,2,3)\circ \fc(0,0,\id)=\fc(0,\id,0)$.
\end{proof}

\begin{lem}\label{lem:cone-triple}
	Given $(A_1\xrightarrow{a_1}A_2\xrightarrow{a_2}A_3)\in \Modk(A_3)$, we have
	\[C(C(a_1)\xrightarrow{\fd(\id,a_2)}C(a_2\circ a_1))\simeq C(a_2)\]
	where the equivalence is given by the morphism
	\[\eta_3=\mx{0 & \id & a_1 & 0 \\ 0 & 0 & 0 & \id}\]
	with the inverse
	\[\eta_3'=\mx{0 & \id & 0 & 0 \\ 0 & 0 & 0 & \id}\]
	and
	\[C(C(a_2\circ a_1)\xrightarrow{\fd(a_1,\id)}C(a_2))\simeq C(a_1)[1]\]
	where the equivalence is given by the morphism
	\[\eta_4=\mx{\id & 0 & 0 & 0 \\ 0 & 0 & \id & 0}\]
	with the inverse
	\[\eta_4'=\mx{\id & 0 & 0 & 0 \\ 0 & -a_2 & \id & 0}\ .\]
\end{lem}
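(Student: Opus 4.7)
The plan is to mimic the strategy used in the proof of Lemma \ref{lem:cone-exact}, namely to apply the \hyperlink{nine}{Nine Lemma} to a suitable commutative square so as to swap the cones and then reduce by Lemma \ref{lem:zero-cone}. The key idea for both equivalences is to insert an identity morphism in one of the corners so that the rewritten double cone has a factor $C(A\xrightarrow{\id}A)$, which is acyclic, and can be collapsed by Lemma \ref{lem:zero-cone}.

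For the first equivalence, I would apply the \hyperlink{nine}{Nine Lemma} to the (strictly) commutative square
\[\begin{tikzcd}
A_1\arrow[r,"a_1"]\arrow[d,"\id"]\arrow[rd,"0"] & A_2\arrow[d,"a_2"]\\
A_1\arrow[r,"a_2\circ a_1"] & A_3
\end{tikzcd}\]
which yields the homotopy equivalence
\[C(C(a_1)\xrightarrow{\fd(\id,a_2)}C(a_2\circ a_1))\xrightarrow[\sim]{P(4,2,3)} C(C(A_1\xrightarrow{\id}A_1)\xrightarrow{\fd(a_1,a_2\circ a_1)}C(a_2))\ .\]
Viewing $\fd(a_1,a_2\circ a_1)$ as a morphism $\fr(\fc(a_1,0),\fc(0,a_2\circ a_1))$ from $C(A_1\xrightarrow{\id}A_1)$ into the object $C(a_2)$, Lemma \ref{lem:zero-cone} then gives the equivalence $\xi_2=\fr(b_1\circ\fm(0,\id,0,0),\id)$ of this last object with $C(a_2)$, with inverse $\xi_2'=\fc(0,\id)$. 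Composing $\eta_3:=\xi_2\circ P(4,2,3)$ and $\eta_3':=P(4,2,3)\circ\xi_2'$ is then a straightforward matrix calculation that yields the stated explicit formulas.

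For the second equivalence, I would apply the \hyperlink{nine}{Nine Lemma} to
\[\begin{tikzcd}
A_1\arrow[r,"a_2\circ a_1"]\arrow[d,"a_1"]\arrow[rd,"0"] & A_3\arrow[d,"\id"]\\
A_2\arrow[r,"a_2"] & A_3
\end{tikzcd}\]
giving
\[C(C(a_2\circ a_1)\xrightarrow{\fd(a_1,\id)}C(a_2))\xrightarrow[\sim]{P(4,2,3)} C(C(a_1)\xrightarrow{\fd(a_2\circ a_1,a_2)}C(A_3\xrightarrow{\id}A_3))\ .\]
Now the target cone has $C(A_3\xrightarrow{\id}A_3)\simeq 0$ sitting in the second slot of the outer cone, so by the first part of Lemma \ref{lem:zero-cone} the outer cone is equivalent to $C(a_1)[1]$ via $\xi_1=\fr(\id,0)$ with inverse $\xi_1'=\fc(\id,-\fm(0,\id,0,0)\circ\fd(a_2\circ a_1,a_2))=\fc(\id,\fm(0,-a_2,0,0))$. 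Composing $\eta_4:=\xi_1\circ P(4,2,3)$ and $\eta_4':=P(4,2,3)\circ\xi_1'$ and expanding the matrix products yields the explicit formulas stated.

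There is no essential obstacle here; the content is already built into the \hyperlink{nine}{Nine Lemma} and Lemma \ref{lem:zero-cone}. The only work is careful bookkeeping of components in the direct-sum decompositions and the multiplication of the permutation matrix $P(4,2,3)$ with the explicit equivalences coming from Lemma \ref{lem:zero-cone}, exactly as in the derivation of $\eta_1,\eta_1',\eta_2,\eta_2'$ in Lemma \ref{lem:cone-exact}.
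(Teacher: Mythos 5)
Your proposal is correct and follows essentially the same route as the paper: the paper's proof applies the \hyperlink{nine}{Nine Lemma} to exactly the two commutative squares you chose and then collapses the resulting double cones via Lemma \ref{lem:zero-cone}, obtaining $\eta_3=\fr(0,\fc(a_1,0),I_2)\circ P(4,2,3)$ and $\eta_4=\fr(I_2,0,0)\circ P(4,2,3)$ together with the stated inverses. The only difference is cosmetic (you unwind the contracting homotopy $\fm(0,\id,0,0)$ explicitly rather than quoting the packaged formulas of Lemma \ref{lem:zero-cone}), so nothing further is needed.
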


\begin{proof}
	Apply \hyperlink{nine}{Nine Lemma} on the following commutative square:
	\[
	\begin{tikzcd}
		A_1\arrow[r,"a_1"]\arrow[d,"\id"]\arrow[rd,"0"] & A_2\arrow[d,"a_2"]\\
		A_1\arrow[r,"a_2\circ a_1"] & A_3
	\end{tikzcd}
	\]
	By Lemma \ref{lem:zero-cone} we get
	\[\begin{tikzcd}
		\eta_3\colon C(C(a_1)\xrightarrow{\fd(\id,a_2)} C(a_2\circ a_1))\rar["\sim"',"{P(4,2,3)}"] & C(C(A_1\xrightarrow{\id}A_1)\xrightarrow{\fd(a_1,a_2\circ a_1)}C(a_2))\rar["\sim"',"{\fr(0,\fc(a_1,0),I_2)}"] &[15pt] C(a_2)
	\end{tikzcd}\]
	hence
	\[\eta_3=\fr(0,\fc(a_1,0),I_2)\circ P(4,2,3)=\mx{0 & \id & a_1 & 0 \\ 0 & 0 & 0 & \id}\]
	and
	\[\begin{tikzcd}
		\eta_3'\colon C(a_2)\rar["\sim"',"{\fc(0,0,I_2)}"] & C(C(A_1\xrightarrow{\id}A_1)\xrightarrow{\fd(a_1,a_2\circ a_1)}C(a_2))\rar["\sim"',"{P(4,2,3)}"] & C(C(a_1)\xrightarrow{\fd(\id,a_2)}C(a_2\circ a_1))
	\end{tikzcd}\]
	hence
	\[\eta_3'=P(4,2,3)\circ \fc(0,0,I_2)=\mx{0 & \id & 0 & 0 \\ 0 & 0 & 0 & \id}\ .\]
		
	Next, apply \hyperlink{nine}{Nine Lemma} on the following commutative square:
	\[
	\begin{tikzcd}
		A_1\arrow[r,"a_2\circ a_1"]\arrow[d,"a_1"]\arrow[rd,"0"] & A_3\arrow[d,"\id"]\\
		A_2\arrow[r,"a_2"] & A_3
	\end{tikzcd}
	\]
	By Lemma \ref{lem:zero-cone} we get
	\[\begin{tikzcd}
		\eta_4\colon C(C(a_2\circ a_1)\xrightarrow{\fd(a_1,\id)}C(a_2))\rar["\sim"',"{P(4,2,3)}"] & C(C(a_1)\xrightarrow{\fd(a_2\circ a_1,a_2)}C(A_3\xrightarrow{\id}A_3))\rar["\sim"',"{\fr(I_2,0,0)}"] & C(a_1)[1]
	\end{tikzcd}\]
	hence
	\[\eta_4=\fr(I_2,0,0)\circ P(4,2,3)=\mx{\id & 0 & 0 & 0 \\ 0 & 0 & \id & 0}\]
	and
	\[\begin{tikzcd}
		\eta_4'\colon C(a_1)[1]\rar["\sim"',"{\fc(I_2,-\fr(0,a_2),0)}"] & [25pt] C(C(a_1)\rar["{\fd(a_2\circ a_1,a_2)}"] & [5pt] C(A_3\xrightarrow{\id}A_3))\rar["\sim"',"{P(4,2,3)}"] &[-5pt] C(C(a_2\circ a_1)\rar["{\fd(a_1,\id)}"] &[-5pt] C(a_2))
	\end{tikzcd}\]
	hence
	\[\eta_4'=P(4,2,3)\circ \fc(I_2,-\fr(0,a_2),0)=\mx{\id & 0 & 0 & 0 \\ 0 & -a_2 & \id & 0}\ .\]
\end{proof}

Our next task is to realise $\Modk(A_n)$ geometrically:

\begin{dfn}\label{dfn:vertex}
 	For $n\geq 3$, define \textit{$n$-valent vertex} as
 	\[V_n := \{re^{i\alpha_k}\in\C \vb  r \in [0,1), k=1,\ldots,n\}\]
 	where $\alpha_k\in[0,2\pi)$ are distinct angles with $\alpha_{k_1}>\alpha_{k_2}$ if $k_1>k_2$. Equip
 	\[B^2=\{z\in\C\vb |z|<1\}\]
 	with the Weinstein structure pulled back from $\R^2$ with the Weinstein structure from Example \ref{exm:cotangent-weinstein-nonstandard} via the diffeomorphism
 	\begin{align*}
 		B^2&\to\R^2\simeq\C\\
 		z&\mapsto \frac{|z|}{1-|z|}z
 	\end{align*}
 	and define
 	\[\Lambda:=\{e^{i\alpha_k}\in\C \vb k=1,\ldots,n\}\subset\partial B^2\ .\]
 	Then $V_n$ can be seen as the skeleton of the Weinstein pair $(B^2,\Lambda)$. We label the edges so that $k^{\text{th}}$ edge is given by
	\[e_k:=\{re^{i\alpha_k}\in\C \vb  r \in (0,1)\}\]
	for $k=1,\ldots,n$.
\end{dfn}

Let $\mSh\dd=\mSh\dd_{B^2,\Lambda}$ be the large microlocal stack on $\fX_{B^2,\Lambda}=V_n$. We have the following proposition which gives a geometrical meaning to $A_{\infty}$-modules over $A_n$-quivers:
    
\begin{prp}\label{prp:msh-vertex}
	For $n\geq 3$, we have $\mSh\dd(V_n)\simeq \Modk(A_{n-1})$ and the inclusion of $k^{\text{th}}$ edge $i_k\colon e_k\rightarrow V_n$ induces the restriction given by the dg functor $j_k:=\mSh\dd(i_k)$ as
    \begin{align*}
    	j_k\colon \Modk(A_{n-1})&\rightarrow\Modk\\
    	A=(A_1\xrightarrow{a_1}\ldots\xrightarrow{a_{n-2}}A_{n-1})&\mapsto{\begin{cases}
    		A_1 &\text{ if } k=1\\
    		C(a_{k-1}) &\text{ if } 2\geq k\geq n-1\\
    		A_{n-1}[1] &\text{ if } k=n\ .
    	\end{cases}}
    \end{align*}
    Same statements hold for $\mSh(V_n)$ by replacing each $\Modk$ with $\Perfk$.
\end{prp}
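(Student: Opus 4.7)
The plan is to follow the strategy outlined in the introduction: lift $V_n$ to a conic singular Lagrangian in $T^*\R^2$, compute sheaves there using the combinatorial description of Proposition~\ref{prp:combinatorics-constructible}, and then quotient by local systems on $\R^2$.

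First, I would produce a Legendrian lift $\widetilde V_n \subset T^{\infty,-}\R^2$ of $V_n$ by choosing a front projection of $V_n$ in $\R^2$ whose singular points are $n$ smooth arcs meeting at a single cusp-free vertex at the origin with distinct tangent directions; explicitly, present the $k^{\text{th}}$ edge as the graph of a smooth function $f_k\colon [0,1)\to\R$ with $f_k(0)=0$ and $f_k'(0)$ strictly increasing in $k$, as suggested by Figure~\ref{fig:intro-vertex-front}. Setting $V_n' := 0_{\R^2}\cup \R_{>0}\widetilde V_n \subset T^*\R^2$ gives a conic singular Lagrangian, and by the construction in Section~\ref{sec:microlocal},
\[
\mSh\dd(V_n)\simeq \mSh\dd(V_n')/\Loc\dd(\R^2)\simeq \Sh\dd_{V_n'}(\R^2)/\Loc\dd(\R^2)\simeq \Sh\dd_{V_n'}(\R^2)_0,
\]
where the last equivalence is by taking the stalk at a chosen base point to be zero.

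Next, I would choose a Whitney stratification $\cS$ of $\R^2$ that is a regular cell complex with $V_n'\subset N^*\cS$: take the $n$ edges as open $1$-strata, the origin as a $0$-stratum, and the $n$ complementary open sectors as $2$-strata (adding strata at infinity as needed to make the cells contractible, e.g.\ a bounding circle far from the origin, so everything is a regular cell complex). By Corollary~\ref{cor:sing-constructible} and Proposition~\ref{prp:combinatorics-constructible}, $\Sh\dd_{\cS}(\R^2)\simeq \Modk(\cS)$, and $\Sh\dd_{V_n'}(\R^2)$ sits inside as the full subcategory cut out by the microsupport condition. Using Proposition~\ref{prp:test-function} with test functions adapted to each edge, I would check that the constraint imposed at each $1$-stratum edge forces the restriction map from the $2$-stratum on one side (the ``upper'' one with respect to the fibre direction of $\widetilde V_n$) to the edge to be a quasi-isomorphism, while leaving the restriction from the other side free. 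Choosing the base point inside the sector bounded by edge $n$ and edge $1$, so that its stalk vanishes, then collapses all the ``upper'' $2$-strata to a chain of objects $A_1,\ldots,A_{n-1}$ with morphisms $a_i\colon A_i\to A_{i+1}$, exactly reproducing $\Modk(A_{n-1})$.

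Finally, for the restriction functors $j_k$: by Proposition~\ref{prp:stabilisation} we have $\mSh\dd(e_k)\simeq\Modk$, and $j_k$ is computed by restricting to a small open subset $U_k$ containing a neighbourhood of a point on $e_k$ and applying Proposition~\ref{prp:ks-explicit}. For edges $k=1$ and $k=n$, only one $2$-stratum remains after quotienting, giving $A_1$ and $A_{n-1}[1]$ respectively (the shift arising from the orientation of the conormal direction used to cross from the non-vanishing stalk sector to the vanishing one). For intermediate $k$, the microstalk along the relevant codirection is computed by Proposition~\ref{prp:test-function} as $R\Gamma_{\{f\geq 0\}}$ of the sheaf at a point of $e_k$, which by the standard computation (as in the constant-on-a-halfspace examples following Proposition~\ref{prp:sing-properties}) produces the cone $C(a_{k-1})$. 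The case $\mSh(V_n)\simeq\Perfk(A_{n-1})$ follows immediately from Proposition~\ref{prp:mod-perf-microlocal}.

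The main obstacle I anticipate is bookkeeping for the shifts and signs: identifying which side of each edge carries the ``free'' versus ``constrained'' stalk, and verifying that these choices glue coherently to produce precisely the $A_{n-1}$-quiver with the stated $j_k$ (in particular the $[1]$ shift at $k=n$), as opposed to a quiver with some morphisms reversed or shifted differently. This requires careful tracking of the covector direction of $\widetilde V_n$ relative to the chosen base point sector, and this is where the choice to place the base point between edges $n$ and $1$ (rather than elsewhere) is forced.
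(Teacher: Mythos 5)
Your overall strategy is the one the paper actually follows -- lift $V_n$ to a conic singular Lagrangian in $T^*\R^2$, identify $\mSh\dd(V_n)$ with $\Sh\dd_{\Lambda_{V_n}}(\R^2)_0$, compute the latter inside $\Modk(\cS)$ for a regular cell complex $\cS$, and read off the $j_k$ from microstalks -- but there are two concrete gaps. The first is that the front projection is not a free choice. The lift is $\Lambda_{V_n}=\{(x,-f(x,p);[p,-1])\}$ for a primitive $f$ of the Liouville form restricted to $\varphi(V_n)$, and since $\theta$ restricts to the $k\th$ edge as $-a_kb_k\,r\,dr$, every primitive vanishes to second order at the vertex; the slope of each front arc at the origin is $dt/dx=p\to 0$. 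So the correct front consists of $n$ half-parabolas mutually \emph{tangent} to the horizontal at the origin (this is what Figure \ref{fig:vertex-front} depicts), not arcs with distinct tangent directions. Your front is a different Legendrian, whose closure contains $n$ distinct points over the origin rather than the single point $(0,0;[0,-1])$; its sheaf category could be matched to the correct one only via a noncharacteristic deformation argument in the spirit of Theorem \ref{thm:arboreal}, which you neither invoke nor verify, and you never compute the primitive that would justify any particular front.

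The second, more serious gap is that you impose the microsupport condition only along the $1$-strata. The condition at the $0$-stratum is essential and is the delicate part of the argument: the only characteristic codirection at the origin is $[0,-1]$, so the microstalk there must vanish for a horizontal test covector, which after the edge conditions forces the stalk $V$ at the vertex to satisfy $V\simeq R\Gamma(\{x>0\};\cF)\simeq E_0\simeq A_0$. Only because of this does killing the stalk at a point of the outer sector $A_0$ (equivalently at the origin) also kill $V$ and collapse the diagram to $A_1\to\cdots\to A_{n-1}$. As written, your argument leaves an unconstrained object at the vertex mapping to every other stratum, and the quotient by $\Loc\dd(\R^2)$ would then not yield $\Modk(A_{n-1})$. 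The vertex analysis -- choosing the test function, computing $R\Gamma$ of a half-plane from the cell structure, and checking that one test covector suffices -- needs to be supplied.
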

    
\begin{proof}
	Assume all edges are with the angles $\alpha_k\in[0,\pi/2)$. The calculation does not depend on the values of $\alpha_k$ up to quasi-equivalence, see Remark \ref{rmk:vertex-reflection}. Let $W'=B^2$ be the Weinstein sector corresponding to $(B^2,\Lambda)$. Define the exact symplectomorphism
	\begin{align*}
		\varphi\colon W'=B^2&\to T^*\R\simeq\C\\
		z&\mapsto \frac{|z|}{1-|z|}z\ .
	\end{align*}
	Obviously, $\varphi$ is just scaling. The transferred skeleton
	\[\varphi(V_n)=\{(0,0)\}\sqcup\bigsqcup_{k=1}^n \varphi(e_k)\subset T^*\R\]
    is exact singular Lagrangian with the obvious strata, via the function $f\colon\varphi(V_n)\to\R$ defined as follows: For any $k$, $\varphi(e_k)$ is a smooth Lagrangian given by the embedding
    \begin{align*}
    	i_k\colon(0,\infty)&\hookrightarrow T^*\R\\
    	r&\mapsto(ra_k,rb_k)
    \end{align*}
    where $a_k:=\cos(\alpha_k)$ and $b_k:=\sin(\alpha_k)$. Then the Liouville form $-pdx$ on $T^*\R=\{(x,p)\}$ restricts on $\varphi(e_k)$ as
    \[i_k^*(-pdx)=-a_k b_k r dr\ .\]
    We define $f_k\colon\varphi(e_k)\to\R$ as
   	\[f_k(r)=-\frac{a_kb_k}{2}r^2\]
   	for $r\in(0,\infty)$, which we can write as
   	\[f_k(x,p)=-\frac{1}{2}c_k x^2\]
	for $(x,p)\in\varphi(e_k)$ and $c_k:=\tan(\alpha_k)$. Clearly $df_k=i_k^*(-pdx)$. Finally, we define the continuous function $f\colon \varphi(V_n)\to\R$ by
	\[f(x,p)=\begin{cases}
		f_k(x,p) &\text{if }(x,p)\in\varphi(e_k)\\
		0 &\text{if }(x,p)=(0,0)
	\end{cases}\]
	and it is a primitive for the Liouville form on $T^*\R$.
        
    By the procedure described in Section \ref{sec:microlocal}, using $f$ we can lift $\varphi(V_n)\subset T^*\R$ to the Legendrian $\Lambda_{V_n}=\{0,0;[0,-1]\}\sqcup\bigsqcup_{k=1}^n \Lambda_k\subset T^{\infty}\R^2$, where
    \[\Lambda_k:=\{(x,-f_k(x,p);[p,-1])\vb (x,p)\in \varphi(e_k)\}=\left\{\left.\left(x,\frac{1}{2}c_kx^2;\left[c_k x,-1\right]\right)\,\right |\, x\in(0,\infty)\right\}\ .\]
    So, the front projection of $\Lambda_{V_n}$ in $\R^2$ is the union of $n$ half parabolas in the first quadrant, meeting at the origin, and the fibre direction at each point is perpendicularly downwards. See Figure \ref{fig:vertex-front} for the case $n=4$.
        
	\begin{figure}[h]
    	\centering
            
     	\begin{tikzpicture}
     		\newcommand{\pta}[1]{(2-(#1+10)/90)}
     		\newcommand{\ptb}[1]{(4-(#1+5)/30)}
     	
     		\newcommand{\edge}[1]{
     			\draw[domain=0:{4-((#1)/30)},variable=\x] plot ({\x},{(tan(#1)/2)*(\x^2)});
     			\draw[->,blue] ({\pta{#1}},{(tan(#1)/2)*(\pta{#1}^2)}) -- ({\pta{#1}+(0.2*(1/sqrt(1+(tan(#1)*\pta{#1})^2)))*tan(#1)*\pta{#1}},{(tan(#1)/2)*(\pta{#1}^2)-(0.2*(1/sqrt(1+(tan(#1)*\pta{#1})^2)))});
     			\draw[->,blue] ({\ptb{#1}},{(tan(#1)/2)*(\ptb{#1}^2)}) -- ({\ptb{#1}+(0.2*(1/sqrt(1+(tan(#1)*\ptb{#1})^2)))*tan(#1)*\ptb{#1}},{(tan(#1)/2)*(\ptb{#1}^2)-(0.2*(1/sqrt(1+(tan(#1)*\ptb{#1})^2)))})
     		}
     		
     		\node[left] at (0,0) {$0$};
     		\draw[->,blue] (0,0) -- (0,-0.2);
     		\edge{0};
     		\edge{20};
     		\edge{40};
     		\edge{60};		
		\end{tikzpicture}
            
        \caption{Front projection of $\Lambda_{V_4}$ in $\R^2$ with the fibre directions}
        \label{fig:vertex-front}
  	\end{figure}
  
	By Definition \ref{dfn:microlocal} and Proposition \ref{prp:ks-explicit}, we have
	\[\mSh\dd(V_n)=\mSh\dd_{T^*\R^2,\Lambda_{V_n}}(\R_{>0}\Lambda_{V_n})\simeq\Sh\dd_{\Lambda_{V_n}}(\R^2)_0\ .\]
	Note that we $\Sh\dd_{\Lambda_{V_n}}(\R^2)_0\simeq\Sh\dd_{\Lambda_{V_n}}(\R^2)/\Loc\dd(\R^2)$, and by Corollary \ref{cor:sing-constructible}, $\Sh\dd_{\Lambda_{V_n}}(\R^2)$ can be described as the full dg subcategory of $\Sh\dd_{\cS}(\R^2)$ where $\cS$ is the Whitney stratification of $\R^2$ consisting of the origin, the edges, and the regions outside the edges. We can see $\cS$ as an $A_{\infty}$-category and since $\cS$ is a regular cell complex, by Proposition \ref{prp:combinatorics-constructible} we have
	\[\Sh\dd_{\cS}(\R^2)\simeq\Modk(\cS)\]
	which associates an element of $\Modk$ to each stratum, and a morphism from a stratum to another one which contains the former in its closure. See the Figure \ref{fig:vertex-sheaves} for $n=4$, where $V\in\Modk$ is associated to the origin, $E_k\in\Modk$ to the $(k+1)^{\text{st}}$ edge, and $A_k\in\Modk$ to the region below $(k+1)^{\text{st}}$ edge for $k=0,\ldots,n-1$. Note that the maps $V\to A_k$ and higher maps are not shown in the figure.
	
	\begin{figure}[h]
		\centering
		
		\begin{tikzpicture}
			\node(V) at (0,0) {$V$};
			\node(A0) at (-2,-1) {$A_0$};
			\node(E0) at (2,0) {$E_0$};
			\node(A1) at (3.8,0.5) {$A_1$};
			\node(E1) at (1.8,0.5) {$E_1$};
			\node(A2) at (3.4,1.5) {$A_2$};
			\node(E2) at (1.4,1) {$E_2$};
			\node(A3) at (3,2.5) {$A_3$};
			\node(E3) at (1,1.5) {$E_3$};
			
			\draw[->] (V) -- node[above,yshift=-0.17cm]{$\sim$} (E0);
			\draw[->] (V) -- (E1);
			\draw[->] (V) -- (E2);
			\draw[->] (V) -- (E3);
			\draw[->] (E0) -- node[below,rotate=10]{$\sim$} (A0);
			\draw[->] (E0) -- (A1);
			\draw[->] (E1) -- node[above,yshift=-0.1cm]{$\sim$} (A1);
			\draw[->] (E1) -- (A2);
			\draw[->] (E2) -- node[above,rotate=10,yshift=-0.1cm]{$\sim$} (A2);
			\draw[->] (E2) -- (A3);
			\draw[->] (E3) -- node[above,rotate=20,yshift=-0.1cm]{$\sim$} (A3) ;
			\draw[->] (E3) -- (A0);	
		\end{tikzpicture}
		
		\caption{An object in $\Sh\dd_{\Lambda_{V_4}}(\R^2)$}
		\label{fig:vertex-sheaves}
	\end{figure}
	
	Then $\Sh\dd_{\Lambda_{V_n}}(\R^2)$ can be found by imposing the microlocal conditions in $\Modk(\cS)$ coming from fibre directions: It is easy to see that they require each morphism from an edge to the region below to be quasi-isomorphism, i.e. $E_k\to A_k$ is a quasi-isomorphism for any $k$. At the vertex, it is a bit more complicated. For any $\cF\in\Sh\dd_{\Lambda_{V_n}}(\R^2)$, $\sing(\cF)$ can only have the downward fibre direction at the vertex. This means that for any open set $U\subset\R^2$ containing the origin, the restriction
	\[R\Gamma(U;\cF)\to R\Gamma(\{(x,p)\in U \vb f(x,p)< 0\};\cF)\]
	is a quasi-isomorphism for any smooth function $f\colon \R^2\to\R$ with $f(0,0)=0$ and $df_{(0,0)}\neq -c dp$ for any $c>0$. One can check that it is enough to check this for the function $f(x,p)=-x$ and $U=\R^2$. Then we have the quasi-isomorphism
	\[R\Gamma(\R^2;\cF)\to R\Gamma(\{(x,p)\in \R^2 \vb x>0\};\cF)\]
	where $R\Gamma(\R^2;\cF)\simeq V$ and
	\[R\Gamma(\{(x,p)\in \R^2 \vb x>0\};\cF)\simeq C(E_0\oplus E_1\oplus\ldots\oplus E_{n-1}\to A_1\oplus \ldots\oplus A_{n-1})[-1]\simeq E_0\]
	since $E_k\to A_k$ are quasi-isomorphisms. Hence we get that $V\to E_0$ is a quasi-isomorphism. See the Figure \ref{fig:vertex-sheaves} for an object of $\Sh\dd_{\Lambda_{V_n}}(\R^2)$ for $n=4$.
	
	Finally, it is easy to see that we can replace every quasi-isomorphism by the identity, so we can set $E_k=A_k$ for any $k$ and $V=A_0$. The dg category $\Sh\dd_{\Lambda_{V_n}}(\R^2)_0\subset\Sh\dd_{\Lambda_{V_n}}(\R^2)$ is obtained by setting the stalk to zero at a specified point as remarked in Definition \ref{dfn:sheaf-category-sing}. We choose the origin, hence $V=A_0$ becomes zero. This also implies that all homotopy maps can be set to zero, and we can get rid of the maps $V\to A_k$ by the commutativity of the diagram up to homotopy. This shows that the objects of $\mSh\dd(V_n)\simeq\Sh\dd_{\Lambda_{V_n}}(\R^2)_0$ are given by
	\[A_1\xrightarrow{a_1}A_2\xrightarrow{a_2}\ldots\xrightarrow{a_{n-2}}A_{n-1}\]
	and since it is full dg subcategory of $\Modk(\cS)$, we get $\mSh\dd(V_n)\simeq\Modk(A_{n-1})$.
	
	To find the restriction functor $j_k:=\mSh\dd(i_k)\colon \Modk(A_{n-1})\rightarrow\Modk$ for any $k=1,\ldots,n$, remember that $\cF$ and $j_k(\cF)$ have the same microstalks. Clearly, the microstalk at a point $((x_1,x_2),(p_1,p_2))\in\R_{>0}e_k\subset T^*\R^2$ is given by
	\[\cF_{(x_1,x_2),(p_1,p_2)}\simeq C(E_{k-1}\to A_k)\simeq C(A_{k-1}\xrightarrow{a_{k-1}} A_k)\]
	after setting $A_n:=A_0\simeq 0$, $a_0:=0$, and $a_{n-1}:=0$. Since $j_k(\cF)\in\Modk$, this microstalk is enough to determine it, hence
	\[j_k(\cF)\simeq C(a_{k-1})\ .\]	
\end{proof}

\begin{rmk}\label{rmk:vertex-reflection}
	If $\alpha_k$'s are arbitrarily chosen such that $\alpha_k\neq\pi/2\text{ or }3\pi/2$ for any $k$ (if not, rotate $B^2$), then a similar proof applies, but we get the objects of the form
	\[A_1\to\ldots \to A_{i-1}\to A_i\leftarrow A_{i+1}\leftarrow\ldots\leftarrow A_{n-1}\]
	where $i=n-1$ if all $\alpha_k\in[0,\pi/2)\cup(3\pi/2,2\pi)$, otherwise $i$ is the number of edges with $\alpha_k\in[0,\pi/2)\cup(3\pi/2,2\pi)$. However, all these representations are quasi-equivalent using the reflection functors, see \cite{arboreal}.
\end{rmk}

\begin{rmk}\label{rmk:pants-torus}
	Above remark shows that $\mSh\dd(V_n)$ is only depends on the topology of $V_n$, whereas the restriction maps from $\mSh\dd(V_n)$ to the edges are affected by the cyclic orientation of the edges of $V_n$. This causes some topologically same skeleta to have different microlocal sheaves as follows: Consider a pair of pants and a punctured torus. Topologically they have the same skeleta, as shown in Figure \ref{fig:skeleton-figure-eight}. However, for the pair of pants, we take a four-valent vertex, connect first and second edge, and then third and fourth edge. Whereas for the punctured torus, we take a four-valent vertex, connect first and third edge, and then second and fourth edge. Since the restriction maps from the vertex depend on the order of the edges, the two skeleta will have different gluing diagrams. One can easily confirm that they have different microlocal sheaves. In general, for 1-dimensional skeleta, the microlocal sheaves depend only on the ribbon graph structure of the skeleta.
\end{rmk}

\begin{figure}[h]
	\centering
	
	\begin{tikzpicture}
		\newcommand{\petal}[5][]{
			\draw[domain=0:90,smooth,xshift=#3cm,yshift=#5cm,rotate=#4,#1] plot ({\x}:{2*sin(2*\x)^#2})
		}
		
		\petal{2}{-3}{-30}{0.5};
		\petal{2}{-3}{120}{0.5};
		
		\petal{1}{3}{65}{0};
		
		\begin{scope}
			\clip (3,0) rectangle (4.2,2);
			\petal{1}{3}{25}{0};
		\end{scope}
		
		\begin{scope}
			\clip (3,0) rectangle (2,2);
			\petal[dashed]{1}{3}{25}{0};
		\end{scope}
	
	\end{tikzpicture}
	
	\caption{Skeleta of a pair of pants (left) and a punctured torus (right)}
	\label{fig:skeleton-figure-eight}
\end{figure}

We have a different representation of $\Modk(A_n)$ where the morphisms are simplified, which we will use throughout rest of the thesis:

\begin{prp}\label{prp:quiver-algebra}
	Let $\cC_{n-1}$ be a dg category whose objects are
	\[A_1\xrightarrow{a_1}A_2\xrightarrow{a_2}\ldots\xrightarrow{a_{n-2}}A_{n-1}\]
	where $A_i\in\Modk$, and $a_i\in\Hom_{\Modk}^0(A_i,A_{i+1})$ with $da_i=0$. We also represent an object by $A=(A_1,a_1,A_2,a_2,\ldots,a_{n-2},A_{n-1})=(A_i,a_i)$. A morphism in $\Hom_{\cC_{n-1}}^k(A,B)$ is given by
	\[
	\begin{tikzcd}
	A_1\arrow[r,"a_1"]\arrow[d,"f_1"]\arrow[rd,"h_1"] & A_2\arrow[r,"a_2"]\arrow[d,"f_2"]\arrow[rd,"h_2"] & A_3\arrow[r,"a_3"]\arrow[d,"f_3"] & \ldots\arrow[r,"a_{n-3}"] & A_{n-2}\arrow[d,"f_{n-2}"]\arrow[r,"a_{n-2}"]\arrow[rd,"h_{n-2}"] & A_{n-1}\arrow[d,"f_{n-1}"]\\
	B_1 \arrow[r,"b_1"] & B_2 \arrow[r,"b_2"] & B_3\arrow[r,"b_3"] & \ldots\arrow[r,"b_{n-3}"] & B_{n-2}\arrow[r,"b_{n-2}"] & B_{n-1}
	\end{tikzcd}
	\]
	where $f_i\in\Hom_{\Modk}^k(A_i,B_i)$ and $h_i\in\Hom_{\Modk}^{k-1}(A_i,B_{i+1})$. We also represent a morphism by $f=(f_1,h_1,f_2,\ldots,h_{n-2},f_{n-1})=(f_i,h_i)$. It has the differential
	\[d(f_i,h_i)=(df_i,dh_i+(-1)^k(b_i\circ f_i-f_{i+1}\circ a_i))\ ,\]
	the composition
	\[(f'_i,h'_i)\circ(f_i,h_i)=(f'_i\circ f_i,f'_{i+1}\circ h_i+(-1)^k h'_i\circ f_i)\ ,\]
	and the identity is $(\id,0)$. Then $\cC_{n-1}$ is quasi-equivalent to $\Modk(A_{n-1})$. Moreover, $(f_i,h_i)\in\Hom^0_{\cC_{n-1}}(A,B)$ is a homotopy equivalence if and only if $f_i$ is a homotopy equivalence and $dh_i=f_{i+1}\circ a_i-b_i\circ f_i$ for all $i$.
\end{prp}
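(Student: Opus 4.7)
The plan is to exhibit an explicit dg functor $G \colon \cC_{n-1} \to \Modk(A_{n-1})$ that is a quasi-equivalence; the characterization of homotopy equivalences in $\cC_{n-1}$ then falls out by applying Lemma \ref{lem:seidel-homotopy-equivalence} to the image of $(f_i, h_i)$ in $\Modk(A_{n-1})$. The dg structure of $\cC_{n-1}$ is designed to encode a strict dg-representation of the path category $A_{n-1}$ together with first-order homotopy data, and the content of the proposition is that this captures all of $\Modk(A_{n-1})$ up to quasi-equivalence.

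On objects, I would send $(A_i, a_i)$ to the strict $A_\infty$-functor $F_A \colon A_{n-1}^{\op} \to \Modk$ with $F_A(\bullet_i) = A_i$, with $F_A^1$ equal to $a_i$ on the $i^{\text{th}}$ generator and to the iterated composition on any composed morphism, and with $F_A^k = 0$ for $k \geq 2$; the $A_\infty$-functor axioms hold automatically because $\mu^k_{A_{n-1}} = 0$ for $k \neq 2$ and $F_A^1$ is multiplicative. On morphisms, I would send $(f_i, h_i)$ to the $A_\infty$-natural transformation $T$ with $T^0(\bullet_i) = f_i$, $T^1$ equal to $h_i$ on a generator and extended to a length-$\ell$ composition by the Leibniz-type rule
\[T^1(\text{gen}_{j+\ell-1}\cdots\text{gen}_j) = \sum_{l=0}^{\ell-1} \pm\, b_{j+\ell-1}\cdots b_{j+l+1}\circ h_{j+l}\circ a_{j+l-1}\cdots a_j,\]
and $T^m = 0$ for $m \geq 2$. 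A direct expansion then shows that the closedness condition $\mu^1(T) = 0$ reduces to the formula for $d(f_i, h_i)$ stated in the proposition, so $G$ intertwines differentials; compatibility with composition and with the identity is a similar verification.

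For essential surjectivity of $H^0 G$, given $F \in \Modk(A_{n-1})$ I would set $A_i := F(\bullet_i)$ and $a_i := F^1(\text{gen}_i)$ (so $(A_i, a_i) \in \cC_{n-1}$ since $da_i = 0$ by the $A_\infty$-functor axiom applied to $\mu^1(\text{gen}_i) = 0$) and construct an $A_\infty$-homotopy equivalence $F \simeq G(A_i, a_i)$ by induction on the length of composable sequences in $A_{n-1}^{\op}$, absorbing each higher $F^k$ term (and any non-strictness of $F^1$ on compositions) into a natural-transformation component. A completely analogous induction shows that $H^* G$ is full and faithful on morphism complexes.

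With the quasi-equivalence $G$ established, Lemma \ref{lem:seidel-homotopy-equivalence} gives that $(f_i, h_i)$ is a homotopy equivalence in $\cC_{n-1}$ iff $\mu^1(G(f_i, h_i)) = 0$ and each $T^0(\bullet_i) = f_i$ is a homotopy equivalence in $\Modk$; unpacking $\mu^1(G(f_i, h_i)) = 0$ yields $df_i = 0$ together with $dh_i = f_{i+1}\circ a_i - b_i\circ f_i$, while the higher-length components of $\mu^1(T)$ are automatically zero because $T^1$ on compositions was defined by the Leibniz rule above. The main obstacle will be the minimal-model induction for essential surjectivity and fullness, which requires careful sign book-keeping in the Leibniz formula for $T^1$; however, since $A_{n-1}^{\op}$ has composable sequences of length at most $n-2$, this induction terminates in finitely many steps and the only obstructions vanish because $\Modk$ has honest identities and the $A_\infty$-relations in $A_{n-1}^{\op}$ are trivial beyond $\mu^2$.
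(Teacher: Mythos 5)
Your proposal is correct in outline, but it takes a genuinely different route from the paper -- in fact, it is precisely the route the paper explicitly mentions and then declines to follow (``We can prove this proposition by explicitly constructing an $A_{\infty}$-quasi-equivalence between $\cC_{n-1}$ and $\Modk(A_{n-1})$. However, we will follow a different approach...''). The paper instead argues geometrically: it invokes Proposition \ref{prp:msh-vertex} to identify $\Modk(A_{n-1})$ with $\mSh\dd(V_n)$, deforms $V_n$ noncharacteristically to a skeleton $V'_n$ with arboreal singularities (Theorem \ref{thm:arboreal}), covers $V'_n$ by trivalent vertices, and computes $\mSh\dd(V'_n)$ as a homotopy limit of copies of $\Modk(A_2)$ over $\Modk$; since the restriction functors $j_1,j_3$ are fibrations, the homotopy limit is an ordinary limit, and its objects and morphisms are read off to be exactly those of $\cC_{n-1}$. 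What the paper's route buys is that the differential, composition, and homotopy-equivalence criterion come for free from the $\Modk(A_2)$ pieces with no sign bookkeeping, and the components $u_i$ of Definition \ref{dfn:components} acquire the geometric meaning (restriction to $U_{i+1}$) that is used repeatedly in Section \ref{sec:msh-pinwheel}. What your route buys is independence from the microlocal machinery: it needs neither Proposition \ref{prp:msh-vertex} nor the arboreal deformation theorem, only Lemma \ref{lem:seidel-homotopy-equivalence} and elementary $A_\infty$-algebra. The price is that the two steps you compress into a sentence -- the strictification of an arbitrary $A_\infty$-module over $A_{n-1}$ (essential surjectivity) and the quasi-isomorphism on hom complexes between ``generator-determined'' transformations and all $A_\infty$-natural transformations (fullness and faithfulness) -- are the real content of your argument and would need to be carried out, e.g.\ by filtering by path length in the directed quiver; both inductions do terminate and have no obstructions, as you note, but they are not shorter than the paper's gluing computation.
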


\begin{proof}
	We can prove this proposition by explicitly constructing an $A_{\infty}$-quasi-equivalence between $\cC_{n-1}$ and $\Modk(A_{n-1})$. However, we will follow a different approach: By Proposition \ref{prp:msh-vertex} we know $\mSh\dd(V_n)\simeq\Modk(A_{n-1})$. There is another way to calculate $\mSh\dd(V_n)$ using arboreal singularities as remarked in Section \ref{sec:microlocal}. We can expand $\Lambda_{V_n}$ to get $\Lambda_{V'_n}$ with arboreal singularities as shown \cite{nonchar}, then by Theorem \ref{thm:arboreal} we have
	\[\mSh\dd(V_n)\simeq\mSh\dd_{T^*\R^2,\Lambda_{V_n}}(\R_{>0}\Lambda_{V_n})\simeq\mSh\dd_{T^*\R^2,\Lambda_{V'_n}}(\R_{>0}\Lambda_{V'_n})\simeq\mSh\dd(V'_n)\]
	where $V'_n$ is a skeleton associated to $\Lambda_{V'_n}$. See Figure \ref{fig:vertex-arboreal} for this noncharacteristic deformation in the case of $n=4$. Note that although it is not shown in the figure, the edges attached to the arc are deformed near the singular points so that the fibre direction of the arc and the edge at any singular point match.
	
	\begin{figure}[h]
		\centering
		
		\begin{tikzpicture}
			\newcommand{\pta}[1]{(2-(#1+10)/90)}
			\newcommand{\ptb}[1]{(4-(#1+5)/30)}
			
			\newcommand{\edge}[2]{
				\draw[domain=0:{4-((#1)/30)},variable=\x] plot ({\x+#2},{(tan(#1)/2)*(\x^2)});
				\draw[->,blue] ({\pta{#1}+#2},{(tan(#1)/2)*(\pta{#1}^2)}) -- ({\pta{#1}+#2+(0.2*(1/sqrt(1+(tan(#1)*\pta{#1})^2)))*tan(#1)*\pta{#1}},{(tan(#1)/2)*(\pta{#1}^2)-(0.2*(1/sqrt(1+(tan(#1)*\pta{#1})^2)))});
				\draw[->,blue] ({\ptb{#1}+#2},{(tan(#1)/2)*(\ptb{#1}^2)}) -- ({\ptb{#1}+#2+(0.2*(1/sqrt(1+(tan(#1)*\ptb{#1})^2)))*tan(#1)*\ptb{#1}},{(tan(#1)/2)*(\ptb{#1}^2)-(0.2*(1/sqrt(1+(tan(#1)*\ptb{#1})^2)))})
			}
		
			\newcommand{\arrow}[2][0]{
				\draw[->,blue] ({#1+cos(#2)},{sin(#2)}) -- ({#1+(1+0.2)*cos(#2)},{(1+0.2)*sin(#2)})
			}
			
			\draw[->,blue] (-5,0) -- (-5,-0.2);
			\edge{0}{-5};
			\edge{20}{-5};
			\edge{40}{-5};
			\edge{60}{-5};

			\begin{scope}[even odd rule]
				\clip (5,0) circle (1) (5,-0.5) rectangle (9,4);
				\edge{0}{5};
				\edge{20}{5};
				\edge{40}{5};
				\edge{60}{5};
			\end{scope}	
			
			\draw (6,0) arc (0:70:1cm);
			\draw (6,0) arc (0:-30:1cm);
			\fill ({5+cos(70)},{sin(70)}) circle (2pt);
			\fill ({5+cos(-30)},{sin(-30)}) circle (2pt);
			\arrow[5]{50};
			\arrow[5]{30};
			\arrow[5]{17};
			\arrow[5]{5};
			\arrow[5]{-10};
			
			\node[left] at ({5+cos(-30)},{sin(-30)}){\tiny $p_0$};
			\node[left] at ({5+cos(0)},{sin(0)}){\tiny $p_1$};
			\node[left] at ({5+cos(10)},{sin(10)}){\tiny $p_2$};
			\node[left] at ({5+cos(20)},{sin(20)}){\tiny $p_3$};
			\node[left] at ({5+cos(30)},{sin(30)}){\tiny $p_4$};
			\node[left] at ({5+cos(70)},{sin(70)}){\tiny $p_5$};
			
			\node at (-3,-0.7){$\Lambda_{V_4}$};
			\node at (7,-0.7){$\Lambda_{V'_4}$};
			
			\draw[->] (0,1) -- node[above]{noncharacteristic} node[below]{deformation} (4,1);
		\end{tikzpicture}
		
		\caption{Front projection of the noncharacteristic deformation of $\Lambda_{V_4}$ in $\R^2$}
		\label{fig:vertex-arboreal}
	\end{figure}

	To find $\mSh\dd(V'_n)$, observe that $V'_n$ has the same topology as the front projection of $\Lambda_{V'_n}$ in $\R^2$. The singularities are the ends of the arcs $p_0,p_{n+1}$, and the points $p_1,\ldots,p_n$ where the arc and the edges meet, see Figure \ref{fig:vertex-arboreal}. They are arboreal singularities, see \cite{arboreal}. Since $\mSh\dd$ is a sheaf of dg categories, we can calculate it locally around the singularities $p_i$ and then glue the sections to get the global section. Choose an open covering $\{U_i\}_{i=0}^{n+1}$ of $V'_n$ such that $U_i$ is a sufficiently small neighbourhood of $p_i$. Then we have
	\[\mSh\dd(V'_n)\simeq\holim\left(\begin{tikzcd}[column sep=-25pt]
		\mSh\dd(U_0)\ar[rd] &  & \mSh\dd(U_1)\ar[ld]\ar[rd] & & \mSh\dd(U_2)\ar[ld]\ar[rd] &[25pt]\cdots&[25pt] \mSh\dd(U_{n+1})\ar[ld]\\
		& \mSh\dd(U_0\cap U_1) & &\mSh\dd(U_1\cap U_2) & &\cdots&
	\end{tikzcd}\right)\ .\]
	The subsets $U_i\cap U_{i+1}$ are smooth Lagrangians, hence we have $\mSh\dd(U_i\cap U_{i+1})\simeq\Modk$ for any $i=0,1,\ldots n$. The dg categories $\mSh\dd(U_i)$ associated to arboreal singularities $p_i$ are described in \cite{arboreal}. However, it is already easy to see that $\mSh\dd(U_0)\simeq\mSh\dd(U_{n+1})\simeq 0$, and Proposition \ref{prp:msh-vertex} shows that $\mSh\dd(U_i)\simeq\Modk(A_2)$ for $i=1,\ldots,n$ since $U_i$ is a trivalent vertex. We can label the edges of $U_i$ in such a way that the first edge is $U_{i-1}\cap U_i$ and the third edge is $U_i\cap U_{i+1}$. Then we have
	\[\mSh\dd(V'_n)\simeq\holim\left(\begin{tikzcd}[column sep=-10pt]
		0 \ar[rd] &[15pt] & \Modk(A_2)\ar[ld,"j_1"']\ar[rd,"j_3"] & & \Modk(A_2)\ar[ld,"j_1"']\ar[rd,"j_3"] &[10pt]\cdots&[25pt] 0\ar[ld]\\
		& \Modk & &\Modk & &\cdots&
	\end{tikzcd}\right)\]
	where $j_i$ is the restriction map for $i^{\text{th}}$ edge which is given by Proposition \ref{prp:msh-vertex}, and they are clearly fibrations. This shows homotopy limit becomes ordinary limit. Hence an object $A\in\mSh\dd(V'_n)$ is given by
	\[A=\left(\begin{tikzcd}[column sep=-20pt]
		0 \ar[rd] &[15pt] &[10pt] (0\to A_1)\ar[ld,"j_1"']\ar[rd,"j_3"] & & (A_1[1]\xrightarrow{a_1[1]} A_2[1])\ar[ld,"j_1"']\ar[rd,"j_3"]& &(A_2[2]\xrightarrow{a_2[2]} A_3[2])\ar[ld,"j_1"']\ar[rd,"j_3"] &[15pt]\cdots&[15pt] (A_{n-1}[n-1]\to 0)\ar[ld,"j_1"']\ar[rd,"j_3"] &[10pt] &[15pt] 0\ar[ld]\\
		& 0 & &A_1[1] & & A_2[2]& &\cdots& & 0 &
	\end{tikzcd}\right)\]
	where $(A_i\xrightarrow{a_i} A_{i+1})\in\Modk(A_2)$, and a morphism $f\in\Hom^k_{\mSh\dd(V'_n)}(A,B)$ is given by
	\[f=\left(\begin{tikzcd}[column sep=-15pt]
		0 \ar[rd] &[10pt] &[10pt] (0,0,f_1)\ar[ld,"j_1"']\ar[rd,"j_3"] & & (f_1[1],h_1[1],f_2[1])\ar[ld,"j_1"']\ar[rd,"j_3"]& &(f_2[2],h_2[2],f_3[2])\ar[ld,"j_1"']\ar[rd,"j_3"] &[10pt]\cdots&[10pt] (f_{n-1}[n-1],0,0)\ar[ld,"j_1"']\ar[rd,"j_3"] &[10pt] &[10pt] 0\ar[ld]\\
		& 0 & &f_1[1] & & f_2[2]& &\cdots& & 0 &
	\end{tikzcd}\right)\]
	where $(f_i,h_i,f_{i+1})\in\Hom^k_{\Modk(A_2)}((A_i,a_i,A_{i+1}),(B_i,b_i,B_{i+1}))$. In short, we can write
	\[A=A_1\xrightarrow{a_1}A_2\xrightarrow{a_2}\ldots\xrightarrow{a_{n-2}}A_{n-1}\]
	and
	\[f=(f_1,h_1,f_2,\ldots,h_{n-2},f_{n-1})\ .\]
	The differential, composition rule, identity, and homotopy equivalences can be determined easily by comparing with $\Modk(A_2)$. Since $\mSh\dd(V'_n)\simeq\mSh\dd(V_n)\simeq\Modk(A_{n-1})$, this proves the proposition.
\end{proof}

From now on, by $\Modk(A_{n-1})$ we will mean the dg category $\cC_{n-1}$ in Proposition \ref{prp:quiver-algebra}. We will sometimes consider its objects $A,B\in\Modk(A_{n-1})$ and the morphism $f\in\Hom^k_{\Modk(A_{n-1})}(A,B)$ with zero parts as
\[\begin{tikzcd}
A\dar["f"']\\
B
\end{tikzcd}
=
\begin{tikzcd}
0=A_0\rar["0=a_0"]\dar["0=f_0"']\drar["0=h_0"] & A_1\rar["a_1"]\dar["f_1"]\drar["h_1"] & A_2\rar["a_2"]\dar["f_2"] & \ldots\rar["a_{n-2}"] & A_{n-1}\rar["a_{n-1}=0"]\dar["f_{n-1}"]\drar["h_{n-1}=0"] & A_n=0\dar["f_n=0"]\\
0=B_0\rar["0=b_0"] & B_1\rar["b_1"] & B_2\rar["b_2"] & \ldots\rar["b_{n-2}"] & B_{n-1}\rar["b_{n-1}=0"] & B_n=0
\end{tikzcd}\]
to define functors on $\Modk(A_{n-1})$ compactly. With this presentation, the restriction functor $j_i\colon \Modk(A_{n-1})\to\Modk$ can be given as
\begin{align*}
	j_i(A)&=C(a_{i-1})\\
	j_i(f)&=\fb(f_{i-1},h_{i-1},f_i)
\end{align*}
for $i=1,\ldots,n$. Note that $j_1(A)=A_1$, $j_1(f)=f_1$, $j_n(A)=A_{n-1}[1]$, and $j_n(f)=f_{n-1}$.

Also, we have restriction functors from $\Modk(A_{n-1})$ to the $\Modk(A_2)$:

\begin{dfn}\label{dfn:components}
	The dg functor $u_i\colon\Modk(A_{n-1})\to\Modk(A_2)$ for $1\leq i\leq n-2$ is defined as
	\[u_i(A_1\xrightarrow{a_1}A_2\xrightarrow{a_2}\ldots\xrightarrow{a_{n-2}}A_{n-1})=(A_i\xrightarrow{a_i}A_{i+1})\]
	on objects, and
	\[u_i(f_1,h_1,f_2,\ldots,h_{n-2},f_{n-1})=(f_i,h_i,f_{i+1})\]
	on morphisms. We also define the dg functor $u_{i,j}\colon \Modk(A_{n-1})\to\Modk(A_{j-i+1})$ as
	\[u_{i,j}(A_1\xrightarrow{a_1}A_2\xrightarrow{a_2}\ldots\xrightarrow{a_{n-2}}A_{n-1})=(A_i\xrightarrow{a_i}A_{i+1}\xrightarrow{a_{i+1}}\ldots\xrightarrow{a_{j-1}}A_j)\]
	on objects, and
	\[u_{i,j}(f_1,h_1,f_2,\ldots,h_{n-2},f_{n-1})=(f_i,h_i,f_{i+1},\ldots,h_{j-1},f_j)\]
	on morphisms for $1\leq i< j\leq n-1$.
\end{dfn}

\begin{rmk}
	The functor $u_i$ can be considered as the geometric restriction
	\[\mSh\dd(V_n')\to\mSh\dd(U_{i+1})\]
	in Proposition \ref{prp:quiver-algebra}. Also, since $\mSh\dd(V'_n)$ is glued by $\mSh\dd(U_{i+1})$ for $i=1,\ldots,n-2$, it is enough to describe a given dg functor
	\[F\colon\cD\to\Modk(A_{n-1})\]
	by its components
	\[u_i\circ F\colon\cD\to\Modk(A_2)\]
	for $i=1,\ldots,n-2$, where $\cD$ is a dg category. Note that we can use the functor $u_{i,j}$ instead of using $u_i,u_{i+1},\ldots,u_{j-1}$ altogether. We call $u_i\circ F$ \textit{the $i\th$ component of $F$}, and $u_{ij}$ the $(i,j)\th$ component of $F$. These facts and notations will be also useful for typographical reasons.
\end{rmk}

Now we are ready to study automorphisms of $\Modk(A_n)$, and their geometrical meaning:

\begin{dfn}\label{dfn:coxeter}
	The \textit{(derived) Coxeter functor} $\cox_n\colon \Modk(A_{n-1})\to \Modk(A_{n-1})$ defined as the composition of the derived (source) reflection functors (see \cite{gelfand-manin}) at each object of the $A_{n-1}$-quiver. Explicitly, it can be defined as the dg functor given by
	\begin{align*}
		\begin{tikzcd}[ampersand replacement=\&]
			\cox_n(A)\dar["\cox_n(f)"',"\hspace{2em} ="] \& [-10pt]
			C(a_1)\rar["{\fd(\id,a_2)}"]\dar["{\fb(f_1,h_1,f_2)}"]\drar["{\fd(0,h_2)}"] \& [50pt]
			C(a_2\circ a_1)\rar["{\fd(\id,a_3)}"]\dar["{\fb(f_1,\chi(h)_1^2,f_3)}"]\drar["{\fd(0,h_3)}"] \& [50pt]
			C(a_3\circ a_2\circ a_1)\rar["{\fd(\id,a_4)}"]\dar["{\fb(f_1,\chi(h)_1^3,f_4)}"] \&
			\cdots
			\\
			\cox_n(B) \&
			C(b_1)\rar["{\fd(\id,b_2)}"] \&
			C(b_2\circ b_1)\rar["{\fd(\id,b_3)}"] \&
			C(b_3\circ b_2\circ b_1)\rar["{\fd(\id,b_4)}"] \& 
			\cdots
		\end{tikzcd}\hspace{-34em}&
		\\ 
		&\begin{tikzcd}[ampersand replacement=\&]
			\cdots \rar["{\fd(\id,a_{n-3})}"] \&
			C(a_{n-3}\circ\ldots\circ a_1)\rar["{\fd(\id,a_{n-2})}"]\dar["{\fb(f_1,\chi(h)_1^{n-3},f_{n-2})}"']\drar["{\fd(0,h_{n-2})}"] \&
			C(a_{n-2}\circ\ldots\circ a_1)\rar["{\fr(\id,0)}"]\dar["{\fb(f_1,\chi(h)_1^{n-2},f_{n-1})}"]\drar[gray,dashed,"0"] \& [60pt]
			A_1[1]\dar["f_1"]
			\\
			\cdots \rar["{\fd(\id,b_{n-3})}"] \&
			C(b_{n-3}\circ\ldots\circ a_1)\rar["{\fd(\id,b_{n-2})}"] \&
			C(b_{n-2}\circ\ldots\circ b_1)\rar["{\fr(\id,0)}"] \&
			B_1[1]
		\end{tikzcd}
	\end{align*}
    where $\chi(h)_{i_1}^{i_2}:=\sum_{j=i_1}^{i_2} b_{i_2}\circ\ldots b_{j+1}\circ h_j\circ a_{j-1}\circ \dots\circ a_{i_1}$. Note that $\chi(h)_1^1=h_1$.
\end{dfn}

Before understanding the powers of the Coxeter functor, we present the following useful lemma for $\Modk(A_{n-1})$:

\begin{lem}\label{lem:adjusting-quiver}
	Let $A=(A_1\xrightarrow{a_1}A_2\xrightarrow{a_2}\ldots\xrightarrow{a_{n-2}}A_{n-1})\in\Modk(A_{n-1})$, and assume $m_i\colon A_i\to B_i$ is a homotopy equivalence with the inverse $m_i'$ in homotopy for $i=1,\dots,n-1$. Then $A$ is homotopy equivalent to
	\[B=(B_1\xrightarrow{m_2\circ a_1\circ m_1'}B_2\xrightarrow{m_3\circ a_2\circ m_2'}\ldots\xrightarrow{m_{n-1}\circ a_{n-2}\circ m_{n-2}'}B_{n-1})\]
	via the morphism $m\colon A\to B$ whose component $u_i(m)$ is given by
	\[\begin{tikzcd}[column sep=60pt]
		A_i\rar["a_i"]\dar["m_i"]\drar["m_{i+1}\circ a_i\circ \xi_i" xshift=-0.2cm] & A_{i+1}\dar["m_{i+1}"]\\
		B_i\rar["m_{i+1}\circ a_i\circ m_i'" yshift=-0.05cm] & B_{i+1}
	\end{tikzcd}\]
	where $d\xi_i=\id-m_i'\circ m_i$ for $i=1,\ldots,n-2$.
\end{lem}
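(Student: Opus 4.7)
The plan is to verify the claim by direct computation using the explicit description of $\Modk(A_{n-1})$ from Proposition \ref{prp:quiver-algebra}. First I would check that $B$ is a bona fide object of $\Modk(A_{n-1})$: the composites $b_i := m_{i+1} \circ a_i \circ m_i'$ are closed of degree zero, since $m_{i+1}$, $m_i'$, and $a_i$ are all closed of degree zero (an inverse in homotopy is by definition closed of degree zero). This ensures $B$ makes sense.

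Next I would invoke the characterization at the end of Proposition \ref{prp:quiver-algebra} which says that $(f_i, h_i) \in \Hom^0(A, B)$ is a homotopy equivalence iff each $f_i$ is a homotopy equivalence in $\Modk$ and $dh_i = f_{i+1} \circ a_i - b_i \circ f_i$ for all $i$. Applied to $m$, condition (i) that $f_i = m_i$ is a homotopy equivalence holds by hypothesis. Condition (ii) is a one-line check: since $m_{i+1}$ and $a_i$ are closed and $d\xi_i = \id - m_i' \circ m_i$, we compute
\[dh_i = d(m_{i+1} \circ a_i \circ \xi_i) = m_{i+1} \circ a_i \circ d\xi_i = m_{i+1} \circ a_i - (m_{i+1} \circ a_i \circ m_i') \circ m_i = m_{i+1} \circ a_i - b_i \circ m_i,\]
which is exactly the required relation with $f_i = m_i$ and $f_{i+1} = m_{i+1}$. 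Both conditions hold, so $m$ is a homotopy equivalence.

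The hard part is essentially trivial here: this lemma is nothing more than the explicit form of the general Lemma \ref{lem:adjusting} specialized to $\cC = A_{n-1}$ (viewed as a dg category with $\mu^k = 0$ for $k \neq 2$) and $\cD = \Modk$, with the underlying dg functor $F$ being $A$ itself. Comparing components, the natural equivalence $T$ produced by Lemma \ref{lem:adjusting} satisfies $T^0(\bullet_i) = m_i$ and $T^1(a_i) = (-1)^{|a_i|} m_{i+1} \circ a_i \circ \xi_i = m_{i+1} \circ a_i \circ \xi_i$, which matches precisely the data of $m$ read off from the diagram defining $u_i(m)$. So one could alternatively give a one-sentence proof by simply invoking Lemma \ref{lem:adjusting}; the direct verification above is merely a transparent unwinding of that argument in the concrete presentation of Proposition \ref{prp:quiver-algebra}.
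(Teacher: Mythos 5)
Your proof is correct and follows the same route as the paper: the paper's proof is the two-line observation that $dm=0$ is clear and that, since each $m_i$ is a homotopy equivalence, Proposition \ref{prp:quiver-algebra} gives that $m$ is a homotopy equivalence — your explicit computation of $dh_i=m_{i+1}\circ a_i\circ d\xi_i=m_{i+1}\circ a_i-b_i\circ m_i$ is exactly the content of "clearly $dm=0$" unwound. The remark that this is the specialization of Lemma \ref{lem:adjusting} is also consistent with how the paper uses these two lemmas together.
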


\begin{proof}
	Clearly $dm=0$, and since $m_i$ is a homotopy equivalence for every $i$, by Proposition \ref{prp:quiver-algebra} $m$ is a homotopy equivalence.
\end{proof}

\begin{dfn}
	For $1\leq k\leq n-1$, we define the dg functor
	\[\cox_{n,k}\colon \Modk(A_{n-1})\to\Modk(A_{n-1})\]
	such that its components $u_i\circ \cox_{n,k}$ are given by
	\[\begin{tikzcd}
		u_i\circ \cox_{n,k}(A)\dar["u_i\circ \cox_{n,k}(f)"']\\
		u_i\circ \cox_{n,k}(B)
	\end{tikzcd}
	=
	\begin{tikzcd}
		C(a_{k+i-1}\circ\ldots\circ a_k)\rar["{\fd(\id,a_{k+i})}"]\dar["{\fb(f_k,\chi(h)_k^{k+i-1},f_{k+i})}"']\drar["{\fd(0,h_{k+i})}"] & C(a_{k+i}\circ\ldots\circ a_k)\dar["{\fb(f_k,\chi(h)_k^{k+i},f_{k+i+1})}"]\\
		C(b_{k+i-1}\circ\ldots\circ b_k)\rar["{\fd(\id,b_{k+i})}"] & C(b_{i+1}\circ\ldots\circ b_k)
	\end{tikzcd}\]
	for $1\leq i\leq n-k-1$, and
	\[=
	\begin{tikzcd}
		C(a_{k-1}\circ\ldots\circ a_{k+i-n})[1]\rar["{\fd(a_{k+i-n},\id)}"]\dar["{\fb(f_{k+i-n},\chi(h)_{k+i-n}^{k-1},f_k)}"']\drar["{\fd(h_{k+i-n},0)}"] &[20pt] C(a_{k-1}\circ\ldots\circ a_{k+i-n+1})[1]\dar["{\fb(f_{k+i-n+1},\chi(h)_{k+i-n+1}^{k-1},f_k)}"]\\
		C(b_{k-1}\circ\ldots\circ b_{k+i-n})[1]\rar["{\fd(b_{k+i-n},\id)}"] & C(b_{k-1}\circ\ldots\circ b_{k+i-n+1})[1]
	\end{tikzcd}\]
	for $n-k\leq i\leq n-2$. Note that $\cox_{n,1}$ is the Coxeter functor $\cox_n$.
\end{dfn}

\begin{prp}\label{prp:coxeter-power}
	For $1\leq k\leq n-1$, we have the natural equivalence $\cox_n^k\simeq \cox_{n,k}$, i.e. $\cox_n^k$ and $\cox_{n,k}$ are homotopy equivalent in $\Fun(\Modk(A_{n-1}),\Modk(A_{n-1}))$. Moreover, $\cox_n^n\simeq [2]$. In particular, $\cox_{n,k}$ is an auto-quasi-equivalence of $\Modk(A_{n-1})$.
\end{prp}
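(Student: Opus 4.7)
I prove the first assertion by induction on $k$. The base case $k=1$ is simply the definition of $\cox_{n,1}$. For the inductive step, assume $\cox_n^k\simeq \cox_{n,k}$ as functors. Since dg functors preserve homotopy equivalences objectwise and hence natural equivalences (Proposition \ref{prp:homotopic-prop}), post-composing with $\cox_n$ gives $\cox_n^{k+1}\simeq \cox_n\circ \cox_{n,k}$, so it remains to show $\cox_n\circ \cox_{n,k}\simeq \cox_{n,k+1}$.

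Fix $A=(A_i,a_i)\in\Modk(A_{n-1})$ and apply the definition of $\cox_n$ to the explicit description of $B:=\cox_{n,k}(A)$. Each component $\cox_n(B)_i=C(b_i\circ\cdots\circ b_1)$ is an iterated cone of compositions $B_1\to B_{i+1}$ built from matrices of the form $\fd(\id,a_\bullet)$ (when both endpoints sit in the first range $1\leq j\leq n-k-1$), $\fd(a_\bullet,\id)$ (when both sit in the second range $n-k\leq j\leq n-1$), or the transitional map at index $j=n-k$. The strategy is to collapse these iterated cones using the two families of simplifications already established, namely Lemma \ref{lem:cone-exact} (for the transitional step, which produces the degree shift $[1]$ appearing in the second range of the formula for $\cox_{n,k+1}$) and Lemma \ref{lem:cone-triple} (for adjacent pairs within either range, which turns $C(C(a_1)\to C(a_2\circ a_1))$ into $C(a_2)$, and $C(C(a_2\circ a_1)\to C(a_2))$ into $C(a_1)[1]$).

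Concretely, when $i+1\leq n-k-1$ one obtains $\cox_n(B)_i\simeq C(a_{k+i}\circ\cdots\circ a_{k+1})$ by applying Lemma \ref{lem:cone-triple} inductively; when $i\geq n-k$ one obtains the appropriate shifted cone $C(a_{k}\circ\cdots\circ a_{k+i-n+1})[1]$ by first invoking Lemma \ref{lem:cone-exact} at the transitional step and then Lemma \ref{lem:cone-triple} afterwards; these match the components $\cox_{n,k+1}(A)_i$ exactly. The objectwise homotopy equivalences $m_i\colon \cox_n(\cox_{n,k}(A))_i\to \cox_{n,k+1}(A)_i$ are the explicit morphisms $\eta_j,\eta_j'$ from Lemmas \ref{lem:cone-exact} and \ref{lem:cone-triple}; combining these with Lemma \ref{lem:adjusting-quiver} (to assemble them into a homotopy equivalence in $\Modk(A_{n-1})$ that commutes up to prescribed homotopies with the morphisms in $\cox_{n,k+1}(A)$) and then with Lemma \ref{lem:adjusting} (to upgrade the objectwise family to a natural equivalence of dg functors) completes the inductive step.

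For the final claim $\cox_n^n\simeq [2]$, one more application of $\cox_n$ to $\cox_{n,n-1}(A)$ lands in a situation where every component lies in the ``fully wrapped'' second range. The same two lemmas now simplify each iterated cone of the form $C(\cdots[1])$ appearing in $\cox_n(\cox_{n,n-1}(A))_i$ all the way down to $A_i[2]$, and the connecting morphisms to $a_i[2]$; the last slot, $B_1[1]$ with $B=\cox_{n,n-1}(A)$, gives $A_{n-1}[2]$ directly. The main obstacle is purely bookkeeping: one must track the matrix form of the iterated composition $b_i\circ\cdots\circ b_1$ through the two ranges and the transition, keep correct signs on the shift functor, and verify that the assembled homotopy equivalences are natural in $A$ (as opposed to merely objectwise). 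This naturality is the essential content of Lemma \ref{lem:adjusting}, and it is the step that requires the most care; the homotopies $h_i$ in a morphism $f\colon A\to B$ propagate through $k$ applications of $\cox_n$ via the universal formula $\chi(h)_{i_1}^{i_2}$ already present in the definition of $\cox_{n,k}$, so verifying the inductive step on morphisms reduces to checking that this formula transforms correctly under one further application of $\cox_n$, which is a routine matrix calculation.
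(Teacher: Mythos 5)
Your proposal is correct and follows essentially the same route as the paper's proof: induct on $k$, use Lemmas \ref{lem:cone-exact} and \ref{lem:cone-triple} to identify the components of $\cox_n\circ\cox_{n,k}(A)$ with those of $\cox_{n,k+1}(A)$, assemble these into an objectwise homotopy equivalence via Lemma \ref{lem:adjusting-quiver}, and upgrade to a natural equivalence of functors via Lemma \ref{lem:adjusting}, with $\cox_n^n\simeq[2]$ handled by the same mechanism. In fact you spell out more of the bookkeeping than the paper does, which dispatches the inductive step with "it is not hard to see."
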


\begin{proof}
	For any $A=(A_1\xrightarrow{a_1}A_2\xrightarrow{a_2}\ldots\xrightarrow{a_{n-2}}A_{n-1})\in\Modk(A_{n-1})$, it is not hard to see that $\cox_n\circ \cox_{n,k}(A)$ and $\cox_{n,{k+1}}(A)$ are homotopy equivalent since $\cox_n\circ \cox_{n,k}(A_i)\simeq \cox_{n,{k+1}}(A_i)$ for all $i$ by Lemma \ref{lem:cone-exact} and \ref{lem:cone-triple}, and $\cox_{n,k+1}(A)$ is constructed as in Lemma \ref{lem:adjusting-quiver}. Then $\cox_n\circ \cox_{n,k}\simeq \cox_{n,{k+1}}$ since $\cox_{n,{k+1}}$ is constructed as in Lemma \ref{lem:adjusting}. Using this we conclude $\cox_n^k\simeq \cox_{n,k}$. Similarly, we can show $\cox_n^n\simeq [2]$.
\end{proof}

\begin{rmk}\label{rmk:coxeter-general}
	There is a ``counterclockwise'' version of the Coxeter functor, which is the composition of derived (sink) reflection functors at each object of the $A_{n-1}$-quiver. It is the inverse of $\cox_n$. If we define for any integer $n\geq 3$ and $k\in\Z$
	\[\cox_{n,k}:=\begin{cases}
		\cox_{n,k'}[2\lfloor k/n\rfloor] & \text{if }k\not\equiv 0\text{ mod }n\\
		\id[2k/n] & \text{if }k\equiv 0\text{ mod }n
	\end{cases}\]
	where $k'\equiv k\text{ mod }n$ with $1\leq k'\leq n-1$, by Proposition \ref{prp:coxeter-power} we get
	\[\cox_n^{k}\simeq \cox_{n,k}\]
	for any $k\in\Z$. In particular, $\cox_n^{-1}\simeq \cox_{n,n-1}[-2]$.
\end{rmk}

From now on, we will assume all dg categories are $\Z/2$-graded. All the previous results in this section are equally valid for $\Z/2$-graded categories. Note that for $\Modk(A_{n-1})$, in additions to morphism, the objects also become $\Z/2$-graded, i.e. for
\[A=(A_1\xrightarrow{a_1}A_2\xrightarrow{a_2}\ldots\xrightarrow{a_{n-2}}A_{n-1})\in\Modk(A_{n-1})\]
we have $A_i[2]=A_i$ and $a_i[2]=a_i$ for all $i$. Then Remark \ref{rmk:coxeter-general} tells us that
\[\cox_{n,k}:=\begin{cases}
	\cox_{n,k'} & \text{if }k\not\equiv 0\text{ mod }n\\
	\id & \text{if }k\equiv 0\text{ mod }n
\end{cases}\]
where $k'\equiv k\text{ mod }n$ with $1\leq k'\leq n-1$, and $\cox_n^{k}\simeq \cox_{n,k}$. In particular $\cox_n^n\simeq\id$ and we get the following proposition:

\begin{prp}\label{prp:rotation-coxeter}
	For $n\geq 3$ and $k\in\Z$, let $r_{n,k}\colon V_n\to V_n$ be the automorphism of $V_n$ by rotating it by $2\pi k/n$ clockwise, which is explicitly given by $r_{n,k}(z)=ze^{-2\pi i k/n}$. Then $\mSh\dd(r_{n,k})\simeq \cox_{n,k}$.
\end{prp}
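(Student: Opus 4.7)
The plan is to reduce to the case $k=1$ using functoriality of $\mSh\dd$ and Proposition \ref{prp:coxeter-power}, and then verify $\mSh\dd(r_{n,1})\simeq\cox_n$ by matching the restrictions to the edges of $V_n$. Since $r_{n,k_1}\circ r_{n,k_2}(z)=ze^{-2\pi i(k_1+k_2)/n}=r_{n,k_1+k_2}(z)$, we have $r_{n,k}=r_{n,1}^k$ as diffeomorphisms of $V_n$, so by functoriality of $\mSh\dd$ we obtain $\mSh\dd(r_{n,k})\simeq\mSh\dd(r_{n,1})^k$. Combined with $\cox_n^k\simeq\cox_{n,k}$ from Proposition \ref{prp:coxeter-power}, it suffices to construct a natural equivalence $\mSh\dd(r_{n,1})\simeq\cox_n$. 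Note also that in $\Z/2$-grading both functors have order $n$ since $r_{n,1}^n=\id$ geometrically and $\cox_n^n\simeq\id$ algebraically, so the cases $k\equiv 0\bmod n$ are automatic.

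The next step is to compute, for an arbitrary object $A=(A_1\xrightarrow{a_1}\cdots\xrightarrow{a_{n-2}}A_{n-1})\in\Modk(A_{n-1})$, the effect of both functors on the restriction functors $j_1,\ldots,j_n$ of Proposition \ref{prp:msh-vertex}. Geometrically, the clockwise rotation $r_{n,1}$ cyclically permutes the edges $e_1,\ldots,e_n$ of $V_n$, so the sheaf-theoretic pullback $\mSh\dd(r_{n,1})$ satisfies $j_i\circ\mSh\dd(r_{n,1})\simeq j_{i+1\bmod n}$ by the functoriality of restriction, with $j_{n+1}:=j_1$. On the algebraic side, the definition of $\cox_n$ gives $j_1\circ\cox_n(A)=C(a_1)=j_2(A)$ directly, and for $2\leq i\leq n-2$ the cone $C(\fd(\id,a_i)\colon C(a_{i-1}\circ\cdots\circ a_1)\to C(a_i\circ\cdots\circ a_1))$ is canonically equivalent to $C(a_i)=j_{i+1}(A)$ by Lemma \ref{lem:cone-triple}; for $i=n-1$, the cone $C(\fr(\id,0)\colon C(a_{n-2}\circ\cdots\circ a_1)\to A_1[1])$ is equivalent to $A_{n-1}[1]=j_n(A)$ by Lemma \ref{lem:cone-exact}; and finally $j_n\circ\cox_n(A)=A_1[1][1]=A_1=j_1(A)$ using $[2]=\id$ in $\Z/2$-grading. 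Hence $j_i\circ\cox_n\simeq j_{i+1\bmod n}$ as well, so both functors have the same restriction data.

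To upgrade these pointwise equivalences on each edge into a natural equivalence of dg functors, I would use the arboreal presentation $\mSh\dd(V_n)\simeq\mSh\dd(V'_n)$ from Proposition \ref{prp:quiver-algebra}, which exhibits $\Modk(A_{n-1})$ as the (homotopy) limit of the trivalent pieces $\mSh\dd(U_i)\simeq\Modk(A_2)$ glued along the edge categories $\Modk$. The explicit homotopy equivalences $\eta_1,\eta_2$ of Lemma \ref{lem:cone-exact} and $\eta_3,\eta_4$ of Lemma \ref{lem:cone-triple} produce homotopy equivalences $\cox_n(A)_i\simeq\mSh\dd(r_{n,1})(A)_i$ on each object of the quiver that are compatible with the connecting morphisms $b_i$, so Lemma \ref{lem:adjusting-quiver} packages them into a closed homotopy equivalence between $\cox_n(A)$ and $\mSh\dd(r_{n,1})(A)$ in $\Modk(A_{n-1})$ for every $A$. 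Finally, Lemma \ref{lem:adjusting} (with $F=\mSh\dd(r_{n,1})$ and $G=\cox_n$) promotes this object-wise family of homotopy equivalences to an $A_\infty$-natural equivalence $\mSh\dd(r_{n,1})\simeq\cox_n$ in the functor category, completing the proof.

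The main obstacle is the last step: showing that the pointwise equivalences obtained from the explicit formulas $\eta_i$ are coherent with the gluing at the arboreal singularities and with the naturality of $\mSh\dd(r_{n,1})$, so that Lemma \ref{lem:adjusting} applies. This is essentially a bookkeeping exercise with the matrix presentations of $\eta_1,\ldots,\eta_4$, and it is what makes the $\Z/2$-grading necessary; in the $\Z$-graded case the candidate natural transformation would fail to close up after traversing the circle of edges, as remarked in the introduction immediately before the definition of $\cox_{4,1}$.
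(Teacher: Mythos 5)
Your reduction to $k=1$ and your verification that both $\mSh\dd(r_{n,1})$ and $\cox_n$ shift the edge restrictions $j_l$ cyclically are correct and coincide with the first half of the paper's proof (as does your observation that the $\Z/2$-grading is what allows the functor to close up after a full turn). The gap is in your final step. Knowing that $j_l\circ F\simeq j_l\circ G$ for all $l$ does not by itself give $F\simeq G$: the edge restrictions only record microstalks along the edges, not the objects of $\Modk(A_{n-1})$ themselves, so two endofunctors could a priori agree on all $j_l$ without being naturally equivalent. Your proposed bridge does not close this gap. Lemma \ref{lem:adjusting} manufactures a functor $G$ \emph{out of} a given $F$ and a family of homotopy equivalences $m_A\colon F(A)\to G(A)$; to apply it in your setting you would already need an explicit description of $\mSh\dd(r_{n,1})(A)$ as an object of $\Modk(A_{n-1})$ together with the equivalences $m_A$, and neither is available. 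The maps $\eta_1,\ldots,\eta_4$ of Lemmas \ref{lem:cone-exact} and \ref{lem:cone-triple} compare $\cox_n\circ\cox_{n,k}$ with $\cox_{n,k+1}$ — they compute powers of the Coxeter functor, not the geometric rotation functor — so they do not "produce homotopy equivalences $\cox_n(A)_i\simeq\mSh\dd(r_{n,1})(A)_i$" as claimed. The arboreal presentation does not rescue this either: the noncharacteristic deformation to $V'_n$ breaks the rotational symmetry (the arc is attached asymmetrically), so $r_{n,1}$ does not act by permuting the trivalent pieces $U_i$, and the limit diagram is not rotation-equivariant.

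What is missing is a uniqueness statement, and this is exactly how the paper proceeds: $\Modk(A_{n-1})$ is generated by the injective interval modules $I_i$, whose edge restrictions are $\k$ on one edge, $\k[1]$ on another, and zero elsewhere; the condition $j_l\circ F\simeq j_{l+1}$ therefore forces $F(I_i)\simeq P_i[1]$ for the projective modules $P_i$, and similarly pins down the images of the morphisms between the $I_i$. Since the $I_i$ generate, any $A_{\infty}$-endofunctor satisfying the edge conditions is determined up to natural equivalence, and $\cox_n$ visibly satisfies them, whence $\mSh\dd(r_{n,1})\simeq\cox_n$. To keep your direct-comparison strategy you would need to supply an argument of this kind, or an independent explicit computation of $\mSh\dd(r_{n,1})$ on objects and morphisms; as written, your last paragraph asserts rather than establishes the coherence it needs.
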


\begin{proof}
	Let $r_n:=r_{n,1}$. It is enough to show that $\mSh\dd(r_n)\simeq \cox_n$, since $r_{n,k}=r_n^k$ and $\cox_{n,k}\simeq \cox_n^k$. Observe that we have
	\[i_l\circ r_n=\begin{cases}
		i_{l-1} &\text{if }1<l\leq n\\
		i_n &\text{if }l=1
	\end{cases}\]
	and applying the large microlocal stack $\mSh\dd$ we get
	\[j_l\circ \mSh\dd(r_n)=\begin{cases}
		j_{l-1} &\text{if }1<l\leq n\\
		j_n &\text{if }l=1
	\end{cases}\ .\]
	To understand the $A_{\infty}$-functors satisfying the above condition, we will use the fact that $\Modk(A_{n-1})$ is generated by the injective objects $I_i$ for $i=1,\ldots,n-1$ defined by
	\[I_i:=\k\xrightarrow{\id}\k\xrightarrow{\id}\ldots\xrightarrow{\id}\k\to 0 \to\ldots\to 0\]
	where the first $i$ entries are $\k$ and rest are zero, see \cite{dyckerhoff-kapranov} and \cite{wrapped}. The above condition requires that $\mSh\dd(r_n)(I_i)=P_i[1]$ for all $i=1,\ldots,n-1$, where $P_i$ are the projective objects defined by
	\[P_i:=0\to \ldots \to 0 \to \k\xrightarrow{\id}\k\xrightarrow{\id}\k\]
	where the first $i-1$ entries are zero and rest are $\k$. It also implies that the morphisms between $I_i$'s are required to be mapped to certain morphisms under $\mSh\dd(r_n)$. This shows $\mSh\dd(r_n)$ is uniquely determined up to natural equivalence by imposing this condition. The Coxeter functor $\cox_n$ indeed satisfies this condition, hence we must have $\mSh\dd(r_n)\simeq \cox_n$.
\end{proof}

\begin{rmk}\label{rmk:coxeter-z2}
	Above proof does not apply in $\Z$-graded setting, because $\cox_n^n\not\simeq\id$ whereas $r_n^n=\id$. Hence we cannot define $\mSh\dd(r_{n,k})$ if $\mSh\dd$ is $\Z$-graded.
\end{rmk}
    
\section{Rational Homology Balls and Pinwheels}

In this last chapter, we will apply the theory reviewed and developed in previous chapters to the rational homology balls $B_{p,q}$, defined in \cite{rational-casson}. Being Weinstein manifolds, their symplectic topology is studied in \cite{lekili}. Their skeleta are called pinwheels $L_{p,q}$, following \cite{evans}. In Section \ref{sec:rational-homology-ball}, we will review these objects. In Section \ref{sec:wrapped-rational}, we will calculate the wrapped Fukaya category of $B_{p,1}$ using the theorems reviewed in Section \ref{sec:fukaya-weinstein}. Finally, in Section \ref{sec:msh-pinwheel}, we will calculate the microlocal sheaves on pinwheels $L_{p,1}$, and show that the wrapped microlocal sheaves on $L_{p,1}$ match with the wrapped Fukaya category of $B_{p,1}$, confirming Conjecture \ref{con:microlocal-wrapped}.

\subsection{$A_{p-1}$-Milnor Fibre and Rational Homology Ball $B_{p,q}$}\label{sec:rational-homology-ball}

In this section, we will introduce the Weinstein manifold we will study in this thesis, the rational homology ball $B_{p,q}$, where $p$ and $q$ are coprime integers such that $p\geq 2$ and $0<q<p$. We will describe it as a quotient of $A_{p-1}$-Milnor fibre and summarise some of its properties, following mostly \cite{lekili} and \cite{evans}. Most importantly, we will describe the Legendrian surgery diagram for $B_{p,q}$ which will be used to calculate its wrapped Fukaya category in Section \ref{sec:wrapped-rational}, and the pinwheel $L_{p,q}$ which is the skeleton of $B_{p,q}$. The description of $L_{p,q}$ will enable us to calculate microlocal sheaves on it in Section \ref{sec:msh-pinwheel}.

\begin{dfn}
	Let $p\geq 2$ be an integer. \textit{$A_{p-1}$-Milnor fibre} is defined as
	\[A_{p-1}:=\{(x,y,z)\vb z^p+2xy=1\}\subset\C^3\ .\]
	To see that it is a complex submanifold of $\C^3$, define the holomorphic map
	\begin{align*}
		F\colon \C^3&\rightarrow\C\\
		(x,y,z)&\mapsto z^p+2xy \ . 
	\end{align*}
	Its derivative is $dF_{(x,y,z)}=\mx{2y & 2x & pz^{p-1}}$ which has only one critical point $(0,0,0)$, hence $1$ is a regular value of $F$ and $A_{p-1}=\{F=1\}$ is a complex submanifold of $\C^3$ with the real dimension $4$. This implies that $A_{p-1}$ is a Stein manifold, and consequently has a Weinstein structure coming from the standard Weinstein structure on $\C^3$ given by the Liouville form
	\[\theta=\frac{i}{4}(xd\bar x-\bar x dx+yd\bar y-\bar y dy+zd\bar z-\bar z dz)\]
	with the Lyapunov function
	\[\phi(x,y,z)=\frac{1}{4}(|x|^2+|y|^2+|z|^2)\]
	where $\bar x$ is the conjugate of $x$ (compare with Example \ref{exm:cotangent-weinstein-nonstandard}). See \cite[Section 11.5]{weinstein} for the details.
\end{dfn}

To have better understanding of the geometry of $A_{p-1}$, we consider the exact Lefschetz fibration
\begin{align*}
	 \pi\colon A_{p-1}&\rightarrow\C\\
	 (x,y,z)&\mapsto z\ .
\end{align*}
Its derivative is $d\pi_{(x,y,z)}=\mx{0 & 0 & 1}$. The critical points of this fibration are the points $a\in A_{p-1}$ such that $d\pi_a|_{T_a^{\text{hol}}A_{p-1}}=0$. Since $A_{p-1}=\{F=1\}$, we have $T_a^{\text{hol}}A_{p-1}=\ker dF_a$. Then, $a\in A_{p-1}$ is a critical point if and only if $\ker dF_a\subset\ker d\pi_a$. Hence $\pi$ has $p$ many critical points given by $\{(0,0,z) \vb z^p=1\}$, and the critical values are $z\in\C$ with $z^p=1$. Figure \ref{fig:lef-fib} shows how the fibres look like, where the crosses stand for the critical values of $\pi$.
\begin{figure}[h]
	\centering
	
	\begin{tikzpicture}
	\draw (-3,-1) -- (2,-1) node[right]{$\C_z$};;
	\draw (-2,1) -- (-1.6,1);
	\draw (-1.4,1) -- (-0.1,1);
	\draw (0.1,1) -- (3,1);
	\draw (-3,-1) -- (-2,1);
	\draw (2,-1) -- (3,1);
	
	\newcommand{\cross}[3][thick]{
		\draw[#1] (#2-0.1,#3-0.1) -- (#2+0.1,#3+0.1);
		\draw[#1] (#2-0.1,#3+0.1) -- (#2+0.1,#3-0.1)
	}
	
	\draw[fill] (0,0) circle (2pt) node[right]{$0$};
	\cross{1.5}{0.5};
	\cross{1.5}{-0.5};
	\cross{0}{-0.7};
	\cross{-1.5}{-0.5};
	\cross{-1.5}{0.5};
	
	\draw[domain=-1.3:-0.2,variable=\x] plot ({\x},{(0.5/-1.5)*\x});
	\draw (0,0.2) -- (0,1.3);
	\draw (-1.5,0.7) -- (-1.5,1.3);
	
	\draw (0.4,1.5) arc (0:-180:0.4 and 0.1);
	\draw[dashed] (0.4,1.5) arc (0:180:0.4 and 0.1);
	\draw (0.4,1.9) arc (0:-180:0.4 and 0.1);
	\draw[dashed] (0.4,1.9) arc (0:180:0.4 and 0.1);
	\draw (0,2.3) ellipse (0.4 and 0.1);
	\draw (-0.4,1.5) -- (-0.4,2.3);
	\draw (0.4,1.5) -- (0.4,2.3);
	
	\draw (-1.1,1.5) arc (0:-180:0.4 and 0.1);
	\draw[dashed] (-1.1,1.5) arc (0:180:0.4 and 0.1);
	\draw (-1.9,1.5) -- (-1.1,2.3);
	\draw (-1.1,1.5) -- (-1.9,2.3);
	\draw (-1.5,2.3) ellipse (0.4 and 0.1);
	
	\draw[blue] (-1.5,1.9) .. controls (-0.7,1.8) .. (0,1.8);
	
	\begin{scope}
	\clip (-0.4,1.5) rectangle (-1.4,2.5);
	\draw[blue] (-1.5,1.9) .. controls (-0.7,2) .. (0,2);
	\end{scope}
	
	\begin{scope}
	\clip (-0.4,1.5) rectangle (0.4,2.5);
	\draw[blue,dashed] (-1.5,1.9) .. controls (-0.7,2) .. (0,2);
	\end{scope}
	
	\end{tikzpicture}
	
	\caption{Lefschetz fibration $\pi\colon A_{p-1}\to\C_z$}
	\label{fig:lef-fib}
\end{figure}

\begin{dfn}
	Let $p,q$ be coprime integers such that $p\geq 2$ and $0<q<p$. Define the action $\Gamma_{p,q}$ of $\Z/p$ on $\C^3$ as
	\begin{align*}
		\Gamma_{p,q}\colon\Z/p\times \C^3 &\rightarrow\C^3\\
		(\xi,(x,y,z))&\mapsto(\xi x,\xi^{-1}y,\xi^q z)
	\end{align*}
	where $\Z/p$ is presented as $\{e^{2\pi i k/p}\vb k\in\Z\}$. Note that $\Gamma_{p,q}$ is a free action except at the origin, and it keeps $A_{p-1}\subset\C^3$ invariant, hence we define the $4$-dimensional manifold $B_{p,q}$ as
	\[B_{p,q}:=A_{p-1}/\Gamma_{p,q}\ .\]
	The Weinstein structure on $A_{p-1}$ induces a Weinstein structure on $B_{p,q}$. Also, $B_{p,q}$ is a rational homology ball, see \cite{lekili}.
\end{dfn}

\begin{rmk}\label{rmk:pq-action}
	The action of $\Gamma_{p,q}$ can be understood in terms of Lefschetz fibration as follows: It rotates the base $\C_z$ around the origin by the angle $2\pi q/p$, and the fibres by $2\pi/p$. See Figure \ref{fig:pq-action}.
	
	\begin{figure}[h]
		\centering
		
		\begin{tikzpicture}
		\draw (-6,-1) -- (-2,-1) node[right]{$\C_z$};
		\draw (-5,1) -- (-1,1);
		\draw (-6,-1) -- (-5,1);
		\draw (-2,-1) -- (-1,1);
		
		\draw (6,1) -- (2,1);
		\draw (5,-1) -- (1,-1);
		\draw (6,1) -- (5,-1) node[right]{$\C_z$};
		\draw (2,1) -- (1,-1);
		
		\newcommand{\cross}[3][thick]{
			\draw[#1] ({#2-0.1},{#3-0.1}) -- ({#2+0.1},{#3+0.1});
			\draw[#1] ({#2-0.1},{#3+0.1}) -- ({#2+0.1},{#3-0.1})
		}
		
		\draw[fill] (-3.5,0) circle (2pt);
		\cross{-3.5+cos(0)}{0+0.8*sin(0)};
		\cross{-3.5+cos(72)}{0+0.8*sin(72)};
		\cross{-3.5+cos(144)}{0+0.8*sin(144)};
		\cross{-3.5+cos(216)}{0+0.8*sin(216)};
		\cross{-3.5+cos(288)}{0+0.8*sin(288)};
		
		\draw ({-3.5+0.2*cos(0)},{0+0.2*sin(0)}) -- ({-3.5+0.8*cos(0)},{0+0.6*sin(0)});
		\draw ({-3.5+0.2*cos(72)},{0+0.2*sin(72)}) -- ({-3.5+0.8*cos(72)},{0+0.6*sin(72)});
		\draw ({-3.5+0.2*cos(144)},{0+0.2*sin(144)}) -- ({-3.5+0.8*cos(144)},{0+0.6*sin(144)});
		\draw ({-3.5+0.2*cos(216)},{0+0.2*sin(216)}) -- ({-3.5+0.8*cos(216)},{0+0.6*sin(216)});
		\draw ({-3.5+0.2*cos(288)},{0+0.2*sin(288)}) -- ({-3.5+0.8*cos(288)},{0+0.6*sin(288)});
		
		\draw[fill] (3.5,0) circle (2pt);
		\draw[thick] ({3.5+cos(0)-0.1},{0+0.8*sin(0)-0.1}) -- ({3.5+cos(0)+0.1},{0+0.8*sin(0)+0.1});
		\draw[thick] ({3.5+cos(0)-0.1},{0+0.8*sin(0)+0.1}) -- ({3.5+cos(0)+0.1},{0+0.8*sin(0)-0.1});
		
		\draw ({3.5+0.2*cos(0)},{0+0.2*sin(0)}) -- ({3.5+0.8*cos(0)},{0+0.6*sin(0)});
		\draw[blue] ({3.5+cos(10)},{0+0.8*sin(10)}) arc (10:350:1 and 0.8);
		
		\draw[->] (-1,0) -- (1,0) node[midway,above]{$\boldsymbol{\cdot}/\Gamma_{p,q}$};
		\end{tikzpicture}
		
		\caption{The orbits of $\Gamma_{p,q}$ on the base $\C_z$}
		\label{fig:pq-action}
	\end{figure}
\end{rmk}

\begin{rmk}
	$B_{p,q}$ and $B_{p,p-q}$ are exact symplectomorphic as remarked in \cite{evans}.
\end{rmk}

We have the following fact regarding the Lagrangians in $B_{p,q}$:

\begin{thm}[\cite{lekili}]
	For $p\neq 2$, $B_{p,q}$ has no closed exact Lagrangian submanifolds.
\end{thm}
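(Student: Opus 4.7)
The plan is to argue by contradiction: suppose $L\subset B_{p,q}$ is a closed exact Lagrangian, pass to the $p$-fold cover $A_{p-1}$, and derive a topological/Floer-theoretic obstruction.

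First I would verify that $\Gamma_{p,q}$ acts \emph{freely} on $A_{p-1}$. A non-identity fixed point $(x,y,z)$ of $\xi\in\Z/p$ would force $x=y=0$ from $\xi x=x$ and $\xi^{-1}y=y$, and then $z\neq 0$ with $z^p=1$ together with $\xi^q z=z$ gives $\xi^q=1$; since $\gcd(p,q)=1$ and $|\xi|\mid p$, this forces $\xi=1$. Hence $\pi\colon A_{p-1}\to B_{p,q}$ is a regular $p$-fold covering, and $\tilde L:=\pi^{-1}(L)\subset A_{p-1}$ is a closed exact Lagrangian invariant under $\Gamma_{p,q}$.

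Next I would exploit the fact that $B_{p,q}$ is a \emph{rational} homology ball: $H_2(B_{p,q};\Q)=0$, so $[L]\in H_2(B_{p,q};\Z)\simeq\Z/p$ is torsion and vanishes rationally. The intersection form therefore forces $L\cdot L=0\in\Q$, which (being an integer) means $L\cdot L=0$. But the Weinstein neighborhood identification $\nu_L\simeq T^*L$ yields $L\cdot L=e(\nu_L)[L]=-\chi(L)$, so $\chi(L)=0$. If $L$ is orientable this forces $L\simeq T^2$; the non-orientable case is handled analogously (the Klein bottle, after passing to the orientation double cover, reduces to the same torus argument). Since covers preserve Euler characteristic up to a multiplicative factor, $\chi(\tilde L)=p\cdot\chi(L)=0$ as well, so every component of $\tilde L$ must be a torus or Klein bottle.

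The remaining task is to rule out closed exact Lagrangian tori in $A_{p-1}$. This I would address via the classification of closed exact Lagrangians in the $A_{p-1}$-Milnor fibre (realized as a plumbing of $p-1$ copies of $T^*S^2$ along an $A_{p-1}$-chain). By Seidel's work on Fukaya categories of ADE plumbings — sharpened by Abouzaid--Smith, Ritter, Keating, and others — every connected closed exact Lagrangian in $A_{p-1}$ is quasi-isomorphic in $\cF(A_{p-1})$ to a vanishing cycle, and in particular has the topology of $S^2$ (so $\chi=2$, not $0$). This contradicts $\chi(\tilde L)=0$ unless $\tilde L$ is empty, i.e.\ unless no such $L$ exists.

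The main obstacle is the third paragraph: the classification of exact Lagrangians in $A_{p-1}$ is a substantial Floer-theoretic input, and one has to be careful that the input actually rules out all tori (not merely identifies the spheres up to isotopy). A clean way to package only what is needed is the Euler characteristic constraint $\chi=2$ for every connected component, which can be extracted from self-Floer cohomology calculations $HF^*(L,L)\simeq H^*(L)$ combined with the known wrapped/compact Fukaya category of $A_{p-1}$. The rest of the proof — the free action check, the lift to the cover, and the rational-homology-ball self-intersection computation — is formal and short.
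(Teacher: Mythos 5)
First, a framing point: the paper does not prove this statement — it is quoted verbatim from \cite{lekili} — so there is no internal proof to measure you against. Your outline does follow the same skeleton as the cited proof of Lekili--Maydanskiy (lift to the free $\Z/p$-cover $A_{p-1}$, constrain $\chi(L)$ using that $B_{p,q}$ is a rational homology ball, then exclude the resulting exact Lagrangians in the Milnor fibre by Floer theory), and your first paragraph (freeness of $\Gamma_{p,q}$ away from the origin, exactness of the preimage) is correct. But there are two genuine gaps.

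The first is the non-orientable case, which you dismiss in a parenthesis. The computation $[L]\cdot[L]=0$, hence $\chi(L)=0$, needs $[L]$ to live in $H_2(B_{p,q};\Q)$, i.e.\ needs $L$ orientable; for non-orientable $L$ you only control $\chi(L)\bmod 2$ via the $\Z/2$ self-intersection, and even that only when $[L]=0$ in $H_2(B_{p,q};\Z/2)$, which fails for $p$ even. So you cannot conclude that a non-orientable $L$ is a Klein bottle, and the orientation double cover of $L$ is not a Lagrangian in $B_{p,q}$, so there is nothing to "reduce to". The tell-tale symptom is that your argument never uses $p\neq 2$: since $B_{2,1}\simeq T^*\RP^2$ contains the closed exact Lagrangian $\RP^2$ (with $\chi=1$), any correct proof must invoke $p\neq 2$, and it must do so precisely in the non-orientable branch.

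The second gap is the input about $A_{p-1}$. The assertion that every connected closed exact Lagrangian in $A_{p-1}$ is quasi-isomorphic in $\cF(A_{p-1})$ to a vanishing cycle is not a theorem in the literature in that form: generation results of Abouzaid--Smith type yield at best that $L$ is a summand of a twisted complex built from the vanishing cycles, which by itself says nothing about the diffeomorphism type of $L$; and even if $L$ were quasi-isomorphic to a single sphere object, deducing $L\simeq S^2$ requires the additional step $HF^*(L,L)\simeq H^*(L)$, which you only gesture at. The input that actually does the job — and the one used in \cite{lekili} — is Ritter's theorem on exact Lagrangians in ALE spaces: a deformation of symplectic cohomology by a transgressed two-form shows that any closed exact Lagrangian $L\subset A_{p-1}$ has $H^1(L;\R)=0$, which excludes tori and Klein bottles outright. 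With that citation in place of your third paragraph, and with an honest treatment of the non-orientable case (for instance by applying Ritter's constraint to the components of $\tilde L$, using that $A_{p-1}$ is simply connected and spin, and analysing which surfaces a sphere can cover with degree dividing $p$), the argument can be completed.
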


Note that this does not imply $\cF(B_{p,q})$ is empty since we also consider (unobstructed) exact immersed Lagrangians in the Fukaya category. Moreover, we have an exact immersed Lagrangian sphere in $B_{p,q}$, possibly unobstructed, whose projection to the base is given by the blue curve in Figure \ref{fig:pq-action}: a simple closed curve around $0$ passing through the unique critical value. For $p=2$, $B_{2,1}$ is exact symplectomorphic to $T^*\RP^2$, hence it contains a closed exact Lagrangian submanifold given by the zero section.

Next, we will describe the Weinstein handle decomposition of $B_{p,q}$. For that, we will first describe the Weinstein handle decompoisiton of $A_{p-1}$ (as a Weinstein domain): We will make use of the Lefschetz fibration $\pi$ on $A_{p-1}$. Take a small closed neighbourhood $D_0$ of the origin in $\C_z$ which does not contain the critical values of $\pi$. Note that the fibre $\pi^{-1}(0)$ is a cylinder, which is the union of a 0- and a 1-handle of dimension 2. Since $\pi^{-1}(D_0)\simeq\pi^{-1}(0)\times D^2$, it is a thickened cylinder which can be seen as the union of a 0- and a 1-handle of dimension 4, i.e.
\[\pi^{-1}(D_0)\simeq(D^0\times D^4)\cup (D^1\times D^3)\ .\]
The core of 2-handles attached to $\pi^{-1}(D_0)$ is given by the Lefschetz thimbles associated to the vanishing paths, which are the linear paths connecting the origin to the critical values of $\pi$. See Figure \ref{fig:lef-fib} where blue disk is the Lefschetz thimble whose projection is the linear path below. Explicitly, we have $p$ many 2-handles whose core is given by
\[\pi^{-1}(\gamma_a([0,1])\setminus \inter(D_0))\cap\{(x,y,z)\in\C^3\vb |x|=|y|\}\]
where $\inter(D_0)$ is the interior of $D_0$, $a\in\Z/p\subset\C_z$ is one of the $p$ critical values of $\pi$, and $\gamma_a\colon [0,1]\rightarrow\C_z$ is the linear path connecting the origin and $a$. So, the attaching circle of each $p$ 2-handle is a noncontractible circle on the boundary $S^1\times S^2$ of the thickened cylinder $\pi^{-1}(D_0)$, shown in Figure \ref{fig:leg-milnor} where two spheres are identified respecting the labels on them. This gives the desired Weinstein handle decomposition of $A_{p-1}$. Note that all Legendrian surgery diagrams in this section show the front projection of the Legendrian knots, which are the attaching spheres, and the framing of each attaching sphere is given by $\tb-1$ where $\tb$ is the Thurston–Bennequin number of the Legendrian knot. Then we have the following proposition:

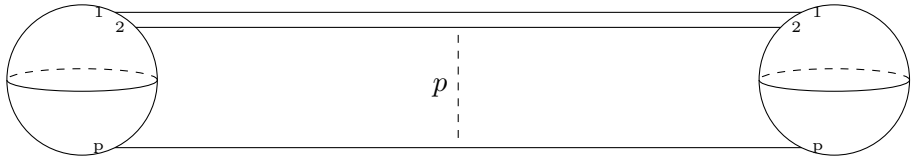
\begin{figure}[h]
	\centering
	
	\begin{tikzpicture}
	\draw (-5,0) circle (1);
	\draw (-4,0) arc (0:-180:1 and 0.15);
	\draw[dashed] (-4,0) arc (0:180:1 and 0.15);
	\draw (5,0) circle (1);
	\draw (6,0) arc (0:-180:1 and 0.15);
	\draw[dashed] (6,0) arc (0:180:1 and 0.15);
	
	\node[left] at ({-5+cos(asin(0.9))},0.9){\tiny 1};
	\node[left] at ({-5+cos(asin(0.7))},0.7){\tiny 2};
	\node[left] at ({-5+cos(asin(0.9))},-0.9){\tiny p};
	
	\node[right] at ({5-cos(asin(0.9))},0.9){\tiny 1};
	\node[right] at ({5-cos(asin(0.7))},0.7){\tiny 2};
	\node[right] at ({5-cos(asin(0.9))},-0.9){\tiny p};
	
	\draw ({-5+cos(asin(0.9))},0.9) -- ({5-cos(asin(0.9))},0.9);
	\draw ({-5+cos(asin(0.7))},0.7) -- ({5-cos(asin(0.7))},0.7);
	\draw ({-5+cos(asin(0.9))},-0.9) -- ({5-cos(asin(0.9))},-0.9);
	
	\draw[dashed] (0,0.6) -- (0,-0.8) node[left,midway]{\small $p$};
	\end{tikzpicture}
	
	\caption{Legendrian surgery diagram of $A_{p-1}$}
	\label{fig:leg-milnor}
\end{figure}

\begin{prp}[\cite{lekili}]
	$B_{p,q}$ has a Weinstein handle decomposition consisting of one 0-handle, one 1-handle, and one 2-handle. The attaching circle of the 2-handle on $S^1\times S^2$ is given by the Legendrian surgery diagram shown in the Figure \ref{fig:leg-rational}.
\end{prp}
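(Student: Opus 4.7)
The plan is to descend the Weinstein handle decomposition of $A_{p-1}$ (described in the paragraphs preceding the proposition) to $B_{p,q}$ via the free quotient by $\Gamma_{p,q}$. First I would verify that $\Gamma_{p,q}$ acts freely on $A_{p-1}$: a fixed point of a nontrivial $\xi\in\Z/p$ would have to satisfy $\xi x=x$, $\xi^{-1}y=y$, and $\xi^q z=z$, which together with $\gcd(p,q)=1$ force $(x,y,z)=(0,0,0)$, and the origin does not lie on $A_{p-1}$. I would also note that the standard Liouville form $\theta$ and the Lyapunov function $\phi$ on $\C^3$ are manifestly $\Gamma_{p,q}$-invariant (since the action is by unitary rotations of each coordinate), so the Weinstein structure on $A_{p-1}$ descends to a Weinstein structure on $B_{p,q}$; consequently the Liouville flow, the critical points of $\phi|_{A_{p-1}}$, and the stable/unstable manifolds are all $\Gamma_{p,q}$-equivariant, and the $\Gamma_{p,q}$-orbits of the handles of $A_{p-1}$ are precisely the handles of $B_{p,q}$.

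Next I would analyse how $\Gamma_{p,q}$ permutes the handles of $A_{p-1}$ using the Lefschetz fibration $\pi$, as described in Remark \ref{rmk:pq-action}. The disk $D_0$ around the origin is preserved setwise by the base rotation, so $\pi^{-1}(D_0)$ (the union of the $0$-handle and $1$-handle) is $\Gamma_{p,q}$-invariant; the induced action on this thickened cylinder is a free rotation of both the cylinder factor (by $2\pi/p$) and the base disk (by $2\pi q/p$), and its quotient is again a thickened cylinder, so $B_{p,q}$ inherits one $0$-handle and one $1$-handle. On the other hand, the $p$ critical values of $\pi$ form a single $\Gamma_{p,q}$-orbit under the base rotation; therefore the $p$ Lefschetz thimbles---and with them the $p$ $2$-handles of $A_{p-1}$---are permuted transitively, so only one $2$-handle survives in $B_{p,q}$. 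This proves the first claim.

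Finally, to identify the attaching circle I would project one of the $p$ parallel attaching circles from Figure \ref{fig:leg-milnor} (say $\Lambda_1$) to the quotient boundary. The boundary $\partial(\pi^{-1}(D_0))\cong S^1\times S^2$ supports a free $\Gamma_{p,q}$-action by diagonal rotation (rotation of $S^1$ by $2\pi q/p$ and of $S^2$ by $2\pi/p$), and the quotient is again diffeomorphic to $S^1\times S^2$ as a standard $p$-fold cyclic cover. Since no nontrivial element of $\Gamma_{p,q}$ preserves $\Lambda_1$ setwise (the generator sends $\Lambda_k$ to $\Lambda_{k+q}$), the quotient map restricted to $\Lambda_1$ is injective, and its image is a single Legendrian circle. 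In the quotient diagram the two ``boundary spheres'' still carry $p$ marked points each, and the combined effect of base rotation (which identifies the $k$th thimble with the $(k+q)$th) and fiber rotation (which shifts labels on each fiber sphere by one) forces the descended strand to enter the left sphere at marked point $k$ and exit the right sphere at marked point $k+q\pmod p$ for each $k=1,\ldots,p$. Because $\gcd(p,q)=1$ this single collection of strands closes up into one connected Legendrian knot, giving exactly Figure \ref{fig:leg-rational} (and specialising to Figure \ref{fig:intro-leg-bp1} when $q=1$).

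The main obstacle is the last paragraph: carefully tracking how the two independent rotations (fiber labels shifting by $1$, thimbles shifting by $q$) interact when one forms the fundamental domain, and verifying that the resulting collection of arcs in the quotient $S^1\times S^2$ has the precise braid pattern and Thurston--Bennequin framing depicted in Figure \ref{fig:leg-rational}. Checking the framing is essentially automatic once one knows, as in Definition \ref{dfn:weinstein-handle}, that Lagrangian $2$-handles require no extra framing data; the combinatorial identification is the only real work.
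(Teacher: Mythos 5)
Your first two paragraphs are fine and essentially match the paper's (much terser) treatment: freeness of $\Gamma_{p,q}$, invariance of $\theta$ and $\phi$, invariance of $\pi^{-1}(D_0)$ giving one $0$- and one $1$-handle, and transitivity on the $p$ thimbles giving one $2$-handle. Your third paragraph, however, takes a genuinely different route from the paper for the attaching circle, and it is exactly there that the proof has a gap. The direct ``cut along a fundamental domain of the $S^1$-direction and reglue by the residual rotation'' argument does correctly produce the smooth connectivity pattern (strand $k$ continues to strand $k+q$, closing up into one knot since $\gcd(p,q)=1$), but the content of the proposition is a specific \emph{Legendrian front} diagram: Figure \ref{fig:leg-rational} records $q$ full twists' worth of cusps, and these are what later determine $\tb$ (hence the surgery framing), the rotation number (hence $c_1(B_{p,q})=q$), and ultimately the generators of $\CE^*(\Lambda_{p,1})$. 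Your closing admission that ``the combinatorial identification is the only real work'' defers precisely the step that carries the proposition's content; knowing that critical handles need no extra framing data does not help, because the framing $\tb-1$ still depends on which Legendrian representative you end up with, and your quotient construction by itself does not identify the induced contact structure and front projection on $(S^1\times S^2)/(\Z/p)$ with the standard ones.

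The paper closes this gap with a specific device that your proposal lacks a substitute for: before quotienting, it modifies the surgery diagram of $A_{p-1}$ in Figure \ref{fig:leg-milnor} by applying $q$ full negative twists around the $1$-handle, obtaining the alternative diagram of Figure \ref{fig:leg-milnor-alternative} with $pq$ twists. This modification is legitimate because $A_{p-1}$ has a unique Weinstein structure up to deformation (the paper cites \cite{wendl} and \cite{lekili} for this --- a nontrivial external input your argument never invokes). On the pre-twisted diagram the $\Gamma_{p,q}$-action becomes an evident symmetry of the Legendrian front itself, so the quotient is read off by literally dividing the picture by $p$: the $pq$ twists become $q$ twists, yielding Figure \ref{fig:leg-rational} together with its cusp data. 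If you want to complete your direct approach instead, you would need to prove an analogue of this: an identification of the quotient contact $S^1\times S^2$ (with its descended Legendrian) with the standard model, precise enough to compute the front. As written, your proof establishes the handle count and the smooth isotopy type of the attaching circle, but not the Legendrian surgery diagram asserted in the statement.
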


\begin{figure}[h]
	\centering
	
	\begin{tikzpicture}
	\draw (-5,0) circle (1);
	\draw (-4,0) arc (0:-180:1 and 0.15);
	\draw[dashed] (-4,0) arc (0:180:1 and 0.15);
	\draw (5,0) circle (1);
	\draw (6,0) arc (0:-180:1 and 0.15);
	\draw[dashed] (6,0) arc (0:180:1 and 0.15);
	
	\node[left] at ({-5+cos(asin(0.9))},0.9){\tiny 1};
	\node[left] at ({-5+cos(asin(0.7))},-0.7){\tiny p-1};
	\node[left] at ({-5+cos(asin(0.9))},-0.9){\tiny p};
	
	\node[right] at ({5-cos(asin(0.9))},0.9){\tiny 1};
	\node[right] at ({5-cos(asin(0.7))},-0.7){\tiny p-1};
	\node[right] at ({5-cos(asin(0.9))},-0.9){\tiny p};
	
	\draw ({-5+cos(asin(0.9))},0.9) -- (2,-0.5) -- (1,1.7) -- ({5-cos(asin(0.9))},0.9);
	\draw ({-5+cos(asin(0.7))},-0.7) -- (-1.8,-1.2) -- (-2.8,1) -- ({5-cos(asin(0.7))},-0.7);
	\draw ({-5+cos(asin(0.9))},-0.9) -- (-2,-1.4) -- (-3,0.8) -- ({5-cos(asin(0.9))},-0.9);
	
	\draw[dashed] (-3.5,0.6) -- (-3.5,-0.8) node[right,midway]{\small $p$};
	\draw[dashed] (3,1.1) -- (3,-0.3) node[right,midway]{\small $p$};
	\draw[dashed] (-2,1) -- (0.8,1.4) node[above,midway]{\small $q$};
	\end{tikzpicture}
	
	\caption{Legendrian surgery diagram of $B_{p,q}$}
	\label{fig:leg-rational}
\end{figure}
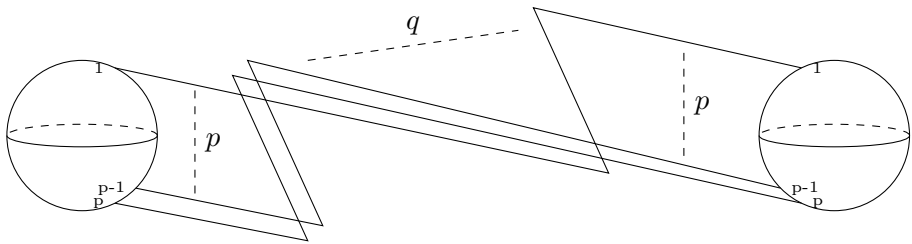

\begin{proof}
	$\Gamma_{p,q}$ action does not change $\pi^{-1}(D_0)$, so $B_{p,q}$ has one 0- and one 1-handle, and hence we only need to understand what happens to 2-handles of $A_{p-1}$ under the action of $\Gamma_{p,q}$. Clearly, $p$ 2-handles become one under the action. To understand its attaching circle, observe that we can apply $q$ full negative twists around the 1-handle in Figure \ref{fig:leg-milnor}, and since $A_{p-1}$ has unique Weinstein structure up to deformation (see \cite{wendl} and \cite{lekili}), it gives an alternative Legendrian surgery diagram of $A_{p-1}$ as shown in Figure \ref{fig:leg-milnor-alternative}.
	
	\begin{figure}[h]
		\centering
		
		\begin{tikzpicture}
			\draw (-5,0) circle (1);
			\draw (-4,0) arc (0:-180:1 and 0.15);
			\draw[dashed] (-4,0) arc (0:180:1 and 0.15);
			\draw (5,0) circle (1);
			\draw (6,0) arc (0:-180:1 and 0.15);
			\draw[dashed] (6,0) arc (0:180:1 and 0.15);
		
			\node[left] at ({-5+cos(asin(0.9))},0.9){\tiny 1};
			\node[left] at ({-5+cos(asin(0.7))},-0.7){\tiny p-1};
			\node[left] at ({-5+cos(asin(0.9))},-0.9){\tiny p};
		
			\node[right] at ({5-cos(asin(0.9))},0.9){\tiny 1};
			\node[right] at ({5-cos(asin(0.7))},-0.7){\tiny p-1};
			\node[right] at ({5-cos(asin(0.9))},-0.9){\tiny p};
		
			\draw ({-5+cos(asin(0.9))},0.9) -- (2,-0.5) -- (1,1.7) -- ({5-cos(asin(0.9))},0.9);
			\draw ({-5+cos(asin(0.7))},-0.7) -- (-1.8,-1.2) -- (-2.8,1) -- ({5-cos(asin(0.7))},-0.7);
			\draw ({-5+cos(asin(0.9))},-0.9) -- (-2,-1.4) -- (-3,0.8) -- ({5-cos(asin(0.9))},-0.9);
		
			\draw[dashed] (-3.5,0.6) -- (-3.5,-0.8) node[right,midway]{\small $p$};
			\draw[dashed] (3,1.1) -- (3,-0.3) node[right,midway]{\small $p$};
			\draw[dashed] (-2,1) -- (0.8,1.4) node[above,midway]{\small $pq$};
		\end{tikzpicture}
		
		\caption{An alternative Legendrian surgery diagram of $A_{p-1}$}
		\label{fig:leg-milnor-alternative}
	\end{figure}
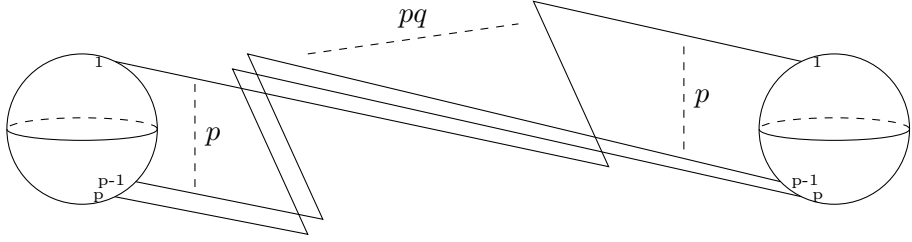

	From this diagram, it is easy to see the effect of the action of $\Gamma_{p,q}$. It gives the Legendrian surgery diagram of $B_{p,q}$ as shown in Figure \ref{fig:leg-rational}.
\end{proof}

\begin{prp}
	The cohomology groups of $B_{p,q}$ are given by
	\[H^1(B_{p,q};\Z)\simeq 0, H^2(B_{p,q};\Z)\simeq \Z/p\]
	and
	\[H^1(B_{p,q};\Z/N)\simeq\Z/\gcd(N,p), H^2(B_{p,q};\Z/N)\simeq\Z/\gcd(N,p)\]
	where $\gcd(N,p)$ is the greatest common divisor of $N$ and $p$. Moreover, the first Chern class of $B_{p,q}$
	\[c_1(B_{p,q})\in H^2(B_{p,q};\Z)\simeq\Z/p\]
	is primitive.
\end{prp}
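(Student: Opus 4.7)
The plan is to proceed in three steps, computing integer cohomology first, then mod $N$ cohomology by Universal Coefficients, and finally the Chern class via the Calabi-Yau structure of the cover $A_{p-1}$.

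First I would use the Weinstein handle decomposition of $B_{p,q}$ from the previous proposition: one 0-handle, one 1-handle, one 2-handle. Hence $B_{p,q}$ is homotopy equivalent to a CW complex with exactly one cell in each of dimensions $0$, $1$, $2$, and the cellular chain complex is $\Z \xrightarrow{\partial_2} \Z \xrightarrow{0} \Z$. Since the action $\Gamma_{p,q}$ on $A_{p-1}$ is free (the origin $(0,0,0)$ is not in $A_{p-1}$) and $A_{p-1}$ is simply connected (being homotopy equivalent to a wedge of $(p-1)$ copies of $S^2$), the quotient map $A_{p-1}\to B_{p,q}$ is a universal covering with deck group $\Z/p$, so $\pi_1(B_{p,q})\simeq\Z/p$ and $H_1(B_{p,q};\Z)\simeq\Z/p$. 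This forces $\partial_2$ to be multiplication by $p$, which yields $H_0=\Z$, $H_1=\Z/p$, $H_2=0$. Applying the Universal Coefficient Theorem to these homology groups gives $H^1(B_{p,q};\Z)=0$ and $H^2(B_{p,q};\Z)\simeq\Z/p$.

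Next, I would derive the mod $N$ cohomology by applying UCT once more with coefficients in $\Z/N$. Using the standard identities $\textup{Hom}(\Z/p,\Z/N)\simeq\Z/\gcd(p,N)$ and $\textup{Ext}(\Z/p,\Z/N)\simeq\Z/\gcd(p,N)$, both $H^1(B_{p,q};\Z/N)$ and $H^2(B_{p,q};\Z/N)$ come out to $\Z/\gcd(p,N)$ as required.

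The final and more interesting step is showing $c_1(B_{p,q})$ is primitive in $\Z/p$. I would exploit that $A_{p-1}=\{z^p+2xy=1\}$ carries a nowhere-vanishing holomorphic $2$-form
\[\Omega=\frac{dx\wedge dy}{pz^{p-1}}=\frac{dy\wedge dz}{2y}=\frac{dz\wedge dx}{2x},\]
so $K_{A_{p-1}}$ is trivial and $c_1(A_{p-1})=0$. A direct computation under $(x,y,z)\mapsto(\xi x,\xi^{-1}y,\xi^q z)$ gives $\Gamma_{p,q}^*\Omega=\xi^{-q(p-1)}\Omega=\xi^q\Omega$, so the descended bundle $K_{B_{p,q}}$ on $B_{p,q}$ is the flat line bundle corresponding to the character $\xi\mapsto\xi^q$ of $\pi_1(B_{p,q})=\Z/p$. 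Since $B_{p,q}$ is Stein (being Weinstein of real dimension $4$), the exponential sequence gives $\textup{Pic}(B_{p,q})\xrightarrow{\sim}H^2(B_{p,q};\Z)\simeq\Z/p$ via $c_1$. The character $\xi\mapsto\xi^q$ has order $p$ in $\textup{Hom}(\Z/p,\C^*)$ because $\gcd(p,q)=1$, so $c_1(K_{B_{p,q}})$ is a generator of $\Z/p$. Then $c_1(B_{p,q})=-c_1(K_{B_{p,q}})$ is also a generator, since $-1$ is a unit mod $p$. The main obstacle is cleanly identifying $c_1(K_{B_{p,q}})$ with the character of $\Gamma_{p,q}$ on $\Omega$; once the Stein identification $\textup{Pic}\simeq H^2$ is in place this is essentially bookkeeping, but it is the one spot where care with signs and with the monodromy convention is needed.
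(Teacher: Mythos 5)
Your proposal is correct, but it reaches both conclusions by a genuinely different route from the paper. For the cohomology, the paper reads the cochain complex $0\to R\xrightarrow{0}R\xrightarrow{x\mapsto px}R\to 0$ directly off the Legendrian surgery diagram (the attaching circle of the $2$-handle runs over the $1$-handle $p$ times algebraically), whereas you pin down the degree-$p$ boundary map indirectly, via the free $\Z/p$-action on the simply connected Milnor fibre giving $\pi_1(B_{p,q})\simeq\Z/p$ and hence $H_1\simeq\Z/p$; both are sound, and yours has the advantage of not requiring one to trust the orientation bookkeeping in the diagram. For primitivity of $c_1$, the paper uses Gompf's handlebody formula: $c_1$ evaluates on the $2$-handle as the rotation number of the Legendrian attaching circle, which is read off the front projection as $(\#\text{up cusps}-\#\text{down cusps})/2=q$, primitive since $\gcd(p,q)=1$. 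You instead trivialise $K_{A_{p-1}}$ by the holomorphic volume form $\Omega$ and observe that $\Gamma_{p,q}$ acts on it by the character $\xi\mapsto\xi^{q}$ (the computation $\xi^{-q(p-1)}=\xi^{q}$ is correct), so $K_{B_{p,q}}$ is the flat line bundle of an order-$p$ character; the step you flag as ``bookkeeping'' --- that this flat bundle has $c_1$ a generator of the torsion --- does go through, via the exact sequence $H^1(X;\C)\to H^1(X;\C^*)\to H^2(X;\Z)\to H^2(X;\C)$ with vanishing outer terms, and you do not actually need the full Stein isomorphism $\textup{Pic}\simeq H^2$ for this. Your complex-geometric argument is less elementary but more structural: it identifies the canonical bundle itself (recovering $c_1=\pm q$, consistent with the paper's $r(\Lambda_{p,q})=q$), while the paper's is a purely diagrammatic $4$-manifold computation.
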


\begin{proof}
	$B_{p,q}$ consists of one 0-, one 1-, and one 2-handle. The 2-handle is attached as in the Figure \ref{fig:leg-rational}. Hence we have the cochain complex $C^*(B_{p,q};R)$ given by
	\[0\to R\xrightarrow{0}R\xrightarrow{x\mapsto px}R\to 0\]
	for $R=\Z$ or $\Z/N$, which gives the desired cohomology groups. By \cite[Proposition 2.3]{gompf-handlebody}, the class $c_1(B_{p,q})\in H^2(B_{p,q};\Z)$ is
	represented by a cocycle whose value on the oriented 2-handle of $B_{p,q}$ is the rotation number $r(\Lambda_{p,q})$ of the attaching circle $\Lambda_{p,q}$ of that handle, which is shown in Figure \ref{fig:leg-rational}. If we orient $\Lambda_{p,q}$ from left to right, its rotation number can be read from its front projection as
	\[r(\Lambda_{p,q})=\frac{\#(\text{up cusps})-\#(\text{down cusps})}{2}=\frac{2q}{2}=q\ .\]
	This shows that $c_1(B_{p,q})=q\in\Z/p$ is primitive since $\gcd(p,q)=1$.
\end{proof}

Finally, we will describe the skeleton of $B_{p,q}$. To find it we will describe a skeleton of $A_{p-1}$ and then take quotient by $\Gamma_{p,q}$: As remarked in Definition \ref{dfn:weinstein-handle}, a skeleton $\fX_{A_{p-1}}$ can be thought as the union of the cores of the Weinstein handles of $A_{p-1}$ (after extending each core to get whole stable manifold). Hence we get $\fX_{A_{p-1}}$ as the union of the Lefschetz thimbles associated to the linear paths connecting the origin and the critical values of $\pi$:
\[\fX_{A_{p-1}}=\bigcup_{a\in\Z/p\subset\C}\pi^{-1}(\gamma_a([0,1]))\cap\{(x,y,z)\in\C^3\vb |x|=|y|\}\ .\]
$\fX_{A_{p-1}}$ can be also constructed as follows: Start with $V_p\times S^1$, where $V_p$ is the $p$-valent vertex defined in Definition \ref{dfn:vertex} (we take the angles between edges as equal). Note that its boundary consists of $p$ disjoint circles. Then we attach a disk to each of these circles to get $\fX_{A_{p-1}}$. The circle $\{0\}\times S^1$ inside $\fX_{A_{p-1}}$ is called the \textit{core circle}. See Figure \ref{fig:core-circle} to see how it looks like locally.

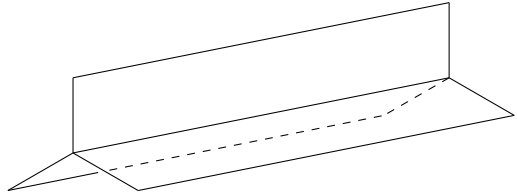
\begin{figure}[h]
	\centering
	
	\begin{tikzpicture}
		\draw (0,0) -- (0,1);
		\draw (0,0) -- ({-sqrt(3)/2},-0.5);
		\draw (0,0) -- ({sqrt(3)/2},-0.5);
		\draw (0,0) -- (5,1);
		\draw (0,1) -- (5,2);
		\draw ({sqrt(3)/2},-0.5) -- ({5+sqrt(3)/2},0.5);
		\draw ({-sqrt(3)/2},-0.5) -- ({5*0.24-sqrt(3)/2},-0.5+0.24);
		\draw[dashed] ({5*0.27-sqrt(3)/2},-0.5+0.27) -- ({5-sqrt(3)/2},0.5);
		\draw (5,1) -- (5,2);
		\draw (5,1) -- ({5+sqrt(3)/2},0.5);
		\draw[dashed] (5,1) -- ({5-sqrt(3)/2},0.5);
		
	\end{tikzpicture}
	
	\caption{A neighbourhood of an arc in the core circle for $p=3$}
	\label{fig:core-circle}
\end{figure}

Now we are ready to describe the skeleton of $B_{p,q}$:

\begin{dfn}[\cite{khodorovskiy}, \cite{evans}]\label{dfn:pinwheel}
	Let $p,q$ be coprime integers such that $p\geq 2$ and $0<q<p$. We define the pinwheel $L_{p,q}$ in a symplectic manifold $(W,\omega)$ as a smooth Lagrangian immersion $f\colon D^2\to W$ satisfying
	\begin{enumerate}
		\item $f|_{D^2\setminus\partial D^2}$ is an embedding,
		\item For $z,z'\in\partial D^2$, $f(z)=f(z')$ if and only if $z/z'\in\Z/p$,
		\item If $z\neq z'$ and $f(z) = f(z')$, then $f_*(T_zD^2)\neq f_*(T_{z'}D^2)$.
	\end{enumerate}
	where we call $f(\partial D^2)$ \textit{core circle}, see Figure \ref{fig:core-circle} again. The characterisation of $q$ is explained in \cite{evans}. We will describe it by the following construction of $L_{p,q}$: Start with $V_p\times[0,1]$, then identify $V_p\times\{0\}$ with $V_p\times\{1\}$ after rotating the former by the angle $2\pi q/p$ clockwise, i.e. via the map $(z,0)\mapsto(ze^{-2\pi q i/p},1)$. This gives the space
	\[S_{p,q} := \colim(V_p\times(0,1)\xleftarrow{(i,i)}(V_p\times(0,1/2))\sqcup (V_p\times(1/2,1))\xrightarrow{(i,r_{p,q})}V_p\times(0,1))\]
	where $i$ is the inclusion and $r_{p,q}(z,t)=(ze^{-2\pi qi/p},t)$ is the inclusion after rotating by the angle $2\pi q/p$ clockwise. Note that the boundary of $S_{p,q}$ is $\partial S_{p,q}=S^1$ whose neighbourhood is $S^1\times B^1$, since $p$ and $q$ are coprime. We get $L_{p,q}$ by attaching a disk to the boundary of $S_{p,q}$, hence we get
	\[L_{p,q}\simeq\colim(S_{p,q} \hookleftarrow S^1\times B^1 \hookrightarrow B^2)\]
	where the maps are standard inclusions into the neighbourhood of boundaries.
\end{dfn}

\begin{prp}
	The skeleton $\fX_{B_{p,q}}$ of $B_{p,q}$ is given by the pinwheel $L_{p,q}$.
\end{prp}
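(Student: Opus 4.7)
The plan is to realise $\fX_{B_{p,q}}$ as the quotient $\fX_{A_{p-1}}/\Gamma_{p,q}$ and to match this quotient to the gluing description of $L_{p,q}$ in Definition \ref{dfn:pinwheel}. The advantage of this approach is that the skeleton of $A_{p-1}$ has already been described explicitly as $(V_p\times S^1)\cup_\partial (\bigsqcup_{k=1}^p D^2)$, so everything reduces to understanding how $\Gamma_{p,q}$ acts on this model. First I would fix coordinates: the core circle $\{0\}\times S^1$ sits inside the fibre $\pi^{-1}(0)=\{xy=1/2\}$ as $\{(x,y):xy=1/2,\,|x|=|y|\}$, the $k\th$ edge of $V_p$ (times $S^1$) lies in the Lefschetz thimble over the linear path from $0$ to the critical value $e^{2\pi ik/p}$, and the $k\th$ attached disk closes up this thimble.

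Next I would compute the action of the generator $\xi=e^{2\pi i/p}$ of $\Gamma_{p,q}$ on this decomposition. On the base $\C_z$, $\xi$ acts by $z\mapsto \xi^q z$, which sends the $k\th$ linear path to the $(k+q)\th$ one mod $p$; equivalently, it rotates the $V_p$-factor by $2\pi q/p$ and cyclically permutes the $p$ attached disks by $q$. On the core circle, parametrised by $(e^{i\theta}/\sqrt 2,e^{-i\theta}/\sqrt 2)$, the action $(x,y)\mapsto(\xi x,\xi^{-1}y)$ becomes $\theta\mapsto\theta+2\pi/p$, so $\xi$ translates the $S^1$-factor by $1/p$. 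In summary, $\xi$ acts on $V_p\times S^1$ as the diagonal of (rotation of $V_p$ by $2\pi q/p$, translation of $S^1$ by $1/p$) and permutes the $p$ disks compatibly with the permutation of edges.

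I would then take the quotient in two stages. For the $V_p\times S^1$ part, a fundamental domain is $V_p\times[0,1/p]$, with boundary identification $(z,0)\sim(\xi^q z,1/p)$; rescaling $[0,1/p]$ to $[0,1]$ and reading off the direction of the identification (moving from $\{1/p\}$ back to $\{0\}$ corresponds to rotating the $V_p$ fibre clockwise by $2\pi q/p$, matching the sign convention used in Definition \ref{dfn:pinwheel}), this is precisely the model of $S_{p,q}$ as a colimit
\[
S_{p,q}=\colim\bigl(V_p\times(0,1)\xleftarrow{(i,i)} (V_p\times(0,1/2))\sqcup(V_p\times(1/2,1))\xrightarrow{(i,r_{p,q})}V_p\times(0,1)\bigr).
\]
Since $\Z/p$ acts freely and transitively on the $p$ attached disks, their quotient is a single disk $B^2$, attached to the image of $\partial(V_p\times S^1)$ in $S_{p,q}$, which is a single circle $\partial S_{p,q}\simeq S^1$ precisely because $\gcd(p,q)=1$ (otherwise the $p$ boundary components of $V_p\times S^1$ would descend to $\gcd(p,q)$ circles). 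Assembling this yields
\[
\fX_{B_{p,q}}\simeq \colim(S_{p,q}\hookleftarrow S^1\times B^1\hookrightarrow B^2)=L_{p,q}.
\]

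The main obstacle I expect is bookkeeping of orientations and rotation directions: one has to check that the sign of the base rotation $2\pi q/p$ inherited from $\Gamma_{p,q}$ matches the clockwise convention $r_{p,q}(z,t)=(ze^{-2\pi iq/p},t)$ used to define $S_{p,q}$, and that the disk attachment after quotient genuinely realises the colimit description of $L_{p,q}$. A small side check is that the coprimality $\gcd(p,q)=1$ is used exactly where Definition \ref{dfn:pinwheel} uses it, namely to guarantee $\partial S_{p,q}$ is a single circle so that exactly one $2$-cell is attached.
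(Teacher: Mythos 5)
Your proposal is correct and follows essentially the same route as the paper: realise $\fX_{B_{p,q}}$ as $\fX_{A_{p-1}}/\Gamma_{p,q}$, use the description of the action as rotating the base by $2\pi q/p$ (hence the $V_p$-factor) and the fibre by $2\pi/p$ (hence translating the core circle), so that $V_p\times S^1$ descends to $S_{p,q}$ and the $p$ attached disks descend to a single disk on the boundary circle. Your version is more explicit about coordinates, the direction of the identification, and where $\gcd(p,q)=1$ is used, but the argument is the same.
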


\begin{proof}
	We will get $\fX_{B_{p,q}}$ as the quotient of $\fX_{A_{p-1}}$ by $\Gamma_{p,q}$. Observe that the core circle $\{0\}\times S^1$ of $\fX_{A_{p-1}}$ is a noncontractible circle in the fibre $\pi^{-1}(0)$, and $V_p\times\{s\}$ is just a lift of the linear path connecting the origin and a critical point on the base $\C_z$ for any $s\in S^1$. As observed in Remark \ref{rmk:pq-action}, $\Gamma_{p,q}$ rotates the base $\C_z$ around the origin by the angle $2\pi q/p$, and the fibres by $2\pi/p$, hence $V_p\times S^1$ is sent to $S_{p,q}$ under the quotient by $\Gamma_{p,q}$. Finally, the attached $p$ disks to the boundary of $V_p\times S^1$ becomes a single disk after taking quotient, which is attached to the boundary $\partial S_{p,q}=S^1$. Hence we get $L_{p,q}$ as a result.
\end{proof}

\begin{rmk}
	The topology of $L_{p,q}$ does not depend on $q$, it depends only on $p$. Also, being a skeleton of $B_{p,q}$, $L_{p,q}$ is homotopy equivalent to $B_{p,q}$. However, when calculating microlocal sheaves, we care not only about the topology, but also the order of the edges of the vertices and how they are glued to each other. See Remark \ref{rmk:pants-torus} for an example.
\end{rmk}

\subsection{Wrapped Fukaya Category of Rational Homology Ball $B_{p,1}$}\label{sec:wrapped-rational}

We start the section with some definitions from \cite{subcritical}:

\begin{dfn}
	A \textit{semifree dga} $\cA$ is a dga whose underlying algebra is a unital tensor algebra freely generated by countably many generators. Note that the grading and the differential on $\cA$ is determined by specifying them just on the generators. A \textit{stabilisation} of $\cA$ is a semifree dga obtained by adding a countable collection of generators $\{x_i,y_i\}$ to $\cA$ such that $dx_i=y_i$ for each $i$.
	
	There is a filtration on $\cA$ obtained as follows: Give an ordering on the generators $\{a_i\}_{i=1}^N$ of $\cA$ where $N\in\Z_{\geq 0}$ or $N=\infty$:
	\[a_1<a_2<a_3<\ldots\]
	This induces a filtration on $\cA$ given by
	\[k=\cF^0\cA\subset\cF^1\cA\subset\cF^2\cA\subset\ldots\subset\cA\]
	where $\cF^n\cA$ is a semifree dga generated by $\{a_i\}_{i=1}^n$. An \textit{elementary automorphism} of $\cA$ with an ordered set of generators $\{a_i\}_{i=1}^N$ is a grading preserving algebra map
	\begin{align*}
	F\colon \cA &\to\cA\\
	a_i&\mapsto u_i a_i + v_i
	\end{align*}
	where $u_i$ is a unit in $\k$ and $v_i\in\cF^{i-1}\cA$.  A \textit{tame automorphism} of $\cA$ is a composition of finitely many elementary automorphisms of $\cA$.
	
	A \textit{tame isomorphism} between semifree dgas $\cA$ and $\cB$ is a dg functor $F\colon\cA\to\cB$ such that it can be decomposed as $F=F_2\circ F_1$ where $F_1$ is a tame automorphism of $\cA$ and $F_2\colon \cA \to\cB$ is a grading preserving algebra map sending the generators of $\cA$ to the generators of $\cB$ bijectively. We call $\cA$ and $\cB$ \textit{stable tame isomorphic} if there is a tame isomorphism between a stabilisation of $\cA$ and a stabilisation of $\cB$.
\end{dfn}

\begin{prp}[\cite{subcritical}]\label{prp:stable-tame}
	If semifree dgas $\cA$ and $\cB$ are stable tame isomorphic, then they are quasi-isomorphic.
\end{prp}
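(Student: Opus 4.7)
The plan is to reduce the proposition to two independent facts and then concatenate them. First, a tame isomorphism is by construction a dga isomorphism, hence a quasi-isomorphism. Second, every stabilization of a semifree dga is quasi-isomorphic to the original dga (via the projection that kills all added generators). Granting both, if $\tilde\cA$ and $\tilde\cB$ are stabilizations of $\cA$ and $\cB$ together with a tame isomorphism $F\colon\tilde\cA\to\tilde\cB$, we obtain a chain of quasi-isomorphisms $\cA\xleftarrow{\sim}\tilde\cA\xrightarrow{F}\tilde\cB\xrightarrow{\sim}\cB$, yielding the desired quasi-isomorphism $\cA\simeq\cB$.

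The first fact is an unpacking of the definitions. A tame isomorphism is $F=F_2\circ F_1$ where, on the underlying graded algebras, $F_1$ is an algebra automorphism (a composition of elementary automorphisms, each of which is invertible since $u_i$ is a unit and $v_i$ lives in the previous filtration step), and $F_2$ is an algebra isomorphism because it is a grading-preserving algebra map that bijects generators to generators, hence extends to a bijection of tensor algebras. Since $F$ is required to be a dg functor, it additionally commutes with the differentials, so $F$ is an isomorphism of dgas, in particular a quasi-isomorphism.

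The second fact requires a short homological argument. Let $\tilde\cA$ be a stabilization of $\cA$, obtained by adjoining free generators $\{x_i,y_i\}_{i\in I}$ with $dx_i=y_i$ (and hence $dy_i=0$), and let $\pi\colon\tilde\cA\to\cA$ be the dga map sending every $x_i$ and $y_i$ to $0$. Split the generating space of $\tilde\cA$ as $V=V_0\oplus V_1$, where $V_0$ spans the generators of $\cA$ and $V_1=\mathrm{span}_{\k}\{x_i,y_i\}$. The key observation is that the differential of $\tilde\cA$ preserves the number of $V_1$-letters appearing in a word: on a $V_0$-generator it lands in $\cA=T(V_0)$, while on $x_i\in V_1$ it lands in $V_1$. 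This yields a chain-level direct sum decomposition $\tilde\cA=\bigoplus_{k\geq 0}C_k$, where $C_k$ is spanned by words with exactly $k$ letters from $V_1$. By construction $C_0=\cA$, and for $k\geq 1$ one has an isomorphism of chain complexes $C_k\cong\cA\otimes V_1\otimes\cA\otimes\cdots\otimes V_1\otimes\cA$ with $k$ copies of $V_1$. The complex $V_1=\bigoplus_i(\k x_i\oplus\k y_i)$ is contractible via the degree $-1$ map $h(y_i)=x_i$, $h(x_i)=0$, which satisfies $dh+hd=\id_{V_1}$, and a standard tensor-product argument then gives a contracting homotopy on $C_k$ for every $k\geq 1$. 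Hence $C_k$ is acyclic for $k\geq 1$, and $\pi$ is a quasi-isomorphism.

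The main obstacle, and really the only substantive step, is verifying the grading-by-$V_1$-letters claim together with the resulting tensor-product description of each $C_k$. These are bookkeeping statements that need to be set up carefully because the tensor algebra is noncommutative and $d$ acts via the graded Leibniz rule; the essential point is that $d(V_0)\subset T(V_0)$ and $d(V_1)\subset V_1$, which prevents Leibniz terms from altering the count of $V_1$-letters. Once these facts are in place the rest of the proof is immediate from the concatenation described above.
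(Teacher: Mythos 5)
Your proof is correct. Note that the paper does not actually prove this proposition --- it is quoted from \cite{subcritical} without argument --- so there is no in-text proof to compare against; your two-step reduction (tame isomorphisms are dga isomorphisms because elementary automorphisms are invertible by induction on the filtration and generator-bijective algebra maps of tensor algebras are isomorphisms; stabilisations are quasi-isomorphic to the original via the projection killing the new generators, using the direct-sum decomposition by the number of $V_1$-letters and the contracting homotopy $h(y_i)=x_i$ tensored into each acyclic summand) is exactly the standard argument going back to Chekanov, and both halves check out, including the key observations that $d(V_0)\subset T(V_0)$ and $d(V_1)\subset V_1$ and that contractibility of one tensor factor suffices over an arbitrary commutative ring.
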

    
Now we are ready to calculate the wrapped Fukaya category of $B_{p,1}$ for $p\geq 3$. Note that $\cW(B_{p,1})$ cannot be made $\Z$-graded, since $2c_1(B_{p,1})\neq 0\in H^2(B_{p,1};\Z)\simeq\Z/p$, see Remark \ref{rmk:grading-structure}. Hence $\cW(B_{p,1})$ will be $\Z/2$-graded. If $\k$ is a field of characteristic zero, the combination of Theorem \ref{thm:cdrgg} and \ref{thm:bee} gives
\[\cW(B_{p,1})\simeq\Perf(\CE^*(\Lambda_{p,1}))\]
where $\Lambda_{p,1}$ is the attaching circle of the unique critical 2-handle of $B_{p,1}$, attached on $S^1\times S^2$ which is the boundary of the subcritical part of $B_{p,1}$. Hence we only need to calculate $\Z/2$-graded $\CE^*(\Lambda_{p,1})$ to find $\cW(B_{p,1})$.

\begin{prp}\label{prp:ce-bp1}
	For $p\geq 3$, $\CE^*(\Lambda_{p,1})$ is the semifree dga generated by the elements
	\begin{align*}
		a_i &\text{ for }p\geq i\geq 1\\
		b_{ij} &\text{ for }p\geq i>j\geq 1\\
		c_{ij}&\text{ for }p\geq i>j\geq 1\\
		c'_{ij}&\text{ for }p\geq i,j\geq 1
	\end{align*}
	such that 
	\begin{align*}
		db_{ij} &= \sum_{k=j}^i c_{ik}\circ b_{kj}-b_{ik}\circ c_{(k-1)(j-1)} &\text{with }|b_{ij}|=0 \text{ for }p\geq i>j\geq 1\\
		dc_{ij} &= -\delta_{i-j,p}+\sum_{k=j+1}^{i-1}c_{ik}\circ c_{kj} &\text{with }|c_{ij}|=1\text{ for }p\geq i>j\geq 0\\
		dc'_{ij} &= \delta_{ij} + \sum_{k=1}^{i-1}c_{ik}\circ c'_{kj}+\sum_{k=j+1}^{p}c'_{ik}\circ c_{kj} &\text{with }|c'_{ij}|=1 \text{ for }p\geq i, j\geq 1
	\end{align*}
	where we set $c_{i0}:=a_i$, $b_{ii}:=\id$, $c_{ii}:=\id$, and $\delta_{ij}$ is Kronecker delta.
\end{prp}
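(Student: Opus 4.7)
The plan is to compute $\CE^*(\Lambda_{p,1})$ directly from its Legendrian surgery diagram in Figure \ref{fig:intro-leg-bp1} using the combinatorial recipe of \cite{subcritical}. First I would resolve the front projection to a Lagrangian projection, smoothing right cusps into loops and left cusps in the standard way. The resulting diagram is the one sketched in Figure \ref{fig:intro-gen-bp1}. By \cite{subcritical}, the generators of $\CE^*(\Lambda_{p,1})$ are then in bijection with (i) the Reeb chords appearing as double points of the Lagrangian projection, producing the $b_{ij}$ for $p \geq i > j \geq 1$ in the braid region where strand $i$ (labelled on the right) crosses strand $j$, together with the $p$ extra crossings $a_1,\dots,a_p$ generated by the long ``wrap-around'' arc implementing the $p$ twists, and (ii) two families $c_{ij}$, $c'_{ij}$ of generators attached to the $1$-handle, one for each pair of arcs passing through it, in the two Reeb-flow orientations.

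Second I would compute the gradings in $\Z/2$ using the parity rule for double points (determined by the difference of Maslov potentials of the two branches) and the standard parities for $1$-handle chords from \cite{subcritical}: one finds $|b_{ij}| = 0$ and $|a_i| = |c_{ij}| = |c'_{ij}| = 1$, consistent with the statement.

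Third I would compute the differentials by enumerating immersed polygons in the Lagrangian projection with exactly one positive corner at the generator being differentiated and negative corners at the remaining chords, together with the $1$-handle contributions of \cite{subcritical}. For $dc_{ij}$ with $i > j \geq 1$, only the standard $1$-handle disks contribute, giving $\sum_{k=j+1}^{i-1} c_{ik}\circ c_{kj}$; the term $-\delta_{i-j,p}$ appears once one sets $c_{i0} := a_i$ and picks up the unique disk that wraps all the way around the diagram through the $p$ twists. The formula for $dc'_{ij}$ arises analogously, with the $\delta_{ij}$ term coming from the small bigon pairing $c'_{ij}$ against $c_{ji}$ through the $1$-handle. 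For $db_{ij}$ the disks sit in the twist region and split according to which intermediate strand $k$ is used: disks using the upper strand traverse the $1$-handle once contributing $c_{ik}\circ b_{kj}$, disks using the lower strand contribute $-b_{ik}\circ c_{(k-1)(j-1)}$ with the shift in indices reflecting the single full twist that $k$ picks up while crossing back.

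The main obstacle will be the systematic enumeration of disks in the twist region contributing to $db_{ij}$, and keeping track of signs. To bypass a brute-force disk count, my plan is to write down the obvious basic disks (the ones that are visibly bounded by two or three Reeb chords in a single twist square) and then enforce $d^2 = 0$ on each family of generators in order of increasing complexity: $d^2 a_i = 0$ forces the $-\delta_{i-j,p}$ term to be consistent with the quadratic $c$-disks; then $d^2 c_{ij} = 0$ and $d^2 c'_{ij} = 0$ are automatic from the recursive structure; finally $d^2 b_{ij} = 0$ determines the shifted-index piece $c_{(k-1)(j-1)}$ uniquely. An independent sanity check is that a stable tame isomorphism (Proposition \ref{prp:stable-tame}) applied to these relations reproduces $\cA_{p,1}$ of Theorem \ref{thm:intro-wrapped} after cancelling the contractible $(b_{ij}, \partial)$-pairs, which is precisely what is carried out in Section \ref{sec:wrapped-rational}; consistency of that simplification provides a robust cross-check that the combinatorial differentials above are correct.
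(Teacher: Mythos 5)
Your setup matches the paper's: resolve the front projection of $\Lambda_{p,1}$ to a Lagrangian projection, read off the crossing generators $a_i$, $b_{ij}$ and the $1$-handle generators $c_{ij}$, $c'_{ij}$ (using the finite presentation of the handle generators from \cite{plumbing} rather than the infinite family of \cite{subcritical}), and determine the $\Z/2$-gradings. The differentials $dc_{ij}$ and $dc'_{ij}$ are indeed given directly by the $1$-handle formulas of \cite{subcritical}, as you say.

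The gap is in how you propose to get $da_i$ and $db_{ij}$. The Chekanov--Eliashberg differential is \emph{defined} by a count of immersed polygons with one positive corner, weighted by signs and by powers of the marked-point variable $t$; it is not characterized by $d^2=0$, so ``enforcing $d^2=0$ in order of increasing complexity'' does not constitute a computation of it. Even granting that the quadratic shape of the answer is heavily constrained, $d^2=0$ cannot recover the data that actually requires work here: the sign rules at even-degree corners (Figure \ref{fig:orientation-signs}), the passage to the opposite dga forced by reading compositions right-to-left (which flips $d$ to $(-1)^{|x|-1}d$), the choice of null-cobordant spin structure (which replaces $t$ by $-t$), and the single monogon at the resolved right cusp through the marked point that produces the $-t$ (hence $-\delta_{i-j,p}$ after setting $t=1$) term in $da_p$ --- your description of that term as coming from a disk ``wrapping all the way around through the $p$ twists'' is not what happens; it is a local teardrop at the twisted cusp. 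The paper does exactly the enumeration you are trying to avoid, exhibiting the seven representative disk types and their contributions. Finally, your proposed cross-check --- that the stable tame simplification lands on $\cA_{p,1}$ --- is circular: Theorem \ref{thm:wrapped-rational} is \emph{deduced} from this proposition, so internal consistency of that reduction cannot certify the differentials you fed into it. To close the gap you must actually classify the immersed polygons in the twist region (they are few and of a uniform combinatorial type, so this is finite and routine) and track the sign and $t$ conventions through the opposite-algebra translation.
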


\begin{proof}
	For a Legendrian link $\Lambda$ in the boundary of a subcritical Weinstein 4-manifold, $\CE^*(\Lambda)$ is combinatorially described in \cite{subcritical} starting from the front projection of $\Lambda$ on the boundary. We will describe this process for our Legendrian knot $\Lambda_{p,1}$, which is as follows:
	
	By setting $q=1$ in Figure \ref{fig:leg-rational}, we get the front projection of $\Lambda_{p,1}$ on $S^1\times S^2$ as shown in Figure \ref{fig:leg-bp1}.
	
	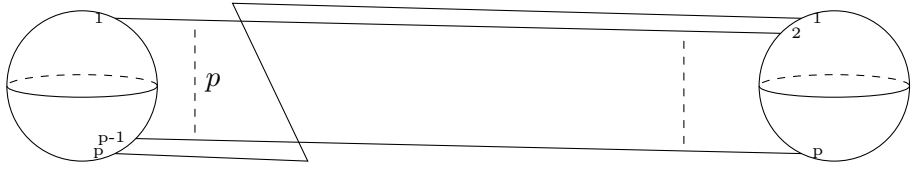
\begin{figure}[h]
		\centering
	        
	    \begin{tikzpicture}    
	    	\draw (-5,0) circle (1);
	        \draw (-4,0) arc (0:-180:1 and 0.15);
	        \draw[dashed] (-4,0) arc (0:180:1 and 0.15);
	        \draw (5,0) circle (1);
	        \draw (6,0) arc (0:-180:1 and 0.15);
	        \draw[dashed] (6,0) arc (0:180:1 and 0.15);
	            
	        \node[left] at ({-5+cos(asin(0.9))},0.9){\tiny 1};
	        \node[left] at ({-5+cos(asin(0.7))},-0.7){\tiny p-1};
	        \node[left] at ({-5+cos(asin(0.9))},-0.9){\tiny p};
	            
	        \node[right] at ({5-cos(asin(0.9))},0.9){\tiny 1};
	        \node[right] at ({5-cos(asin(0.7))},0.7){\tiny 2};
	        \node[right] at ({5-cos(asin(0.9))},-0.9){\tiny p};
	            
	        \draw ({-5+cos(asin(0.9))},0.9) -- ({5-cos(asin(0.7))},0.7);
	        \draw ({-5+cos(asin(0.7))},-0.7) -- ({5-cos(asin(0.9))},-0.9);
	        \draw ({-5+cos(asin(0.9))},-0.9) -- (-2,-1) -- (-3,1.1) -- ({5-cos(asin(0.9))},0.9);
	            
	        \draw[dashed] (-3.5,0.75) -- (-3.5,-0.65) node[right,midway]{\small $p$};
	        \draw[dashed] (3,0.6) -- (3,-0.8);  
		\end{tikzpicture}
	        
	    \caption{Front projection of $\Lambda_{p,1}$ on $S^1\times S^2$}
	    \label{fig:leg-bp1}
	\end{figure}
	
	What we want to work with is actually the Lagrangian projection of $\Lambda_{p,1}$. In \cite{subcritical} it is explained how to pass from a front projection to a Lagrangian projection. It can be summarised as follows: Left cusps are smoothed, right cusps are twisted, the crossings are resolved, and each 1-handle is half-twisted. The Lagrangian projection of $\Lambda_{p,1}$ on $S^1\times S^2$ is shown in Figure \ref{fig:lag-bp1}. Note that the labels on the right sphere are reflected upside down as a result of half-twisting the 1-handle.
	    
	\begin{figure}[h]
		\centering
	        
		\begin{tikzpicture}
	     	\draw (-5,0) circle (1);
	        \draw (-4,0) arc (0:-180:1 and 0.15);
	        \draw[dashed] (-4,0) arc (0:180:1 and 0.15);
	        \draw (5,0) circle (1);
	        \draw (6,0) arc (0:-180:1 and 0.15);
	        \draw[dashed] (6,0) arc (0:180:1 and 0.15);
	            
	        \node[left] at ({-5+cos(asin(0.9))},0.9){\tiny $1$};
	        \node[left] at ({-5+cos(asin(0.7))},0.7){\tiny $2$};
	        \node[left] at ({-5+cos(asin(0.5))},-0.5){\tiny $p-2$};
	        \node[left] at ({-5+cos(asin(0.7))},-0.7){\tiny $p-1$};
	        \node[left] at ({-5+cos(asin(0.9))},-0.9){\tiny $p$};
	            
	        \node[right] at ({5-cos(asin(0.9))},0.9){\tiny $p$};
	        \node[right] at ({5-cos(asin(0.7))},0.7){\tiny $p-1$};
	        \node[right] at ({5-cos(asin(0.5))},-0.5){\tiny $3$};
	        \node[right] at ({5-cos(asin(0.7))},-0.7){\tiny $2$};
	        \node[right] at ({5-cos(asin(0.9))},-0.9){\tiny $1$};
	            
	        \draw ({-5+cos(asin(0.9))},-0.9) -- (-3.1,-0.9);
	        \draw plot[smooth] coordinates{(-2.9,-0.9) (-2.5,-1) (-2.5,-1.4) (-2.8,-1.4) (-3,-0.9) (-3,1.1) (1.5,1.1) (2.5,-1) (3,-0.9) ({5-cos(asin(0.9))},-0.9)};
	            
	        \begin{scope}
	        	\clip (-6,-1.3) rectangle (-3.25,1.3) (-3.1,-1.3) rectangle (2.35,1.3) (2.45,-1.3) rectangle (6,1.3);
	            \draw plot[smooth] coordinates{({-5+cos(asin(0.9))},0.9) (1,0.9) (2,-1) (2.5,-0.7) ({5-cos(asin(0.7))},-0.7)};
	       	\end{scope}
	            
	        \begin{scope}
	            \clip (-6,-1.3) rectangle (-3.35,1.3) (-3.15,-1.3) rectangle (1.8,1.3) (1.95,-1.3) rectangle (2.25,1.3) (2.4,-1.3) rectangle (6,1.3);
	            \draw plot[smooth] coordinates{({-5+cos(asin(0.7))},0.7) (0.5,0.7) (1.5,-1) (2,-0.5) ({5-cos(asin(0.5))},-0.5)};
	        \end{scope}
	            
	        \begin{scope}
	            \clip (-6,-1.3) rectangle (-3.2,1.3) (-3,-1.3) rectangle (-0.2,1.3) (0.4,-1.3) rectangle (0.7,1.3) (0.82,-1.3) rectangle (1.2,1.3) (1.35,-1.3) rectangle (1.8,1.3) (1.95,-1.3) rectangle (6,1.3);
	            \draw plot[smooth] coordinates{({-5+cos(asin(0.5))},-0.5) (-1,-0.5) (-0.5,-1) (1,0.7) ({5-cos(asin(0.7))},0.7)};
	        \end{scope}
	            
	        \begin{scope}
	            \clip (-6,-1.3) rectangle (-3.15,1.3) (-2.95,-1.3) rectangle (-0.8,1.3) (-0.67,-1.3) rectangle (-0.5,1.3) (0.1,-1.3) rectangle (0.3,1.3) (0.45,-1.3) rectangle (0.7,1.3) (0.95,-1.3) rectangle (1.55,1.3) (1.75,-1.3) rectangle (6,1.3);
	            \draw plot[smooth] coordinates{({-5+cos(asin(0.7))},-0.7) (-1.5,-0.7) (-1,-1) (0.7,0.9) ({5-cos(asin(0.9))},0.9)};
	       	\end{scope}
	            
	        \draw[dashed] (-3.8,0.6) -- (-3.8,-0.4) node[right,midway]{\small $p$};
	        \draw[dashed] (-1.7,0.6) -- (-1.7,-0.4);
	        \draw[dashed] (-0.2,-0.3) -- (0.1,0.2);
	        \draw[dashed] (-0.2,-0.9) -- (1.2,-0.9);
	        \draw[dashed] (1.2,0.5) -- (1.6,-0.4);
	        \draw[dashed] (1.7,0.6) -- (2.1,-0.3);
	        \draw[dashed] (3.2,0.6) -- (3.2,-0.4);
		\end{tikzpicture}
	        
	    \caption{Lagrangian projection of $\Lambda_{p,1}$ on $S^1\times S^2$}
	    \label{fig:lag-bp1}
	\end{figure}
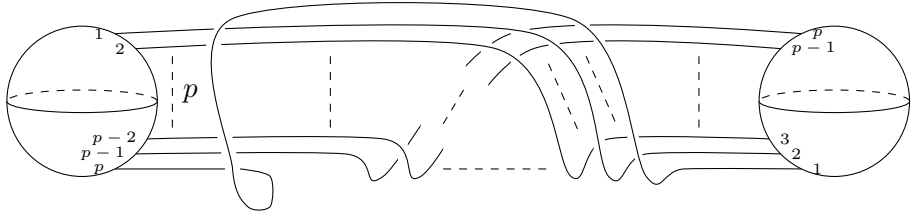
	
	Then $\CE^*(\Lambda_{p,1})$ is a semifree dga whose generators can be read off from the Lagrangian projection: We get one generator for each crossing, which we label by
	\begin{align*}
		a_i &\text{ for }p\geq i\geq 1\\
		b_{ij} &\text{ for }p\geq i>j\geq 1
	\end{align*}
	as shown in Figure \ref{fig:gen-bp1}, and we get a collection of additional generators from the 1-handle, labelled by
	\begin{align*}
		c_{ij}&\text{ for }p\geq i>j\geq 1\\
		c'_{ij}&\text{ for }p\geq i,j\geq 1\ .
	\end{align*}
	We visualise the generator $c_{ij}$ in the figure as the arc connecting the points $i$ and $j$ on the sphere, and consider it as an intersection point by collapsing the arc to a point. Figure \ref{fig:gen-bp1} also shows the auxiliary data: an orientation of $\Lambda_{p,1}$ and a marked point $t$ which are given to calculate the differential and grading on the generators.
	
	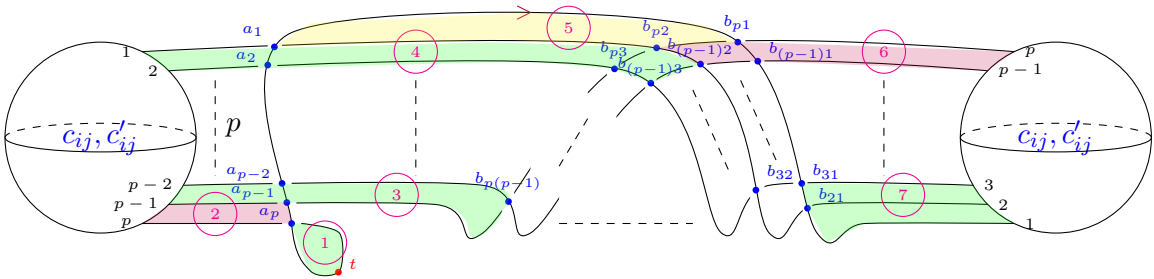
\begin{figure}[h]
		\centering
		
		\begin{tikzpicture}[scale=1.27]
			\draw (-5,0) circle (1) node[blue]{\small $c_{ij},c'_{ij}$};
			\draw (-4,0) arc (0:-180:1 and 0.15);
			\draw[dashed] (-4,0) arc (0:180:1 and 0.15);
			\draw (5,0) circle (1) node[blue]{\small $c_{ij},c'_{ij}$};
			\draw (6,0) arc (0:-180:1 and 0.15);
			\draw[dashed] (6,0) arc (0:180:1 and 0.15);
		
			\node[left] at ({-5+cos(asin(0.9))},0.9){\tiny $1$};
			\node[left] at ({-5+cos(asin(0.7))},0.7){\tiny $2$};
			\node[left] at ({-5+cos(asin(0.5))},-0.5){\tiny $p-2$};
			\node[left] at ({-5+cos(asin(0.7))},-0.7){\tiny $p-1$};
			\node[left] at ({-5+cos(asin(0.9))},-0.9){\tiny $p$};
		
			\node[right] at ({5-cos(asin(0.9))},0.9){\tiny $p$};
			\node[right] at ({5-cos(asin(0.7))},0.7){\tiny $p-1$};
			\node[right] at ({5-cos(asin(0.5))},-0.5){\tiny $3$};
			\node[right] at ({5-cos(asin(0.7))},-0.7){\tiny $2$};
			\node[right] at ({5-cos(asin(0.9))},-0.9){\tiny $1$};
		
			\draw ({-5+cos(asin(0.9))},-0.9) -- (-3.1,-0.9);
			\draw plot[smooth] coordinates{(-2.9,-0.9) (-2.5,-1) (-2.5,-1.4) (-2.8,-1.4) (-3,-0.9) (-3,1.1) (1.5,1.1) (2.5,-1) (3,-0.9) ({5-cos(asin(0.9))},-0.9)};
			
			\begin{scope}
				\clip (-6,-1.3) rectangle (-3.25,1.3) (-3.1,-1.3) rectangle (2.35,1.3) (2.45,-1.3) rectangle (6,1.3);
				\draw plot[smooth] coordinates{({-5+cos(asin(0.9))},0.9) (1,0.9) (2,-1) (2.5,-0.7) ({5-cos(asin(0.7))},-0.7)};
			\end{scope}
		
			\begin{scope}
				\clip (-6,-1.3) rectangle (-3.35,1.3) (-3.15,-1.3) rectangle (1.8,1.3) 	(1.95,-1.3) rectangle (2.25,1.3) (2.4,-1.3) rectangle (6,1.3);
				\draw plot[smooth] coordinates{({-5+cos(asin(0.7))},0.7) (0.5,0.7) (1.5,-1) (2,-0.5) ({5-cos(asin(0.5))},-0.5)};
			\end{scope}
		
			\begin{scope}
				\clip (-6,-1.3) rectangle (-3.2,1.3) (-3,-1.3) rectangle (-0.2,1.3) (0.4,-1.3) rectangle (0.7,1.3) (0.82,-1.3) rectangle (1.2,1.3) (1.35,-1.3) rectangle (1.8,1.3) (1.95,-1.3) rectangle (6,1.3);
				\draw plot[smooth] coordinates{({-5+cos(asin(0.5))},-0.5) (-1,-0.5) (-0.5,-1) (1,0.7) ({5-cos(asin(0.7))},0.7)};
			\end{scope}
		
			\begin{scope}
				\clip (-6,-1.3) rectangle (-3.15,1.3) (-2.95,-1.3) rectangle (-0.8,1.3) (-0.67,-1.3) rectangle (-0.5,1.3) (0.1,-1.3) rectangle (0.3,1.3) (0.45,-1.3) rectangle (0.7,1.3) (0.95,-1.3) rectangle (1.55,1.3) (1.75,-1.3) rectangle (6,1.3);
				\draw plot[smooth] coordinates{({-5+cos(asin(0.7))},-0.7) (-1.5,-0.7) (-1,-1) (0.7,0.9) ({5-cos(asin(0.9))},0.9)};
			\end{scope}
		
			\draw[dashed] (-3.8,0.6) -- (-3.8,-0.4) node[right,midway]{\small $p$};
			\draw[dashed] (-1.7,0.6) -- (-1.7,-0.4);
			\draw[dashed] (-0.2,-0.3) -- (0.1,0.2);
			\draw[dashed] (-0.2,-0.9) -- (1.2,-0.9);
			\draw[dashed] (1.2,0.5) -- (1.6,-0.4);
			\draw[dashed] (1.7,0.6) -- (2.1,-0.3);
			\draw[dashed] (3.2,0.6) -- (3.2,-0.4);
		
			\fill[blue] (-3.18,0.95) circle (1pt) node[above left]{\tiny $a_1$};
			\fill[blue] (-3.25,0.76) circle (1pt) node[above left,yshift=-0.1cm]{\tiny $a_2$};
			\fill[blue] (-3.1,-0.48) circle (1pt) node[above left,yshift=-0.1cm]{\tiny $a_{p-2}$};
			\fill[blue] (-3.05,-0.68) circle (1pt) node[above left,yshift=-0.1cm]{\tiny $a_{p-1}$};
			\fill[blue] (-3,-0.9) circle (1pt) node[above left,yshift=-0.1cm]{\tiny $a_p$};
		
			\fill[blue] (1.67,1) circle (1pt) node[above]{\tiny $b_{p1}$};
			\fill[blue] (0.82,0.94) circle (1pt) node[above]{\tiny $b_{p2}$};
			\fill[blue] (0.38,0.72) circle (1pt) node[above]{\tiny $b_{p3}$};
			\fill[blue] (-0.73,-0.67) circle (1pt) node[above]{\tiny $b_{p(p-1)}$};
			\fill[blue] (1.88,0.8) circle (1pt) node[above right,yshift=-0.14cm]{\tiny $b_{(p-1)1}$};
			\fill[blue] (1.28,0.77) circle (1pt) node[above,yshift=-0.05cm]{\tiny $b_{(p-1)2}$};
			\fill[blue] (0.76,0.57) circle (1pt) node[above,yshift=-0.05cm]{\tiny $b_{(p-1)3}$};
			\fill[blue] (2.34,-0.48) circle (1pt) node[above right,yshift=-0.08cm]{\tiny $b_{31}$};
			\fill[blue] (1.86,-0.55) circle (1pt) node[above right]{\tiny $b_{32}$};
			\fill[blue] (2.4,-0.74) circle (1pt) node[above right,yshift=-0.06cm]{\tiny $b_{21}$};
			
			\fill[red] (-2.51,-1.41) circle (1pt) node[above right,yshift=-0.1cm]{\tiny $t$};
			
			\draw[purple] (-0.5,1.31) -- (-0.64,1.38);
			\draw[purple] (-0.5,1.31) -- (-0.64,1.24);
			
			\fill[green,opacity=0.2] plot coordinates {(2.4,-0.74) (2.6,-1.1) (3,-0.9) ({5-cos(asin(0.9))},-0.9) ({5-cos(asin(0.7))},-0.7) ({5-cos(asin(0.5))},-0.5) (2.34,-0.48)};
			\node[circle,magenta,draw] at (3.4,-0.6){\tiny 7};
			\fill[purple,opacity=0.2] plot[smooth] coordinates {({5-cos(asin(0.7))},0.7) (1.88,0.8) (1.28,0.77) (1,0.9) (0.82,0.94) (1.67,1) ({5-cos(asin(0.9))},0.9)};
			\node[circle,magenta,draw] at (3.2,0.9){\tiny 6};
			\fill[green,opacity=0.2] plot[smooth] coordinates {(-2.9,-0.9) (-2.5,-1) (-2.5,-1.4) (-2.8,-1.4) (-3,-0.9)};
			\node[circle,magenta,draw] at (-2.65,-1.1){\tiny 1};
			\fill[green,opacity=0.2] plot[smooth] coordinates {({-5+cos(asin(0.7))},-0.7) (-1.5,-0.7) (-1.1,-1.05) (-0.8,-0.79) (-1,-0.5) ({-5+cos(asin(0.5))},-0.5)};
			\node[circle,magenta,draw] at (-1.9,-0.6){\tiny 3};
			\fill[green,opacity=0.2] plot coordinates {({-5+cos(asin(0.9))},0.9) (-0.9,1) (0.82,0.94) (1.28,0.77) (0.76,0.57) (0.38,0.72) (-0.5,0.8) ({-5+cos(asin(0.7))},0.7)};
			\node[circle,magenta,draw] at (-1.7,0.9){\tiny 4};
			\fill[purple,opacity=0.2] plot coordinates {({-5+cos(asin(0.7))},-0.7) (-3.05,-0.68) (-3,-0.9) ({-5+cos(asin(0.9))},-0.9)};
			\node[circle,magenta,draw] at (-3.8,-0.8){\tiny 2};
			\fill[yellow,opacity=0.2] plot coordinates{(-3.18,0.95) (-0.8,1.03) (0.82,0.94) (1.67,1) (1.3,1.18) (-0.5,1.31) (-2.8,1.2)};
			\node[circle,magenta,draw] at (-0.1,1.15){\tiny 5};
		\end{tikzpicture}
		
		\caption{Generators of $\CE^*(\Lambda_{p,1})$}
		\label{fig:gen-bp1}
	\end{figure}
	
	Note that there are infinitely many generators coming from the handle in \cite{subcritical}, however they are simplified in \cite{plumbing} to give finitely many generators, hence we use those. Also, our notation $c_{ij}$ corresponds to $c^0_{ji}$ and $c'_{ij}$ corresponds to $c^1_{ji}$ in \cite{subcritical} and \cite{plumbing}.
	
	Before we describe the differential and grading on the generators, note the following: First, our complexes are cohomological, as opposed to \cite{subcritical} where the complexes are homological. However, since the complexes are $\Z/2$-graded they mean the same thing. Second, in \cite{subcritical} multiplications in dgas are read from left-to-right, whereas we read from right-to-left. This requires us to pass the opposite dga of $\CE^*(\Lambda_{p,1})$. As remarked in \cite{koszul}, this is just changing the differential and multiplication as follows:
	\begin{align*}
		d^{\op}(x) &:= (-1)^{|x|-1}dx\\
		x_2\circ^{\op} x_1 &:= x_1\circ x_2\ .
	\end{align*}
	
	Now, we will use the prescription in \cite[Chapter 2]{subcritical} to get the differential and grading on the generators. For the generators $c_{ij}, c'_{ij}$ coming from the 1-handle, the differential and grading are given by
	\begin{align*}
		dc_{ij} &= \sum_{k=j+1}^{i-1}c_{ik}\circ c_{kj} &\text{with }|c_{ij}|=1\text{ for }p\geq i>j\geq 1\\
		dc'_{ij} &= \delta_{ij} + \sum_{k=1}^{i-1}c_{ik}\circ c'_{kj}+\sum_{k=j+1}^{p}c'_{ik}\circ c_{kj} &\text{with }|c'_{ij}|=1 \text{ for }p\geq i, j\geq 1\ .
	\end{align*}
	For a generator $z_0\in\{a_l,b_{ij}\vb p\geq l\geq 1, p\geq i>j\geq 1\}$, the differential is given by
	\[dz_0=(-1)^{|z_0|-1}\sum_{n\geq 0}\sum_{z_1,\ldots,z_n}\sum_{\Delta\in\Delta(z_0;z_1,\ldots,z_n)}\sgn(\Delta)(-t)^{-n(\Delta)}z_1\circ\ldots\circ z_k\]
	where the second sum is over the generators $a_i,b_{ij},c_{ij}$. $\Delta(z_0;z_1,\ldots,z_n)$ is the moduli space of the immersed disks (up to reparametrisation) with convex corners in Figure \ref{fig:gen-bp1} whose boundary lies inside $\Lambda_{p,1}$ with a positive corner at $z_0$, negative corners at $z_1,\ldots,z_n$ (shown in Figure \ref{fig:reeb-signs}), and they are arranged in clockwise order on the boundary. Any $c_{ij}$ is considered as a negative corner. Also, we don't allow disks to pass through the 1-handle. 
	
	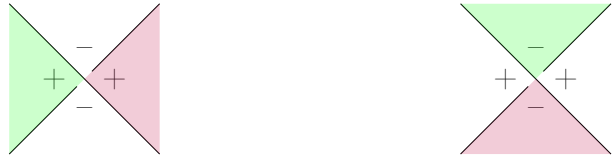
\begin{figure}[h]
		\centering
		
		\begin{tikzpicture}
			\draw (-1,1) -- (1,-1);
			\draw (-1,-1) -- (-0.1,-0.1);
			\draw (0.1,0.1) -- (1,1);
			\node at (0,0.4){--};
			\node at (0,-0.4){--};
			\node at (0.4,0){+};
			\node at (-0.4,0){+};
			
			\fill[green,opacity=0.2] (0,0) -- (-1,1) -- (-1,-1) -- (0,0);
			\fill[purple,opacity=0.2] (0,0) -- (1,1) -- (1,-1) -- (0,0);
			
			\draw (5,1) -- (7,-1);
			\draw (5,-1) -- (5.9,-0.1);
			\draw (6.1,0.1) -- (7,1);
			\node at (6,0.4){--};
			\node at (6,-0.4){--};
			\node at (6.4,0){+};
			\node at (5.6,0){+};
			
			\fill[green,opacity=0.2] (6,0) -- (5,1) -- (7,1) -- (6,0);
			\fill[purple,opacity=0.2] (6,0) -- (5,-1) -- (7,-1) -- (6,0);
		\end{tikzpicture}
		
		\caption{Positive (on the left) and negative (on the right) corners of the the shaded portions of a disk}
		\label{fig:reeb-signs}
	\end{figure}

	$n(\Delta)$ is the signed number of times the boundary of $\Delta$ (when transversed counterclockwise) pass through the point $t$. $\sgn(\Delta)$ is defined as
	\[\sgn(\Delta)=\sgn(z_0;\Delta)\times (z_1;\Delta)\times\ldots\times\sgn(z_n;\Delta)\]
	where $\sgn(z_k;\Delta)$ is equal to $1$ if $|z_k|$ is odd, and Figure \ref{fig:orientation-signs} shows the possible values of $\sgn(z_k;\Delta)$ when $|z_k|$ is even.
	
	\begin{figure}[h]
		\centering
		
		\begin{tikzpicture}
			\draw (-1,1) -- (1,-1);
			\draw (-1,-1) -- (-0.1,-0.1);
			\draw (0.1,0.1) -- (1,1);
			\node at (0,0.5){\small +1};
			\node at (0,-0.5){\small --1};
			\node at (0.5,0){\small --1};
			\node at (-0.5,0){\small +1};
			
			\fill[green,opacity=0.2] (0,0) -- (-1,1) -- (1,1) -- (0,0);
			\fill[purple,opacity=0.2] (0,0) -- (-1,1) -- (-1,-1) -- (0,0);
			
			\draw (5,1) -- (7,-1);
			\draw (5,-1) -- (5.9,-0.1);
			\draw (6.1,0.1) -- (7,1);
			\node at (6,0.5){\small +1};
			\node at (6,-0.5){\small --1};
			\node at (6.5,0){\small --1};
			\node at (5.5,0){\small +1};
			
			\fill[green,opacity=0.2] (6,0) -- (5,-1) -- (7,-1) -- (6,0);
			\fill[purple,opacity=0.2] (6,0) -- (7,-1) -- (7,1) -- (6,0);
		\end{tikzpicture}
		
		\caption{The values of $\sgn(z_k;\Delta)$ for the shaded portions of a disk when $|z_k|$ is even}
		\label{fig:orientation-signs}
	\end{figure}
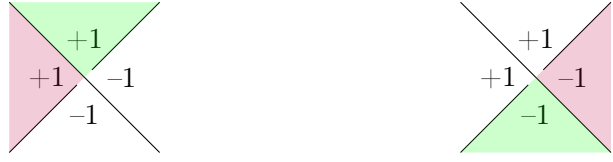
	
	Note that the differential is slightly different compared to \cite{subcritical}. We modified it according to the remark above since we read multiplications from right-to-left, and $t$ is replaced by $-t$ since we use the null-cobordant spin structure on $\Lambda_{p,1}$, as remarked in \cite{subcritical}.
	
	We highlighted some typical disks by green in Figure \ref{fig:gen-bp1}. The contribution of each disk to the differential is as follows:
	\begin{align*}
		\raisebox{.5pt}{\textcircled{\raisebox{-.9pt} {$1$}}}&\to\text{ the term }-t\text{ in }da_p\\
		\raisebox{.5pt}{\textcircled{\raisebox{-.9pt} {$2$}}}&\to\text{ the term }+c_{p(p-1)}\circ a_{p-1}\text{ in }da_p\\
		\raisebox{.5pt}{\textcircled{\raisebox{-.9pt} {$3$}}}&\to\text{ the term }-c_{(p-1)(p-2)}\text{ in }db_{p(p-1)}\\
		\raisebox{.5pt}{\textcircled{\raisebox{-.9pt} {$4$}}}&\to\text{ the term }-b_{(p-1)3}\circ c_{21}\text{ in }db_{(p-1)2}\\
		\raisebox{.5pt}{\textcircled{\raisebox{-.9pt} {$5$}}}&\to\text{ the term }-b_{p2}\circ a_1\text{ in }db_{p1}\\
		\raisebox{.5pt}{\textcircled{\raisebox{-.9pt} {$6$}}}&\to\text{ the term }+c_{p(p-1)}\circ b_{(p-1)2}\text{ in }db_{p2}\\
		\raisebox{.5pt}{\textcircled{\raisebox{-.9pt} {$7$}}}&\to\text{ the term }+c_{31}\text{ in }db_{31}
	\end{align*}
	where we used the fact that $|a_i|=1$ and $|b_{ij}|=0$ when determining the signs, which is justified below. Hence, we get the differentials
	\begin{align*}
		da_i &= \sum_{k=1}^{i-1}c_{ik}\circ a_k &\text{with }|a_i|=1\text{ for }p-1\geq i\geq 1\\
		da_p &= -t+\sum_{k=1}^{p-1}c_{pk}\circ a_k &\text{with }|a_p|=1\\
		db_{i1} &=c_{i1}-a_{i-1}+\sum_{k=2}^{i-1} c_{ik}\circ b_{k1}-b_{ik}\circ a_{(k-1)} &\text{with }|b_{i1}|=0 \text{ for }p\geq i\geq 2\\
		db_{ij} &= c_{ij}-c_{(i-1)(j-1)}+\sum_{k=j+1}^{i-1} c_{ik}\circ b_{kj}-b_{ik}\circ c_{(k-1)(j-1)} &\text{with }|b_{ij}|=0 \text{ for }p\geq i>j\geq 2
	\end{align*}
	where $|t|=0$. All the grading is determined by the grading $|c_{i,j}|=1$ by exploiting the differential relation above. Here, the dga is with the coefficient ring $\k[t,t^{-1}]$. We set $t=1$ to work with coefficient field $\k$. 
\end{proof}

Next, we will simplify $\CE^*(\Lambda_{p,1})$. For that, we define:

\begin{dfn}\label{dfn:dga-rational}
	For $p\geq 3$, $\cA_{p,1}$ is defined as the semifree dga generated by the degree $1$ elements $x_i$ for $p\geq i\geq 1$, and $y_{ij}$ for $p\geq i,j\geq 1$, where
	\begin{align*}
		dx_i&=-\delta_{i,p}+\sum_{j=1}^{i-1}x_{i-j}\circ x_j\\
		dy_{ij} &= \delta_{ij} + \sum_{k=1}^{i-1}x_{i-k}\circ y_{kj}+\sum_{k=j+1}^{p}y_{ik}\circ x_{k-j}\ .
	\end{align*}
\end{dfn}

We are ready to state the main theorem of this section:
    
\begin{thm}\label{thm:wrapped-rational}
	For $p\geq 3$, $\CE^*(\Lambda_{p,1})$ is quasi-isomorphic to $\cA_{p,1}$, hence we have an $A_{\infty}$-quasi-equivalence of pretriangulated $A_{\infty}$-categories over $\k$
    \[\cW(B_{p,1})\simeq\Perf(\cA_{p,1})\]
    where $\k$ is a field of characteristic zero.
\end{thm}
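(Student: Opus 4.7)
The plan is to use Theorems \ref{thm:cdrgg} and \ref{thm:bee} --- applicable since $\k$ has characteristic zero --- to identify $\cW(B_{p,1}) \simeq \Perf(\CE^*(\Lambda_{p,1}))$, where $\Lambda_{p,1}$ is the Legendrian attaching circle of the unique critical handle of $B_{p,1}$. Combined with the explicit semifree presentation of $\CE^*(\Lambda_{p,1})$ supplied by Proposition \ref{prp:ce-bp1}, this reduces the theorem to constructing a quasi-isomorphism $\CE^*(\Lambda_{p,1}) \to \cA_{p,1}$, for which I would invoke Proposition \ref{prp:stable-tame} and exhibit a stable tame isomorphism.

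The structural feature to exploit is that the degree-$0$ generators $b_{ij}$ in $\CE^*(\Lambda_{p,1})$ satisfy
\[db_{ij} = c_{ij} - c_{(i-1)(j-1)} + (\text{terms quadratic in the }c\text{'s and }b\text{'s}),\]
where I set $c_{i,0}:=a_i$ so that the case $j=1$ is included. Iterating the diagonal equivalence $c_{ij}\sim c_{(i-1)(j-1)}$ witnessed by $b_{ij}$ collapses $c_{ij}\sim a_{i-j}$ modulo boundary and higher order; this is precisely the amount of simplification needed to pass from the generators of $\CE^*(\Lambda_{p,1})$ down to the $p+p^2$ generators of $\cA_{p,1}$. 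Accordingly, I would order the generators so that $c_{ij}>c_{i'j'}$ whenever $i>i'$, or $i=i'$ and $j<j'$ (treating $a_k$ as $c_{k,0}$ and placing all $b$'s above all $c$'s), and then introduce new generators $\tilde c_{ij}:=db_{ij}$ replacing $c_{ij}$. This is legitimate as an elementary automorphism because $c_{ij}$ appears with coefficient $+1$ in $db_{ij}$ and every other term on the right-hand side is strictly lower in the chosen order. In the resulting dga one has $db_{ij}=\tilde c_{ij}$ exactly, so the pairs $(b_{ij},\tilde c_{ij})$ are candidates for destabilisation.

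Upon destabilising, I expect to be left with a semifree dga on the generators $\{a_i\}_{i=1}^p$ and $\{c'_{ij}\}$ whose induced differentials, after substituting $c_{ij}\mapsto a_{i-j}$ into the original formulas for $da_i$ and $dc'_{ij}$, simplify to
\begin{align*}
da_i &= -\delta_{i,p}+\sum_{j=1}^{i-1}a_{i-j}\circ a_j,\\
dc'_{ij} &= \delta_{ij}+\sum_{k=1}^{i-1}a_{i-k}\circ c'_{kj}+\sum_{k=j+1}^{p} c'_{ik}\circ a_{k-j},
\end{align*}
which are the defining relations of $\cA_{p,1}$ under $x_i:=a_i$, $y_{ij}:=c'_{ij}$. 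The main obstacle will be the bookkeeping needed to legitimise the destabilisation step: in the original dga the $b_{ij}$'s appear only inside differentials of the form $db_{i'j'}$, but after the change of variables they also appear inside $da_k$ and $dc'_{kl}$, because the substitution of $c_{ij}$ introduces $b$-valued corrections. Additional elementary automorphisms on the $a_k$'s, $c'_{kl}$'s, and remaining $b_{i'j'}$'s must therefore be performed iteratively --- for instance by induction on $i+j$ together with a lexicographic order on the $b$'s --- to absorb each newly introduced $b$-term before the stabilisation pairs can be removed cleanly. Verifying termination of this procedure, and that no unexpected relations arise along the way, is the technical heart of the argument; once done, the matching of the collapsed differentials with those of $\cA_{p,1}$ is immediate.
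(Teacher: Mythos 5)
Your proposal follows essentially the same route as the paper: reduce to $\Perf(\CE^*(\Lambda_{p,1}))$ via Theorems \ref{thm:cdrgg} and \ref{thm:bee}, take the presentation from Proposition \ref{prp:ce-bp1}, and build a stable tame isomorphism to $\cA_{p,1}$ by replacing $c_{ij}$ with $\tilde c_{ij}:=db_{ij}$ (the paper's $\bar c_{ij}$), destabilising the pairs $(b_{ij},\tilde c_{ij})$, and correcting $a_i$ and $c'_{ij}$ to absorb the $b$-terms. The iterative correction you defer as ``the technical heart'' is exactly what the paper's recursively defined $\beta_{ij}$ and the accompanying inductive computation of $d\bar a_i$ and $d\bar c'_{ij}$ accomplish; note only that for the elementary-automorphism condition the $b$'s must sit \emph{below} the $c$'s in the chosen ordering, since they appear in the correction terms.
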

    
\begin{proof}
	Apply the following (recursive)  change of variables on the generators of $\CE^*(\Lambda_{p,1})$:
    \begin{align*}
    	\bar a_i&:=a_i-\sum_{j=1}^{i-1}\beta_{ij}\circ \bar a_j &\text{for }p\geq i\geq 1\\
        \bar b_{ij}&:=b_{ij}&\text{for }p\geq i>j\geq 1\\
        \bar c_{ij}&:=\sum_{k=j}^i c_{ik}\circ b_{kj}-b_{ik}\circ c_{(k-1)(j-1)} &\text{for }p\geq i>j\geq 1\\
        \bar{c'}_{ij} &:= c'_{ij} - \sum_{k=1}^{i-1}\beta_{ik}\circ\bar{c'}_{kj}+\sum_{k=j+1}^{p}c'_{ik}\circ \beta_{kj} &\text{for }p\geq i, j\geq 1
   	\end{align*}
    where degree 0 elements $\beta_{ij}$ for $i\geq j\geq 1$ are determined by the recursive relations
    \[\beta_{ij}=\sum_{k=j}^i b_{ik}\circ \beta_{(k-1)(j-1)}\]
    where we set $\beta_{ii}=\id$, and $\beta_{i0}=0$ if $i>0$.
    
    As a result, we get the following differentials and gradings:
    \begin{align*}
    	d\bar a_i&=-\delta_{i,p}+\sum_{j=1}^{i-1}\bar a_{i-j}\circ\bar a_j&\text{with }|\bar a_i|=1\text{ for }p\geq i\geq 1\\
        d\bar b_{ij} &= \bar c_{ij} &\text{with }|\bar b_{ij}|=0\text{ for }p\geq i>j\geq 1\\
        d\bar c_{ij} &= 0 &\text{with }|\bar c_{ij}|=1\text{ for }p\geq i>j\geq 0\\
        d\bar{c'}_{ij} &= \delta_{ij} + \sum_{k=1}^{i-1}\bar a_{i-k}\circ \bar{c'}_{kj}+\sum_{k=j+1}^{p}\bar{c'}_{ik}\circ \bar a_{k-j} &\text{with }|\bar{c'}_{ij}|=1 \text{ for }p\geq i, j\geq 1\ .
    \end{align*}
    
    We will explain the above equalities in the end of the proof. Let $\cB_{p,1}$ be the semifree dga generated by $\{\bar a_i,\bar b_{ij},\bar c_{ij},\bar{c'}_{ij}\}$ with the differential and grading as above. Then the functor
    \begin{align*}
    	F\colon\CE^*(\Lambda_{p,1})&\to\cB_{p,1}\\
    	a_{ij}&\mapsto\bar a_i\\
    	b_{ij}&\mapsto\bar b_{ij}\\
    	c_{ij}&\mapsto\bar c_{ij}\\
    	c'_{ij}&\mapsto\bar{c'}_{ij}
    \end{align*}
    is obviously a dg functor, moreover it is a tame isomorphism. It can be seen by giving an ordering to the generators $\{a_i,b_{ij},c_{ij},c'_{ij}\}$ of $\CE^*(\Lambda_{p,1})$ satisfying
    \begin{enumerate}
    	\item $b_{i_1j_1}<a_{i_2}<c_{i_3j_3}<c'_{i_4j_4}$ for any $i_l,j_l$,\\
    	
    	\item $a_{i_1}<a_{i_2}$ if $i_1<i_2$,\\
    	
    	\item $c_{i_1j_1}<c_{i_2j_2}$ if $i_1-j_1<i_2-j_2$,\\
    	
    	\item $c_{i_1j_1}<c_{i_2j_2}$ if $i_1-j_1=i_2-j_2$ and $i_1<i_2$,\\
    	
    	\item $c'_{i_1j_1}<c'_{i_2j_2}$ if $i_1-j_1<i_2-j_2$.
    \end{enumerate}
    
    It is easy to see that $\cB_{p,1}$ is actually a stabilisation of $\cA_{p,1}$ after relabelling $x_i$ by $\bar a_i$ and $y_{ij}$ by $\bar{c'}_{ij}$. Hence this shows that $\CE^*(\Lambda_{p,1})$ and $\cA_{p,1}$ are stable tame isomorphic, which implies that they are quasi-isomorphic by Proposition \ref{prp:stable-tame}. This proves the theorem.
        
    Now, back to the differentials we have not explained above: It is obvious that $d\bar b_{ij} = \bar c_{ij}$ and $d\bar c_{ij} = 0$. To see that $d\bar a_i$ is indeed $-\delta_{i,p}+\sum_{j=1}^{i-1}\bar a_{i-j}\circ\bar a_j$, first note that for $i\geq j\geq 0$
    \[d\beta_{ij}=\sum_{k=j+1}^i c_{i(k-1)}\circ \beta_{(k-1)j}-\beta_{ik}\circ\bar a_{k-j}\ .\]
    To show this, assume it is known for $\beta_{i'(j-1)}$ for $i'<i$. Then
    \begin{align*}
    	d\beta_{ij}&=\sum_{k=j+1}^i b_{ik}\circ d\beta_{(k-1)(j-1)}+\sum_{k=j}^{i-1} db_{ik}\circ \beta_{(k-1)(j-1)}\\
        &=\sum_{k=j+1}^i b_{ik}\circ \left(\sum_{l=j}^{k-1} c_{(k-1)(l-1)}\circ \beta_{(l-1)(j-1)}-\beta_{(k-1)l}\circ\bar a_{(l+1)-j}\right)\\
        &\qquad+\sum_{k=j}^{i-1} \left(\sum_{l=k}^{i-1} c_{il}\circ b_{lk}-\sum_{l=k+1}^i b_{il}\circ c_{(l-1)(k-1)}\right)\circ \beta_{(k-1)(j-1)}\\
        &=\sum_{l=j}^{i-1} c_{il}\circ\left(\sum_{k=j}^l b_{lk}\circ \beta_{(k-1)(j-1)}\right)-\sum_{l=j}^{i-1} \left(\sum_{k=l+1}^i b_{ik}\circ \beta_{(k-1)l}\right)\circ\bar a_{(l+1)-j}\\
        &=\sum_{l=j+1}^i c_{i(l-1)}\circ \beta_{(l-1)j}-\beta_{il}\circ\bar a_{l-j}\ .
	\end{align*}
       
    Using this, and assuming $d\bar a_{i'}=\sum_{j=1}^{i'-1}\bar a_{i'-j}\circ\bar a_j$ for $i'<i$, we get
   	\begin{align*}
    	d\bar a_i&=da_i-\sum_{j=1}^{i-1}d\beta_{ij}\circ \bar a_j-\sum_{j=1}^{i-1}\beta_{ij}\circ d\bar a_j\\
        &=-\delta_{i,p}+\sum_{j=1}^{i-1}c_{ij}\circ\left(\sum_{k=1}^j \beta_{jk}\circ\bar a_k\right)-\sum_{j=1}^{i-1}\left(\sum_{k=j+1}^i c_{i(k-1)}\circ \beta_{(k-1)j}-\beta_{ik}\circ\bar a_{k-j}\right)\circ \bar a_j\\
        &\qquad - \sum_{j=1}^{i-1}\beta_{ij}\circ \left(\sum_{k=1}^{j-1}\bar a_{j-k}\circ\bar a_k\right)\\
        &=-\delta_{i,p}+\sum_{j=1}^{i-1} \bar a_{i-j}\circ \bar a_j\ .
	\end{align*}
        
    To see that $d\bar{c'}_{ij}$ is indeed $\delta_{ij} + \sum_{k=1}^{i-1}\bar a_{i-k}\circ \bar{c'}_{kj}+\sum_{k=j+1}^{p}\bar{c'}_{ik}\circ \bar a_{k-j}$, assume $d\bar{c'}_{i'j'}=\delta_{i'j'} + \sum_{k=1}^{i'-1}\bar a_{i'-k}\circ \bar{c'}_{kj'}+\sum_{k=j'+1}^{p}\bar{c'}_{i'k}\circ \bar a_{k-j'}$ for $i'<i$ with $j'=j$, and $i'=i$ with $j'>j$, and get
    \begin{align*}
    	d\bar{c'}_{ij} &= dc'_{ij} - \sum_{k=1}^{i-1}d\beta_{ik}\circ\bar{c'}_{kj}-\sum_{k=1}^{i-1}\beta_{ik}\circ d\bar{c'}_{kj}+\sum_{k=j+1}^{p}dc'_{ik}\circ \beta_{kj}-\sum_{k=j+1}^{p}c'_{ik}\circ d\beta_{kj}\\
       	&= \delta_{ij} + \sum_{k=1}^{i-1}c_{ik}\circ \left(\sum_{l=1}^{k}\beta_{kl}\circ\bar{c'}_{lj}-\sum_{l=j+1}^{p}c'_{kl}\circ \beta_{lj}\right)+\sum_{k=j+1}^{p}c'_{ik}\circ c_{kj}\\
        &\qquad- \sum_{k=1}^{i-1}\left(\sum_{l=k+1}^i c_{i(l-1)}\circ \beta_{(l-1)k}-\beta_{il}\circ\bar a_{l-k}\right)\circ\bar{c'}_{kj}\\
        &\qquad-\sum_{k=1}^{i-1}\beta_{ik}\circ \left(\delta_{kj} + \sum_{l=1}^{k-1}\bar a_{k-l}\circ \bar{c'}_{lj}+\sum_{l=j+1}^{p}\bar{c'}_{kl}\circ \bar a_{l-j}\right)\\
        &\qquad+\sum_{k=j+1}^{p}\left(\delta_{ik} + \sum_{l=1}^{i-1} c_{il}\circ c'_{lk}+\sum_{l=k+1}^{p}c'_{il}\circ c_{lk}\right)\circ \beta_{kj}\\
        &\qquad-\sum_{k=j+1}^{p}c'_{ik}\circ \left(\sum_{l=j+1}^k c_{k(l-1)}\circ \beta_{(l-1)j}-\beta_{kl}\circ\bar a_{l-j}\right)\\
        &=\delta_{ij} + \sum_{k=1}^{i-1}\bar a_{i-k}\circ \bar{c'}_{kj}+\sum_{k=j+1}^{p}\bar{c'}_{ik}\circ \bar a_{k-j}\ .
	\end{align*}
\end{proof}
    
\begin{rmk}
	We can apply Reidemeister moves to the Lagrangian projection of $\Lambda_{p,1}$ shown in Figure \ref{fig:lag-bp1} before calculating the associated dga. Reidemeister moves of Lagrangian projections of Legendrian links are described in \cite{lag-reidemeister}. We expect that one can apply the Reidemeister moves until the associated dga for the resulting Lagrangian projection is directly given by $\cA_{p,1}$. To show this, we need to answer how to deal with the strands of $\Lambda_{p,1}$ in the 1-handle.
\end{rmk}
    
\begin{rmk}\label{rmk:bp1-grading}
	The grading of $x_1$, which is $1$, determines all the grading in $\cA_{p,1}$ by the grading condition induced by the differential. If we choose $|x_1|=0$ instead, and if $p$ is odd, we get inconsistency of gradings in the differential, hence this is not possible. However, if $p$ is even, there is no inconsistency, and we get
	\[|x_i|=\begin{cases}0 &\text{if }i\text{ is odd}\\1 &\text{if }i\text{ is even}\end{cases}, |y_{ij}|=\begin{cases}0 &\text{if }i-j\text{ is odd}\\1 &\text{if }i-j\text{ is even}\end{cases}\ .\]
	Hence we have only one possible grading for $\cA_{p,1}$ if $p$ is odd, and two possible gradings if $p$ is even. Same thing holds consequently for $\cW(B_{p,1})$ by Theorem \ref{thm:wrapped-rational}. Note that this is consistent with Remark \ref{rmk:grading-structure}, since
	\[H^1(B_{p,1};\Z/2)\simeq\begin{cases}0 &\text{if }p\text{ is odd}\\ \Z/2 &\text{if }p\text{ is even}\end{cases}\ .\]
\end{rmk}
    
\subsection{Microlocal Sheaves on Pinwheel $L_{p,1}$}\label{sec:msh-pinwheel}

In this final section, we will calculate the traditional/wrapped microlocal sheaves on the pinwheel $L_{p,1}$ for $p\geq 3$ as defined in Section \ref{sec:microlocal}. For this purpose, we will make use of the algebraic tools developed in Chapter \ref{chp:algebraic-tools}. We will also show that the wrapped microlocal sheaves on $L_{p,1}$ and the wrapped Fukaya category of $B_{p,1}$, given by Theorem \ref{thm:wrapped-rational}, are quasi-equivalent.

For the notation convention about matrices, see the beginning of Section \ref{sec:quiver}. Also, the categories in this section will be $\Z/2$-graded. The reason for this will be explained throughout the section. We already expect this since the wrapped Fukaya category $\cW(B_{p,1})$ also cannot be made $\Z/2$-graded.

Instead of calculating $\mSh(L_{p,1})$ and $\mSh^w(L_{p,1})$ individually, we will calculate the large microlocal sheaves $\mSh\dd(L_{p,1})$, and describe it as $A_{\infty}$-modules over some dga $\cA$, then $\mSh(L_{p,1})$ and $\mSh^w(L_{p,1})$ are automatically obtained thanks to Proposition \ref{prp:mod-perf-microlocal}. Our main theorem is as follows:

\begin{thm}\label{thm:msh-pinwheel}
	Let $\k$ be a commutative ring. For $p\geq 3$, microlocal sheaves on the pinwheel $L_{p,1}$ are given by
	\begin{align*}
		\mSh\dd(L_{p,1})&\simeq\Modk(\cA_{p,1})\\
		\mSh^w(L_{p,1})&\simeq\Perf(\cA_{p,1})\\
		\mSh(L_{p,1})&\simeq\Perfk(\cA_{p,1})
	\end{align*}
	where $\cA_{p,1}$ is the semifree dga introduced in Definition \ref{dfn:dga-rational}.
\end{thm}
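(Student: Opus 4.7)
The plan is to first compute the large microlocal sheaves $\mSh\dd(L_{p,1})$ and present them as $A_\infty$-modules over $\cA_{p,1}$; the equivalences $\mSh(L_{p,1})\simeq\Perfk(\cA_{p,1})$ and $\mSh^w(L_{p,1})\simeq\Perf(\cA_{p,1})$ then follow at once from Proposition \ref{prp:mod-perf-microlocal}. From the decomposition in Definition \ref{dfn:pinwheel},
\[L_{p,1}\simeq\colim\bigl(S_{p,1}\hookleftarrow S^1\times B^1\hookrightarrow B^2\bigr),\]
the \hyperlink{disk}{Disk Lemma} reduces matters to computing $\mSh\dd(S_{p,1})$ together with its monodromy functor $\mon\colon\mSh\dd(S_{p,1})\to\Loc\dd(S^1)$.

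For $\mSh\dd(S_{p,1})$, I would express $S_{p,1}$ itself as $V_p\times[0,1]$ with the two ends glued by a $2\pi/p$ clockwise rotation and apply the \hyperlink{circle}{Circle Lemma}. Proposition \ref{prp:stabilisation} together with Proposition \ref{prp:msh-vertex} gives $\mSh\dd(V_p\times(0,1))\simeq\Modk(A_{p-1})$, and Proposition \ref{prp:rotation-coxeter} identifies the rotation functor with the Coxeter functor $\cox_{p,1}$. Here is where $\Z/2$-grading becomes unavoidable: by Remark \ref{rmk:coxeter-z2}, no $\Z$-graded auto-equivalence of $\Modk(A_{p-1})$ realises the required cyclic edge-shift, while $\cox_{p,1}$ does so exactly in the $\Z/2$-graded setting (consistent with $2c_1(B_{p,1})\neq 0$ for $p\geq 3$). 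The Circle Lemma then presents $\mSh\dd(S_{p,1})$ as pairs $(A,f)$ with $A\in\Modk(A_{p-1})$ and $f\colon A\to\cox_{p,1}(A)$ a homotopy equivalence.

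Next I would compute the monodromy $\mon(A,f)$ explicitly by tracing the $p$ edges of $V_p$ around $\partial S_{p,1}$. Iterated application of the cone equivalences $\eta_1,\eta_2$ from Lemma \ref{lem:cone-exact} and $\eta_3,\eta_4$ from Lemma \ref{lem:cone-triple} collapses the nested cones appearing in $\cox_{p,1}$, producing the explicit formula recorded in Proposition \ref{prp:monodromy}. To make the subsequent disk gluing tractable, I would then apply Lemmas \ref{lem:simplified-pinwheel-1} and \ref{lem:simplified-pinwheel-2} successively to strip out the diagonal and vertical components of $f$ from right to left, reducing the data of $(A,f)$ to a single object $A_1\in\Modk$ equipped with degree $1$ self-maps $f_1^i\colon A_1\to A_1$ for $i=1,\ldots,p-1$ satisfying
\[df_1^i=\sum_{j=1}^{i-1}f_1^{i-j}\circ f_1^j,\]
subject to the residual condition $C(f_1)\simeq 0$. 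In this simplified model the monodromy evaluates to $\mon(A,f)_0=\sum_{j=1}^{p-1}f_1^{p-j}\circ f_1^j$.

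The final step is the Disk Lemma itself: attaching the disk adjoins a degree $1$ element $\gamma$ with $d\gamma=\mon(A,f)_0-\id$, and relabelling $f_1^p:=\gamma$ consolidates the differential relations into
\[df_1^i=-\delta_{i,p}+\sum_{j=1}^{i-1}f_1^{i-j}\circ f_1^j \qquad (i=1,\ldots,p).\]
The residual contractibility of $C(f_1)$ I would handle following \cite{drinfeld} by adjoining a degree $1$ contracting homotopy $\varepsilon\colon C(f_1)\to C(f_1)$ with $d\varepsilon=\id$; writing $\varepsilon$ in block form produces generators $g^{ij}\colon A_1\to A_1$ satisfying
\[dg^{ij}=\delta_{ij}+\sum_{k=1}^{i-1}f_1^{i-k}\circ g^{kj}+\sum_{k=j+1}^{p}g^{ik}\circ f_1^{k-j},\]
matching exactly the presentation of $\cA_{p,1}$ from Definition \ref{dfn:dga-rational} under $x_i\leftrightarrow f_1^i$ and $y_{ij}\leftrightarrow g^{ij}$. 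Running the same simplifications and gluing computations on morphism complexes (which the Circle and Disk Lemmas treat on the same footing as objects) delivers the full quasi-equivalence $\mSh\dd(L_{p,1})\simeq\Modk(\cA_{p,1})$. I expect the main obstacle to lie in the monodromy and simplification step: each individual invocation of Lemmas \ref{lem:cone-exact}, \ref{lem:cone-triple}, \ref{lem:simplified-pinwheel-1}, \ref{lem:simplified-pinwheel-2} is elementary, but iterating them $p$ times requires careful sign and block-matrix bookkeeping to be sure the final presentation lines up on the nose with $\cA_{p,1}$.
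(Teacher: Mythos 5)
Your proposal is correct and follows essentially the same route as the paper's own proof: Disk Lemma plus Circle Lemma, identification of the rotation with the Coxeter functor in the $\Z/2$-graded setting, the right-to-left simplification via Lemmas \ref{lem:simplified-pinwheel-1} and \ref{lem:simplified-pinwheel-2}, the explicit monodromy formula, and the Drinfeld contracting-homotopy presentation of $C(f_1)\simeq 0$ yielding the generators $g^{ij}$, with Proposition \ref{prp:mod-perf-microlocal} supplying the other two equivalences. The only caveats are bookkeeping ones you already flag yourself.
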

    
To prove the theorem, recall that from Definition \ref{dfn:pinwheel}, we have the description of $L_{p,1}$ given by
\[L_{p,1}\simeq\colim(S_{p,1} \overset{i}{\hookleftarrow} S^1\times B^1 \hookrightarrow B^2)\]
where the maps are standard inclusions into the neighbourhood of boundaries. In this case, \hyperlink{disk}{Disk Lemma} gives
\[\mSh\dd(L_{p,1})\simeq\{(\Gamma,\gamma)\vb \Gamma\in\mSh\dd(S_{p,1}),\mon(\Gamma)_0\overset{\gamma}{\sim}\id\}\]
where $\mon=\mSh\dd(i)$, and $\mon(\Gamma)_0$ is the monodromy of $\Gamma$. Recall also that $S_{p,1}$ is defined as
\[S_{p,1} = \colim(V_p\times(0,1)\xleftarrow{(i,i)}(V_p\times(0,1/2))\sqcup (V_p\times(1/2,1))\xrightarrow{(i,r_{p,1})}V_p\times(0,1))\ .\]
After applying the large microlocal stack $\mSh\dd$, and noting $\mSh\dd(V_p\times U)\simeq\mSh\dd(V_p)$ for any open interval $U\subset\R$ by Proposition \ref{prp:stabilisation}, we get
\[\mSh\dd(S_{p,1})\simeq\holim(\mSh\dd(V_p)\xrightarrow{(\id,\id)}\mSh\dd(V_p)\times\mSh\dd(V_p)\xleftarrow{(\id,\mSh\dd(r_{p,1}))}\mSh\dd(V_p))\ .\]
By Proposition \ref{prp:msh-vertex}, we know that $\mSh\dd(V_p)\simeq\Modk(A_{p-1})$, $A_{\infty}$-modules over $A_{p-1}$-quiver, and by Proposition \ref{prp:rotation-coxeter} $\mSh\dd(r_{p,1})\simeq \cox_{p,1}$, the Coxeter functor. Note that we necessarily need to work with $\Z/2$-graded categories at this stage, since otherwise we cannot define $\mSh\dd(r_{p,1})$, see Remark \ref{rmk:coxeter-z2}. Then \hyperlink{circle}{Circle Lemma} gives
\[\mSh\dd(S_{p,1})\simeq\{(A,f)\vb A\in\Modk(A_{p-1}), f\colon A\rightarrow \cox_{p,1}(A)\textup{ is a homotopy equivalence}\}\ .\]
Combining with the previous step, we get
\[\mSh\dd(L_{p,1})\simeq\{(A,f,\gamma)\vb A\in\Modk(A_{p-1}),f\colon A\to \cox_{p,1}(A)\text{ is h.e.},\mon(A,f)_0\overset{\gamma}{\sim}\id\}\]
where ``h.e.'' is an abbreviation of ``homotopy equivalence'', and $\mon(A,f)_0$ is the monodromy of $(A,f)$. The morphisms are explained by \hyperlink{disk}{Disk Lemma} and \hyperlink{circle}{Circle Lemma}.

Next task is to explain $\mSh\dd(L_{p,1})$ explicitly. For that, first we will describe $\mon(A,f)_0$. Note that an object $(A,f)\in\mSh\dd(S_{p,1})$ can be expressed as
\[\begin{tikzcd}
	A\arrow[d,"f"]\\
	\cox_{p,1}(A)
\end{tikzcd}
=
\begin{tikzcd}
	A_1\arrow[r,"a_1"]\arrow[d,"f_1"]\arrow[rd,"h_1"] & A_2\arrow[r,"a_2"]\arrow[d,"f_2"] & \cdots\arrow[r,"a_{p-3}"] & A_{p-2}\arrow[r,"a_{p-2}"]\arrow[d,"f_{p-2}"]\arrow[rd,"h_{p-2}"] & A_{p-1}\arrow[d,"f_{p-1}"]\\
	C(a_1)\arrow[r,"{\fd(\id,a_2)}"] & C(a_2\circ a_1)\arrow[r,"{\fd(\id,a_3)}"] & \cdots\arrow[r,"{\fd(\id,a_{p-2})}"] & C(a_{p-2}\circ\ldots\circ a_1)\arrow[r,"{\fr(\id,0)}" xshift=-0.15cm] & A_1[1]
\end{tikzcd}\]
where $A_i\in\Modk$, $da_i=0$, and $f_i$ is a homotopy equivalence for all $i$, and
\[dh_i=\begin{cases}
	f_{i+1}\circ a_i-\fd(\id,a_{i+1})\circ f_i &\text{if }1\leq i\leq p-3\\
	f_{p-1}\circ a_{p-2}-\fr(\id,0)\circ f_{p-2} &\text{if }i=p-2
\end{cases}\]
by Proposition \ref{prp:quiver-algebra} since $f$ is a homotopy equivalence.

\begin{prp}\label{prp:monodromy}
	Given $(A,f)\in\mSh\dd(S_{p,1})$, we have $\mon(A,f)=(A_1,\mon(A,f)_0)$ where the monodromy $\mon(A,f)_0\colon A_1\to A_1$ is given by
	\[\mon(A,f)_0=f_{p-1}\circ\eta_2\circ\fb(f_{p-2},h_{p-2},f_{p-1})\circ\eta_{3,p-3}\circ\ldots\circ\eta_{3,1}\circ\fb(f_1,h_1,f_2)\circ f_1\]
	where $\eta_2=\fr(0,\id,a_{p-2}\circ\dots\circ a_1)$ and
	\[\eta_{3,i}=\mx{0 & \id & a_i\circ\ldots\circ a_1 & 0 \\ 0 & 0 & 0 & \id}\]
	for $i=1,\ldots,p-3$.
\end{prp}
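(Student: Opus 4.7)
The plan is to compute the monodromy directly by tracing how $(A,f)$ restricts around the boundary circle $\partial S_{p,1}$. First I will describe the combinatorics of this circle. Because $V_p$ has $p$ edge-endpoints, and because the clockwise rotation $r_{p,1}$ by $2\pi/p$ sends the endpoint of edge $k+1$ to that of edge $k$, the circle $\partial S_{p,1}$ decomposes as $p$ arcs $I_1,\ldots,I_p$ (one for each edge of $V_p$) glued cyclically: $t=1$ of $I_k$ is identified with $t=0$ of $I_{k+1}$ (indices mod $p$). Placing the basepoint on $I_1$ near $t=0$ gives $\mon(A,f)_1 = j_1(A) = A_1$, matching the source and target of the claimed formula.

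Next I will identify the restrictions and transition maps. On the interior of each arc $I_k$ the sheaf restricts to $j_k(A)$, whose value is read off from Proposition \ref{prp:msh-vertex}. Each gluing $I_k \ni (t=1) \sim (t=0) \in I_{k+1}$ is a local instance of the \hyperlink{circle}{Circle Lemma} patch, in which $f : A \to \cox_{p,1}(A)$ plays the role of the non-trivial gluing datum and $\cox_{p,1}$ implements the geometric rotation (Proposition \ref{prp:rotation-coxeter}). The transition across such a gluing is therefore the composition of (i) the restriction $j_k(f) : j_k(A) \to j_k(\cox_{p,1}(A))$, and (ii) the natural identification $j_k(\cox_{p,1}(A)) \simeq j_{k+1}(A)$, which expresses geometrically that $\cox_{p,1}$ rotates edge $k+1$ onto edge $k$. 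Proposition \ref{prp:quiver-algebra} gives $j_1(f)=f_1$, $j_k(f)=\fb(f_{k-1},h_{k-1},f_k)$ for $2\le k\le p-1$, and $j_p(f)=f_{p-1}$.

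The heart of the argument is to make the identifications $j_k(\cox_{p,1}(A)) \simeq j_{k+1}(A)$ fully explicit. For $k=1$ this is literal equality, since $j_1(\cox_{p,1}(A))=C(a_1)=j_2(A)$, so no nontrivial $\eta$ is required. For $2\le k\le p-2$ one has $j_k(\cox_{p,1}(A))=C(C(a_{k-1}\circ\cdots\circ a_1)\xrightarrow{\fd(\id,a_k)}C(a_k\circ\cdots\circ a_1))$, and Lemma \ref{lem:cone-triple}, applied with $a_1$ replaced by $a_{k-1}\circ\cdots\circ a_1$ and $a_2$ replaced by $a_k$, supplies the equivalence to $C(a_k)=j_{k+1}(A)$ given by exactly $\eta_{3,k-1}$ as stated. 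For $k=p-1$, $j_{p-1}(\cox_{p,1}(A))=C(C(a_{p-2}\circ\cdots\circ a_1)\xrightarrow{\fr(\id,0)}A_1[1])$, and Lemma \ref{lem:cone-exact} (with $a_1$ replaced by $a_{p-2}\circ\cdots\circ a_1$) yields $\eta_2$ as stated. For $k=p$, one has $j_p(\cox_{p,1}(A))=A_1[2]=A_1=j_1(A)$ literally in the $\Z/2$-graded setting, which is why the loop closes up. Composing these transitions in cyclic order assembles precisely into the stated formula.

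The main obstacle is justifying that the transition across each gluing is really $\eta\circ j_k(f)$ in the direction I have described, rather than being twisted by an inverse or by extra shifts; this requires carefully tracing how the \hyperlink{circle}{Circle Lemma} interacts with the restriction functors from Proposition \ref{prp:msh-vertex}, and it is also the place where the $\Z/2$-grading is essential, since by Remark \ref{rmk:coxeter-z2} the identification $\cox_{p,1}^p\simeq\id$ that closes up the loop only holds after passing to $\Z/2$-grading.
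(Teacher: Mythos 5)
Your proposal is correct and follows essentially the same route as the paper: both describe the monodromy as the $p$-fold composition obtained by going once around the core circle, read off the edge restrictions via $j_k$, and obtain the explicit identifications $\eta_{3,i}$ and $\eta_2$ from Lemma \ref{lem:cone-triple} and Lemma \ref{lem:cone-exact}, closing the loop using $\cox_{p,1}^p\simeq\id$ in the $\Z/2$-graded setting. The only (cosmetic) difference is bookkeeping: you write the $k$-th transition as $j_k(f)$ followed by $j_k(\cox_{p,1}(A))\simeq j_{k+1}(A)$, while the paper writes it as $j_1(\cox_{p,k-1}(f))$ followed by $j_1(\cox_{p,k-1}\circ\cox_{p,1}(A))\simeq j_1(\cox_{p,k}(A))$ via Proposition \ref{prp:coxeter-power}; these are the same maps.
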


\begin{proof}
	First recall how the monodromy is defined: We have the inclusion $S^1\times B^1\overset{i}{\hookrightarrow} S_{p,1}$ into a neighbourhood of the boundary $S^1$ of $S_{p,1}$, and after applying the large microlocal stack $\mSh\dd$, we get
	\begin{align*}
		\mSh\dd(S_{p,q})&\xrightarrow{\mon=\mSh\dd(i)}\mSh\dd(S^1\times B^1)\simeq\Loc\dd(S^1)\\
		(A,f)&\mapsto \mon(A,f)=(\mon(A,f)_1,\mon(A,f)_0)\ .
	\end{align*}
	If we fix the vertex $V_p\in S_{p,1}$, and choose its first edge $e_1$, we can revolve it around the core circle of $S_{p,1}$ $p$ times to get $S^1\times B^1$. Then, given $(A,f)\in\mSh\dd(S_{p,1})$, we can read off $\mon(A,f)\in\Loc\dd(S^1)$ from the gluing diagram for $(A,f)$ by focusing to the first edge $e_1$. Turning the vertex $V_p$ around the core circle $p$ times corresponds to composing $f$ with itself (after translation) $p$ times:
	\begin{multline*}
		A\xrightarrow{f}\cox_{p,1}(A)\xrightarrow{\cox_{p,1}(f)}\cox_{p,1}^2(A)\simeq \cox_{p,2}(A)\xrightarrow{\cox_{p,2}(f)}\cox_{p,2}\circ \cox_{p,1}(A)\simeq \cox_{p,3}(A)\xrightarrow{\cox_{p,3}(f)}\ldots\\
		\ldots\xrightarrow{\cox_{p,p-2}(f)}\cox_{p,p-2}\circ \cox_{p,1}(A)\simeq \cox_{p,p-1}(A)\xrightarrow{\cox_{p,p-1}(f)}\cox_{p,p-1}\circ \cox_{p,1}(A)\simeq A
	\end{multline*}
	where the equivalences given by Proposition \ref{prp:coxeter-power}. We restrict this morphism to the first edge $e_1$, i.e. we apply the functor $j_1$, defined in Proposition \ref{prp:msh-vertex}, on this morphism, and get $\mon(A,f)$:
	\begin{multline*}
		A_1\xrightarrow{f_1}C(a_1)\xrightarrow{\fb(f_1,h_1,f_2)}C((a_1)\xrightarrow{\fd(\id,a_2)}C(a_2\circ a_1))\simeq C(a_2)\xrightarrow{\fb(f_2,h_2,f_3)}\ldots\\
		\ldots\xrightarrow{\fb(f_{p-3},h_{p-3},f_{p-2})}C((a_{p-3}\circ\ldots\circ a_1)\xrightarrow{\fd(\id,a_{p-2})}C(a_{p-2}\circ \ldots\circ a_1))\simeq C(a_{p-2})\xrightarrow{\fb(f_{p-2},h_{p-2},f_{p-1})}\\
		\xrightarrow{\fb(f_{p-2},h_{p-2},f_{p-1})}C(C(a_{p-2}\circ\ldots\circ a_1)\xrightarrow{\fr(\id,0)}A_1[1])\simeq A_{p-1}[1]\xrightarrow{f_{p-1}}A_1\simeq A_1
	\end{multline*}
	By inspecting the proof of Proposition \ref{prp:coxeter-power}, the equivalences can be obtained by Lemma \ref{lem:cone-triple} and \ref{lem:cone-exact}, which are $\eta_{3,1},\ldots,\eta_{3,p-3},\eta_2,\id$ in order. This concludes the proof.
\end{proof}

Next, we will simplify the objects of $\mSh\dd(S_{p,1})$. Before that, we define:

\begin{dfn}
	A \textit{simple matrix} $A$ is a lower triangular square matrix where any of its diagonal consists of the same elements, i.e.
	\[A^{ij}=A^{(i+k)(j+k)}\]
	for any $i,j,k$, where $A^{ij}$ is $(ij)^{\text{th}}$ entry of $A$. It is obvious that the simple matrices are determined by their first column, hence if $a$ is the first column of the simple matrix $A$, we say \textit{$A$ is generated by $a$}, and write $A=\fs(a)$. If $a=\fc(a_1,a_2,\ldots,a_n)$, we can write $A=\fs(a_1,a_2,\ldots,a_n)$.
\end{dfn}

\begin{rmk}
	Set of all $n\times n$ simple matrices forms a ring with the usual addition and multiplication. We can use this fact as follows: If $A$ and $B$ are simple matrices and $B=\fs(b)$, then $A\circ B$ is simple and hence $A\circ B=\fs(A\circ b)$.
\end{rmk}

We also introduce the matrices
\[l_n:=\fr(I_n,0)\]
and
\[r_n:=\fc(I_n,0)\]
where $I_n$ is $n\times n$ identity matrix, and dimension of the zero matrix $0$ is determined by the context. We write
\[b_n(A):=l_n\circ A\circ r_n\]
for a matrix $A$ with dimensions bigger than $n$, which gives the $n\times n$ matrix at the top left part of $A$. Note that if $x$ is $1\times 1$, and $a$ is $n\times 1$ matrix, then
\[\fs(x,a)=\fb(x,a,b_n(s(x,a)))\ .\]

\begin{dfn}\label{dfn:simple-pinwheel}
	Let $\cS_{p,1}'$ be the full dg subcategory of $\mSh\dd(S_{p,1})$ with the objects of the form
	\[\begin{tikzcd}
		A_1\arrow[r,"l_{p-2}\circ f_1"]\arrow[d,"f_1"]\arrow[rd,"0"] &
		C(l_{p-3}\circ f_1)\arrow[r,"l_{p-3}"]\arrow[d,"\id"]\arrow[rd,"0"] &
		C(l_{p-4}\circ f_1)\arrow[r,"l_{p-4}"]\arrow[d,"\id"] &[-5pt]
		\cdots\arrow[r,"l_2"] &[-5pt]
		C(l_1\circ f_1)\arrow[r,"l_1"]\arrow[d,"\id"]\arrow[rd,"0"] &
		A_1[1]\arrow[d,"\id"]
		\\
		C(l_{p-2}\circ f_1)\arrow[r,"l_{p-2}"] &
		C(l_{p-3}\circ f_1)\arrow[r,"l_{p-3}"] &
		C(l_{p-4}\circ f_1)\arrow[r,"l_{p-4}"] &
		\cdots\arrow[r,"l_2"] &
		C(l_1\circ f_1)\arrow[r,"l_1"] &
		A_1[1]
	\end{tikzcd}\]
	where $f_1=\fc(f_1^1,f_1^2,\ldots,f_1^{p-1})$ is a homotopy equivalence with degree zero morphisms $f_1^j\colon A_1\to A_1[1]$ for $j=1,\ldots,p-1$. Note that
	\[C(l_k\circ f_1)=A_1[1]\oplus\ldots \oplus A_1[1]\]
	where $A_1[1]$ is repeated $k+1$ times, and
	\[d_{C(l_k\circ f_1)}=\fs(-d,l_k\circ f_1)\]
	for $k=1,\ldots,p-2$.
\end{dfn}

\begin{prp}\label{prp:simplified-pinwheel}                         
	$\mSh\dd(S_{p,1})$ is quasi-equivalent to $\cS_{p,1}'$.
\end{prp}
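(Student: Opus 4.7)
The plan is to show that the tautological inclusion $F\colon\cS_{p,1}'\hookrightarrow\mSh\dd(S_{p,1})$, which is faithful by construction, is a dg quasi-equivalence; equivalently, I must show that $H^0F$ is essentially surjective and $H^*F$ is full. Since objects of $\cS_{p,1}'$ have a very rigid shape---namely $f_i=\id$ and $h_i=0$ for $i\geq 2$, with $A_i=C(l_{p-i}\circ f_1)$---the content of the proposition is that every object of $\mSh\dd(S_{p,1})$ is homotopy equivalent, inside the larger dg category, to one of this special form.

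I would construct the reduction by induction from right to left along the quiver. Take $(A,f)\in\mSh\dd(S_{p,1})$ with $A=(A_i,a_i)$ and $f=(f_i,h_i)$. By Proposition \ref{prp:quiver-algebra} each vertical component $f_i$ is a homotopy equivalence, so an adaptation of Lemma \ref{lem:adjusting-quiver} to the whole square first lets me replace $A_{p-1}$ by $A_1[1]$, turning $f_{p-1}$ into $\id$; then replace $A_{p-2}$ by $C(a_{p-2}\circ\ldots\circ a_1)$, turning $f_{p-2}$ into $\id$ and absorbing $h_{p-2}$; and so on. After $p-2$ such steps every $A_i$ with $i\geq 2$ has been replaced by $C(a_{i-1}\circ\ldots\circ a_1)$, each $f_i$ with $i\geq 2$ is an identity, and every $h_i$ is zero. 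Lemmas \ref{lem:simplified-pinwheel-1} and \ref{lem:simplified-pinwheel-2} are the technical statements that each individual step of this reduction is a quasi-equivalence of the full dg category, not merely an object-level rewriting.

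Setting $f_1^j:=a_j\circ\ldots\circ a_1$ for $j=1,\ldots,p-1$ and $f_1=\fc(f_1^1,\ldots,f_1^{p-1})$, the horizontal maps in the reduced object become the canonical projections $l_k\colon C(l_k\circ f_1)\to C(l_{k-1}\circ f_1)$, exactly as in Definition \ref{dfn:simple-pinwheel}. The map $f_1$ remains a homotopy equivalence because each original $f_i$ was one and homotopy equivalences are preserved by the cone functor (via Proposition \ref{prp:homotopic-prop}) and by composition; this gives essential surjectivity of $H^0F$.

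The main obstacle is then fullness of $H^*F$. A degree $k$ morphism in $\mSh\dd(S_{p,1})$ between two objects already in the form of $\cS_{p,1}'$ need not have components that are strict identities on the truncated cones; one must perform an analogous right-to-left simplification on morphisms, at each step modifying the degree $k$ piece by a boundary and adjusting the diagonal piece to absorb the discrepancy, in the style of the proofs of \hyperlink{circle}{Circle Lemma} and \hyperlink{disk}{Disk Lemma}. The signs coming from $\cox_{p,1}$ interacting with the $p-1$ nested cones make this bookkeeping lengthy but formal; conceptually each morphism is determined up to homotopy by its action on the first column together with a diagonal piece, and this is precisely the datum recorded by $\Hom_{\cS_{p,1}'}$, proving fullness and hence the claimed quasi-equivalence.
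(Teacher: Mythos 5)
Your core argument is the paper's: the proposition follows by chaining the quasi-equivalences of Lemmas \ref{lem:simplified-pinwheel-1} and \ref{lem:simplified-pinwheel-2} along the filtration $\cS_{p,1}'=\cS_{p,1}^1\subset\cT_{p,1}^1\subset\cdots\subset\cS_{p,1}^{p-1}=\mSh\dd(S_{p,1})$, and that is all the paper does. However, your final paragraph identifies ``fullness of $H^*F$'' as the main remaining obstacle and then waves it away as formal bookkeeping; this is a misreading of Definition \ref{dfn:simple-pinwheel}. There $\cS_{p,1}'$ is declared to be a \emph{full} dg subcategory of $\mSh\dd(S_{p,1})$ (as are all the intermediate categories $\cS_{p,1}^i$ and $\cT_{p,1}^i$), so fullness and faithfulness of each inclusion are automatic and the only content of the two lemmas is essential surjectivity of $H^0$. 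The morphism-level simplification you describe --- reducing a general morphism between objects of the special shape to one determined by $(\alpha_1,\beta_1)$ --- is genuinely needed, but it is the content of the \emph{separate} Proposition \ref{prp:simplified-pinwheel-morphism}, which compares $\cS_{p,1}'$ with the non-full subcategory $\cS_{p,1}$ of Definition \ref{dfn:simple-pinwheel-morphism}; it is not part of the present proposition, and deferring it with ``lengthy but formal'' would be a gap if it were. Separately, your explicit identification $f_1^j:=a_j\circ\cdots\circ a_1$ is incorrect: that composite lands in $A_{j+1}$, not in $A_1[1]$, and the components $f_1^j$ of the reduced object are built from the original data $f_i$, $h_i$ through the maps $\bar H_i$, $\bar h_i$ of Lemma \ref{lem:simplified-pinwheel-2}, not from the $a_i$ alone.
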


We can represent an object $(A,f)\in\mSh\dd(S_{p,1})$ by its $(k,l)\th$ components $(u_{k,l}(A),u_{k,l}(f))$ for $1\leq k<l\leq p-1$ where $u_{k,l}$ is defined in Definition \ref{dfn:components}. We shortly write
\[u_{k,l}(A,f):=(u_{k,l}(A),u_{k,l}(f))\ .\]
We also write $\bar a_n:=a_n\circ\ldots\circ a_1$. To prove the proposition, for $i=1,\ldots,p-1$ we define:

\begin{dfn}\label{dfn:simplified-pinwheel}
	$\cS_{p,1}^i$ as the full dg subcategory of $\mSh\dd(S_{p,1})$ with the objects whose $(1,i-2)\nd$ component is of the form
	\[\begin{tikzcd}
		A_1\arrow[r,"a_1"]\arrow[d,"f_1"]\arrow[rd,"h_1"] &[15pt] A_2\arrow[r,"a_2"]\arrow[d,"f_2"] & \cdots\arrow[r,"a_{i-4}"] & A_{i-3}\arrow[r,"a_{i-3}"]\arrow[d,"f_{i-3}"]\arrow[rd,"h_{i-3}"] &[15pt] A_{i-2}\arrow[d,"f_{i-2}"]\\
		C(\bar a_1)\arrow[r,"{\fd(\id,a_2)}"] & C(\bar a_2)\arrow[r,"{\fd(\id,a_3)}"] & \cdots\arrow[r,"{\fd(\id,a_{i-3})}"] & C(\bar a_{i-3})\arrow[r,"{\fd(\id,a_{i-2})}"] & C(\bar a_{i-2})
	\end{tikzcd}\]
	$(i-2,i+1)\st$ component is of the form
	\[\begin{tikzcd}
		A_{i-2}\arrow[r,"a_{i-2}"]\arrow[d,"f_{i-2}"]\arrow[rd,"h_{i-2}"] & [15pt] A_{i-1}\rar["a_{i-1}"]\dar["f_{i-1}"]\drar["h_{i-1}"] & [20pt] A_i\rar["l_{p-(i+1)}\circ f_i"]\dar["f_i"]\drar["0"] & C(l_{p-(i+2)}\circ f_i\circ\bar a_{i-1})\dar["\id"]\\
		C(\bar a_{i-2})\arrow[r,"{\fd(\id,a_{i-1})}"] & C(\bar a_{i-1})\rar["{\fd(\id,l_{p-(i+1)}\circ f_i)}"] & C(l_{p-(i+1)}\circ f_i\circ\bar a_{i-1})\rar["l_{p-(i+1)}"] & C(l_{p-(i+2)}\circ f_i\circ\bar a_{i-1})
	\end{tikzcd}\]
	and $(i+1,p-1)\st$ component is of the form
	\[\begin{tikzcd}
		C(l_{p-(i+2)}\circ f_i\circ\bar a_{i-1})\rar["l_{p-(i+2)}"]\dar["\id"]\drar["0"] & C(l_{p-(i+3)}\circ f_i\circ\bar a_{i-1})\rar["l_{p-(i+3)}"]\dar["\id"]& \cdots\rar["l_1"] & A_1[1]\dar["\id"]\\
		C(l_{p-(i+2)}\circ f_i\circ\bar a_{i-1})\rar["l_{p-(i+2)}"] & C(l_{p-(i+3)}\circ f_i\circ\bar a_{i-1})\rar["l_{p-(i+3)}"] & \cdots\rar["l_1"] & A_1[1]
	\end{tikzcd}\]
	where we consider $C(l_0\circ f_i\circ \bar a_{i-1})$ as $A_1[1]$ and $\fd(\id,l_0\circ f_i)$ as $\fr(\id,0)$. Note that
	\[C(l_k\circ f_i\circ\bar a_{i-1})=A_1[1]\oplus\ldots \oplus A_1[1]\]
	where $A_1[1]$ is repeated $k+1$ times, and
	\[d_{C(l_k\circ f_i\circ\bar a_{i-1})}=\fs(-d,l_k\circ f_i\circ\bar a_{i-1})\]
	for $k=1,\ldots,p-(i+1)$.
\end{dfn}

\begin{dfn}\label{dfn:simplified-pinwheel-homotopy}
	We define $\cT_{p,1}^i$ for $i=1,\ldots,p-2$ as the full dg subcategory of $\mSh\dd(S_{p,1})$ with the objects whose $(1,i-1)\st$ component is of the form
	\[\begin{tikzcd}
		A_1\arrow[r,"a_1"]\arrow[d,"f_1"]\arrow[rd,"h_1"] &[15pt] A_2\arrow[r,"a_2"]\arrow[d,"f_2"] & \cdots\arrow[r,"a_{i-3}"] & A_{i-2}\arrow[r,"a_{i-2}"]\arrow[d,"f_{i-2}"]\arrow[rd,"h_{i-2}"] &[15pt] A_{i-1}\arrow[d,"f_{i-1}"]\\
		C(\bar a_1)\arrow[r,"{\fd(\id,a_2)}"] & C(\bar a_2)\arrow[r,"{\fd(\id,a_3)}"] & \cdots\arrow[r,"{\fd(\id,a_{i-2})}"] & C(\bar a_{i-2})\arrow[r,"{\fd(\id,a_{i-1})}"] & C(\bar a_{i-1})
	\end{tikzcd}\]
	$(i-1,i+1)\st$ component is of the form
	\[\begin{tikzcd}
		A_{i-1}\rar["a_{i-1}"]\dar["f_{i-1}"]\drar["h_{i-1}"] & [20pt] A_i\rar["a_i"]\dar["f_i"]\drar["h_i"] & C(l_{p-(i+2)}\circ \bar a_i)\dar["\id"]\\
		C(\bar a_{i-1})\rar["{\fd(\id,a_i)}"] & C(\bar a_i)\rar["l_{p-(i+1)}"] & C(l_{p-(i+2)}\circ \bar a_i)
	\end{tikzcd}\]
	and $(i+1,p-1)\st$ component is of the form
	\[\begin{tikzcd}
		C(l_{p-(i+2)}\circ \bar a_i)\rar["l_{p-(i+2)}"]\dar["\id"]\drar["0"] & C(l_{p-(i+3)}\circ\bar a_i)\rar["l_{p-(i+3)}"]\dar["\id"]& \cdots\rar["l_1"] & A_1[1]\dar["\id"]\\
		C(l_{p-(i+2)}\circ \bar a_i)\rar["l_{p-(i+2)}"] & C(l_{p-(i+3)}\circ \bar a_i)\rar["l_{p-(i+3)}"] & \cdots\rar["l_1"] & A_1[1]
	\end{tikzcd}\ .\]
	where $C(l_0\circ \bar a_i)$ is considered as $A_1[1]$. Note that
	\[C(l_k\circ\bar a_i)=A_1[1]\oplus\ldots \oplus A_1[1]\]
	where $A_1[1]$ is repeated $k+1$ times, and
	\[d_{C(l_k\circ \bar a_i)}=\fs(-d,l_k\circ \bar a_i)\]
	for $k=1,\ldots,p-(i+1)$, where $l_{p-(i+1)}\circ \bar a_i=\bar a_i$.
\end{dfn}

Note that we have
\[\cS_{p,1}'=\cS_{p,1}^1\subset \cT_{p,1}^1\subset \cS_{p,1}^2\subset\cT_{p,1}^2\subset\ldots\subset\cS_{p,1}^{p-2}\subset \cT_{p,1}^{p-2}\subset \cS_{p,1}^{p-1}=\mSh\dd(S_{p,1})\]
where every category is contained as a full dg subcategory in the next one. Then we have the following two lemmas:

\begin{lem}\label{lem:simplified-pinwheel-1}    
    $\cS_{p,1}^i$ is quasi-equivalent to $\cT_{p,1}^{i-1}$ for $i=2,\ldots,p-1$.
\end{lem}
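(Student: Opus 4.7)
The plan is to establish that the inclusion $F\colon\cT_{p,1}^{i-1}\hookrightarrow\cS_{p,1}^i$ (coming from the chain $\cS_{p,1}^{i-1}\subset\cT_{p,1}^{i-1}\subset\cS_{p,1}^i$ stated just before the lemma) is a quasi-equivalence. Since both sides are full dg subcategories of $\mSh\dd(S_{p,1})$, $F$ is automatically fully faithful on morphism complexes, so the content is essential surjectivity of $H^0F$: every object of $\cS_{p,1}^i$ is homotopy equivalent, inside $\cS_{p,1}^i$, to one lying in $\cT_{p,1}^{i-1}$.

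Fix $(A,f)\in\cS_{p,1}^i$. Invoking the \hyperlink{circle}{Circle Lemma} together with Proposition \ref{prp:quiver-algebra}, the $i$th vertical component $f_i\colon A_i\to C(l_{p-(i+1)}\circ f_i\circ\bar a_{i-1})$ is a homotopy equivalence; pick an inverse $f_i'$ in homotopy and degree $-1$ morphisms $\xi,\xi'$ with $d\xi=\id-f_i'\circ f_i$, $d\xi'=\id-f_i\circ f_i'$. I would then build a new object $(A'',f'')$ by replacing only the $i$th slot of $A$: set $A''_i:=C(l_{p-(i+1)}\circ\bar a_{i-1})$, $A''_j:=A_j$ for $j\neq i$, and adjust the adjacent arrows in the spirit of Lemma \ref{lem:adjusting-quiver} by $a''_{i-1}:=l_{p-(i+1)}\circ f_i\circ a_{i-1}$ and $a''_i:=a_i\circ f_i'$. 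On the Coxeter side, since every object appearing to the right of position $i$ in $\cS_{p,1}^i$ is a cone of the form $C(l_k\circ f_i\circ\bar a_{i-1})$, the substitution $f_i\leadsto\id$ and the accompanying choice $f''_i:=\id_{A''_i}$ turn these into the cones $C(l_k\circ\bar a_{i-1})$ demanded by $\cT_{p,1}^{i-1}$. A direct comparison of Definitions \ref{dfn:simplified-pinwheel} and \ref{dfn:simplified-pinwheel-homotopy} then shows $(A'',f'')\in\cT_{p,1}^{i-1}$.

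To finish I would exhibit an explicit homotopy equivalence $m\colon(A,f)\to(A'',f'')$ in $\cS_{p,1}^i$. Its underlying $\Modk(A_{p-1})$-morphism $m^A$ is constructed via Lemma \ref{lem:adjusting-quiver}, taking the $i$th component to be $f_i$, all other components to be $\id$, and the single non-trivial diagonal homotopy to be built from $\xi$; since each component is a homotopy equivalence, so is $m^A$. The Coxeter-side datum of $m$ is produced analogously, and closedness plus the required coherence $\cox_{p,1}(m^A)\circ f\sim f''\circ m^A$ are then verified by the \hyperlink{equivalence}{Equivalence Lemma} and the \hyperlink{homotopy}{Homotopy Lemma}, with the homotopies ultimately reducing to combinations of $\xi,\xi'$ and the original $h_{i-1},h_i,\ldots$.

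The main obstacle is the bookkeeping on the right-hand (simplified) side: the cones there literally contain $f_i$ in their definition, so the substitution $A_i\leadsto A''_i$ and $f_i\leadsto\id$ must be carefully propagated through every column $C(l_k\circ f_i\circ\bar a_{i-1})$, $k=0,\ldots,p-(i+2)$, and through the horizontal maps $l_k$ linking them. Checking that each of these columns canonically matches the corresponding $C(l_k\circ\bar a_{i-1})$ after the replacement, and that all coherence homotopies entering the \hyperlink{circle}{Circle Lemma} presentation remain closed under the adjustment, is routine but lengthy; it closely parallels the verifications in the proofs of Lemma \ref{lem:adjusting-quiver} and of the \hyperlink{circle}{Circle Lemma}, which I would adapt column by column rather than attempt in closed form.
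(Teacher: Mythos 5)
Your overall strategy coincides with the paper's: take $F$ to be the inclusion $\cT_{p,1}^{i-1}\hookrightarrow\cS_{p,1}^i$ (so full faithfulness is automatic, both being full dg subcategories of $\mSh\dd(S_{p,1})$) and prove essential surjectivity by transporting the slot $A_i$ along the homotopy equivalence $f_i$ and comparing via a morphism whose components are identities except for $f_i$ at position $i$. However, there is a concrete flaw in how you build the replacement object. You set $a''_i:=a_i\circ f_i'$ in the spirit of Lemma \ref{lem:adjusting-quiver}. In $\cS_{p,1}^i$ the arrow out of position $i$ is the literal truncation $l_{p-(i+1)}$ precomposed with $f_i$, so your new arrow is $l_{p-(i+1)}\circ f_i\circ f_i'$, which is only \emph{homotopic} to $l_{p-(i+1)}$, not equal to it. Since $\cT_{p,1}^{i-1}$ is the full dg subcategory on objects of a prescribed \emph{literal} form---horizontal arrows exactly $l_k$, vertical maps exactly $\id$, diagonals exactly $0$---the object you produce need not lie in $\cT_{p,1}^{i-1}$ at all, and an unstated further rectification would be required. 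The same objection applies to feeding $\xi,\xi'$ into the coherence data: the target subcategory does not accommodate ``up to homotopy'' versions of these entries.

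The fix, which is what the paper does, is to avoid the inverse $f_i'$ entirely: take the new object at position $i$ to be the \emph{target} $C(l_{p-(i+1)}\circ f_i\circ\bar a_{i-1})$ of $f_i$, set the new arrow $A_{i-1}\to A''_i$ to be $f_i\circ a_{i-1}$, and leave everything to the right of position $i$ untouched---those columns are already of the form $C(l_k\circ f_i\circ\bar a_{i-1})$ with arrows literally $l_k$, which is exactly what $\cT_{p,1}^{i-1}$ demands once $f_i\circ\bar a_{i-1}$ is read as the new composite through position $i-1$. The new vertical map at position $i-1$ becomes $\fd(\id,f_i)\circ f_{i-1}$ and at position $i$ it becomes $\id$. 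With this choice the comparison morphism $(\alpha,\beta)$, with $\alpha$ equal to $f_i$ at position $i$ and $\id$ elsewhere and $\beta=0$, satisfies $\cox_{p,1}(\alpha)\circ f'-f''\circ\alpha=0$ on the nose, so the \hyperlink{circle}{Circle Lemma} applies with no homotopies to manufacture. You should also record why $(A'',f'')$ lies in $\cT_{p,1}^{i-1}$, i.e.\ why $f''$ is still a homotopy equivalence: this reduces to $\fd(\id,f_i)$ being one, which follows by applying the cone functor to the homotopy equivalence $(\id,0,f_i)$ in $\Modk(A_2)$.
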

    
\begin{proof}
    Define the dg functor $F\colon \cT_{p,1}^{i-1}\to \cS_{p,1}^i$ as the inclusion. This implies that $H^*F$ is full and faithful. Next, we want show that $H^0F$ is essentially surjective. Pick $(A',f')\in \cS_{p,1}^i$ which we present as in Definition \ref{dfn:simplified-pinwheel}. Define $(A'',f'')\in\cT_{p,1}^{i-1}$ by
    \[u_{1,i-2}(A'',f''):=u_{1,i-2}(A',f')\ ,\quad u_{i+1,p-1}(A'',f''):=u_{i+1,p-1}(A',f')\]
    and $u_{i-2,i+1}(A'',f'')$ is defined by
    \[\begin{tikzcd}
    	A_{i-2}\rar["a_{i-2}"]\dar["f_{i-2}"]\drar["{\fd(\id,f_i)\circ h_{i-2}}"] & [15pt]
    	A_{i-1}\rar["f_i\circ a_{i-1}"]\dar["{\fd(\id,f_i)\circ f_{i-1}}"]\drar["h_{i-1}"] & [5pt]
    	C(l_{p-(i+1)}\circ f_i\circ\bar a_{i-1})\rar["l_{p-(i+1)}"]\dar["\id"]\drar["0"] &
    	C(l_{p-(i+2)}\circ f_i\circ\bar a_{i-1})\dar["\id"] \\
    	C(\bar a_{i-2})\rar["{\fd(\id,f_i\circ a_{i-1})}"] &
    	C(f_i\circ\bar a_{i-1})\rar["l_{p-i}"] &
    	C(l_{p-(i+1)}\circ f_i\circ\bar a_{i-1})\rar["l_{p-(i+1)}"] &
    	C(l_{p-(i+2)}\circ f_i\circ\bar a_{i-1})
    \end{tikzcd}\ .\]
    
    We should comment on why $(A'',f'')\in\cT_{p,1}^{i-1}$, in particular why $f''$ is a homotopy equivalence. It is clear that showing $u_{i-2,i+1}(f'')$ is a homotopy equivalence is enough. For this to be true, by Proposition \ref{prp:quiver-algebra} we need to show that the vertical maps above should be homotopy equivalences, and the squares should commute up to homotopy inside. Since $f'$ is a homotopy equivalence, $f_j$ is a homotopy equivalence for $j=1,\ldots,i$. To see that $\fd(\id,f_i)\colon C(\bar a_{i-1})\to C(f_i\circ\bar a_{i-1})$ is a homotopy equivalence, observe that
    \[\begin{tikzcd}
    	A_1\rar["\bar a_{i-1}"]\dar["\id"]\drar["0"] & A_i\dar["f_i"] \\
    	A_1\rar["f_i\circ\bar a_{i-1}"] & C(\bar a_i)
    \end{tikzcd}\]
    is a homotopy equivalence, hence $C(\id,0,f_i)=\fd(\id,f_i)$ and consequently $\fd(\id,f_i)\circ f_{i-1}$ are homotopy equivalences. Commutativity of squares easily follows from this and the commutativity of squares for $(A',f')$.
    
    We will show that $(A',f')$ is homotopy equivalent to $F(A'',f'')=(A'',f'')$, and hence prove the essential surjectivity of $H^0F$. Define a morphism $(\alpha,\beta)\colon (A',f')\to (A'',f'')$ in the sense of \hyperlink{circle}{Circle Lemma}, such that $\alpha\colon A'\to A''$ is given by
    \[u_{1,i-2}(\alpha):=\id\ ,\quad u_{i+1,p-1}(\alpha):=\id\]
    and $u_{i-2,i+1}(\alpha)$ is
    \[\begin{tikzcd}
    	A_{i-2}\rar["a_{i-2}"]\dar["\id"]\drar["0"] &
    	A_{i-1}\rar["a_{i-1}"]\dar["\id"]\drar["0"] &
    	A_i\rar["l_{p-(i+1)}\circ f_i"]\dar["f_i"]\drar["0"] &
    	C(l_{p-(i+2)}\circ f_i\circ\bar a_{i-1})\dar["\id"] \\
	    A_{i-2}\rar["a_{i-2}"] &
	    A_{i-1}\rar["f_i\circ a_{i-1}"] &
	    C(l_{p-(i+1)}\circ f_i\circ\bar a_{i-1})\rar["l_{p-(i+1)}"] &
	    C(l_{p-(i+2)}\circ f_i\circ\bar a_{i-1})
    \end{tikzcd}\]
    and $\beta\colon A'\to \cox_{p,1}(A'')$ is given by $\beta=0$. Note that then $\cox_{p,1}(\alpha)\colon \cox_{p,1}(A')\to \cox_{p,1}(A'')$ is given by
    \[u_{1,i-2}(\cox_{p,1}(\alpha))=\id\ ,\quad u_{i+1,p-1}(\cox_{p,1}(\alpha))=\id\]
    and $u_{i-2,i+1}(\cox_{p,1}(\alpha))$ is
    \[\begin{tikzcd}
	    C(\bar a_{i-2})\rar["{\fd(\id,a_{i-1})}"]\dar["\id"]\drar["0"] & [15pt]
	    C(\bar a_{i-1})\rar["{\fd(\id,l_{p-(i+1)}\circ f_i)}"]\dar["{\fd(\id,f_i)}"]\drar["0"] & [10pt]
	    C(l_{p-(i+1)}\circ f_i\circ\bar a_{i-1})\rar["l_{p-(i+1)}"]\dar["\id"]\drar["0"] & [-5pt]
	    C(l_{p-(i+2)}\circ f_i\circ\bar a_{i-1})\dar["\id"] \\
	    C(\bar a_{i-2})\rar["{\fd(\id,f_i\circ a_{i-1})}"] &
	    C(f_i\circ\bar a_{i-1})\rar["l_{p-i}"] &
	    C(l_{p-(i+1)}\circ f_i\circ\bar a_{i-1})\rar["l_{p-(i+1)}"] &
	    C(l_{p-(i+2)}\circ f_i\circ\bar a_{i-1})
    \end{tikzcd}\ .\]
    Clearly $\alpha$ is a homotopy equivalence, and $\cox_{p,1}(\alpha)\circ f' - f''\circ\alpha=d\beta$, in particular the $(i-2,i+1)\st$ component of the equation is given by
    \begin{multline*}
 		(\id,0,\fd(\id,f_i),0,\id,0,\id)\circ(f_{i-2},h_{i-2},f_{i-1},h_{i-1},f_i,0,\id)\\
 		\qquad-(f_{i-2},\fd(\id,f_i)\circ h_{i-2},\fd(\id,f_i)\circ f_{i-1},h_{i-1},\id,0,\id)\circ (\id,0,\id,0,f_i,0,\id)=0\ .
    \end{multline*}
    Hence by \hyperlink{circle}{Circle Lemma}, $(\alpha,\beta)$ is a homotopy equivalence, which shows $H^0F$ is essentially surjective. Consequently, $F$ is a quasi-equivalence between $\cT_{p,1}^{i-1}$ and $\cS_{p,1}^i$.
\end{proof}
    
\begin{lem}\label{lem:simplified-pinwheel-2}
    $\cT_{p,1}^i$ is quasi-equivalent to $\cS_{p,1}^i$ for $i=1,\ldots,p-2$.
\end{lem}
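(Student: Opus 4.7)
The plan is to follow the same strategy as the proof of Lemma~\ref{lem:simplified-pinwheel-1}, but with the containment $\cS_{p,1}^i\subset\cT_{p,1}^i$ in place of $\cT_{p,1}^{i-1}\subset\cS_{p,1}^i$. Define $F\colon\cS_{p,1}^i\hookrightarrow\cT_{p,1}^i$ to be the inclusion of full dg subcategories, so that $H^*F$ is automatically full and faithful; the entire content of the lemma is then to prove essential surjectivity of $H^0F$, i.e.\ that each $(A,f)\in\cT_{p,1}^i$ is homotopy equivalent inside $\mSh\dd(S_{p,1})$ to some $(A',f')\in\cS_{p,1}^i$.

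Given $(A,f)\in\cT_{p,1}^i$ as in Definition~\ref{dfn:simplified-pinwheel-homotopy}, and writing $f_i=\fc(f_i^1,\dots,f_i^{p-i})$, I would construct $(A',f')\in\cS_{p,1}^i$ by leaving the first $i-1$ top-row positions and the $i$-th top object $A_i$ (together with the vertical maps $f_j$ for $j\leq i$) unchanged, and by replacing at each position $j\geq i+1$ the simple cone $C(l_{p-(j+1)}\circ\bar a_i)$ by the simple cone $C(l_{p-(j+1)}\circ f_i\circ\bar a_{i-1})$; at position $i$ of the bottom row the object $C(\bar a_i)$ is likewise replaced by $C(l_{p-(i+1)}\circ f_i\circ\bar a_{i-1})$. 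All new top-row and bottom-row connecting maps are then the truncations $l_k$ dictated by Definition~\ref{dfn:simplified-pinwheel}.

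A morphism $(\alpha,\beta)\colon(A,f)\to(A',f')$ in the sense of \hyperlink{circle}{Circle Lemma} would then be built as follows: $\alpha$ is the identity on positions $1,\ldots,i$ of the top row, and on positions $j>i$ it is the evident block matrix between the two copies of $A_1[1]^{p-j}$ (identity on the leading $A_1[1]$ factor and an $f_i$-truncation on the remaining factors), with a single nonzero diagonal homotopy at position $i$ of the bottom row supplied by the datum $h_{i-1}$ already present in $(A,f)$; we take $\beta=0$. Closedness of $\alpha$ is checked by unpacking the two simple-matrix differentials and observing that the relation $dh_{i-1}=f_i\circ a_{i-1}-\fd(\id,a_i)\circ f_{i-1}$ precisely intertwines $\fs(-d,l_k\circ\bar a_i)$ with $\fs(-d,l_k\circ f_i\circ\bar a_{i-1})$ under $\alpha$, after which $\cox_{p,1}(\alpha)\circ f=f'\circ\alpha$ follows block by block by the same Leibniz identities exploited at the end of the proof of Lemma~\ref{lem:simplified-pinwheel-1}. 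Finally, \hyperlink{circle}{Circle Lemma} combined with Proposition~\ref{prp:quiver-algebra} reduces the claim that $(\alpha,\beta)$ is a homotopy equivalence to the claim that each component of $\alpha$ is a homotopy equivalence in $\Modk$; for positions $j>i$ this is automatic because $f_i$ itself is a homotopy equivalence, being a component of the homotopy equivalence $f$ (again by Proposition~\ref{prp:quiver-algebra}).

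The main obstacle will be the detailed matrix bookkeeping: one has to track carefully which columns of $f_i$ appear in which simple-matrix differential, verify that the block form of $\alpha$ commutes with the new differentials, and keep all sign and shift conventions consistent with the cone-functor and $\cox_{p,1}$ definitions. All of this is analogous to, and no more involved than, the calculation carried out in the proof of Lemma~\ref{lem:simplified-pinwheel-1}, so no genuinely new algebraic input should be needed.
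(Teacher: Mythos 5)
Your skeleton matches the paper's: take $F\colon\cS_{p,1}^i\hookrightarrow\cT_{p,1}^i$ to be the inclusion of full dg subcategories, so full faithfulness of $H^*F$ is automatic and everything reduces to essential surjectivity, proved by writing down a replacement object and an explicit homotopy equivalence with $\beta=0$. But the replacement object you propose is not well defined, and your closing claim that no new algebraic input beyond Lemma \ref{lem:simplified-pinwheel-1} is needed is precisely where this lemma's content lives. The square of $\cT_{p,1}^i$ between positions $i$ and $i+1$ commutes only up to the homotopy $h_i$, i.e.\ $dh_i=a_i-l_{p-(i+1)}\circ f_i$ (it is $h_i$, not $h_{i-1}$, that is the relevant datum). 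If you keep the vertical map $f_i$ while changing its target from $C(\bar a_i)$, with differential $\fs(-d,\bar a_i)$, to $C(l_{p-(i+1)}\circ f_i\circ\bar a_{i-1})$, with differential $\fs(-d,l_{p-(i+1)}\circ f_i\circ\bar a_{i-1})$, then $f_i$ is no longer closed: the two differentials differ by $\fs(0,dh_i\circ\bar a_{i-1})$ (up to sign), so $df_i$ acquires the generally nonzero term $\fs(0,dh_i\circ\bar a_{i-1})\circ f_i$. For the same reason there is no closed comparison map $C(l_k\circ\bar a_i)\to C(l_k\circ f_i\circ\bar a_{i-1})$ of the simple block form you describe: writing a candidate as $\fs(\id,u)$, closedness forces $du^m=(f_i^m-a_i^m)\circ\bar a_{i-1}+\sum_{j<m}\bigl(c^{m-j}\circ u^j-u^{m-j}\circ\tilde c^{\,j}\bigr)$ for the two subdiagonal columns $c,\tilde c$, and the first-order guess $u^m=-h_i^m\circ\bar a_{i-1}$ kills only the leading term while leaving the quadratic cross terms. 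Lemma \ref{lem:simplified-pinwheel-1} avoids all of this because there every new square commutes strictly and the comparison maps are cones of strictly commuting squares.

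The missing idea is the recursive (geometric-series) resummation of $h_i$: set $\bar h_i^l=h_i^l+\sum_{j=1}^{l-1}h_i^{l-j}\circ\bar a_{i-1}\circ\bar h_i^j$ and $\bar H_i=\fs(\id,\bar h_i\circ\bar a_{i-1})$, replace the vertical map at position $i$ by $\bar H_i\circ f_i$, and build all the cones at positions $\geq i$ on $\bar H_i\circ f_i\circ\bar a_{i-1}$ rather than on $f_i\circ\bar a_{i-1}$ (the apparent circularity of this prescription is exactly what the recursion resolves). The comparison morphism $\alpha$ then has vertical components $b_k(\bar H_i)$ at positions $\geq i+1$, diagonal homotopy $\bar h_i$ in the square between positions $i$ and $i+1$, and a corrected homotopy $\bar H_i\circ h_{i-1}+\fd(0,\bar h_i)\circ f_{i-1}$ in the square between positions $i-1$ and $i$ (which also changes, since its bottom map goes from $\fd(\id,a_i)$ to $\fd(\id,l_{p-(i+1)}\circ\bar H_i\circ f_i)$). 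The two stated recursions for $\bar h_i$ are what make $H_i=\fs(\id,-h_i\circ\bar a_{i-1})$ a two-sided inverse of $\bar H_i$ and make all the relevant maps closed. Without this correction your $(A',f')$ is not an object of $\cS_{p,1}^i$ and your $\alpha$ is not a chain map, so the argument does not go through as written.
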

    
\begin{proof}
    Define the dg functor $F\colon\cS_{p,1}^i\to\cT_{p,1}^i$ as the inclusion. This implies that $H^*F$ is full and faithful. Next, we want show that $H^0F$ is essentially surjective. Pick $(A',f')\in \cT_{p,1}^i$ which we present as in Definition \ref{dfn:simplified-pinwheel-homotopy}. Define $(A'',f'')\in\cS_{p,1}^i$ by
    \[u_{1,i-1}(A'',f''):=u_{1,i-1}(A',f')\ ,\]
    $u_{i-1,i+1}(A'',f'')$ is defined by
    \[\begin{tikzcd}
	    A_{i-1}\rar["a_{i-1}"]\dar["f_{i-1}"]\drar["{\bar H_i\circ h_{i-1}+\fd(0,\bar h_i)\circ f_{i-1}}"] & [35pt]
	    A_i\rar["l_{p-(i+1)}\circ\bar H_i\circ f_i"]\dar["\bar H_i\circ f_i"]\drar["0"] & [5pt]
	    C(l_{p-(i+2)}\circ\bar H_i\circ f_i\circ \bar a_{i-1})\dar["\id"] \\
	    C(\bar a_{i-1})\rar["{\fd(\id,l_{p-(i+1)}\circ\bar H_i\circ f_i)}"] &
	    C(l_{p-(i+1)}\circ\bar H_i\circ f_i\circ\bar a_{i-1})\rar["l_{p-(i+1)}"] &
	    C(l_{p-(i+2)}\circ\bar H_i\circ f_i\circ \bar a_{i-1})
    \end{tikzcd}\]
    and $u_{i+1,p-1}(A'',f'')$ is defined by
    \[\begin{tikzcd}
	    C(l_{p-(i+2)}\circ\bar H_i\circ f_i\circ \bar a_{i-1})\rar["l_{p-(i+2)}"]\dar["\id"]\drar["0"] & C(l_{p-(i+3)}\circ\bar H_i\circ f_i\circ \bar a_{i-1})\rar["l_{p-(i+3)}"]\dar["\id"]& \cdots\rar["l_1"] & A_1[1]\dar["\id"]\\
	    C(l_{p-(i+2)}\circ\bar H_i\circ f_i\circ \bar a_{i-1})\rar["l_{p-(i+2)}"] & C(l_{p-(i+3)}\circ\bar H_i\circ f_i\circ \bar a_{i-1})\rar["l_{p-(i+3)}"] & \cdots\rar["l_1"] & A_1[1]
    \end{tikzcd}\]
    where $\bar H_i\colon C(\bar a_i)\to C(l_{p-(i+1)}\circ \bar H_i\circ f_i\circ \bar a_{i-1})$ is given by
    \[\bar H_i:=\fs(\id,\bar h_i\circ \bar a_{i-1})\]
    and $\bar h_i\colon A_i\to C(l_{p-(i+2)}\circ \bar H_i\circ f_i\circ \bar a_{i-1})$ is given by $\bar h_i:=\fc(\bar h_i^1,\bar h_i^2,\ldots,\bar h_i^{p-(i+1)})$ with
    \[\bar h_i^l:=\sum_{k=1}^l\sum_{j_1+\ldots+j_k=l}h_i^{j_1}\circ \bar a_{i-1}\circ h_i^{j_2}\circ \bar a_{i-1}\circ\ldots\circ \bar a_{i-1}\circ h_i^{j_k}\]
   	and $h_i=\fc(h_i^1,h_i^2,\ldots,h_i^{p-(i+1)})$ for $l=1,\ldots,p-(i+1)$.  We can also recursively state that
   	\[\bar h_i^l=h_i^l +\sum_{j=1}^{l-1} h_i^{l-j}\circ \bar a_{i-1}\circ \bar h_i^j=h_i^l +\sum_{j=1}^{l-1} \bar h_i^{l-j}\circ \bar a_{i-1}\circ h_i^j\ .\]
   	Note that
   	\[C(l_k\circ \bar H_i\circ f_i\circ \bar a_{i-1})=A_1[1]\oplus\ldots \oplus A_1[1]\]
   	where $A_1[1]$ is repeated $k+1$ times, and
   	\[d_{C(l_k\circ \bar H_i\circ f_i\circ \bar a_{i-1})}=\fs(-d,l_k\circ \bar H_i\circ f_i\circ \bar a_{i-1})\]
   	for $k=1,\ldots,p-(i+1)$.

	To see that $(A'',f'')\in\cS_{p,1}^i$ indeed, note that the only nontrivial thing to show is why the component $u_{i-1,i}(f'')$ is a homotopy equivalence. Observe that we can write $u_{i-1,i}(f'')=\bar\mu\circ u_{i-1,i}(f')$ where
	\[\bar\mu:=\begin{tikzcd}
		C(\bar a_{i-1})\arrow[r,"{\fd(\id,a_i)}"]\arrow[d,"\id"]\arrow[rd,"{\fd(0,\bar h_i)}"] & [40pt]
		C(\bar a_i)\arrow[d,"\bar H_i"]
		\\
		C(\bar a_{i-1})\arrow[r,"{\fd(\id,l_{p-(i+1)}\circ \bar H_i\circ f_i)}"] &
		C(l_{p-(i+1)}\circ \bar H_i\circ f_i\circ \bar a_{i-1})
	\end{tikzcd}\ .\]
	Since $u_{i-1,i}(f')$ is a homotopy equivalence, we only need to show that $\bar\mu$ is a homotopy equivalence, i.e. we need to show $\bar H_i$ is a homotopy equivalence and the above square commutates up to homotopy inside. For that, we define
	\[H_i\colon C(l_{p-(i+1)}\circ \bar H_i\circ f_i\circ \bar a_{i-1})\to C(\bar a_i)\]
	given by
	\[H_i:=\fs(\id,-h_i\circ \bar a_{i-1})\ .\]
	Stating the recursive relation $\bar h_i^l=h_i^l +\sum_{j=1}^{l-1} h_i^{l-j}\circ \bar a_{i-1}\circ \bar h_i^j$ as $h_i=b_{p-(i+1)}(H_i)\circ\bar h_i$ we get
	\begin{align*}
		H_i\circ \bar H_i&=\fs(\id,-h_i\circ \bar a_{i-1})\circ \fs(\id,\bar h_i\circ \bar a_{i-1})\\
		&=\fs(\fb(\id,-h_i\circ \bar a_{i-1},b_{p-(i+1)}(H_i))\circ \fc(\id,\bar h_i\circ \bar a_{i-1}))\\
		&=\fs(\id,-h_i\circ \bar a_{i-1}+\fb_{p-(i+1)}(H_i)\circ\bar h_i\circ \bar a_{i-1})\\
		&=\fs(\id,0)\\
		&=\id
	\end{align*}
	and stating the recursive relation $\bar h_i^l=h_i^l +\sum_{j=1}^{l-1} \bar h_i^{l-j}\circ \bar a_{i-1}\circ h_i^j$ as $\bar h_i=b_{p-(i+1)}(\bar H_i)\circ h_i$ we get
	\begin{align*}
		\bar H_i\circ H_i&=\fs(\id,\bar h_i\circ \bar a_{i-1})\circ \fs(\id,-h_i\circ \bar a_{i-1})\\
		&=\fs(\fb(\id,\bar h_i\circ \bar a_{i-1},b_{p-(i+1)}(\bar H_i))\circ \fc(\id,-h_i\circ \bar a_{i-1}))\\
		&=\fs(\id,\bar h_i\circ \bar a_{i-1}-b_{p-(i+1)}(\bar H_i)\circ h_i\circ \bar a_{i-1}))\\
		&=\fs(\id,0)\\
		&=\id\ .
	\end{align*}
	Also, by inspecting the object $(A',f')\in\cT_{p,1}^i$ presented in Definition \ref{dfn:simplified-pinwheel-homotopy}, we see
	\[d(h_i\circ \bar a_{i-1})=d_{C(\bar a_{i-1})}\circ h_i\circ \bar a_{i-1}+h_i\circ \bar a_{i-1}\circ d\]
	and the commutativity of the diagram up to homotopy gives
	\begin{align*}d(h_i\circ \bar a_{i-1})&=dh_i\circ \bar a_{i-1}\\
		&=(a_i-l_{p-(i+1)}\circ f_i)\circ\bar a_{i-1}\ .
	\end{align*}
	Using this, we get
	\begin{align*}
		dH_i&=d_{C(\bar a_i)}\circ H_i - H_i\circ d_{C(l_{p-(i+1)}\circ \bar H_i\circ f_i\circ \bar a_{i-1})}\\
		&=\fs(-d,\bar a_i)\circ\fs(\id,-h_i\circ \bar a_{i-1})-\fs(\id,-h_i\circ \bar a_{i-1})\circ \fs(-d,l_{p-(i+1)}\circ \bar H_i\circ f_i\circ \bar a_{i-1})\\
		&=\fs(\fb(-d,\bar a_i,d_{C(\bar a_{i-1})})\circ\fc(\id,-h_i\circ \bar a_{i-1}))\\
		&\hspace{7em}-\fs(\fb(\id,-h_i\circ \bar a_{i-1},b_{p-(i+1)}(H_i))\circ \fc(-d,l_{p-(i+1)}\circ \bar H_i\circ f_i\circ \bar a_{i-1}))\\
		&=\fs(0,\bar a_i-d_{C(\bar a_{i-1})}\circ h_i\circ \bar a_{i-1}-h_i\circ \bar a_{i-1}\circ d\\
		&\hspace{18em}-b_{p-(i+1)}(H_i)\circ l_{p-(i+1)}\circ \bar H_i\circ f_i\circ \bar a_{i-1})\\
		&=\fs(0,\bar a_i-l_{p-(i+1)}\circ f_i\circ \bar a_{i-1}-d(h_i\circ\bar a_{i-1}))\\
		&=0\ .
	\end{align*}	
 	$H_i$ is clearly degree $0$, so by Proposition \ref{prp:closed-inverse} $d\bar H_i=0$ also and $\bar H_i$ is a homotopy equivalence. We also have
 	\begin{align*}
	 	d(\fd(0,\bar h_i))&=d_{C(l_{p-(i+1)}\circ \bar H_i\circ f_i\circ \bar a_{i-1})}\circ \fd(0,\bar h_i)+\fd(0,\bar h_i)\circ d_{C(\bar a_{i-1})}\\
	 	&=\fb(-d,l_{p-(i+1)}\circ \bar H_i\circ f_i\circ \bar a_{i-1},d_{C(l_{p-(i+2)}\circ \bar H_i\circ f_i\circ \bar a_{i-1})})\circ\fd(0,\bar h_i)\\
	 	&\hspace{24em}+\fd(0,\bar h_i)\circ\fb(-d,\bar a_{i-1},d)\\
	 	&=\fb(0,\bar h_i\circ \bar a_{i-1},d_{C(l_{p-(i+2)}\circ \bar H_i\circ f_i\circ \bar a_{i-1})}\circ \bar h_i+\bar h_i\circ d)\\
	 	&=\fb(0,\bar h_i\circ \bar a_{i-1},d\bar h_i)
 	\end{align*}
 	and
 	\begin{align*}
		d\bar h_i&=b_{p-(i+1)}(\bar H_i)\circ dh_i\\
		&=b_{p-(i+1)}(\bar H_i)\circ(a_i - l_{p-(i+1)}\circ f_i)\ .
 	\end{align*}
 	gives
 	\begin{align*}
	 	d(\fd(0,\bar h_i))&=\fb(0,\bar h_i\circ \bar a_{i-1},b_{p-(i+1)}(\bar H_i)\circ a_i - l_{p-(i+1)}\circ\bar H_i\circ f_i)\\
	 	&=\bar H_i\circ \fd(\id,a_i)-\fd(\id,l_{p-(i+1)}\circ\bar H_i\circ f_i)\ .
 	\end{align*}
 	This shows that $d\bar\mu=0$ and $\bar\mu$ is a homotopy equivalence. Therefore, $f''$ is a homotopy equivalence, and we have $(A'',f'')\in \cS_{p,1}^i$ indeed.
    
    Next, we will show that $(A',f')$ is homotopy equivalent to $F(A'',f'')=(A'',f'')$, and hence prove the essential surjectivity of $H^0F$. Define, in the sense of \hyperlink{circle}{Circle Lemma}, a morphism $(\alpha,\beta)\colon (A',f')\to (A'',f'')$ such that $\alpha\colon A'\to A''$ is given by
    \[u_{1,i-1}(\alpha):=\id\ ,\]
    $u_{i-1,i+1}(\alpha)$ is
    \[\begin{tikzcd}
	    A_{i-1}\rar["a_{i-1}"]\dar["\id"]\drar["0"] &
	    A_i\rar["a_i"]\dar["\id"]\drar["\bar h_i"] & [30pt]
	    C(l_{p-(i+2)}\circ \bar a_i)\dar["b_{p-(i+1)}(\bar H_i)"] \\
	    A_{i-1}\rar["a_{i-1}"] &
	    A_i\rar["l_{p-(i+1)}\circ\bar H_i\circ f_i"] &
	    C(l_{p-(i+2)}\circ \bar H_i\circ f_i\circ\bar a_{i-1})
    \end{tikzcd}\]
    and $u_{i+1,p-1}(\alpha)$ is
    \[\begin{tikzcd}
    	C(l_{p-(i+2)}\circ\bar a_i)\rar["l_{p-(i+2)}"]\dar["b_{p-(i+1)}(\bar H_i)"]\drar["0"] & C(l_{p-(i+3)}\circ\bar a_i)\rar["l_{p-(i+3)}"]\dar["b_{p-(i+2)}(\bar H_i)"]& \cdots\rar["l_1"] & A_1[1]\dar["\id"]\\
    	C(l_{p-(i+2)}\circ\bar H_i\circ f_i\circ \bar a_{i-1})\rar["l_{p-(i+2)}"] & C(l_{p-(i+3)}\circ\bar H_i\circ f_i\circ \bar a_{i-1})\rar["l_{p-(i+3)}"] & \cdots\rar["l_1"] & A_1[1]
    \end{tikzcd}\]
    and $\beta\colon A'\to \cox_{p,1}(A'')$ is given by $\beta=0$. Note that then $\cox_{p,1}(\alpha)\colon \cox_{p,1}(A')\to \cox_{p,1}(A'')$ is given by
    \[u_{1,i-1}(\cox_{p,1}(\alpha))=\id\ ,\qquad u_{i+1,p-1}(\cox_{p,1}(\alpha))=u_{i+1,p-1}(\alpha)\]
    and $u_{i-1,i+1}(\cox_{p,1}(\alpha))$ is
    \[\begin{tikzcd}
	    C(\bar a_{i-1})\rar["{\fd(\id,a_i)}"]\dar["\id"]\drar["{\fd(0,\bar h_i)}"] & [35pt]
	    C(\bar a_i)\rar["l_{p-(i+1)}"]\dar["\bar H_i"]\drar["0"] &
	    C(l_{p-(i+2)}\circ \bar a_i)\dar["b_{p-(i+1)}(\bar H_i)"] \\
	    C(\bar a_{i-1})\rar["{\fd(\id,l_{p-(i+1)}\circ\bar H_i\circ f_i)}"] &
	    C(l_{p-(i+1)}\circ\bar H_i\circ f_i\circ\bar a_{i-1})\rar["l_{p-(i+1)}"] &
	    C(l_{p-(i+2)}\circ\bar H_i\circ f_i\circ\bar a_{i-1})
    \end{tikzcd}\ .\]
    We see that $\alpha$ is a homotopy equivalence, since $b_k(\bar H_i)$ is a homotopy equivalence for every $k$ and all squares commute up to homotopy inside.
    
    Finally, $\cox_{p,1}(\alpha)\circ f' - f''\circ\alpha=d\beta$, in particular the $(i-1,i+1)\st$ component of the equation is given by
    \begin{multline*}
    	(\id,\fd(0,\bar h_i),\bar H_i,0,b_{p-(i+1)}(\bar H_i))\circ(f_{i-1},h_{i-1},f_i,h_i,\id)\\
    	\qquad-(f_{i-1},\bar H_i\circ h_{i-1}+\fd(0,\bar h_i)\circ f_{i-1},\bar H_i\circ f_i,0,\id)\circ (\id,0,\id,\bar h_i,b_{p-(i+1)}(\bar H_i))=0\ .
    \end{multline*}
    Hence by \hyperlink{circle}{Circle Lemma}, $(\alpha,\beta)$ is a homotopy equivalence, which shows $H^0F$ is essentially surjective. Consequently, $F$ is a quasi-equivalence between $\cS_{p,1}^i$ and $\cT_{p,1}^i$.
\end{proof}
    
\begin{proof}[Proof of Proposition \ref{prp:simplified-pinwheel}]
    It directly follows from Lemma \ref{lem:simplified-pinwheel-1} and \ref{lem:simplified-pinwheel-2}.
\end{proof}

Next, we will simplify the morphisms in $\cS_{p,1}'$:

\begin{dfn}\label{dfn:simple-pinwheel-morphism}
	Define $\cS_{p,1}$ as the dg subcategory of $\cS_{p,1}'$ with the same objects, and a degree $k$ morphism $(\alpha,\beta)\colon(A,f)\to(B,g)$ is given by a degree $k$ morphism $\alpha\colon A\to B$ of the form
	\[\begin{tikzcd}
		A_1\arrow[r,"l_{p-2}\circ f_1"]\arrow[d,"\alpha_1"]\arrow[rd,"l_{p-2}\circ\beta_1" yshift=-0.15cm] &
		C(l_{p-3}\circ f_1)\arrow[r,"l_{p-3}"]\arrow[d,"{\fs(\alpha_1,\l_{p-3}\circ\beta_1)}"]\arrow[rd,"0"] &[50pt]
		C(l_{p-4}\circ f_1)\arrow[r,"l_{p-4}"]\arrow[d,"{\fs(\alpha_1,\l_{p-4}\circ\beta_1)}"] &
		\cdots\arrow[r,"l_2"] &[-10pt]
		C(l_1\circ f_1)\arrow[r,"l_1"]\arrow[d,"{\fs(\alpha_1,\l_1\circ\beta_1)}"']\arrow[rd,"0"] &[-10pt]
		A_1[1]\arrow[d,"\alpha_1"]
		\\
		B_1\arrow[r,"l_{p-2}\circ g_1"] &
		C(l_{p-3}\circ g_1)\arrow[r,"l_{p-3}"] &
		C(l_{p-4}\circ g_1)\arrow[r,"l_{p-4}"] &
		\cdots\arrow[r,"l_2"] &
		C(l_1\circ g_1)\arrow[r,"l_1"] &
		B_1[1]
	\end{tikzcd}\]
	and a degree $k-1$ morphism $\beta\colon A\to \cox_{p,1}(B)$ of the form
	\[\begin{tikzcd}
		A_1\arrow[r,"l_{p-2}\circ f_1"]\arrow[d,"\beta_1"]\arrow[rd,"0"] &
		C(l_{p-3}\circ f_1)\arrow[r,"l_{p-3}"]\arrow[d,"0"]\arrow[rd,"0"] &
		C(l_{p-4}\circ f_1)\arrow[r,"l_{p-4}"]\arrow[d,"0"] &
		\cdots\arrow[r,"l_2"] & [-10pt]
		C(l_1\circ f_1)\arrow[r,"l_1"]\arrow[d,"0"]\arrow[rd,"0"] & [-5pt]
		A_1[1]\arrow[d,"0"]
		\\
		C(l_{p-2}\circ g_1)\arrow[r,"l_{p-2}"] &
		C(l_{p-3}\circ g_1)\arrow[r,"l_{p-3}"] &
		C(l_{p-4}\circ g_1)\arrow[r,"l_{p-4}"] &
		\cdots\arrow[r,"l_2"] &
		C(l_1\circ g_1)\arrow[r,"l_1"] &
		B_1[1]
	\end{tikzcd}\ .\]
	
	A degree $k$ morphism $(\alpha,\beta)$ can be presented by $(\alpha_1,\beta_1)$ with $\beta_1=\fc(\beta_1^1,\ldots,\beta_1^{p-1})$, where $\alpha_1\colon A_1\to B_1$ and $\beta_1^i\colon A_1\to B_1$ are degree $k$. By \hyperlink{circle}{Circle Lemma}, its derivative is given by
	\[d(\alpha_1,\beta_1)=(d\alpha_1,d\beta_1+(-1)^k(g_1\circ\alpha_1-\fs(\alpha_1,l_{p-2}\circ\beta_1)\circ f_1))\]
	which gives
	\begin{align*}
		d(\alpha_1,(\beta_1^i)_{i=1}^{p-1})=(d\alpha_1,(-d\beta_1^i+(-1)^k(g_1^i\circ\alpha_1-&\alpha_1\circ f_1^i)\\
		&+\sum_{j=1}^{i-1}(g_1^j\circ\beta_1^{i-j} -(-1)^k\beta_1^{i-j}\circ f_1^j))_{i=1}^{p-1})\ .
	\end{align*}
	Composition rule is given by
	\[(\alpha_1',\beta_1')\circ(\alpha_1,\beta_1)=(\alpha_1'\circ \alpha_1,\fs(\alpha_1',l_{p-2}\circ\beta_1')\circ \beta_1+(-1)^k \beta'_1\circ \alpha_1)\]
	which gives
	\[(\alpha'_1,(\beta_1'^i)_{i=1}^{p-1})\circ(\alpha_1,(\beta_1^i)_{i=1}^{p-1})=(\alpha_1'\circ \alpha_1,(\alpha_1'\circ\beta_1^i+(-1)^k\beta_1'^i\circ\alpha_1+\sum_{j=1}^{i-1}\beta_1'^{i-j}\circ\beta_1^j)_{i=1}^{p-1})\]
	and the identity is $(\alpha_1,\beta_1)=(\id,0)$.
\end{dfn}

\begin{prp}\label{prp:simplified-pinwheel-morphism}
	$\cS_{p,1}'$ is quasi-equivalent to $\cS_{p,1}$.
\end{prp}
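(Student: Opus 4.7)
The plan is to show that the inclusion functor $F \colon \cS_{p,1} \hookrightarrow \cS_{p,1}'$ is a quasi-equivalence. Since $\cS_{p,1}$ and $\cS_{p,1}'$ have the same objects, $H^0 F$ is automatically essentially surjective, so the task reduces to showing that $F$ induces quasi-isomorphisms on morphism complexes, i.e.\ that $H^* F$ is full and faithful.

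The strategy mirrors the iterative simplification used in Lemma \ref{lem:simplified-pinwheel-1} and Lemma \ref{lem:simplified-pinwheel-2}, but now applied to the morphism layer rather than the object layer. Given a closed degree $k$ morphism $(\alpha,\beta) \colon (A,f) \to (B,g)$ in $\cS_{p,1}'$, written via \hyperlink{circle}{Circle Lemma} as $\alpha = (\alpha_i, h_i) \in \Hom^k(A,B)$ and $\beta = (\beta_i, h_i^\beta) \in \Hom^{k-1}(A, \cox_{p,1}(B))$, I would introduce a chain of intermediate full dg subcategories
\[
\cS_{p,1} \subset \cS_{p,1}^{(p-2)} \subset \cdots \subset \cS_{p,1}^{(2)} \subset \cS_{p,1}^{(1)} = \cS_{p,1}'
\]
where $\cS_{p,1}^{(i)}$ consists of those morphisms whose components $u_{i,p-1}(\alpha)$ and $u_{i,p-1}(\beta)$ already have the simple-matrix form prescribed by Definition \ref{dfn:simple-pinwheel-morphism}, and show that each consecutive inclusion is a quasi-equivalence. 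At each step I would construct a simplified morphism $(\alpha',\beta')$ by replacing $\alpha_i$ with $\fs(\alpha_1, l_{p-(i+1)}\circ\beta_1)$ (using the recursion relation $\fs(a)\circ\fs(b) = \fs(\fs(a)\circ b)$ to keep compatibility with the cone differentials $d_{C(l_k \circ f_1)} = \fs(-d, l_k \circ f_1)$), setting the higher $h_i$'s and $\beta_i$'s to zero, and would exhibit an explicit degree $k-1$ element $(\xi,\zeta)$ whose differential, computed via the formula
\[
d(\xi,\zeta) = \bigl(d\xi,\, d\zeta + (-1)^{k-1}(\cox_{p,1}(g)\circ\xi - \cox_{p,1}(\xi)\circ f)\bigr)
\]
of \hyperlink{circle}{Circle Lemma}, realizes $(\alpha,\beta)-(\alpha',\beta')$. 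The primitives $(\xi,\zeta)$ are built from the off-diagonal pieces of the $\alpha_i$'s and from the $h_i$'s, using the same contraction trick as in Lemma \ref{lem:simplified-pinwheel-2} (the homotopy equivalences $\bar H_i$ and $H_i$ inverse in homotopy) to eliminate the extra data component by component. Faithfulness at the cohomology level will follow by the same mechanism: if a morphism already lying in $\cS_{p,1}$ is a coboundary in $\cS_{p,1}'$, applying the simplification to any primitive produces a primitive lying in $\cS_{p,1}$.

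The main obstacle will be bookkeeping. The differential and composition formulas in Definition \ref{dfn:simple-pinwheel-morphism} are phrased entirely in terms of $(\alpha_1, \beta_1^1, \dots, \beta_1^{p-1})$, and one must verify that the simplified $(\alpha',\beta')$ produced at each stage continues to be closed with respect to the full Circle-Lemma differential; in particular the interaction of the simple-matrix structure with the Leibniz rule yields cross-terms $\sum_{j=1}^{i-1} g_1^j \circ \beta_1^{i-j} - (-1)^k \beta_1^{i-j}\circ f_1^j$ that must match exactly the relations in Definition \ref{dfn:simple-pinwheel-morphism}. Carefully tracking the signs (the $(-1)^k$ factors coming from both the Circle-Lemma differential and the grading of the entries of $\beta_1$) and verifying that the composition rule in $\cS_{p,1}$ reproduces the one induced from $\cS_{p,1}'$ will be the most delicate part of the argument, but no new conceptual input beyond Lemma \ref{lem:simplified-pinwheel-1}, Lemma \ref{lem:simplified-pinwheel-2}, and the simple-matrix identities should be required.
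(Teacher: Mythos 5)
Your starting point agrees with the paper's: take $F\colon\cS_{p,1}\hookrightarrow\cS_{p,1}'$, observe that essential surjectivity is free because the objects coincide, and reduce everything to showing $F$ is a quasi-isomorphism on morphism complexes. Where you diverge is in the mechanism. The paper does \emph{not} iterate through intermediate categories: given an arbitrary closed $(\alpha',\beta')$ it writes down, in one shot, a single homotopy $(\kappa,0)$ whose components $\lambda_{p-2},\dots,\lambda_1$ and $\kappa_{p-1},\dots,\kappa_1$ are defined by a downward recursion in the $\varepsilon_i$'s and $\beta_i$'s, and then uses the \emph{closedness} of $(\alpha',\beta')$ to prove that $\xi:=\alpha'-d\kappa$ is forced to have the simple-matrix form $\fs(\alpha_1,l_{\bullet}\circ\kappa_1)$. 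That last step is the real engine of the proof and is absent from your sketch: you do not get to prescribe the simplified target and then look for a primitive; rather the primitive is written down first and the closedness equation $g\circ\alpha'=(-1)^{k-1}d\beta'+\cox_{p,1}(\alpha')\circ f$ determines what the simplified morphism must be.

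Two concrete problems with your route. First, your intermediate categories $\cS_{p,1}^{(i)}$ are \emph{not full} subcategories (they restrict morphisms, not objects), so the logic of Lemmas \ref{lem:simplified-pinwheel-1} and \ref{lem:simplified-pinwheel-2} does not transfer: at each stage you must prove fullness and faithfulness on cohomology anyway, so the chain buys nothing over the one-step homotopy. Worse, you have not checked that each $\cS_{p,1}^{(i)}$ is a dg subcategory at all: the \hyperlink{circle}{Circle Lemma} differential couples $\alpha$ and $\beta$ through the term $g\circ\alpha-\cox_{p,1}(\alpha)\circ f$, and the composition rule couples them through $\fs(\alpha_1',l_{p-2}\circ\beta_1')\circ\beta_1$, so constraining only the tail components $u_{i,p-1}$ while leaving the head arbitrary is not obviously closed under $d$ and $\circ$. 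Second, the simplified morphism is not obtained by keeping the raw pair $(\alpha_1,\beta_1)$ and zeroing the rest: the $\beta$-slot of the target must be the corrected element $\kappa_1=\beta_1+(-1)^{k-1}\fb(0,\lambda_1,\kappa_2)\circ f_1$, which absorbs contributions from all of the $\varepsilon_i$ and the lower $\beta_i$; with the uncorrected $\beta_1$ the proposed target is in general not closed and no homotopy to it exists. These are fixable, but as written the proposal is missing the step that actually identifies the simplified representative.
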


\begin{proof}
	Define the dg functor $F\colon \cS_{p,1}\to\cS_{p,1}'$ as the inclusion. Then clearly $H^0F$ is (essentially) surjective, and $H^*F$ is injective on morphisms. The surjectivity of $H^*F$ on morphisms can be seen as follows: Pick a closed degree $k$ morphism $(\alpha',\beta')\colon (A,f)\to(B,g)$ in $\cS_{p,1}'$ which is expressed by a degree $k$ morphism $\alpha'\colon A\to B$ given by
	\[\begin{tikzcd}
		A_1\arrow[r,"l_{p-2}\circ f_1"]\arrow[d,"\alpha_1"]\arrow[rd,"\delta_1"] & [10pt]
		C(l_{p-3}\circ f_1)\arrow[r,"l_{p-3}"]\arrow[d,"\alpha_2"]\arrow[rd,"\delta_2"] &
		C(l_{p-4}\circ f_1)\arrow[r,"l_{p-4}"]\arrow[d,"\alpha_3"] &
		\cdots\arrow[r,"l_2"] &
		C(l_1\circ f_1)\arrow[r,"l_1"]\arrow[d,"\alpha_{p-2}"]\arrow[rd,"\delta_{p-2}"] &
		A_1[1]\arrow[d,"\alpha_{p-1}"]
		\\
		B_1\arrow[r,"l_{p-2}\circ g_1"] &
		C(l_{p-3}\circ g_1)\arrow[r,"l_{p-3}"] &
		C(l_{p-4}\circ g_1)\arrow[r,"l_{p-4}"] &
		\cdots\arrow[r,"l_2"] &
		C(l_1\circ g_1)\arrow[r,"l_1"] &
		B_1[1]
	\end{tikzcd}\]
	and a degree $k-1$ morphism $\beta'\colon A\to \cox_{p,1}(B)$ given by
	\[\begin{tikzcd}
		A_1\arrow[r,"l_{p-2}\circ f_1"]\arrow[d,"\beta_1"]\arrow[rd,"\varepsilon_1"] & [-5pt]
		C(l_{p-3}\circ f_1)\arrow[r,"l_{p-3}"]\arrow[d,"\beta_2"]\arrow[rd,"\varepsilon_2"] &
		C(l_{p-4}\circ f_1)\arrow[r,"l_{p-4}"]\arrow[d,"\beta_3"] &
		\cdots\arrow[r,"l_2"] & [-10pt]
		C(l_1\circ f_1)\arrow[r,"l_1"]\arrow[d,"\beta_{p-2}"]\arrow[rd,"\varepsilon_{p-2}"] & [-5pt]
		A_1[1]\arrow[d,"\beta_{p-1}"]
		\\
		C(l_{p-2}\circ g_1)\arrow[r,"l_{p-2}"] &
		C(l_{p-3}\circ g_1)\arrow[r,"l_{p-3}"] &
		C(l_{p-4}\circ g_1)\arrow[r,"l_{p-4}"] &
		\cdots\arrow[r,"l_2"] &
		C(l_1\circ g_1)\arrow[r,"l_1"] &
		B_1[1]
	\end{tikzcd}\ .\]
	Then we claim that $(\alpha',\beta')$ is homotopic to $F(\alpha'',\beta'')=(\alpha'',\beta'')$ where $\alpha''\colon A\to B$ is given by
	\[\begin{tikzcd}
		A_1\arrow[r,"l_{p-2}\circ f_1"]\arrow[d,"\alpha_1"]\arrow[rd,"l_{p-2}\circ\kappa_1" yshift=-0.15cm] &
		C(l_{p-3}\circ f_1)\arrow[r,"l_{p-3}"]\arrow[d,"{\fs(\alpha_1,\l_{p-3}\circ\kappa_1)}"]\arrow[rd,"0"] &[50pt]
		C(l_{p-4}\circ f_1)\arrow[r,"l_{p-4}"]\arrow[d,"{\fs(\alpha_1,\l_{p-4}\circ\kappa_1)}"] &
		\cdots\arrow[r,"l_2"] &[-10pt]
		C(l_1\circ f_1)\arrow[r,"l_1"]\arrow[d,"{\fs(\alpha_1,\l_1\circ\kappa_1)}"']\arrow[rd,"0"] &[-10pt]
		A_1[1]\arrow[d,"\alpha_1"]
		\\
		B_1\arrow[r,"l_{p-2}\circ g_1"] &
		C(l_{p-3}\circ g_1)\arrow[r,"l_{p-3}"] &
		C(l_{p-4}\circ g_1)\arrow[r,"l_{p-4}"] &
		\cdots\arrow[r,"l_2"] &
		C(l_1\circ g_1)\arrow[r,"l_1"] &
		B_1[1]
	\end{tikzcd}\]
	and $\beta''\colon A\to \cox_{p,1}(B)$ is given by
	\[\begin{tikzcd}
		A_1\arrow[r,"l_{p-2}\circ f_1"]\arrow[d,"\kappa_1"]\arrow[rd,"0"] &
		C(l_{p-3}\circ f_1)\arrow[r,"l_{p-3}"]\arrow[d,"0"]\arrow[rd,"0"] &
		C(l_{p-4}\circ f_1)\arrow[r,"l_{p-4}"]\arrow[d,"0"] &
		\cdots\arrow[r,"l_2"] & [-10pt]
		C(l_1\circ f_1)\arrow[r,"l_1"]\arrow[d,"0"]\arrow[rd,"0"] & [-5pt]
		A_1[1]\arrow[d,"0"]
		\\
		C(l_{p-2}\circ g_1)\arrow[r,"l_{p-2}"] &
		C(l_{p-3}\circ g_1)\arrow[r,"l_{p-3}"] &
		C(l_{p-4}\circ g_1)\arrow[r,"l_{p-4}"] &
		\cdots\arrow[r,"l_2"] &
		C(l_1\circ g_1)\arrow[r,"l_1"] &
		B_1[1]
	\end{tikzcd}\]
	where $\kappa_1$ is defined as follows: Let $\lambda_{p-2}:=(-1)^{k-1}\varepsilon_{p-2}$, and for $p-2>i>1$ recursively we define
	\[\lambda_i:=(-1)^{k-1}\varepsilon_i + \fd(0,\lambda_{i+1})\]
	and
	\[\lambda_1:=(-1)^{k-1}\varepsilon_1+\fd(0,\lambda_2)\circ f_1\ .\]
	Also, we define $\kappa_{p-1}:=(-1)^{k-1}\beta_{p-1}$, and for $p-1>i>1$ recursively we define
	\[\kappa_i:=(-1)^{k-1}\beta_i+\fb(0,\chi(\lambda)_1^i,\kappa_{i+1})\]
	and
	\[\kappa_1:=\beta_1+(-1)^{k-1}\fb(0,\lambda_1,\kappa_2)\circ f_1\]
	where $\chi(\lambda)_j^i$ is as defined in Definition \ref{dfn:coxeter}. We define a degree $k-1$ morphism $(\kappa,\tau)\colon (A,f)\to(B,g)$ where $\kappa\colon A\to B$ is given by
	\[\begin{tikzcd}
		A_1\arrow[r,"l_{p-2}\circ f_1"]\arrow[d,"0"]\arrow[rd,"\lambda_1"] & [10pt]
		C(l_{p-3}\circ f_1)\arrow[r,"l_{p-3}"]\arrow[d,"\kappa_2"]\arrow[rd,"\lambda_2"] &
		C(l_{p-4}\circ f_1)\arrow[r,"l_{p-4}"]\arrow[d,"\kappa_3"] &
		\cdots\arrow[r,"l_2"] &
		C(l_1\circ f_1)\arrow[r,"l_1"]\arrow[d,"\kappa_{p-2}"]\arrow[rd,"\lambda_{p-2}"] &
		A_1[1]\arrow[d,"\kappa_{p-1}"]
		\\
		B_1\arrow[r,"l_{p-2}\circ g_1"] &
		C(l_{p-3}\circ g_1)\arrow[r,"l_{p-3}"] &
		C(l_{p-4}\circ g_1)\arrow[r,"l_{p-4}"] &
		\cdots\arrow[r,"l_2"] &
		C(l_1\circ g_1)\arrow[r,"l_1"] &
		B_1[1]
	\end{tikzcd}\]
	and $\tau:=0$. We want to show that
	\[d(\kappa,\tau)=(d\kappa,d\tau+(-1)^{k-1}(g\circ\kappa-\cox_{p,1}(\kappa)\circ f))=(\alpha',\beta')-(\alpha'',\beta'')\ .\]
	Note that $\kappa=(-1)^{k-1}(\beta'-\beta'')+\cox_{p,1}(\kappa)\circ f$. Using this, we get
	\[d\tau+(-1)^{k-1}(g\circ\kappa-\cox_{p,1}(\kappa)\circ f)=\beta'-\beta''\]
	since $g\circ\kappa=\kappa$. Hence only thing left to show is $d\kappa=\alpha'-\alpha''$. To prove this, use the fact that $(\alpha',\beta')$ is closed, in particular this implies
	\[g\circ\alpha'=(-1)^{k-1}d\beta'+\cox_{p,1}(\alpha')\circ f\ .\]
	Also, we have
	\[g\circ d\kappa=(-1)^{k-1}(d\beta'-d\beta'')+\cox_{p,1}(d\kappa)\circ f\]
	which, when combined with above equality, gives
	\[g\circ \xi=(-1)^{k-1}d\beta''+\cox_{p,1}(\xi)\circ f\]
	where we define $\xi:=\alpha'-d\kappa$. Note that $(2,p-1)\st$ component of the above equality gives
	\[u_{2,p-1}(\xi)=u_{2,p-1}\circ \cox_{p,1}(\xi)\ .\]
	Let $u_{1,2}=(x,y,z)$. Then we get
	\begin{align*}
		u_{p-2,p-1}(\xi)&=(\fs(x,l_1\circ y),0,x)\\
		u_{i,i+1}(\xi)&=(\fs(x,l_{p-i-1}\circ y),0,\fs(x,l_{p-i-2}\circ y))&&\text{for }p-2>i>1
	\end{align*}
	which gives $z=\fs(x,l_{p-3}\circ y)$. Also,
	\[x=\alpha_{p-1}-d\kappa_{p-1}=\alpha_{p-1}-(-1)^{k-1}d\beta_{p-1}=\alpha_1\]
	and going back to the equality $g\circ \xi=(-1)^{k-1}d\beta''+\cox_{p,1}(\xi)\circ f$, we get
	\[y=l_{p-2}\circ\kappa_1\]
	which shows that $\xi=\alpha''$. Hence $d(\kappa,\tau)=(\alpha',\beta')-(\alpha'',\beta'')$ and $H^*F$ is surjective on morphisms. This completes the proof.
\end{proof}

Next, we will explicitly present the monodromy functor $\mon\colon\cS_{p,1}\to\Loc\dd(S^1)$ for $\cS_{p,1}$:

\begin{prp}\label{prp:monodromy-morphism}
	Given $(A,f)\in\cS_{p,1}$, we have $\mon(A,f)=(A_1,\mon(A,f)_0)\in\Loc\dd(S^1)$ where the monodromy $\mon(A,f)_0\colon A_1\to A_1$ is given by
	\[\mon(A,f)_0=\sum_{j=1}^{p-1} f_1^{p-j}\circ f_1^j\]
	for $f_1=\fc(f_1^1,\ldots,f_1^{p-1})$. Moreover, if $(\alpha,\beta)\colon(A,f)\to(B,g)$ is a degree $k$ morphism, then
	\begin{align*}
		\mon(\alpha,\beta)_1&=\alpha_1\\
		\mon(\alpha,\beta)_0&=\sum_{j=1}^{p-1} g_1^j\circ\beta_1^{p-j} - (-1)^k\sum_{j=1}^{p-1} \beta_1^{p-j} f_1^j\ .
	\end{align*}
	for $g_1=\fc(g_1^1,\ldots,g_1^{p-1})$ and $\beta_1=\fc(\beta_1^1,\ldots,\beta_1^{p-1})$.
\end{prp}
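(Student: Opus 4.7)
\emph{The object formula.} The first assertion is obtained by specializing Proposition \ref{prp:monodromy} to the simplified data of $\cS_{p,1}$ from Definition \ref{dfn:simple-pinwheel}. Under that specialization $f_i=\id$ and $h_i=0$ for $i\geq 2$, so every interior block $\fb(f_i,h_i,f_{i+1})$ collapses to the identity matrix. The structural maps become $a_1=l_{p-2}\circ f_1$ and $a_i=l_{p-i}$ for $i\geq 2$, so $\bar a_k=l_{p-k-1}\circ f_1$ and in particular $\bar a_{p-2}=f_1^1$. The monodromy composition thus reduces to $\eta_2\circ\eta_{3,p-3}\circ\cdots\circ\eta_{3,1}\circ\fb(f_1,0,\id)\circ f_1$, which I would evaluate by induction on $p$: the base case $p=3$ has only a single $\fb$ and gives $\fr(0,\id,f_1^1)\circ\fc(f_1^1\circ f_1^1,f_1^2\circ f_1^1,f_1^2)=f_1^2\circ f_1^1+f_1^1\circ f_1^2$, and each additional $\eta_{3,i}$ inserts one more symmetric term $f_1^{p-j}\circ f_1^j$ by extracting the appropriate row of the growing lower-triangular matrix. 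After $p-2$ such insertions the product telescopes to $\sum_{j=1}^{p-1}f_1^{p-j}\circ f_1^j$.

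\emph{The morphism formula.} Since $\mon$ is a dg functor and the inclusion $\cS_{p,1}\hookrightarrow\mSh\dd(S_{p,1})$ is a quasi-equivalence (Propositions \ref{prp:simplified-pinwheel} and \ref{prp:simplified-pinwheel-morphism}), a morphism $(\alpha,\beta)$ in $\cS_{p,1}$ maps in the homotopy-limit description of $\mSh\dd(S_{p,1})$ to the triple $(\alpha\vb(0,\beta)\vb\alpha)$, as in the proof of the \hyperlink{circle}{Circle Lemma}. Applying $\mon$ lands in $\Loc\dd(S^1)$, whose morphisms are pairs $(\mu_1,\mu_0)$. The first component $\mu_1$ is the restriction to the first edge of the $\alpha$-copy; by Proposition \ref{prp:msh-vertex} this is $j_1(\alpha)=\alpha_1$. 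For $\mu_0$, I would run the same telescoping chain of equivalences $\fb(\cdot,\cdot,\cdot)$, $\eta_{3,i}$, $\eta_2$ used in the object case, but now track how the diagonal datum $\beta=\fc(\beta_1^1,\dots,\beta_1^{p-1})$ propagates through the composition. At each position $j$ of the $p$-step loop there are exactly two places $\beta_1^{p-j}$ can be inserted into a term $f_1^{p-j}\circ f_1^j\mapsto g_1^{p-j}\circ g_1^j$: post-composing $j$ steps of $g$-monodromy, producing $g_1^j\circ\beta_1^{p-j}$, or pre-composing $j$ steps of $f$-monodromy, producing $\beta_1^{p-j}\circ f_1^j$. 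The latter acquires the Koszul sign $-(-1)^k$ coming from moving $\beta_1^{p-j}$ past $\alpha_1$ inside the differential of the Circle Lemma. Summing over $j$ gives the stated formula.

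\emph{Main obstacle, and how to circumvent it.} The principal difficulty is the sign and matrix bookkeeping, especially because $f_1$ and $g_1$ are closed as maps into cones rather than as individual morphisms, so $df_1^i$ and $dg_1^i$ satisfy non-trivial Maurer--Cartan-type relations (compare the $df_1^i=\sum_{j<i}f_1^{i-j}\circ f_1^j$ appearing in the Introduction). Rather than verifying the formula by direct matrix expansion, the cleanest strategy is a consistency check: compute $d\mu_0$ assuming the stated formula, and confirm that it equals $(-1)^k(\alpha_1\circ\mon(A,f)_0-\mon(B,g)_0\circ\alpha_1)$ using (i) the closedness relation on $d\beta_1^i$ from Definition \ref{dfn:simple-pinwheel-morphism}, (ii) the induced quadratic relations on $df_1^i$ and $dg_1^i$ coming from $f_1$ and $g_1$ being closed in the cone targets, and (iii) the formula for $\mon(A,f)_0$ already established. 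Since $\mon$ is a dg functor, morphisms in $\Loc\dd(S^1)$ are determined by $(\mu_1,\mu_0)$, and $\mu_1=\alpha_1$ is forced, this closedness check together with compatibility with composition (which is straightforward from the matrix form in Definition \ref{dfn:simple-pinwheel-morphism}) uniquely pins down $\mu_0$ and proves the formula.
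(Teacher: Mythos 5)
Your treatment of the object formula is essentially the paper's computation: specialize Proposition \ref{prp:monodromy} to $\cS_{p,1}$, note that all interior vertical blocks become identities, and evaluate the telescoping product of the $\eta$'s against the column $\fb(f_1,0,\id)\circ f_1$ (the paper does this in one shot, computing $\eta_2(f)\circ\eta_{3,p-3}(f)\circ\cdots\circ\eta_{3,1}(f)=\fr(0,\id,f_1^{p-2},\ldots,f_1^1)$, rather than by induction on $p$, but the content is the same and your $p=3$ base case matches). That part is fine.

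The morphism formula is where there is a genuine gap. Your ``two insertion points for $\beta_1^{p-j}$'' picture omits an essential source of terms. To form the $p$-fold composite of $f$ one must interpose, at every step, the natural equivalences $\cox_{p,1}^{k}(A)\simeq\cox_{p,k}(A)$ of Proposition \ref{prp:coxeter-power}; extending the morphism $(\alpha,\beta)$ across those squares forces, by Lemma \ref{lem:adjusting}, nonzero diagonal homotopies $\xi_{3,i}=(-1)^k\eta_{3,i}(g)\circ j_{i+1}(\alpha)\circ\zeta_{3,i}(f)$ and $\xi_2$, where the $\zeta$'s satisfy $d\zeta=\eta'\circ\eta-\id$ and must be extracted explicitly from the proofs of Lemmas \ref{lem:zero}--\ref{lem:cone-triple}. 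These homotopies contribute to the horizontal composite $\chi(\beta_1)$ on the same footing as the two squares you do account for, and since the lower-triangular entries of $\alpha$ in Definition \ref{dfn:simple-pinwheel-morphism} are themselves built from $\beta_1$, they are precisely where a large part of the $-(-1)^k\sum_j\beta_1^{p-j}\circ f_1^j$ terms originates. A derivation that never introduces the $\xi$'s and $\zeta$'s cannot produce the formula; it can only guess it.

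Your fallback --- verify $d\mu_0$ is consistent with closedness and composition and then invoke uniqueness --- does not close this gap. Knowing that $\mon$ is a dg functor, its value on objects, and the first component $\mu_1=\alpha_1$ does not determine the second component $\mu_0$ of a morphism in $\Loc\dd(S^1)$: two dg functors can agree on all of that data and still differ in $\mu_0$ (the consistency conditions are necessary, not sufficient, and the proposition is moreover asserted for arbitrary, not necessarily closed, degree $k$ morphisms). The monodromy functor is a specific functor defined through the homotopy-limit identifications, so the only way to establish the formula is to carry out the horizontal composition of the extended diagram, as the paper does.
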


\begin{proof}
	By Proposition \ref{prp:monodromy} we get
	\[\mon(A,f)_0=\id\circ\eta_2(f)\circ\id\circ\eta_{3,p-3}(f)\circ\id\circ\ldots\circ\id\circ\eta_{3,1}(f)\circ\fb(f_1,0,\id)\circ f_1\]
	where $\eta_2(f)=\fr(0,\id,l_1\circ f_1)$ and
	\[\eta_{3,i}(f)=\mx{0 & I_{p-(i+1)} & l_{p-(i+1)}\circ f_1 & 0 \\ 0 & 0 & 0 & I_{p-(i+2)}}\]
	for $i=1,\ldots,p-3$. We have
	\[\eta_2(f)\circ\eta_{3,p-3}(f)\circ\ldots\circ\eta_{3,1}(f)=\fr(0,\id,f_1^{p-2},\ldots,f_1^1)\]
	and
	\[\fb(f_1,0,\id)\circ f_1=\fc(f_1^1\circ f_1^1,f_1^2\circ f_1^1,\ldots,f_1^{p-1}\circ f_1^1,f_1^2,f_1^3,\ldots,f_1^{p-1})\]
	which gives
	\[\mon(A,f)_0=\sum_{j=1}^{p-1} f_1^{p-j}\circ f_1^j\ .\]
	To find $\mon(\alpha,\beta)$, as in the proof of Proposition \ref{prp:monodromy}, we compose $f$ and $g$ with themselves (after translation) $p$ times, and extend the morphism $(\alpha,\beta)$ between $(A,f)$ and $(B,g)$ to make it a morphism between these compositions. We get the diagram
	\begin{align*}
		\begin{tikzcd}[ampersand replacement=\&]
			A\rar["f"]\dar["\alpha"]\drar["\beta"] \&
			\cox_{p,1}(A)\rar["\cox_{p,1}(f)"]\dar["\cox_{p,1}(\alpha)"]\drar["\cox_{p,1}(\beta)"] \&
			\cox_{p,1}^2(A)\rar["\sim"]\dar["\cox_{p,1}^2(\alpha)"]\drar[gray,dashed] \& 
			\cox_{p,2}(A)\rar["\cox_{p,2}(f)"]\dar["\cox_{p,2}(\alpha)"]\drar["\cox_{p,2}(\beta)"] \&
			\cox_{p,2}\circ \cox_{p,1}(A)\rar["\sim"]\dar["\cox_{p,2}\circ \cox_{p,1}(\alpha)"] \&
			\cdots
			\\
			B\rar["g"] \&
			\cox_{p,1}(B)\rar["\cox_{p,1}(g)"] \&
			\cox_{p,1}^2(B)\rar["\sim"] \& 
			\cox_{p,2}(B)\rar["\cox_{p,2}(g)"] \&
			\cox_{p,2}\circ \cox_{p,1}(A)\rar["\sim"] \&
			\cdots
		\end{tikzcd}\hspace{-27em}&
		\\ 
		&\begin{tikzcd}[ampersand replacement=\&]
			\cdots \rar["\cox_{p,p-2}(f)"] \&
			\cox_{p,p-2}\circ \cox_{p,1}(A)\rar["\sim"]\dar["\cox_{p,p-2}\circ \cox_{p,1}(\alpha)"]\drar[gray,dashed] \&
			\cox_{p,p-1}(A)\rar["\cox_{p,p-1}(f)"]\dar["\cox_{p,p-1}(\alpha)"]\drar["\cox_{p,p-1}(\beta)"] \&
			\cox_{p,p-1}\circ \cox_{p,1}(A)\rar["\sim"]\dar["\cox_{p,p-1}\circ \cox_{p,1}(\alpha)"]\drar[gray,dashed] \&
			A\dar["\alpha"]
			\\
			\cdots \rar["\cox_{p,p-2}(g)"] \&
			\cox_{p,p-2}\circ \cox_{p,1}(B)\rar["\sim"] \&
			\cox_{p,p-1}(B)\rar["\cox_{p,p-1}(g)"] \&
			\cox_{p,p-1}\circ \cox_{p,1}(B)\rar["\sim"] \&
			B
		\end{tikzcd}
	\end{align*}
	where we omitted the labels for the diagonal dashed arrows. Next, we restrict this diagram to the first edge $e_1$, i.e. we apply the functor $j_1$ on this diagram, and get
	\begin{align*}
		\begin{tikzcd}[ampersand replacement=\&]
			A_1\rar["f_1"]\dar["\alpha_1"]\drar["\beta_1"] \&
			C(l_{p-2}\circ f_1)\rar["{\fb(f_1,0,\id)}"]\dar["j_2(\alpha)"]\drar["{\fb(\beta_1,0,0)}"] \&
			C(l_{p-2})\rar["\eta_{3,1}(f)"]\dar["j_2(\cox_{p,1}(\alpha))"]\drar["\xi_{3,1}"] \& [25pt]
			C(l_{p-3})\rar["\id"]\dar["j_3(\alpha)"]\drar["0"] \&
			C(l_{p-3})\rar["\eta_{3,2}(f)"]\dar["j_3(\alpha)"] \& 
			\cdots
			\\
			B_1\rar["g_1"] \&
			C(l_{p-2}\circ g_1)\rar["{\fb(g_1,0,\id)}"] \&
			C(l_{p-2})\rar["\eta_{3,1}(g)"] \& 
			C(l_{p-3})\rar["\id"] \&
			C(l_{p-3})\rar["\eta_{3,2}(g)"] \& 
			\cdots
		\end{tikzcd}\hspace{-25em}&
		\\ 
		&\begin{tikzcd}[ampersand replacement=\&]
			\cdots \rar["\id"] \&
			C(l_2)\rar["\eta_{3,p-3}(f)"]\dar["j_{p-2}(\alpha)"]\drar["\xi_{3,p-3}"] \& [15pt]
			C(l_1)\rar["\id"]\dar["j_{p-1}(\alpha)"]\drar["0"] \& [15pt]
			C(l_1)\rar["\eta_2(f)"]\dar["j_{p-1}(\alpha)"]\drar["\xi_2"] \& [20pt]
			A_1\dar["\alpha_1"]
			\\
			\cdots \rar["\id"] \&
			C(l_2)\rar["\eta_{3,p-3}(g)"] \&
			C(l_1)\rar["\id"] \&
			C(l_1)\rar["\eta_2(g)"] \&
			B_1
		\end{tikzcd}
	\end{align*}
	where $\xi_{3,i}$ and $\xi_2$ are given by Lemma \ref{lem:adjusting} as
	\begin{align*}
		\xi_{3,1}&=(-1)^k\eta_{3,1}(g)\circ j_2(\cox_{p,1}(\alpha))\circ \zeta_{3,1}(f) \\
		\xi_{3,i}&=(-1)^k\eta_{3,i}(g)\circ j_{i+1}(\alpha)\circ\zeta_{3,i}(f) &&\text{for }2\leq i\leq p-3\\
		\xi_2&=(-1)^k\eta_2(g)\circ j_{p-1}(\alpha)\circ\zeta_2(f)
	\end{align*}
	and $\zeta_{3,i}(f)$ and $\zeta_2(f)$ determined by the relations $d\zeta_{3,i}(f)=\eta'_{3,i}(f)\circ\eta_{3,i}(f)-\id$ and $d\zeta_2(f)=\eta'_2(f)\circ\eta_2(f)-\id$ where $\eta_{3,i}'(f)$ and $\eta_2'(f)$ are the inverses of $\eta_{3,i}(f)$ and $\eta_2(f)$, respectively. By inspecting the proofs of Lemma \ref{lem:zero}, \ref{lem:zero-cone}, \ref{lem:cone-exact}, and \ref{lem:cone-triple}, $\zeta_{3,1}(f)$ and $\zeta_2(f)$ can be explicitly given by
	\begin{align*}
		\zeta_{3,i}(f)&=\mx{0_{1,p-i} & -\id & 0_{1,p-2-i} \\ 0_{2p-2-2i,p-i} & 0_{2p-2-2i,1} & 0_{2p-2-2i,p-2-i}} &&\text{for }1\leq i\leq p-3\\
		\zeta_2(f)&=\mx{0_{1,2} & -\id \\ 0_{2,2} & 0_{2,1}}
	\end{align*}
	where $0_{n,m}$ is the $n\times m$ zero matrix. Finally, by ``horizontally composing'' the above diagram, in particular by composing the diagonals as in Definition \ref{dfn:coxeter}, we get the morphism $\mon(\alpha,\beta)\colon (A_1,\mon(A,f)_0)\to (B_1,\mon(B,g)_0)$ as
	\[\begin{tikzcd}
		\mon(A,f)\dar["{\mon(\alpha,\beta)}"',"=" xshift=1cm] & A_1\rar["{\mon(A,f)_0}"]\dar["\alpha_1"]\drar["\chi(\beta_1)"] & [20pt]
		A_1\dar["\alpha_1"]\\
		\mon(B,g) &
		B_1\rar["{\mon(B,g)_0}"] &
		B_1
	\end{tikzcd}\]
	where
	\begin{multline*}
		\chi(\beta_1)=\eta_2(g)\circ\eta_{3,p-3}(g)\circ\ldots\circ\eta_{3,1}(g)\circ(\fb(g_1,0,\id)\circ \beta_1+\fb(\beta_1,0,0)\circ f_1)+\\
		+\sum_{i=1}^{p-3} \eta_2(g)\circ\eta_{3,p-3}(g)\circ\ldots\circ\eta_{3,i+1}(g)\circ\xi_{3,i}\circ\eta_{3,i-1}(f)\circ\ldots\circ\eta_{3,1}(f)\circ\fb(f_1,0,0)\circ f_1 +\\
		+\xi_2\circ\eta_{3,p-3}(f)\circ\ldots\circ\eta_{3,1}(f)\circ \fb(f_1,0,0)\circ f_1
	\end{multline*}
	which gives
	\[\chi(\beta_1)=\sum_{j=1}^{p-1} g_1^j\circ\beta_1^{p-j} - (-1)^k\sum_{j=1}^{p-1} \beta_1^{p-j} f_1^j\ .\]
	This proves the proposition.
\end{proof}

Now we are ready to prove our main theorem:

\begin{proof}[Proof of Theorem \ref{thm:msh-pinwheel}]
	By Proposition \ref{prp:simplified-pinwheel} and \ref{prp:simplified-pinwheel-morphism}, we have
	\begin{align*}
		\mSh\dd(L_{p,1})&\simeq\{(A,f,\gamma)\vb (A,f)\in\cS_{p,1},\mon(A,f)_0\overset{\gamma}{\sim}\id\}\\
		&\simeq\{(A,f,\gamma)\vb A\in\Modk(A_{p-1}),f\colon A\to \cox_{p,1}(A)\text{ is h.e.},\mon(A,f)_0\overset{\gamma}{\sim}\id\}
	\end{align*}
	where $(A,f)\in\cS_{p,1}$ is represented as in Definition \ref{dfn:simple-pinwheel}, and the morphisms are given by \hyperlink{disk}{Disk Lemma}. Note that $f$ is a homotopy equivalence if and only if $f_1\colon A_1\to C(l_{p-2}\circ f_1)$ is a homotopy equivalence. Moreover, $f_1$ is a homotopy equivalence if and only if $|f_1|=0$, $df_1=0$ and $C(f_1)\simeq 0$ by Proposition \ref{prp:cone-zero}. This gives
	\begin{align*}
		\mSh\dd(L_{p,1})\simeq\{(A_1,f_1,\gamma)\vb A_1\in\Modk,f_1\in\Hom^0(A_1,&C(l_{p-2}\circ f_1)),\\
		&df_1=0,C(f_1)\simeq 0,\mon(A,f)_0\overset{\gamma}{\sim}\id\}\ .
	\end{align*}
	Our next task is describe the conditions $df_1=0$, $\mon(A,f)_0\overset{\gamma}{\sim}\id$, and $C(f_1)\simeq 0$. Note that we have $f_1=\fc(f_1^1,f_1^2,\ldots,f_1^{p-1})$. We consider $f_1^i\colon A_1\to A_1$ as degree $1$ morphisms. Then $df_1=0$ reads as
	\[df_1^i=\sum_{j=1}^{i-1} f_1^{i-j}\circ f_1^j\]
	for $i=1,\ldots,p-1$. To understand the degree $1$ morphism $\gamma\colon A_1\to A_1$, recall that by Proposition \ref{prp:monodromy-morphism} we have
	\[\mon(A,f)_0=\sum_{j=1}^{p-1} f_1^{p-j}\circ f_1^j\ .\]
	This implies that
	\[d\gamma=-\id+\sum_{j=1}^{p-1} f_1^{p-j}\circ f_1^j\ .\]
	If we denote $f_1^p:=\gamma$, then we can compactly write
	\[df_1^i=-\delta_{i,p}+\sum_{j=1}^{i-1} f_1^{i-j}\circ f_1^j\ .\]
	for $i=1,\ldots,p$, where $\delta_{ij}$ is Kronecker delta, and get
	\begin{align*}
		\mSh\dd(L_{p,1})\simeq\{(A_1,(f_1^i)_{i=1}^p)\vb A_1\in\Modk,&f_1^i\in\Hom^1(A_1,A_1),\\
		&df_1^i=-\delta_{i,p}+\sum_{j=1}^{i-1} f_1^{i-j}\circ f_1^j,C(f_1)\simeq 0\}\ .
	\end{align*}
	
	Now we can discuss the morphisms in $\mSh\dd(L_{p,1})$. By \hyperlink{disk}{Disk Lemma}, a degree $k$ morphism $(\alpha,\beta,\beta_1^p)\colon (A,f,f_1^p)\to(B,g,g_1^p)$ in $\mSh\dd(L_{p,1})$ is given by a degree $k$ morphism $(\alpha,\beta)\colon (A,f)\to (B,g)$ in $\cS_{p,1}$ as in Definition \ref{dfn:simple-pinwheel-morphism} and a degree $k$ morphism $\beta_1^p\colon A_1\to B_1$ in $\Modk$. Note that by Proposition \ref{prp:monodromy-morphism} we know
	\begin{align*}
		\mon(\alpha,\beta)_1&=\alpha_1\\
		\mon(\alpha,\beta)_0&=\sum_{j=1}^{p-1} g_1^j\circ\beta_1^{p-j} - (-1)^k\sum_{j=1}^{p-1} \beta_1^{p-j} f_1^j\ .
	\end{align*}
	As remarked at the end of Definition \ref{dfn:simple-pinwheel-morphism}, we can equivalently consider
	\[(\alpha_1,(\beta_1^i)_1^p)\colon (A_1,(f_1^i)_{i=1}^p) \to (B_1,(g_1^i)_{i=1}^p)\]
	in place of $(\alpha,\beta,\beta_1^p)$ where $\alpha_1,\beta_1^1,\ldots,\beta_1^p\colon A_1\to B_1$ are degree $k$. Then by \hyperlink{disk}{Disk Lemma}, the differential is given by
	\begin{align*}
		d(\alpha_1,(\beta_1^i)_{i=1}^{p-1},\beta_1^p)&=(d(\alpha_1,(\beta_1^i)_{i=1}^{p-1}),d\beta_1^p+g_1^p\circ \mon(\alpha,\beta)_1+(-1)^k(\mon(\alpha,\beta)_0-\mon(\alpha,\beta)_1\circ f_1^p)\\
		&\hspace{-5em}=(d\alpha_1,(-d\beta_1^i+(-1)^k(g_1^i\circ\alpha_1-\alpha_1\circ f_1^i)+\sum_{j=1}^{i-1}(g_1^j\circ\beta_1^{i-j} -(-1)^k\beta_1^{i-j}\circ f_1^j))_{i=1}^{p-1},\\
		&\hspace{2em}d\beta_1^p+g_1^p\circ \alpha_1-(-1)^k\alpha_1\circ f_1^p+(-1)^k\sum_{j=1}^{p-1} (g_1^j\circ\beta_1^{p-j} - (-1)^k \beta_1^{p-j} f_1^j))\ ,
	\end{align*}
	the composition is given by
	\begin{align*}
		(\alpha'_1,(\beta_1'^i)_{i=1}^{p-1},\beta_1'^p)\circ(\alpha_1,(\beta_1^i)_{i=1}^{p-1},\beta_1^p)&=((\alpha'_1,(\beta_1'^i)_{i=1}^{p-1})\circ(\alpha_1,(\beta_1^i)_{i=1}^{p-1}),\\
		&\hspace{8em}\mon(\alpha',\beta')_1\circ \beta_1^p +(-1)^k \beta_1'^p\circ \mon(\alpha,\beta)_1)\\
		&\hspace{-11em}=(\alpha_1'\circ \alpha_1,(\alpha_1'\circ\beta_1^i+(-1)^k\beta_1'^i\circ\alpha_1+\sum_{j=1}^{i-1}\beta_1'^{i-j}\circ\beta_1^j)_{i=1}^{p-1},\alpha_1'\circ \beta_1^p +(-1)^k \beta_1'^p\circ \alpha_1)\ ,
	\end{align*}
	and the identity is $(\alpha_1,(\beta_1^i)_{i=1}^p)=(\id,0)$. This shows that if we define $\cA'_{p,1}$ as the semifree dga generated by the degree $1$ elements $x_i$ for $p\geq i\geq 1$, where
	\[dx_i=-\delta_{i,p}+\sum_{j=1}^{i-1}x_{i-j}\circ x_j\]
	we get
	\[\mSh\dd(L_{p,1})\simeq\{(A_1,(f_1^i)_{i=1}^p)\in\Modk(\cA'_{p,1})\vb C(f_1)\simeq 0\}\ .\]
	
	Lastly, we will describe $C(f_1)\simeq 0$.  By \cite{drinfeld}, it can be described by adding a morphism $\varepsilon$ of degree $1$ to $\Hom(C(f_1),C(f_1))$ such that $d\varepsilon=\id_{C(f_1)}$ where we do not add new relations between morphisms. This will introduce the morphisms $v\circ\varepsilon\circ u$ in $\Hom(A_1,A_1)$, for any $u\in\Hom(A_1,C(f_1))$ and $v\in\Hom(C(f_1),A_1)$. Note that $u\in\Hom(A_1,C(f_1))$ can be decomposed as
	\[u=\sum_{j=1}^p u^j\circ U^j\]
	where $u^j\colon A_1\to C(f_1)$ such that $u^j=\fc(0,\ldots,0,\id,0,\ldots,0)$ whose $j\th$ term is $\id$, and $U^j\in\Hom(A_1,A_1)$. Similarly, $v\in\Hom(C(f_1),A_1)$ can be decomposed as
	\[v=\sum_{j=1}^p V^j\circ v^j\]
	where $v^j\colon C(f_1)\to A_1$ such that $v^j=\fr(0,\ldots,0,\id,0,\ldots,0)$ whose $j^{\text{th}}$ term is $\id$, and $V^j\in\Hom(A_1,A_1)$. Hence, we just add the morphisms
	\[g^{ij}:=-v^i\circ\varepsilon\circ u^j\]
	for $1\leq i,j\leq p$ in $\Hom(A_1,A_1)$ where we do not add new relations between morphisms. Note that $u^i$, $v^i$, and $g^{ij}$ are degree $1$, and
	\begin{align*}
		du^j&=\sum_{k=j+1}^p u^k\circ f_1^{k-j}\\
		dv^i&=\sum_{k=1}^{i-1} f_1^{i-k}\circ v^k
	\end{align*}
	and consequently
	\[dg^{ij}=\delta_{ij}+\sum_{k=1}^{i-1} f_1^{i-k}\circ g^{kj}+\sum_{k=j+1}^p g^{ik}\circ f_1^{k-j}\ .\]
	Therefore, we get
	\begin{align*}
		\mSh\dd(L_{p,1})\simeq\{&(A,(f_1^i)_{i=1}^p,(g^{ij})_{1\leq i,j\leq p}) \vb A_1\in\Modk, f_1^i,g^{ij}\in\Hom^1(A_1,A_1),\\
		&df_1^i=-\delta_{i,p}+\sum_{j=1}^{i-1} f_1^{i-j}\circ f_1^j, dg^{ij}=\delta_{ij}+\sum_{k=1}^{i-1} f_1^{i-k}\circ g^{kj}+\sum_{k=j+1}^p g^{ik}\circ f_1^{k-j}\}
	\end{align*}
	or alternatively,
	\[\mSh\dd(L_{p,1})\simeq\Modk(\cA_{p,1})\]
	where $\cA_{p,1}$ is defined as in Definition \ref{dfn:dga-rational}. Their morphisms also match as explained before. By Proposition \ref{prp:mod-perf-microlocal}, we get
	\begin{align*}
		\mSh(L_{p,1})&\simeq\Perfk(\cA_{p,1})\\
		\mSh^w(L_{p,1})&\simeq\Perf(\cA_{p,1})\ .
	\end{align*}
\end{proof}

\begin{cor}\label{cor:bp1-lp1}
	Let $\k$ be a field of characteristic zero. We have the $A_{\infty}$-quasi-equivalence
	\[\mSh^w(L_{p,1})\simeq\cW(B_{p,1})\]
	for $p\geq 3$, confirming Conjecture \ref{con:microlocal-wrapped} for $B_{p,1}$.
\end{cor}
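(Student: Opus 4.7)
The proof will essentially be a one-line consequence of the two main computational theorems already established in the excerpt, but the proposal should explain why this is a genuine confirmation of Conjecture \ref{con:microlocal-wrapped} and not merely a tautology.

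The plan is to simply concatenate the quasi-equivalences provided by Theorem \ref{thm:wrapped-rational} and Theorem \ref{thm:msh-pinwheel}. By Theorem \ref{thm:wrapped-rational}, for a field $\k$ of characteristic zero and $p\geq 3$, we have an $A_{\infty}$-quasi-equivalence $\cW(B_{p,1})\simeq\Perf(\cA_{p,1})$, where $\cA_{p,1}$ is the semifree dga of Definition \ref{dfn:dga-rational}. By Theorem \ref{thm:msh-pinwheel}, for the same $\cA_{p,1}$ and any commutative ring $\k$ (in particular for a field of characteristic zero), we have $\mSh^w(L_{p,1})\simeq\Perf(\cA_{p,1})$. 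Composing the first quasi-equivalence with the inverse of the second yields $\mSh^w(L_{p,1})\simeq\cW(B_{p,1})$, which is the desired statement.

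The only point that needs a sentence of care is that both sides are $\Z/2$-graded pretriangulated $A_{\infty}$-categories over the same $\k$, and that the $\Z/2$-grading conventions used in the two computations agree. On the Fukaya side, $\cW(B_{p,1})$ is $\Z/2$-graded because $2c_1(B_{p,1})\neq 0\in H^2(B_{p,1};\Z)\simeq\Z/p$ for $p\geq 3$ (Remark \ref{rmk:grading-structure}), and the canonical $\Z/2$-grading structure is used. On the microlocal side, Section \ref{sec:quiver} forces a switch to $\Z/2$-graded categories exactly at the step where the Coxeter functor is identified with the rotation $\mSh\dd(r_{p,1})$ (see Remark \ref{rmk:coxeter-z2} and Proposition \ref{prp:rotation-coxeter}); this is the same choice of grading datum, so no additional twisting is needed. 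Having dispatched this compatibility check, one concludes that the explicit dga $\cA_{p,1}$ computed on both sides matches term-for-term, and the composition is a well-defined $A_{\infty}$-quasi-equivalence.

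Finally, I would close the proof by noting that this confirms Conjecture \ref{con:microlocal-wrapped} in the case $(W,\Lambda)=(B_{p,1},\emptyset)$, since $L_{p,1}=\fX_{B_{p,1}}$ is globally the skeleton being considered and the global sections on both sides have been shown to coincide. There is no serious obstacle here: the substantive work has already been done in Sections \ref{sec:wrapped-rational} and \ref{sec:msh-pinwheel}, and the corollary is essentially a bookkeeping statement comparing the two models of $\Perf(\cA_{p,1})$.
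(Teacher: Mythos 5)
Your proposal is correct and matches the paper's treatment: the corollary is obtained by composing the quasi-equivalences $\cW(B_{p,1})\simeq\Perf(\cA_{p,1})$ from Theorem \ref{thm:wrapped-rational} and $\mSh^w(L_{p,1})\simeq\Perf(\cA_{p,1})$ from Theorem \ref{thm:msh-pinwheel}, both referring to the same dga $\cA_{p,1}$ of Definition \ref{dfn:dga-rational}. Your added remark on the compatibility of the $\Z/2$-grading conventions is consistent with the paper's own discussion (Remark \ref{rmk:bp1-grading} and the remark following the corollary) and does not change the argument.
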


\begin{rmk}
	There is a hidden choice when gluing pieces of $L_{p,1}$, namely we can choose shift of the Coxeter functor, see the discussion in Section \ref{sec:microlocal}. If $p$ is even, we expect to get a different category shown in Remark \ref{rmk:bp1-grading}, because there is an alternative grading structure on $B_{p,1}$ if $p$ is even. Hence we expect that for the alternative grading of $B_{p,1}$, Corollary \ref{cor:bp1-lp1} still holds.
\end{rmk}

\bibliographystyle{amsplain}
\bibliography{thesisbibliography}

\end{document}